

\documentclass[11pt]{amsart}
\usepackage{geometry}                
\geometry{letterpaper}                   
\usepackage{graphicx}
\usepackage{amssymb}
\usepackage{epstopdf}
\usepackage{amsmath}
\usepackage{color}
\usepackage{hyperref}
\usepackage{pdfsync}

\usepackage[all]{xy}
\xyoption{matrix}
\xyoption{arrow}

\textwidth = 6 in
\textheight = 8.5 in
\oddsidemargin = 0.25 in
\evensidemargin = 0.25 in
\topmargin = 0.25 in
\headheight = 0.0 in
\headsep = 0.0 in
\parskip = 0.0in
\parindent = 0.2in

\pagestyle{plain}
 
\newtheorem{thm}{Theorem}[section]
\newtheorem{lem}[thm]{Lemma}
\newtheorem{cor}[thm]{Corollary}
\newtheorem{prop}[thm]{Proposition}

\theoremstyle{definition}
\newtheorem{defn}[thm]{Definition}
\newtheorem{eg}[thm]{Example}

\newtheorem{rem}[thm]{Remark}
 
\numberwithin{equation}{section}

\DeclareGraphicsRule{.tif}{png}{.png}{`convert #1 `dirname #1`/`basename #1 .tif`.png}

\newcommand{\brk}[1]{\left<#1\right>}

\newcommand{\vs}[1]{\vskip .#1 cm} 
\newcommand{\noi}{\noindent}

\newcommand{\xrarrow}{\xrightarrow} 

\newcommand{\ot}{\leftarrow}
\newcommand{\then}{\Rightarrow}
\newcommand{\neht}{\Leftarrow}

\newcommand{\f}{\varphi}
 
\newcommand{\into}{\hookrightarrow}
 \newcommand{\onto}{\twoheadrightarrow}
 
 \newcommand{\cof}{\rightarrowtail}


\newcommand{\smallcoprod}{\,{\textstyle{\coprod}}\,}
\newcommand{\smallprod}{\,{\textstyle{\prod}}\,}

\DeclareMathOperator{\simp}{simp}
\DeclareMathOperator{\coker}{coker}
\DeclareMathOperator{\Hom}{Hom}%
\DeclareMathOperator{\Ext}{Ext}%
\DeclareMathOperator{\End}{End}%
%

\DeclareMathOperator{\undim}{\underline{dim}}
 
\newcommand{\field}[1]{\mathbb{#1}}
\newcommand{\ZZ}{\ensuremath{{\field{Z}}}}
\newcommand{\CC}{\ensuremath{{\field{C}}}}
\newcommand{\RR}{\ensuremath{{\field{R}}}}
\newcommand{\QQ}{\ensuremath{{\field{Q}}}}
\newcommand{\NN}{\ensuremath{{\field{N}}}}


\newcommand{\commentout}[1]{}

\newcommand{\cA}{\ensuremath{{\mathcal{A}}}}

\newcommand{\cB}{\ensuremath{{\mathcal{B}}}}
\newcommand{\cC}{\ensuremath{{\mathcal{C}}}}
\newcommand{\cD}{\ensuremath{{\mathcal{D}}}}

\newcommand{\cG}{\ensuremath{{\mathcal{G}}}}
\newcommand{\cH}{\ensuremath{{\mathcal{H}}}}
\newcommand{\cI}{\ensuremath{{\mathcal{I}}}}

\newcommand{\cK}{\ensuremath{{\mathcal{K}}}}

\newcommand{\cN}{\ensuremath{{\mathcal{N}}}}

\newcommand{\cS}{\ensuremath{{\mathcal{S}}}}

 \newcommand\vare{\varepsilon}

%
%

\title{Signed exceptional sequences and the cluster morphism category}
\author{Kiyoshi Igusa}
\address{Department of Mathematics, Brandeis University, Waltham, MA 02454}\email{igusa@brandeis.edu}

\author{Gordana Todorov}
\address{Department of Mathematics, Northeastern University, Boston, MA 02115}\email{g.todorov@northeastern.edu}

\keywords{cluster tilting sets, exceptional sequences, real Schur roots, c-vectors, wide subcategories, picture groups, HNN-extensions, CW-complexes, classifying spaces}

\subjclass[2010]{
16G20; 20F55}


\begin{document}

\begin{abstract} We introduce signed exceptional sequences as factorizations of morphisms in the cluster morphism category. The objects of this category are wide subcategories of the module category of a hereditary algebra. A morphism $[T]:\cA\to\cB$ is the equivalence class of a rigid object $T$ in the cluster category of $\cA$ so that $\cB$ is the right hom-ext perpendicular category of the underlying object $|T|\in\cA$. Factorizations of a morphism $[T]$ are given by total orderings of the components of $T$. This is equivalent to a ``signed exceptional sequence.'' For an algebra of finite representation type, the geometric realization of the cluster morphism category is an Eilenberg-MacLane space with fundamental group equal to the ``picture group'' introduced by the authors in \cite{IOTW4}.
\end{abstract}

\maketitle

\tableofcontents


\section*{Introduction}

The purpose of this paper is to give an algebraic version of some of the topological definitions, statements and proofs in our joint paper with Kent Orr and Jerzy Weyman about the picture groups for Dynkin quivers \cite{IOTW4}. To avoid repetition, the concurrently written paper \cite{IOTW4} will logically depend on this paper. In the last section of this paper we briefly review, extend and simplify the ideas from earlier versions of \cite{IOTW4} to lay the background for a more streamlined revision of that paper. The conversion to algebra follows the ideas of Quillen \cite{Quillen}. Topological spaces are replaced with small categories, continuous maps with functors and homotopies with natural transformations. In particular, a finite CW-complex can, up to homotopy, be represented algebraically as a finite category, namely, one having finitely many objects and finitely many morphisms between any two objects. When this process is applied to the CW-complex associated in \cite{IOTW4} to a Dynkin quiver, we obtain a category whose morphisms are given by signed exceptional sequences.

Let $\Lambda$ be a finite dimensional hereditary algebra over any field. Then the \emph{cluster morphism category} $\cG(\Lambda)$ of $\Lambda$ is defined to be the category whose objects are the finitely generated wide subcategories of $mod\text-\Lambda$ [InTh], (Section \ref{ss 1.1: wide subcategories} below). Such a subcategory $\cA\subseteq mod\text-\Lambda$ is hereditary and abelian and has a cluster category which we denote by $\cC_\cA$ \cite{BMRRT}. For any indecomposable object $T$ in the cluster category, let $|T|\in\cA$ be the \emph{underlying module} of $T$ given by $|M|=M$ if $T=M$ is a module and $|X[1]|=X$ for shifted objects $X[1]$ where $X$ is an object in $\cA$ which is projective in $\cA$ but not necessarily projective in $mod\text-\Lambda$. We extend additively to all objects of $\cC_\cA$ and to all objects of $\cA\cup \cA[1]\subset \cD^b(\cA)$. Then $|T|\in\cA$ is well defined up to isomorphism for any $T\in\cC_\cA$. The \emph{rank} of $\cA$, denoted $rk\,\cA$, is defined to be the number of nonisomorphic simple objects of $\cA$.

Recall that $T\in\cC_\cA$ is \emph{rigid} if $\Ext_{\cC_\cA}^1(T,T)=0$. We say that two rigid objects $T,T'$ are \emph{equivalent} if $add\,T=add\,T'$, i.e., $T,T'$ have isomorphic summands. Given $\cA,\cB\in \cG(\Lambda)$ a morphism $[T]:\cA\to \cB$ is defined to be the equivalence class of a rigid object $T\in \cC_\cA$ with the property that $|T|^\perp\cap \cA=\cB$ where $M^\perp$ is the right hom-ext-perpendicular category of $M$ in $mod\text-\Lambda$. We note that, if $\Lambda$ has finite representation type, then the cluster morphism category of $\Lambda$ has finitely many objects and finitely many morphism.

The last part of the definition of the cluster morphism category is the definition of composition of morphisms. This is a difficult technical point which requires a change in terminology from equivalence classes of rigid objects of cluster categories to partial cluster tilting sets (Definition \ref{def: cluster tilting set}). The composition of $[T]:\cA\to\cB$ and $[S]:\cB\to \cB'$ is given by $[\sigma_TS\coprod T]:\cA\to\cB'$ where $\sigma_TS\in\cC_\cA$ is the unique (up to isomorphism) rigid object in $\cC_\cA$ having the following two properties.
\begin{enumerate}
\item $\sigma_TS\coprod T$ is a rigid object in $\cC_\cA$.
\item $\undim \sigma_TS-\undim S$ is a linear combination of $\undim T_i$ where $T=\coprod_iT_i$.
\end{enumerate}
We were not able to construct a functor $\sigma_T:\cC_\cB\to\cC_\cA$ realizing this mapping defined on rigid objects of $\cC_\cB$. What we construct in this paper is a mapping
\[
	\sigma_T:\cC(\cB)\to\cC(\cA)
\]
from the set $\cC(\cB)$ of isomorphism classes of rigid indecomposable objects of $\cC_\cB$ to $\cC(\cA)$. With this in mind, we shift our notation and use \emph{partial cluster tilting sets} $T=\{T_1,\cdots,T_k\}\subset \cC(\cA)$  (Definition \ref{def: cluster tilting set}) which are sets of components of rigid objects of $\cC_\cA$. We say that $T$ is a \emph{cluster tilting set} if $k$ is maximal ($k=rk\,\cA$). With this notation, morphisms are written $[T_1,\cdots,T_k]:\cA\to\cB$ and composition of morphisms is written 
\[
	[S_1,S_2,\cdots,S_\ell]\circ[T_1,\cdots,T_k]=[\sigma_TS_1,\sigma_TS_2,\cdots,\sigma_TS_\ell,T_1,\cdots,T_k]:\cA\to\cB'.
\]

The \emph{rank} of a morphism $[T]:\cA\to\cB$ is defined to be the number of elements of $T$ as a subset of $\cC(\cA)$ (the number of nonisomorphic components of $T$ as object of $\cC_\cA$). Then $rk\,[T]=rk\,\cA-rk\,\cB$. So, $[T]$ has maximal rank if and only if $T$ is a cluster tilting set in $\cC(\cA)$. A \emph{signed exceptional sequence} can be defined to be a sequence of objects $(X_1,\cdots,X_k)$ in $mod\text-\Lambda\cup mod\text-\Lambda[1]\subset \cD^b(mod\text-\Lambda)$ with the property that
\[
	[X_1]\circ[X_2]\circ\cdots\circ [X_k]:mod\text-\Lambda\to \cB
\]
is a sequence of composable morphisms in $\cG(\Lambda)$ of rank 1 from $mod\text-\Lambda$ to $\cB=\bigcap |X_i|^\perp$. This is equivalent to the following. 
\begin{defn}\label{def: signed exceptional sequence}[Subsection \ref{ss 2.1: Def of signed exceptional sequence}]
A \emph{signed exceptional sequence} in a wide subcategory $\cA\subseteq mod\text-\Lambda$ is a sequence of objects $X_1,\cdots,X_k$ in $\cA\cup \cA[1]$ satisfying the following.
\begin{enumerate}
\item $(|X_1|,|X_2|,\cdots,|X_k|)$ is an exceptional sequence in $\cA$
\item $X_i\in\cC(\cA_i)$ where $|X_{i+1}\coprod \cdots\coprod X_k|^\perp=\cA_i$, i.e., either $X_i\in\cA_i$ or $X_i=P[1]$ where $P$ is an indecomposable projective object of $\cA_i$.
\end{enumerate}
The signed exceptional sequence is called \emph{complete} if $k$ is maximal, i.e., $k=rk\,\cA$.
\end{defn}

Consider totally ordered {cluster tilting sets} $(T_i)=(T_1,\cdots,T_k)$ in $\cC(\cA)$. We refer to these as \emph{ordered cluster tilting sets}.

\begin{thm}[Theorem \ref{thm 2.3: bijection one}]\label{thm 2.3}
There is a bijection between the set of ordered cluster tilting sets and the set of (complete) signed exceptional sequences.
\end{thm}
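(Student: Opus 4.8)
The plan is to build explicit mutually inverse maps $\Phi$ (complete signed exceptional sequences in $\cA$ $\to$ ordered cluster tilting sets in $\cC(\cA)$) and $\Psi$ (the reverse), and to check $\Phi\circ\Psi=\mathrm{id}$ and $\Psi\circ\Phi=\mathrm{id}$ by induction on $rk\,\cA$; the engine is the composition law of $\cG(\Lambda)$ together with the properties of the maps $\sigma_T$. The map $\Phi$ is essentially forced. Given a complete signed exceptional sequence $(X_1,\dots,X_k)$ in $\cA$, condition (2) of Definition \ref{def: signed exceptional sequence} says $X_i\in\cC(\cA_i)$ with $\cA_i=|X_{i+1}\coprod\cdots\coprod X_k|^\perp\cap\cA$, so each $[X_i]\colon\cA_i\to\cA_{i-1}$ is a morphism of rank $1$ and these are composable. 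Write their composite, computed by iterating the composition formula, as
\[
 [X_1]\circ\cdots\circ[X_k]=[T_1,\dots,T_k]\colon\cA\to\cA_0 .
\]
Since rank is additive under composition, $rk\,\cA_0=rk\,\cA-k=0$, so $\cA_0=0$ and $\{T_1,\dots,T_k\}$ is a cluster tilting set in $\cC(\cA)$; associativity of composition in $\cG(\Lambda)$ shows that the ordered list $(T_1,\dots,T_k)$ does not depend on the bracketing. Set $\Phi(X_1,\dots,X_k):=(T_1,\dots,T_k)$.

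For $\Psi$ I would proceed recursively. Given an ordered cluster tilting set $(T_1,\dots,T_k)$ in $\cC(\cA)$ (the case $k=0$ being trivial), put $X_k:=T_k\in\cC(\cA)$ and $\cA':=|T_k|^\perp\cap\cA$, an object of $\cG(\Lambda)$ of rank $k-1$. Because $\{T_1,\dots,T_k\}$ is rigid with pairwise non-isomorphic components, each $T_i$ with $i<k$ lies in the image of $\sigma_{T_k}\colon\cC(\cA')\to\cC(\cA)$, on which $\sigma_{T_k}$ is injective, so one may set $T_i':=\sigma_{T_k}^{-1}(T_i)$. Granting that $(T_1',\dots,T_{k-1}')$ is again an ordered cluster tilting set, now in $\cC(\cA')$, apply the inductive hypothesis to obtain a complete signed exceptional sequence $(X_1,\dots,X_{k-1})$ in $\cA'$ and set $\Psi(T_1,\dots,T_k):=(X_1,\dots,X_{k-1},X_k)$. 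That this is a complete signed exceptional sequence in $\cA$ rests on two standard facts about exceptional sequences and perpendicular categories: appending $|X_k|$ to the exceptional sequence $(|X_1|,\dots,|X_{k-1}|)$ of $\cA'=|X_k|^\perp\cap\cA$ yields an exceptional sequence in $\cA$ (condition (1)), and $|X_{i+1}\coprod\cdots\coprod X_{k-1}|^\perp\cap\cA'=|X_{i+1}\coprod\cdots\coprod X_k|^\perp\cap\cA$ for $i\le k-1$ (condition (2)), both following by induction.

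To see that $\Phi$ and $\Psi$ are mutually inverse I would again induct on $rk\,\cA$. The key observation is that both constructions strip off the same last letter $X_k=T_k$: on one side the composition formula gives $[T_1,\dots,T_k]=[T_1',\dots,T_{k-1}']\circ[T_k]$, and on the other $[X_1]\circ\cdots\circ[X_k]=\big([X_1]\circ\cdots\circ[X_{k-1}]\big)\circ[X_k]$ by associativity, with $[X_1]\circ\cdots\circ[X_{k-1}]$ the composite computed inside $\cA'$. Thus, modulo the last letter, $\Phi$ and $\Psi$ coincide with the inductive bijection for $\cA'$ transported along $\sigma_{T_k}$, and the compatibility of $\sigma_{T_k}$ with the composition law (again associativity in $\cG(\Lambda)$, together with $\sigma_{T_k}$ being a bijection from $\cC(\cA')$ onto the rigid indecomposables of $\cC(\cA)$ compatible with $T_k$ and not in $\add T_k$) closes the induction.

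I expect the \emph{main obstacle} to be precisely the bracketed claim in the construction of $\Psi$: that $\sigma_{T_k}^{-1}$ carries ordered cluster tilting sets of $\cC(\cA)$ containing $T_k$ to ordered cluster tilting sets of $\cC(\cA')$, equivalently that $\sigma_{T_k}$ restricts to a bijection between cluster tilting sets of $\cC(\cA')$ and cluster tilting sets of $\cC(\cA)$ containing $T_k$, respecting the compatibility (rigidity) relations among components. This is really a statement about how the cluster category of the perpendicular category $\cA'$ embeds in $\cC_\cA$, and it is where the explicit construction of the maps $\sigma_T$ from the earlier sections must be used in full; once it is available, everything else is bookkeeping with the composition formula and with classical facts about exceptional sequences.
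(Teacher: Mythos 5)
Your proof is correct and follows essentially the same route as the paper: the forward map is the composition-law formula of Proposition \ref{prop 2.1: formula for cluster morphism corresponding to signed exceptional sequence}, and the inverse is obtained by peeling off one term and invoking the bijectivity of $\sigma_T$ and its compatibility with (partial) cluster tilting sets (Theorem \ref{thm:sigma-T is a bijection}), which is precisely the ``main obstacle'' you identify and which the paper establishes in Section \ref{ss 1.3: proof of properties of sT}. The only cosmetic difference is that you strip the last term $T_k$ and descend into $|T_k|^\perp$, inducting on $rk\,\cA$, whereas the paper strips the first term $T_1$ and stays in $\cA$, inducting on the length $k$; both reductions rest on the same key lemma.
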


For example, in type $A_2$ the cardinality of this set is $2!C_3=2\cdot 5=10$. Another example is the sequence of simple modules $(S_n,\cdots,S_2,S_1)$ in reverse admissible order (so that $S_n$ is injective and $S_1$ is projective). Since each $S_k$ is projective in the right perpendicular category of $S_{k-1},\cdots,S_1$, it can have either sign. So, there are $2^n$ possible signs. It is easy to see that the corresponding ordered cluster tilting sets are distinct as unordered cluster tilting sets. (Proposition \ref{prop: 2 to n distinct clusters}.)

Our sign conventions make the dimension vectors of the objects in certain signed exceptional sequences into the negatives of the $c$-vectors of cluster tilting objects. Speyer and Thomas \cite{ST} gave a characterization of $c$-vectors. We give another description which also determines the cluster tilting object corresponding to the $c$-vectors.

\begin{thm}[Theorem \ref{thm: Which sig exc seqs are c-vectors?}]\label{thm 2.13}
The dimension vectors of objects $X_i$ in a signed exceptional sequence form the set of negative $c$-vectors of some cluster tilting object $T$ if and only if the ordered cluster tilting set $(T_i)=(T_1,\cdots,T_n)$ corresponding to $(X_i)$ under the bijection of Theorem \ref{thm 2.3: bijection one} has the property that $\Hom_\Lambda(|T_i|,|T_j|)=0=\Ext^1_\Lambda(|T_i|,|T_j|)$ for $i<j$. Furthermore, all sets of (negative) $c$-vectors are given in this way and $T=\coprod_iT_i$.
\end{thm}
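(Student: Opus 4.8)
\noindent\emph{Proof plan.} The idea is to match the combinatorics of signed exceptional sequences against the representation-theoretic description of $c$-vectors of Speyer--Thomas \cite{ST}, via the bijection of Theorem~\ref{thm 2.3: bijection one} and the triangular behaviour of dimension vectors under the $\sigma$-operation. First I record a reformulation of the hypothesis: each $|T_i|$ is an exceptional module (since $T_i$ is rigid indecomposable in $\cC_\cA$) and the $|T_i|$ are pairwise nonisomorphic, so directly from the definition of an exceptional sequence the ordering $(T_1,\dots,T_n)$ satisfies $\Hom_\Lambda(|T_i|,|T_j|)=0=\Ext^1_\Lambda(|T_i|,|T_j|)$ for $i<j$ if and only if the \emph{reversed} list $(|T_n|,\dots,|T_1|)$ is an exceptional sequence in $mod\text-\Lambda$. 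Thus the theorem asserts that $\{\undim X_i\}$ is a set of negative $c$-vectors exactly when the modules $|T_1|,\dots,|T_n|$, taken in the order opposite to $(T_i)$, form an exceptional sequence.

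The forward implication is the crux. Assuming $(|T_n|,\dots,|T_1|)$ is an exceptional sequence, one shows that the associated signed exceptional sequence $(X_1,\dots,X_n)$ produces the negative $c$-vectors of $T=\coprod T_i$ one at a time: crossing the morphism $[T_i]$ is a single mutation contributing the negative $c$-vector $\undim X_i$ --- that is, the $c$-vector $-\undim X_i$, which equals $\pm\undim|X_i|$ according as $X_i$ is a shifted object or a module, which is how the paper's sign convention enters --- while the defining property of $\sigma$ (that $\undim\sigma_T(-)-\undim(-)$ is a linear combination of the $\undim T_j$) encodes exactly the triangular correction that mutation applies to the $c$-vectors already produced. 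This can be organized as an induction on $rk\,\Lambda$: peel off $X_n=T_n$, work inside $\cB=|T_n|^\perp$, where the truncated data again has the reversed-exceptional-sequence property; compute the negative $c$-vectors of $\coprod_{j<n}T_j'$ there; transport them to $mod\text-\Lambda$ along $\sigma_{T_n}$; and adjoin $\undim X_n$ as the new negative $c$-vector.

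For the converse and the ``furthermore'' clause I combine three inputs. (i) The indecomposable summands of any cluster tilting object $T$ of $\cC_\Lambda$ admit a Hom-Ext orthogonal ordering: rigidity of $T$ forces $\Ext^1_\Lambda$ to vanish in both directions among the module summands (so they form a partial tilting module, hence are Hom-directed) and forces $\Hom_\Lambda$ to vanish from the shifted summands into the module summands, so listing the shifted summands first in a quiver-compatible order and the module summands afterwards in tilting order gives such an ordering; equivalently the $|T_i|$ form an exceptional sequence in the reversed order. With the forward implication this shows every set of negative $c$-vectors arises as some $\{\undim X_i\}$. (ii) A cluster tilting object is determined by its $c$-matrix \cite{ST}. (iii) A complete signed exceptional sequence is recovered from the bare set $\{X_1,\dots,X_n\}$: each $\undim X_i$ is $\pm$ a real Schur root and so determines $X_i$, and two exceptional orderings of the same set of exceptional modules differ only by permutations within blocks of mutually Hom-Ext-orthogonal objects, which do not change the associated cluster tilting object. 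Combining these: if $\{\undim X_i\}$ is the negative $c$-vector set of some $T$, then by (i) there is a Hom-Ext ordered cluster tilting set $(T_i')$ with signed exceptional sequence $(X_i')$ satisfying $\{\undim X_i'\}=\{\undim X_i\}$; by (iii) then $\{X_i\}=\{X_i'\}$, so $\coprod T_i=\coprod T_i'$, which by (ii) is $T$; and the permutation remark in (iii) shows $(T_i)$ is itself Hom-Ext orthogonal, with $T=\coprod_iT_i$.

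The part I expect to be the main obstacle is the forward implication: showing that $\sigma$ realizes the $c$-vector mutation rule, i.e. that the signed exceptional sequence attached to a Hom-Ext ordered cluster tilting set computes the negative $c$-vectors of $T$ wall by wall, with $\sigma_{T_n}$ supplying exactly the triangular correction. Carrying this out requires reconciling the sign normalizations in play (the sign of $\undim$ on $\cA\cup\cA[1]$, the sign of a $c$-vector, the orientation of mutation), using the compatibility of $c$-vectors with passage to a right-perpendicular wide subcategory, and checking that the reversed-exceptional-sequence property is inherited under the truncation used in the induction; the remainder reduces to Theorem~\ref{thm 2.3: bijection one}, standard facts about exceptional sequences and partial tilting modules, and the cited results of \cite{ST}.
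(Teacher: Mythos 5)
Your converse argument tracks the paper's own proof closely: Schofield's observation (Theorem \ref{thm: Schofield's observation}) supplies a Hom-Ext ordering of the summands of $T$, the forward implication applied to that ordering produces a signed exceptional sequence whose dimension vectors are the negative $c$-vectors, and Proposition \ref{prop: permutation of sig exc seqs} transfers everything back to the given ordering; your items (i)--(iii) are essentially these three ingredients. The problem is the forward implication, which you correctly identify as the crux but do not actually prove. Your plan --- peel off $T_n$, pass to $\cB=|T_n|^\perp$, compute negative $c$-vectors there, and transport them back along $\sigma_{T_n}$ on the grounds that ``crossing $[T_i]$ is a single mutation'' --- rests on two unestablished claims: that $c$-vectors are compatible with restriction to a right-perpendicular wide subcategory, and that $\sigma_{T_n}$ implements the $c$-vector transport rule. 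Neither is proved in the paper or in your proposal, and the second is not literally true as stated, since the factorization $[X_1]\circ\cdots\circ[X_n]$ of the morphism $[T]$ is a factorization in the cluster morphism category, not a mutation sequence of cluster tilting objects, so there is no off-the-shelf ``wall-crossing equals mutation'' dictionary to invoke. This is a genuine gap.

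The paper sidesteps all of this by using the bilinear characterization of $c$-vectors (Theorem \ref{thm: characterization of c-vectors}): the negative $c$-vectors $\beta_j$ of $(T_1,\cdots,T_n)$ are the unique solutions of $\brk{\undim T_i,\beta_j}=f_i\delta_{ij}$. Under hypothesis \eqref{eq: good order for cluster} the Gram matrix $\brk{\undim T_i,\undim T_j}$ is lower triangular with diagonal $f_i$, so each $\beta_k$ is $\undim T_k$ plus a combination of $\undim T_i$ with $i>k$; on the other hand Theorem \ref{thm: formula for theta inverse} (the integer right twist) shows $\undim X_k-\undim T_k$ also lies in the span of the $\undim T_i$ with $i>k$ and that $\brk{\undim T_i,\undim X_k}=0$ for $i>k$. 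Comparing the two triangular systems forces $\beta_k=\undim X_k$. If you want to rescue your inductive route you would first have to prove the perpendicular-reduction compatibility of $c$-vectors from scratch; the linear-algebra route is shorter and is already fully supported by results proved earlier in the paper.
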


The equation $T=\coprod_iT_i$ means we have two different descriptions of the same bijection:
\[
	\{\text{signed exceptional sequences $(X_i)$ s.t. $-\undim X_i$ are $c$-vectors}\}
\]
\[
	\cong\{
	\text{ordered cluster tilting sets $(T_i)$ s.t. $\Hom_\Lambda(|T_i|,|T_j|)=0=\Ext^1_\Lambda(|T_i|,|T_j|)$ for $i<j$}
	\}
\]
One bijection is given by sending $(X_i)$ to the ordered set of $c$-vectors $(-\undim X_i)$ and then to the {ordered cluster tilting set} which corresponds to these in the usual way by, e.g., Equation \eqref{eq characterizing c-vectors} in section \ref{ss 2.4: c-vectors} below. The other bijection is given by restriction of the bijection given in Theorem \ref{thm 2.3: bijection one}.

Finally, we return to the motivation of this paper which is to show the following.

\begin{thm}[Theorem \ref{thm 3.1: 2nd main theorem}]\label{thm 3.1: Intro}
The classifying space $B\cG(\Lambda)$ of the cluster morphism category of a hereditary algebra of finite representation type is a $K(\pi,1)$ where $\pi$ is the ``picture group'' introduced in \cite{IOTW4}. In fact $B\cG(\Lambda)$ is homeomorphic to the topological space $X(\Lambda)$ constructed in \cite{IOTW4}.
\end{thm}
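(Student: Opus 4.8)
The plan is to prove the sharper statement — an explicit homeomorphism $B\cG(\Lambda)\cong X(\Lambda)$ — and then to deduce the $K(\pi,1)$ assertion from the properties of $X(\Lambda)$ proved in \cite{IOTW4}: that $X(\Lambda)$ is aspherical and that its fundamental group is the picture group $\pi$, its cellular structure being arranged so that the $2$-skeleton is the presentation complex of $\pi$. Thus the work is a cell-by-cell comparison, and the dictionary is furnished by Theorem \ref{thm 2.3} together with the composition law $[S]\circ[T]=[\sigma_TS\coprod T]$. A nondegenerate $m$-simplex of the nerve $N\cG(\Lambda)$ is a chain of composable non-identity morphisms
\[
\cA_0\xrightarrow{\,[T^1]\,}\cA_1\xrightarrow{\,[T^2]\,}\cdots\xrightarrow{\,[T^m]\,}\cA_m ,
\]
and, unwinding the composition law, such a chain is precisely a signed exceptional sequence in $\cA_0$ in the sense of Definition \ref{def: signed exceptional sequence} (complete if and only if $\cA_m=0$), equipped with an ordered partition of its terms into $m$ nonempty consecutive blocks, one block for each $[T^j]$, the block of $[T^j]$ being the ordered cluster tilting set corresponding to it under Theorem \ref{thm 2.3} and transported into $\cC(\cA_0)$ by the iterated operators $\sigma_{T^1}\sigma_{T^2}\cdots\sigma_{T^{j-1}}$.

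Granting this, I would argue as follows. First, recall the cell structure of $X(\Lambda)$ from \cite{IOTW4} and observe that its cells carry exactly this data — a wide subcategory of $mod\text-\Lambda$ together with a block-partitioned signed exceptional sequence — so that there is a canonical dimension-preserving bijection between the cells of $B\cG(\Lambda)$ and those of $X(\Lambda)$. Second, verify that this bijection respects the incidence relations: deleting an endpoint $\cA_0$ or $\cA_m$ of a chain truncates the signed exceptional sequence, while deleting an interior object $\cA_i$ merges the blocks of $[T^i]$ and $[T^{i+1}]$ by means of $\sigma_{T^i}$, and one checks that these coincide with the attaching data of the corresponding cells of $X(\Lambda)$; this is the step with real content. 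Third, conclude that $B\cG(\Lambda)$ and $X(\Lambda)$ are CW-complexes with isomorphic face posets and compatible characteristic maps, hence homeomorphic, and then import the asphericity and the identification $\pi_1=\pi$ from \cite{IOTW4}. An alternative that keeps the argument internal to this paper is to prove asphericity of $B\cG(\Lambda)$ directly: the factorization theory shows that $B\cG(\Lambda)$ carries a cubical structure with one $k$-cube for each morphism of rank $k$, its vertices the wide subcategories $|T_S|^\perp\cap\cA$ for $S\subseteq\{T_1,\dots,T_k\}$; one then verifies Gromov's link (flag) condition at each vertex $\cA$ — a compatibility statement among the rank-one morphisms out of $\cA$, equivalently among signed exceptional sequences in $\cA$ with a common first term, which follows from the rank-two analysis underlying Theorem \ref{thm 2.13} — so that $B\cG(\Lambda)$ is nonpositively curved, hence aspherical, and reads off its $\pi_1$ from the $2$-skeleton as the picture-group presentation.

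The main obstacle, in either route, is the compatibility of the combinatorics of the operators $\sigma_T\colon\cC(\cB)\to\cC(\cA)$ with the geometry. In the first route this is the second step above: one must show that iterated compositions $\sigma_{T^1}\sigma_{T^2}\cdots$ are well defined and associative at the level of cluster tilting sets — not merely up to the equivalence relation on rigid objects of cluster categories — so that the block-merging operation on signed exceptional sequences is unambiguous and matches the boundary maps of the cells of $X(\Lambda)$; this is exactly where the passage from rigid objects to partial cluster tilting sets made earlier in the paper is essential, and where a careful induction on $m$ is needed. In the second route the obstacle is instead the link/flag condition: one must prove that a family of rank-one morphisms out of $\cA$ lies in a common cube as soon as it does so two and three at a time, which is combinatorially delicate but should follow from the structure of exceptional sequences developed in the earlier sections.
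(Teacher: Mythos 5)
Your homeomorphism $B\cG(\Lambda)\cong X(\Lambda)$ is essentially the paper's argument (Section \ref{ss 4.2: proof that X(S)=BG(S)}): the paper decomposes $B\cG(\cS)$ into the under-categories $\cA\backslash\cG(\cS)\cong\simp_+K(\cA)$, whose realizations are the disks $E(\cA)$, and checks that the identifications agree with the attaching maps of $X(\cS)$; your description of nondegenerate nerve simplices as block-partitioned signed exceptional sequences is the same data, and your "second step with real content" is exactly the associativity $\sigma_T\sigma_S=\sigma_{T,\sigma_TS}$ established in Section \ref{ss 1.2: composition of cluster morphisms} (equation \eqref{eq: sigma TS=sigma T sigma S}) and Lemma \ref{lem: Sigma is a functor}. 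That half of the proposal is sound.

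The gap is in how you get the $K(\pi,1)$ statement. Your primary route proposes to "import the asphericity \dots of $X(\Lambda)$ from \cite{IOTW4}," but that asphericity is not available there: the introduction states explicitly that \cite{IOTW4} logically depends on the present paper and that this theorem "gives a proof of the fact that the picture space $X(Q)$ is a $K(\pi,1)$." So the logical dependency runs the other way, and the homeomorphism alone cannot deliver asphericity. The paper instead proves $B\cG(\cS)$ is a $K(\pi,1)$ by a self-contained induction on a finite convex set of roots $\cS$: it splits $\cG(\cS)=\cG_+\cup\cG_-$, shows $B\cG_-$ is a cylinder on $B\cH(\cS,\omega)\cong B\cG(\cS_\omega)$ (Lemma \ref{lem: BG- is a cylinder}), shows $B\cG_+\simeq B\cG(\cS_0)$ via Quillen's Theorem A (Lemma \ref{lem: key lemma}), verifies that the two end-inclusions are $\pi_1$-injective (Propositions \ref{prop: G-omega to GS is split mono} and \ref{prop: psi has left inverse}), and invokes the graph-of-groups theorem for HNN extensions (Theorem \ref{thm: HNN graph of groups}). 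Your alternative route via a cubical structure and Gromov's link condition is a genuinely different strategy, but the flag condition is not a consequence of the rank-two analysis alone: the link of a vertex $\cA$ contains corners of cubes for which $\cA$ is an intermediate or terminal vertex of the factorization, not just the source, and verifying flagness there is substantial. The authors themselves defer exactly this (the nonpositively-curved cube complex structure) to a future paper, so as written that route leaves the key step unproved.
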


This gives a proof of the fact that the ``picture space'' $X(Q)$ is a $K(\pi,1)$ for any Dynkin quiver $Q$. A proof of the following slightly stronger theorem, using the results of this paper and ideas from \cite{IIT} will appear in a future paper: For $\Lambda$ of finite type, $B\cG(\Lambda)$ is a ``non-positively curved cube complex'' and therefore the picture group is a ``CAT(0)-group''. Contrarily, for $\Lambda$ of tame infinite type, $B\cG(\Lambda)$ is not a $K(\pi,1)$.

The contents of this paper are as follows. In Section \ref{ss 1.1: wide subcategories} we give the basic definitions including the key definitions (\ref{def 1.5: A(alpha)}, \ref{def 1.7: cluster morphism}) of $\cA(\alpha_\ast)$ and cluster morphisms $[T]:\cA\to \cB$ as outlined above. In Section \ref{ss 1.2: composition of cluster morphisms} we give the definition of composition of cluster morphisms assuming Proposition \ref{prop 1.8: Properties of sigma_T} which is proved in Section \ref{ss 1.3: proof of properties of sT} using \cite{IOTW1} and \cite{IOTW3}. In Section \ref{sec 2: signed exceptional sequences} we define signed exceptional sequences and show that they have the properties outlined above. 

In Section \ref{sec 3: classifying space of G(S)} we prove the second main Theorem \ref{thm 3.1: 2nd main theorem} that the classifying space of the cluster morphism category is a $K(\pi,1)$. First, we state the extension of the theorem (Theorem \ref{thm 3.5: G(S) is K(pi,1)}) to any convex set of roots (Definition \ref{def: convex set of roots}). In Section \ref{ss 3.2: outline of G(S)} we give an outline of the proof of Theorem \ref{thm 3.5: G(S) is K(pi,1)} using HNN extensions. The details occupy the rest of Section \ref{sec 3: classifying space of G(S)}.

In Section \ref{ss 4.1: the CW-complex X(S)} we recall the picture space $X(\Lambda)$ of a hereditary algebra $\Lambda$ of finite representation type and extend the definition to any finite convex set of roots $\cS$. This space is a finite CW-complex with one cell $e(\cA)$ for every wide subcategory $\cA$ in $mod\text-\Lambda$. Section \ref{ss 4.2: proof that X(S)=BG(S)} proves that $X(\cS)$ is homeomorphic to $B\cG(\cS)$. Section \ref{ss 4.3: example} gives a simple example of the correspondence between parts of $X(\cS)$ and parts of $B\cG(\cS)$. Finally, in \ref{ss 4.4: semi-invariants}, we construct a codimension one subcomplex $D(\cS)\subseteq B\cG(\cS)$ and show in Proposition \ref{prop: DA(b)=e(A) cap D(b)} that $D(\cS)$ is the category theoretic version of the picture complex $L(\cA)\subset S^{n-1}$.

\section{Definition of cluster morphism category}

We will construct a category abstractly by defining objects to be finitely generated wide categories. We call it the ``cluster morphism category'' since its morphisms are (isomorphism classes of) partial cluster tilting objects.

\subsection{Wide subcategories}\label{ss 1.1: wide subcategories}

Suppose that $\Lambda$ is a hereditary finite dimensional algebra over a field $K$ which we assume to be infinite. Let $mod$-$\Lambda$ be the category of finite dimensional right $\Lambda$-modules. Then a \emph{wide subcategory} of $mod$-$\Lambda$ is defined to be an exactly embedded abelian subcategory $\cA$ of $mod$-$\Lambda$ which is closed under extensions. In particular, taking extensions with $0$, any module which is isomorphic to an object of $\cA$ is already in $\cA$. A wide category is called \emph{finitely generated} if there is one object $P$, which we can take to be projective, so that every other object $X$ of $\cA$ is a quotient of $P^m$ for some $m$ depending on $X$. The wide category $\cA$ is then isomorphic to the category of finitely generated right modules over the endomorphism ring of $P$. This is an hereditary finite dimensional algebra over the ground field. 

\begin{thm}\cite{Ingalls-Thomas} There is a 1-1 correspondence between finitely generated wide subcategories in $mod$-$\Lambda$ and isomorphism classes of cluster tilting objects in the cluster category of $\Lambda$.
\end{thm}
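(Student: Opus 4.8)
The plan is to establish the bijection by factoring it through two intermediate sets that are easier to compare directly: the \emph{functorially finite torsion classes} of $mod\text-\Lambda$, and the isomorphism classes of \emph{support tilting} $\Lambda$-modules. Each of the three resulting correspondences is, on its own, essentially classical for a hereditary algebra (this is where the hypothesis that $\Lambda$ is hereditary is used repeatedly and crucially), so the real work is in setting up the maps carefully and checking compatibility of the finiteness conditions. I will follow the Ingalls--Thomas approach.

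\emph{Step 1: finitely generated wide subcategories $\leftrightarrow$ functorially finite torsion classes.} For a wide subcategory $\cW$ let $\beta(\cW)$ be the smallest torsion class of $mod\text-\Lambda$ containing $\cW$, i.e.\ the closure of $\cW$ under quotients and extensions; for a torsion class $\cT$ let $\alpha(\cT)=\{X\in\cT:\ker g\in\cT\text{ for every }g\colon Y\to X\text{ with }Y\in\cT\}$. One checks that $\alpha(\cT)$ is an exactly embedded abelian, extension-closed subcategory — the only nonformal point, and the one place heredity is essential, is closure under cokernels — and that $\alpha\beta=\mathrm{id}$ on wide subcategories. The reverse identity $\beta\alpha=\mathrm{id}$ fails for general torsion classes but holds once $\cT$ is functorially finite, using that such a $\cT$ is $\operatorname{Gen}$ of its Ext-projective objects and that maps into objects of $\cT$ admit right minimal approximations from $\cT$. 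Finally one matches the finiteness conditions: for hereditary $\Lambda$ a wide subcategory has a projective generator if and only if $\beta(\cW)$ is functorially finite, and then $\alpha(\cT)$ is again finitely generated.

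\emph{Step 2: functorially finite torsion classes $\leftrightarrow$ support tilting modules.} Send $\cT$ to the direct sum $T_\cT$ of one copy of each indecomposable Ext-projective object of $\cT$, and send a support tilting module $T$ to $\operatorname{Gen}(T)$. Over a hereditary algebra $\operatorname{Gen}(T)$ is a functorially finite torsion class whose Ext-projectives are exactly the summands of $T$, while conversely the indecomposable Ext-projectives of a functorially finite torsion class $\cT$ assemble into a module which is rigid and has as many summands as there are vertices in its support — precisely the defining property of a support tilting module, the omitted vertices being the ones whose simple projective lies in the torsion-free class. These two assignments are mutually inverse. \emph{Step 3: support tilting modules $\leftrightarrow$ cluster tilting objects} is the Buan--Marsh--Reineke--Reiten--Todorov theorem in the form used throughout this paper: every indecomposable rigid object of the cluster category $\cC_\Lambda$ is uniquely either an indecomposable $\Lambda$-module or a shifted indecomposable projective $P_i[1]$, a basic rigid object is therefore $\overline T\oplus\bigoplus_{i\in\Sigma}P_i[1]$, and it is cluster tilting exactly when $\overline T$ is support tilting with support omitting precisely the vertices in $\Sigma$; the key inputs are $\Ext^1_{\cC_\Lambda}(P_i[1],\overline T)=0\iff\Hom_\Lambda(P_i,\overline T)=0\iff i\notin\operatorname{supp}\overline T$ together with a count showing the result is maximal rigid. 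The inverse simply discards the shifted projective summands. Composing Steps 1--3 gives the asserted bijection; as a sanity check, $0$ corresponds to $\bigoplus_i P_i[1]$ and $mod\text-\Lambda$ to $\Lambda$ itself.

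The main obstacle is Step 1, specifically proving that $\alpha(\cT)$ is closed under cokernels taken in $mod\text-\Lambda$ and that $\beta\alpha=\mathrm{id}$ on \emph{functorially finite} torsion classes. Over a non-hereditary algebra $\alpha$ still lands among wide subcategories but this last identity can fail, so one cannot avoid using both heredity (for the cokernel closure and for controlling the short exact sequences produced by maps into $\cT$) and functorial finiteness (to produce the Ext-projective generator of $\cT$ and the minimal approximations). Everything else is bookkeeping with torsion pairs, Bongartz-type completion, and the standard dictionary between $mod\text-\Lambda$ and $\cC_\Lambda$.
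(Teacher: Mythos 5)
The paper does not actually prove this statement---it is quoted from Ingalls--Thomas, and the text only spells out the support-tilting $\leftrightarrow$ cluster-tilting dictionary (your Step 3)---while your proposal correctly reconstructs the cited Ingalls--Thomas argument via the maps $\alpha,\beta$ between functorially finite torsion classes and finitely generated wide subcategories. So the proposal is correct and takes essentially the same route as the source the paper relies on.
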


In this section, we will review the well-known correspondence between cluster tilting objects of the cluster category with support tilting modules.

We recall that the \emph{quiver} of $\Lambda$ consists of one vertex for every (isomorphism class of) simple module $S_i$ for $i=1,\cdots,n$ and one arrow $i\to j$ if $\Ext^1_\Lambda(S_i,S_j)\neq0$. We number these in \emph{admissible order} which means that $\Ext^1_\Lambda(S_i,S_j)= 0$ if $i<j$. Let $P_i,I_i$ be the projective cover and injective envelope of $S_i$ respectively. Let $F_i=\End_\Lambda(S_i)=\End_\Lambda(P_i)=\End_\Lambda(I_i)$. This is a division algebra which acts on the left on all three of these modules. So, we identify $F_i$ with these endomorphism rings making them all equal. The modules $S_i,P_i,I_i$ are {exceptional} where $X$ is called \emph{exceptional} if $\End_\Lambda(X)$ is a division algebra and $\Ext^1_\Lambda(X,X)=0$.

The \emph{support} of $M$ is the set of vertices $i$ for which $\Hom_\Lambda(P_i,M)\neq0$. A \emph{(basic) support tilting module} is a module $M$ so that\begin{enumerate}
\item $M$ is a direct sum of $k$ nonisomorphic exceptional modules $M_i$ where $k$ is the size of the support of $M$.
\item $\Ext^1_\Lambda(M,M)=0$.
\end{enumerate}

For each support tilting module $M=M_1\coprod\cdots\coprod M_k$ there is a unique cluster tilting set (up to isomorphism) which is the unordered set of objects $\{M_1,M_2,\cdots,M_k\}$ union the $n-k$ shifted projective modules $P_j[1]$ for all $j$ not in the support of $M$. 
We will take this to be the definition of a {cluster tilting set}.

\begin{defn}\label{def: cluster tilting set}
Suppose that $\cA$ is a finitely generated wide subcategory of $mod$-$\Lambda$ with $k$ nonisomorphic projective objects $Q_1,\cdots,Q_k$. Since these may not be projective in $mod$-$\Lambda$ we sometimes refer to them as \emph{relative projective objects}. By a \emph{partial cluster tilting set} for $\cA$ we mean a set of objects $T_1,\cdots,T_\ell$ in the bounded derived category of $\cA$ so that
\begin{enumerate}
\item Each $T_i$ is either a shifted projective object $Q_j[1]$ or an exceptional object of $\cA$.
\item For all $i,j$ we have: $\Ext_{\cD^b}^1(T_i,T_j)=0$. Equivalently:
\begin{enumerate}
\item $\Ext_\Lambda^1(T_i,T_j)$ if $T_i,T_j$ are modules.
\item $\Hom_\Lambda(Q,T_j)=0$ if $T_i=Q[1]$ and $T_j$ is a module.
\end{enumerate}
\end{enumerate}
If $\ell=k$ the partial cluster tilting set is called a \emph{cluster tilting set}. We view all shifted projective objects $Q[1]$ as objects of the bounded derived category of $mod$-$\Lambda$. We use the notation $|T|$ to denote the \emph{underlying module} of $T$ which is equal to $T$ if $T$ is a module and $|Q[1]|=Q$.
\end{defn}

We denote a finitely generated wide subcategory by its set of simple objects. Thus $\cA(M_1,\cdots,M_k)$ denotes the wide subcategory of $mod$-$\Lambda$ whose simple objects are $M_1,\cdots,M_k$.

\begin{prop}
A finite set of exceptional modules $\{M_1,\cdots,M_k\}$ forms the set of simple objects in a finitely generated wide subcategory of $mod$-$\Lambda$ if and only if it satisfies the following two conditions.
\begin{enumerate}
\item $\Hom_\Lambda(M_i,M_j)=0$ for all $i\neq j$.
\item The modules $M_i$ can be ordered in such a way that $\Ext^1_\Lambda(M_i,M_j)=0$ for all $1\le i<j\le k$.
\end{enumerate}
\end{prop}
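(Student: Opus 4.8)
The plan is to prove both directions by relating the combinatorial conditions on $\{M_1,\dots,M_k\}$ to the description of a finitely generated wide subcategory as the module category of an hereditary algebra, together with the Ingalls--Thomas perpendicular-category machinery. For the forward direction, suppose $\{M_1,\dots,M_k\}$ is the full set of simple objects of a finitely generated wide subcategory $\cA$. Then $\cA \simeq \mathrm{mod}\text-\Gamma$ for $\Gamma = \End_\Lambda(P)$ the endomorphism ring of a progenerator, and the $M_i$ are the simple $\Gamma$-modules. Condition (1) is immediate: distinct simple objects of any abelian length category admit no nonzero morphisms between them, and since $\cA$ is exactly embedded, $\Hom_\Lambda(M_i,M_j)=\Hom_\cA(M_i,M_j)=0$ for $i\neq j$. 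For condition (2), order the simple $\Gamma$-modules in admissible order for the quiver of $\Gamma$, so that $\Ext^1_\Gamma(M_i,M_j)=0$ for $i<j$; because $\cA$ is closed under extensions in $\mathrm{mod}\text-\Lambda$ and exactly embedded, $\Ext^1_\cA(M_i,M_j) = \Ext^1_\Lambda(M_i,M_j)$, giving the required ordering.

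For the converse, suppose $\{M_1,\dots,M_k\}$ satisfies (1) and (2); reindex so that $\Ext^1_\Lambda(M_i,M_j)=0$ for $i<j$. First I would check that each $M_i$ being exceptional and the vanishing conditions (1),(2) together imply that the sequence $(M_k, M_{k-1},\dots,M_1)$ is an exceptional sequence in $\mathrm{mod}\text-\Lambda$: we need $\Hom_\Lambda(M_i,M_j)=0$ and $\Ext^1_\Lambda(M_i,M_j)=0$ for $i<j$ — the Hom-vanishing is condition (1), and the Ext-vanishing is condition (2) in the chosen order. (Note one must be slightly careful about the convention on which direction the exceptional sequence runs, but the hypotheses are symmetric enough in Hom that reversing is harmless.) Then I would invoke the standard fact (Ringel, Crawley-Boevey; or the perpendicular-category formalism already in play via Ingalls--Thomas) that the smallest full subcategory of $\mathrm{mod}\text-\Lambda$ containing $M_1,\dots,M_k$ and closed under kernels, cokernels, and extensions is a finitely generated wide subcategory whose simple objects are exactly $M_1,\dots,M_k$. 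Concretely, one can build this subcategory by induction on $k$: the perpendicular category $(M_1 \oplus \cdots \oplus M_{k-1})^{\perp\!\perp}$ (or an iterated one-step perpendicular construction) produces an hereditary algebra of rank $k$, and the conditions (1),(2) guarantee the $M_i$ map to the complete list of its simples.

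The main obstacle, and the step deserving the most care, is the converse direction — specifically verifying that the wide subcategory generated by the $M_i$ has \emph{exactly} these $k$ modules as its simples, with no extra simple objects appearing and with the $M_i$ remaining simple inside it. The inductive/perpendicular argument handles this, but one must track that each perpendicular step drops the rank by exactly one (using that each $M_i$ is exceptional) and that the images of the remaining $M_j$ stay nonisomorphic and simple in the smaller category; the hypotheses (1) and (2) are precisely what keeps this induction alive at each stage. I would organize the write-up so that the bulk of the work is this induction, citing \cite{Ingalls-Thomas} for the correspondence between wide subcategories and the output of iterated perpendicular constructions, and dispatching the forward direction quickly as above.
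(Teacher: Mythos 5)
Your proposal is correct and follows essentially the same route as the paper: necessity is dispatched as routine, and for sufficiency one observes that conditions (1) and (2) make $(M_k,\cdots,M_1)$ an exceptional sequence and then realizes the desired wide subcategory via perpendicular categories. The paper does this in one step by completing the exceptional sequence on the left and taking the right perpendicular of the added modules, whereas you propose the (equivalent) double-perpendicular/iterated construction; both leave the final verification that the $M_i$ are precisely the simple objects at about the same level of detail.
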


We say that the $M_i$ are \emph{hom-orthogonal} if they satisfy (1). Note that, given (1), (2) is equivalent to the statement that $(M_k,\cdots,M_1)$ is an exceptional sequence.

\begin{proof}
Necessity is clear. Conversely, suppose these condition hold. Then the exceptional sequence $(M_k,\cdots,M_1)$ can be completed by adding $\Lambda$-modules $M_n,\cdots,M_{k+1}$ on the left. Then $M_1,\cdots,M_k$ are the simple objects of the wide subcategory $(M_n\coprod \cdots\coprod M_{k+1})^\perp$.
\end{proof}

The \emph{dimension vector} $\underline\dim M\in\NN^n$ of a module $M$ is defined to be the integer vector whose $i$th coordinate is $\dim_{F_i}\Hom_\Lambda(P_i,M)$. The dimension vector of any shifted object $M[1]$ is defined to be $\underline\dim (M[1])=-\underline\dim M$. The \emph{Euler-Ringel form} $\brk{\cdot,\cdot}$ is the bilinear form on $\ZZ^n$ with the property that
\[
	\brk{\underline\dim M,\underline\dim N}=\dim_K\Hom_\Lambda(M,N)-\dim_K\Ext_\Lambda^1(M,N)
\]
If $M,N$ lie in a finitely generated wide subcategory $\cA$ then this form takes the same value if evaluated in $\cA$ or in $mod$-$\Lambda$ because $\cA\into mod\text-\Lambda$ is an exact full embedding (so, $\Hom_\cA(M,N)=\Hom_\Lambda(M,N)$ for all $M,N\in\cA$) and $\cA$ is extension closed in $mod$-$\Lambda$ (so, $\Ext^1_\cA(M,N)=\Ext^1_\Lambda(M,N)$ for all $M,N\in\cA$).

We will also use the same bilinear form in the derived category using the following formula which is easily verified.

\begin{prop}
Suppose that $M,N$ lie in $\cD^b(\cA)$. Then
\[
	\brk{\underline\dim M,\underline\dim N}=\sum_{j\in\ZZ}(-1)^j\dim_K\Ext^j_{\cD^b(\cA)}(M,N)=\sum_{j\in\ZZ}(-1)^j\dim_K\Ext^j_{\cD^b(\Lambda)}(M,N)
\]
\end{prop}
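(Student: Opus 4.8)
The plan is to reduce to the case where $M$ and $N$ are shifts of single objects of $\cA$, and then simply unwind the definitions. Both quantities in the asserted identity are bi-additive for finite direct sums: on the left one uses that $\brk{-,-}$ is bilinear and that $\undim$ extends additively to $\cD^b(\cA)$ (so that it is determined by $\undim(X[1])=-\undim X$), while on the right one uses that $\Ext^j_{\cD^b}(-,-)$ sends finite direct sums in either variable to direct sums, hence $K$-dimensions, and therefore the alternating sums, add. Now invoke the structural fact that, since $\cA$ is hereditary (global dimension $\le 1$), every object $M\in\cD^b(\cA)$ is isomorphic to the finite direct sum $\bigoplus_k H^k(M)[-k]$ of shifts of its cohomology objects, all of which lie in $\cA$; likewise for $N$. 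By bi-additivity it therefore suffices to prove the identity when $M=M_0[m]$ and $N=N_0[n]$ with $M_0,N_0\in\cA$ and $m,n\in\ZZ$.

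For such $M,N$ the verification is immediate. On the left, $\undim(M_0[m])=(-1)^m\undim M_0$ and $\undim(N_0[n])=(-1)^n\undim N_0$, so
\[
	\brk{\undim M,\undim N}=(-1)^{m+n}\brk{\undim M_0,\undim N_0}.
\]
On the right, using the standard identification $\Hom_{\cD^b(\cA)}(M_0,N_0[i])=\Ext^i_\cA(M_0,N_0)$ we get
$\Ext^j_{\cD^b(\cA)}(M_0[m],N_0[n])\cong\Ext^{\,j+n-m}_{\cA}(M_0,N_0)$, which vanishes unless $j+n-m\in\{0,1\}$ because $\cA$ is hereditary. Hence
\[
	\sum_{j}(-1)^j\dim_K\Ext^j_{\cD^b(\cA)}(M,N)=(-1)^{m-n}\big(\dim_K\Hom_\cA(M_0,N_0)-\dim_K\Ext^1_\cA(M_0,N_0)\big),
\]
and the right-hand side is $(-1)^{m-n}\brk{\undim M_0,\undim N_0}$ by definition of the Euler--Ringel form. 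Since $(-1)^{m-n}=(-1)^{m+n}$, the two expressions coincide.

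For the second equality (computing in $\cD^b(\cA)$ versus in $\cD^b(\Lambda)$), I would run the same reduction: the cohomology of an object of $\cD^b(\cA)$ is the same module whether computed in $\cA$ or in $mod\text-\Lambda$, so we again reduce to $M_0,N_0\in\cA$, and there $\Ext^j_{\cA}(M_0,N_0)=\Ext^j_\Lambda(M_0,N_0)$ for all $j$: for $j=0,1$ this is exactly the statement recalled before the Proposition that the embedding $\cA\into mod\text-\Lambda$ is exact, full, and extension-closed, and for $j\ge 2$ both sides vanish because $\cA$ and $mod\text-\Lambda$ are hereditary. Taking alternating sums yields the claim.

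I do not expect a genuine obstacle here; the one point deserving explicit mention rather than being hidden in ``easily verified'' is the formality input that every object of $\cD^b(\cA)$ splits as the direct sum of its shifted cohomology objects. This is the usual consequence of $\gl.\dim\cA\le 1$ (all potential splitting obstructions live in $\Ext^{\ge 2}$, which vanishes), and once it is in hand the rest is pure bookkeeping with the sign conventions $\undim(X[1])=-\undim X$ and $\Ext^j_{\cD^b}(X[m],Y[n])=\Ext^{j+n-m}_{\cD^b}(X,Y)$.
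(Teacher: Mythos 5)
Your proof is correct and complete; the paper itself gives no argument, introducing the proposition only with the remark that the formula "is easily verified." Your reduction via the splitting $M\cong\bigoplus_k H^k(M)[-k]$ (valid since $\cA$ is hereditary), followed by the sign bookkeeping for shifts and the identification $\Ext^j_\cA=\Ext^j_\Lambda$ on $\cA$, is exactly the routine verification the authors are alluding to.
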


Recall that the dimension vectors of all exceptional objects and all shifted relative projective objects of f.g. wide subcategories are {real Schur roots} and all real Schur roots occur as such \cite{Ringel}. For example, let $\beta$ be a real Schur root of $\Lambda$. Let $M_\beta$ be the unique exceptional object with dimension vector $\beta$. Then $M_\beta$ is a relative projective object in the abelian category $\cA(M_\beta)$ generated by $M_\beta$. So, both $\beta$ and $-\beta$ occur as dimension vectors of exceptional objects and shifted relative projective objects in some f.g. wide subcategory of $mod$-$\Lambda$.

\begin{defn}\label{def 1.5: A(alpha)}
Let $\alpha_\ast=\{\alpha_1,\alpha_2,\cdots,\alpha_k\}$ be an unordered set of distinct positive real Schur roots so that the corresponding modules $M_1,\cdots,M_k$ are hom-orthogonal and form an exceptional sequence in some order. Then we denote by $\cA(\alpha_\ast)$ the wide subcategory with simple objects $M_i$. Equivalently, $\cA(\alpha_\ast)$ is the abelian category of all modules having a filtration for which all subquotients are isomorphic to some $M_i$. Let $\cC(\alpha_\ast)$ be the union of the set of all exceptional objects of $\cA(\alpha_\ast)$ and the set of shifted relative projective objects $Q[1]$ for all indecomposable relative projective objects $Q$ in $\cA(\alpha_\ast)$. Two elements $T,T'$ of $\cC(\alpha_\ast)$ are called \emph{ext-orthogonal} if $\Ext^1_{\cD^b}(T,T')=\Ext^1_{\cD^b}(T',T)=0$.
\end{defn}

\begin{defn}\label{def 1.6: perpendicular category}
For any finitely generated wide subcategory $\cA$ in $mod$-$\Lambda$ let $\,^\perp \cA$ denote the full subcategory of $mod$-$\Lambda$ of all modules $X$ with the property that $\Hom_\Lambda(X,M)=0=\Ext^1_\Lambda(X,M)$ for all $M\in\cA$. Similarly, let $\cA^\perp$ be the full subcategory of $mod$-$\Lambda$ of all modules $X$ with the property that $\Hom_\Lambda(M,X)=0=\Ext^1_\Lambda(M,X)$ for all $M\in\cA$.
\end{defn}

It is well-known that the categories $\,^\perp \cA$ and $\cA^\perp$ are finitely generated wide subcategories of $mod$-$\Lambda$. As a special case (replacing $mod$-$\Lambda$ with $\cB$), $\cB\cap(^\perp\cA)$ and $\cB\cap(\cA^\perp)$ are finitely generated wide subcategories of $\cB$ if $\cA\subseteq \cB$.

\begin{defn}\label{def 1.7: cluster morphism}
Suppose that $\cA$ and $\cB$ are finitely generated wide subcategories of $mod$-$\Lambda$ and $\cB\subseteq \cA$. Then a \emph{cluster morphism} $\cA\to \cB$ is defined to be a partial cluster tilting set $T=\{T_1,\cdots,T_k\}$ in $\cC(\cA)$ so that $|T|^\perp\cap \cA=\cB$. In other words, $\cB$ is the full subcategory of $\cA$ of all objects $B$ so that $\Hom_\Lambda(|T_i|,B)=0=\Ext^1_\Lambda(|T_i|,B)$ for all $i$. We denote the corresponding morphism by $[T]$ or $[T_1,\cdots,T_k]:\cA\to\cB$. Note that $T$ is an unordered set. For example, the empty set gives the identity morphism $[\,]=id_\cA:\cA\to\cA$.
\end{defn}

\subsection{Composition of cluster morphisms}\label{ss 1.2: composition of cluster morphisms}
We come to the difficult part of the definition which is the formula for composition of cluster morphisms. Suppose that we have cluster morphisms $[T]:\cA(\alpha_\ast)\to \cA(\beta_\ast)$ and $[S]:\cA(\beta_\ast)\to \cA(\gamma_\ast)$. Then the composition $[S]\circ[T]:\cA(\alpha_\ast)\to  \cA(\gamma_\ast)$ will be the partial cluster tilting set
\begin{equation}\label{eq:composition of cluster morphisms}
	[S_1,\cdots,S_\ell]\circ[T_1,\cdots,T_k]=[\sigma_TS_1,\cdots,\sigma_TS_\ell, T_1,\cdots,T_k]
\end{equation}
where the set mapping $\sigma_T:\cC(\beta_\ast)\to \cC(\alpha_\ast)$ is uniquely determined by the following proposition.

\begin{prop}\label{prop 1.8: Properties of sigma_T}
Suppose that $[T]=[T_1,\cdots,T_k]$ is a cluster morphism $\cA(\alpha_\ast)\to \cA(\beta_\ast)$. Then, for any $S\in \cC(\beta_\ast)$ there is a unique object $\sigma_TS\in\cC(\alpha_\ast)$ satisfying the following three conditions.
\begin{enumerate}
\item[(a)] $\{T_1,\cdots,T_k,\sigma_TS\}$ is a partial cluster tilting set in $\cC(\alpha_\ast)$.
\item[(b)] $\cA(\beta_\ast)\cap |S|^\perp=\cA(\beta_\ast)\cap |\sigma_TS|^\perp$
\item[(c)] $\underline\dim(\sigma_TS)-\underline\dim S$ is an integer linear combination of the vectors $\underline\dim T_i$.
\end{enumerate}
Furthermore, the following additional properties hold as a consequence of the first three.
\begin{enumerate}
\item[(d)] If $S_1,S_2$ are ext-orthogonal elements of $\cC(\beta_\ast)$ then $\sigma_TS_1,\sigma_TS_2$ are ext-orthogonal elements of $\cC(\alpha_\ast)$.
\item[(e)] If $\{T_1,\cdots,T_k,S\}$ is a partial cluster tilting set in $\cC(\alpha_\ast)$ then $\sigma_TS=S$.
\end{enumerate}
\end{prop}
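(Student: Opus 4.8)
The plan is to establish existence and uniqueness of $\sigma_TS$ first, then derive properties (d) and (e) as formal consequences. For \emph{uniqueness}, suppose $U, U' \in \cC(\alpha_\ast)$ both satisfy (a)--(c). By (c), $\underline\dim U - \underline\dim U'$ is an integer combination of the $\underline\dim T_i$. I would pair both sides against $\underline\dim T_j$ using the Euler--Ringel form. Since $\{T_1,\dots,T_k,U\}$ and $\{T_1,\dots,T_k,U'\}$ are partial cluster tilting sets, $\Ext^1_{\cD^b}(T_j,U)=\Ext^1_{\cD^b}(U,T_j)=0$ and likewise for $U'$, so the Euler form controls $\Hom$ groups; combined with the fact (to be invoked from \cite{IOTW1,IOTW3}) that the dimension vectors of the components of a partial cluster tilting set are linearly independent real Schur roots, the coefficients must all vanish, giving $\underline\dim U = \underline\dim U'$. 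Finally, an exceptional object (or shifted relative projective) in $\cA(\alpha_\ast)$ is determined up to isomorphism by its dimension vector, so $U \cong U'$. For \emph{existence}, the natural strategy is to restrict to the wide subcategory $^\perp\!(\text{something})$ or rather to pass through the known compatibility of cluster tilting theory with Bongartz-type completions: given $S \in \cC(\beta_\ast)$, one enlarges the cluster tilting set and invokes the results of \cite{IOTW1} and \cite{IOTW3} (which Section~\ref{ss 1.3: proof of properties of sT} is said to draw on) that mutation/completion in $\cC_{\cA(\alpha_\ast)}$ relative to the rigid subobject $T$ produces a unique component whose dimension vector differs from $\underline\dim S$ by the span of the $\underline\dim T_i$; condition (b) is then the statement that $S$ and $\sigma_TS$ cut out the same perpendicular category inside $\cA(\beta_\ast)$, which holds because the perpendicular category only depends on the class modulo the $T_i$'s.

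For property (e): if $\{T_1,\dots,T_k,S\}$ is already a partial cluster tilting set in $\cC(\alpha_\ast)$, then $S$ itself satisfies (a) trivially, satisfies (c) with all coefficients zero, and satisfies (b) tautologically; by the uniqueness just proved, $\sigma_TS = S$.

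For property (d): suppose $S_1, S_2 \in \cC(\beta_\ast)$ are ext-orthogonal. I want $\Ext^1_{\cD^b}(\sigma_TS_1,\sigma_TS_2) = \Ext^1_{\cD^b}(\sigma_TS_2,\sigma_TS_1) = 0$. The idea is to compute the Euler form $\brk{\underline\dim \sigma_TS_1, \underline\dim \sigma_TS_2}$ and show it equals $\brk{\underline\dim S_1, \underline\dim S_2}$, using condition (c): $\underline\dim \sigma_TS_i = \underline\dim S_i + \sum_j a_{ij}\underline\dim T_i$, and the correction terms pair trivially on the relevant side because each $T_i$ is ext-orthogonal to each $\sigma_TS_j$ (by (a), applied to the partial cluster tilting set $\{T_1,\dots,T_k,\sigma_TS_j\}$) and one can also arrange the analogous vanishing on the $\cA(\beta_\ast)$ side. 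Hence the Euler forms match; since $S_1,S_2$ ext-orthogonal means their Euler pairing equals $\dim_K\Hom$ on both orders, and the same interpretation applies to $\sigma_TS_1,\sigma_TS_2$ (their $\Ext^1$ in both directions is what we must rule out), a positivity/rigidity argument — both $\sigma_TS_i$ live together with $T$ so their mutual $\Ext^1$ is constrained — forces the $\Ext^1$ groups to vanish. The cleanest route is probably to note that $\{T_1,\dots,T_k,\sigma_TS_1,\sigma_TS_2\}$ is itself a partial cluster tilting set: this follows from the characterization of partial cluster tilting sets via linear independence of real Schur roots plus the Euler-form computation, and ext-orthogonality of $\sigma_TS_1,\sigma_TS_2$ is then immediate from the definition of partial cluster tilting set.

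The main obstacle is \emph{existence} in part, but really the construction of $\sigma_T$ as a well-defined map — this is exactly the "difficult technical point" flagged in the introduction, and it is where the external inputs \cite{IOTW1}, \cite{IOTW3} are essential; the honest version of this proof must set up the relative cluster category of $\cA(\alpha_\ast)$ with respect to the rigid object $|T|$ and match it with $\cC_{\cA(\beta_\ast)}$ at the level of dimension vectors, which is genuinely subtle because we do \emph{not} have a functor $\sigma_T$ on categories, only on objects. Everything else (uniqueness, (d), (e)) is then a formal consequence of linear independence of real Schur roots and bookkeeping with the Euler--Ringel form.
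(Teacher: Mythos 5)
Your proposal correctly reduces (e) to uniqueness (this matches the paper) and correctly flags existence as the hard technical point, but three of the four substantive steps have genuine gaps.

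\emph{Uniqueness.} Writing $\underline\dim U-\underline\dim U'=\sum_i c_i\,\underline\dim T_i$ and pairing against $\underline\dim T_j$ gives
$\brk{\underline\dim T_j,\underline\dim U}-\brk{\underline\dim T_j,\underline\dim U'}=\sum_i c_i\brk{\underline\dim T_j,\underline\dim T_i}$.
Ext-orthogonality of $U$ and $U'$ to $T_j$ only tells you that each term on the left equals a $\dim\Hom$, hence is nonnegative; it does not make the left-hand side vanish, so linear independence of the $\underline\dim T_i$ gives you nothing about the $c_i$. The paper's argument is different and essential: both $\{T,U\}$ and $\{T,U'\}$ are cluster tilting sets in a rank-two wide subcategory, and the almost split triangle $U\to T^m\to U'\to U[1]$ forces $\underline\dim U+\underline\dim U'\in\ZZ\,\underline\dim T$ (Lemma \ref{lem: X to Tm to Y means X+Y=mT}); combined with (c) this makes $\underline\dim U$ collinear with $\underline\dim T$, contradicting linear independence.

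\emph{Property (d).} Even granting that the Euler pairings of the $\sigma_TS_i$ match those of the $S_i$ (which your sketch does not establish, since the cross terms $\brk{\underline\dim T_i,\underline\dim\sigma_TS_j}$ equal $\dim\Hom\ge 0$, not $0$), the Euler form computes $\dim\Hom-\dim\Ext^1$ and therefore cannot by itself rule out simultaneous nonvanishing of $\Hom$ and $\Ext^1$. The ``positivity/rigidity argument'' you defer to is precisely the content of the bulk of Section \ref{ss 1.3: proof of properties of sT}: the paper proves the stronger Theorem \ref{thm:sigma-T is a bijection} (that $\sigma_T$ is a bijection onto $\cC_T(\alpha_\ast)$), shows $\sigma_T^{-1}$ preserves ext-orthogonality using the Stability Theorem for virtual semi-invariants together with the virtual generic decomposition theorem (Corollaries \ref{cor: comparing D-beta to D-alpha} and \ref{cor: semi-invariants do not cut clusters}), and then propagates this to all cluster tilting sets by mutation-connectedness. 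None of this machinery appears in your outline, and (d) does not follow formally from (a)--(c) by bookkeeping.

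\emph{Existence.} The references \cite{IOTW1}, \cite{IOTW3} do not hand you the map $\sigma_T$; the paper builds it for $k=1$ by an explicit case analysis (universal extensions realizing mutation of exceptional sequences, minimal left $T$-approximations, and separate arguments proving that the resulting objects are relatively projective when a shift is required), and then induces on $k$ via $\sigma_T=\sigma_{T_k}\circ\sigma_{T_\ast}$. Your appeal to ``Bongartz-type completions'' does not substitute for this construction, in particular for the verification that $\sigma_TS$ lands in $\cC(\alpha_\ast)$ (i.e.\ that the kernel or extension produced is relatively projective) in the shifted cases.
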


We note that Property (e) follows immediately from the uniqueness of $\sigma_TS$. The proof of the other statements will be given later. For the moment suppose that this proposition holds. Then we will show that composition of cluster morphisms is associative. But first we need to show that composition is defined.

\begin{cor}
Given cluster morphisms $[T]:\cA(\alpha_\ast)\to \cA(\beta_\ast)$ and $[S]:\cA(\beta_\ast)\to \cA(\gamma_\ast)$, the formula \eqref{eq:composition of cluster morphisms} gives a cluster morphism $[T,\sigma_TS]:\cA(\alpha_\ast)\to\cA(\gamma_\ast)$. In other words Properties (a) and (b) in the proposition above hold when $\sigma_TS=\{\sigma_TS_1,\cdots,\sigma_TS_\ell\}$ has more than one element.
\end{cor}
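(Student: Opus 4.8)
The plan is to reduce the multi-element case to the single-element case (Proposition \ref{prop 1.8: Properties of sigma_T}) by applying it repeatedly, using properties (d) and (e) to control the interaction between the new objects $\sigma_TS_i$. First I would verify that $\{\sigma_TS_1,\cdots,\sigma_TS_\ell, T_1,\cdots,T_k\}$ is a partial cluster tilting set in $\cC(\alpha_\ast)$. Each $\sigma_TS_i$ is ext-orthogonal to all the $T_j$ by property (a) applied to $S_i$, and any two $S_i,S_j$ are ext-orthogonal in $\cC(\beta_\ast)$ since $\{S_1,\cdots,S_\ell\}$ is a partial cluster tilting set; hence by property (d) the objects $\sigma_TS_i,\sigma_TS_j$ are ext-orthogonal in $\cC(\alpha_\ast)$. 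Combined with $\Ext^1_{\cD^b}(T_i,T_j)=0$ (from $[T]$ being a morphism), this shows all pairwise $\Ext^1_{\cD^b}$ vanish, so we have a partial cluster tilting set; this is Property (a) in the multi-element form.

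Next I would check Property (b), namely $|S\coprod T|^\perp\cap\cA(\alpha_\ast)=\cA(\gamma_\ast)$, where I abusively write $S=\coprod S_i$, $T=\coprod T_i$, and $\sigma_TS=\coprod\sigma_TS_i$. The key point is that
\[
	|\sigma_TS\coprod T|^\perp\cap\cA(\alpha_\ast)
	=\Big(\bigcap_i|\sigma_TS_i|^\perp\Big)\cap|T|^\perp\cap\cA(\alpha_\ast)
	=\bigcap_i\big(|\sigma_TS_i|^\perp\cap\cA(\beta_\ast)\big),
\]
using that $|T|^\perp\cap\cA(\alpha_\ast)=\cA(\beta_\ast)$. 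By property (b) of Proposition \ref{prop 1.8: Properties of sigma_T} applied to each $S_i$, the right-hand side equals $\bigcap_i\big(|S_i|^\perp\cap\cA(\beta_\ast)\big)=|S|^\perp\cap\cA(\beta_\ast)=\cA(\gamma_\ast)$, the last equality because $[S]:\cA(\beta_\ast)\to\cA(\gamma_\ast)$ is a morphism. Thus $[\sigma_TS_1,\cdots,\sigma_TS_\ell,T_1,\cdots,T_k]$ is indeed a cluster morphism $\cA(\alpha_\ast)\to\cA(\gamma_\ast)$.

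I expect the main subtlety to be Property (a): one must be sure that applying $\sigma_T$ separately to each $S_i$ and then collecting the results genuinely yields a rigid (ext-orthogonal) configuration, and property (d) is exactly what is needed, but one should double-check that (d) as stated (for pairs) does cover all the cross terms among the $\sigma_TS_i$ — it does, since ext-orthogonality is a pairwise condition. A secondary point worth spelling out is that $\sigma_T$ applied to the full set $\{S_1,\dots,S_\ell\}$ and to the individual $S_i$ are consistent, i.e.\ the object $\sigma_TS_i$ does not depend on which partial cluster tilting set containing $S_i$ we started with; this is immediate from the uniqueness clause in Proposition \ref{prop 1.8: Properties of sigma_T}, since conditions (a)--(c) there refer only to $S_i$, $T$, and $\cA(\beta_\ast)$. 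Once these points are in place the corollary follows, and in particular composition of cluster morphisms is well-defined; associativity will be treated afterwards.
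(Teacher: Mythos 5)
Your proposal is correct and follows essentially the same route as the paper: Property (a) of the composed set via Proposition \ref{prop 1.8: Properties of sigma_T}(a) for the cross terms with $T$ and (d) for the pairs $\sigma_TS_i,\sigma_TS_j$, and Property (b) by intersecting the perpendicular categories and applying (b) termwise, exactly as in the paper's chain of equalities (written in the opposite direction). The extra remark on the well-definedness of $\sigma_TS_i$ via the uniqueness clause is a reasonable clarification but not needed beyond what the paper already records.
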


\begin{proof}
First, $\{T,\sigma_TS\}=\{T_1,\cdots,T_k,\sigma_TS_1,\cdots,\sigma_TS_\ell\}$ is a partial cluster tilting set in $\cC(\alpha_\ast)$ since, by (a), each $\sigma_TS_i$ is ext-orthogonal to each $T_j$ and by (d) the $\sigma_TS_i$ are ext-orthogonal to each other.  Second, $[T,\sigma_TS]$ is a morphism $\cA(\alpha_\ast)\to\cA(\gamma_\ast)$. In other words, $\cA(\alpha_\ast)\cap |T,\sigma_TS|^\perp=\cA(\gamma_\ast)$. But this follows from Property (b):
 \[
 	\cA(\gamma_\ast)=\cA(\beta_\ast)\cap |S|^\perp=\cA(\beta_\ast)\cap |S_1|^\perp\cap \cdots\cap |S_{\ell-1}|^\perp\cap |S_\ell|^\perp=\bigcap\left(
	\cA(\beta_\ast)\cap|S_i|^\perp
	\right)
 \]
 \[
 	=\bigcap\left(
	\cA(\beta_\ast)\cap|\sigma_TS_i|^\perp
	\right)=\cA(\beta_\ast)\cap |\sigma_TS|^\perp=\cA(\alpha_\ast)\cap |T,\sigma_TS|^\perp
 \]
\end{proof}

\begin{cor}
 The composition law \eqref{eq:composition of cluster morphisms} is associative and unital. Consequently, we have a category with objects given by finitely generated wide subcategories $\cA$ of $mod$-$\Lambda$ and morphisms given by partial cluster tilting sets $[T]:\cA\to \cA\cap|T|^\perp$.
 \end{cor}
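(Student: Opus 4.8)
The plan is to verify the two category axioms—unitality and associativity—using Proposition \ref{prop 1.8: Properties of sigma_T}, with the previous corollary guaranteeing that composition is well-defined. Unitality is immediate: since $[\,]=id_\cA$ has empty underlying object, properties (a)--(c) characterizing $\sigma_{[\,]}S$ are satisfied by $S$ itself, so $\sigma_{[\,]}S=S$ by uniqueness; hence $[S]\circ[\,]=[S]$ and, since $\sigma_T$ of the empty set is empty, $[\,]\circ[T]=[T]$.

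For associativity, suppose we have composable morphisms $[T]:\cA\to\cB$, $[S]:\cB\to\cB'$, $[R]:\cB'\to\cB''$. We must show $([R]\circ[S])\circ[T]=[R]\circ([S]\circ[T])$, i.e.,
\[
	[\sigma_{T}(\sigma_S R\coprod S)\coprod T]=[\sigma_{(\sigma_T S\coprod T)}R\coprod \sigma_T S\coprod T].
\]
Both sides, as unordered sets, contain the $T_i$ and the $\sigma_T S_j$; the claim reduces to showing that for each component $R_m$ of $R$ we have $\sigma_T(\sigma_S R_m)=\sigma_{(\sigma_T S\coprod T)}R_m$ in $\cC(\alpha_\ast)$. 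I would prove this by the uniqueness clause of Proposition \ref{prop 1.8: Properties of sigma_T}: it suffices to check that $\sigma_T(\sigma_S R_m)$ satisfies the three defining conditions (a), (b), (c) that characterize $\sigma_{(\sigma_T S\coprod T)}R_m$ relative to the cluster morphism $[\sigma_T S\coprod T]:\cA\to\cB'$.

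Condition (a) for $\sigma_{(\sigma_T S\coprod T)}R_m$ asks that $\{T,\sigma_T S,\sigma_T(\sigma_S R_m)\}$ be a partial cluster tilting set in $\cC(\alpha_\ast)$; this follows from property (d) applied to the morphism $[T]$, since $\sigma_S R_m$ and each $S_j$ are ext-orthogonal in $\cC(\beta_\ast)$ (as $\{S,\sigma_S R_m\}$ is a partial cluster tilting set by (a) for the $\cB\to\cB'$ composition), combined with (a) for $\sigma_T(\sigma_S R_m)$ itself. Condition (b) asks $\cB'\cap|R_m|^\perp=\cB'\cap|\sigma_T(\sigma_S R_m)|^\perp$; using (b) for $\sigma_S$ one gets $\cB'\cap|R_m|^\perp=\cB'\cap|\sigma_S R_m|^\perp$ inside $\cB$, and using (b) for $\sigma_T$ gives $\cB\cap|\sigma_S R_m|^\perp=\cB\cap|\sigma_T\sigma_S R_m|^\perp$; intersecting appropriately (and noting $\cB'\subseteq\cB$) yields the claim. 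Condition (c) asks that $\underline\dim\sigma_T\sigma_S R_m-\underline\dim R_m$ be an integer combination of $\underline\dim T_i$ and $\underline\dim\sigma_T S_j$; but $\underline\dim\sigma_S R_m-\underline\dim R_m$ is a combination of the $\underline\dim S_j$ by (c) for $\sigma_S$, $\underline\dim\sigma_T\sigma_S R_m-\underline\dim\sigma_S R_m$ is a combination of the $\underline\dim T_i$ by (c) for $\sigma_T$, and $\underline\dim\sigma_T S_j-\underline\dim S_j$ is a combination of the $\underline\dim T_i$, so adding these expresses the required difference in the span of the $\underline\dim T_i$ and $\underline\dim S_j$, equivalently the span of the $\underline\dim T_i$ and $\underline\dim\sigma_T S_j$. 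The main obstacle is purely bookkeeping: tracking the nested perpendicular categories and dimension-vector spans carefully enough that each of (a), (b), (c) is checked without circularity; once that is done, uniqueness in Proposition \ref{prop 1.8: Properties of sigma_T} closes the argument, and the final sentence of the corollary—that we obtain a genuine category—is then a formal consequence.
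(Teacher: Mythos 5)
Your proposal is correct and follows essentially the same route as the paper: reduce associativity to a single component of $R$, then verify that $\sigma_T\sigma_S R_m$ satisfies conditions (a), (b), (c) characterizing $\sigma_{(T,\sigma_TS)}R_m$ and invoke uniqueness in Proposition \ref{prop 1.8: Properties of sigma_T}, with unitality handled via $\sigma_\emptyset S=S$. The only cosmetic difference is that the paper delegates the checks of (a) and (b) to the preceding corollary, whereas you spell them out via properties (a) and (d) directly.
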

 
 \begin{proof} 
 
 It follows from the Definition \eqref{eq:composition of cluster morphisms} that the empty set in $\cC(\beta_\ast)$ is a left identity: $[\,]\circ [T]=[T,\sigma_T(\emptyset)]=[T]$. As a special case of  Property (e), $\sigma_\emptyset S=S$. Therefore, the empty set is a right identity:
 \[
 	[S_1,\cdots,S_\ell]\circ[\,]=[\sigma_\emptyset S_1,\cdots,\sigma_\emptyset S_\ell]=[S_1,\cdots,S_\ell]
 \]

Finally, we need to show that composition is associative.  So, suppose we have the composable cluster morphisms:
\[
	\cA(\alpha_\ast)\xrarrow{[T]}
	\cA(\beta_\ast)\xrarrow{[S]}
	\cA(\gamma_\ast)\xrarrow{[R]}
	\cA(\delta_\ast)
\]
By definition we have:
\[
	([R]\circ[S])\circ[T]=[S,\sigma_SR]\circ T=[T,\sigma_TS,\sigma_T\sigma_SR]
\]
\[
	[R]\circ([S]\circ[T])=[R]\circ[T,\sigma_TS]=[T,\sigma_TS,\sigma_{T,\sigma_TS}R]
\]
Therefore, we need to show that, for each $R_i$ in $R$, $\sigma_T\sigma_SR_i=\sigma_{T,\sigma_TS}R_i$. To prove this we can assume that $R$ has only one element. Then we will verify that $\sigma_T\sigma_SR$ satisfies the three conditions which uniquely characterize $\sigma_{T,\sigma_TS}R$. By the previous corollary we have the first two conditions:
\begin{enumerate}
\item[(a)] $\{T,\sigma_TS,\sigma_T\sigma_SR\}$ forms a partial cluster tilting set in $\cC(\alpha_\ast)$ and
\item[(b)] $\cA(\gamma_\ast)\cap |R|^\perp=\cA(\beta_\ast)\cap |S,\sigma_SR|^\perp=\cA(\alpha_\ast)\cap |T,\sigma_TS,\sigma_T\sigma_SR|^\perp=\cA(\gamma_\ast)\cap |\sigma_T\sigma_SR|^\perp$
\end{enumerate}
The third condition is also easy:
\[
	\sigma_T\sigma_SR-R=(\sigma_T\sigma_SR-\sigma_SR)+(\sigma_SR-R)
\]
which is an additive combination of $\underline\dim T_i$ plus an additive combination of $\underline\dim S_j$. However, modulo the vectors $\underline\dim T_i$, each $\underline\dim S_j$ is congruent to $\underline\dim \sigma_TS_j$. Therefore:
\begin{enumerate}
\item[(c)] $\sigma_T\sigma_SR-R$ is an integer linear combination of the vectors $\underline\dim T_i$ and $\underline\dim \sigma_TS_j$.
\end{enumerate}
Therefore, by the uniqueness clause in the Proposition, we have 
\begin{equation}\label{eq: sigma TS=sigma T sigma S}
\sigma_T\sigma_SR=\sigma_{T,\sigma_TS}R
\end{equation}
making composition of cluster morphisms associative.
 \end{proof}
 
\subsection{Proof of Proposition \ref{prop 1.8: Properties of sigma_T}}\label{ss 1.3: proof of properties of sT}

To complete the definition of the cluster morphism category we need to prove Proposition \ref{prop 1.8: Properties of sigma_T}. We do this by induction on $k$ starting with $k=1$. Without loss of generality we assume that $\cA(\alpha_\ast)=mod$-$H$. Then $\cA(\beta_\ast)=|T|^\perp$.

\subsubsection{Uniqueness of $\sigma_TS$ when $k=1$}

 \begin{lem}\label{lem: X to Tm to Y means X+Y=mT}
 Let $\cA(\alpha_1,\alpha_2)$ be a finitely generated wide subcategory of $mod$-$\Lambda$ of rank $2$ and suppose that $T,X,Y \in\cC(\alpha_1,\alpha_2)$ so that $T$ is ext-orthogonal to both $X$ and $Y$. Then $\underline\dim X+\underline\dim Y$ is a multiple of $\underline\dim T$.
 \end{lem}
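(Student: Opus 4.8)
The plan is to reduce immediately to the case where $\cA(\alpha_1,\alpha_2)$ is the whole module category $mod\text-H$ of a hereditary algebra $H$ of rank $2$. By the discussion of finitely generated wide subcategories in Section~\ref{ss 1.1: wide subcategories} any such $\cA$ of rank $2$ is equivalent to some $mod\text-H$, and since $\cA\into mod\text-\Lambda$ is an exact full embedding all Hom- and Ext-groups --- hence the Euler form and all dimension-vector identities, which only use additivity in $K_0(\cD^b(\cA))$ --- agree whether computed in $\cA$ or in $mod\text-\Lambda$. So we may take $\cC(\alpha_1,\alpha_2)=\cC(\cA)$ to be the set of indecomposable rigid objects of the cluster category $\cC_\cA$ of a rank-$2$ hereditary algebra. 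Because $T$ is rigid and ext-orthogonal to $X$, the pair $\{T,X\}$ is a partial cluster tilting set in $\cC_\cA$; as $rk\,\cA=2$, either $X=T$ or $\{T,X\}$ is a cluster tilting set, and likewise for $Y$. We may assume $T$, $X$, $Y$ are pairwise distinct (if $X=Y=T$ the claim is trivial, and the cases with exactly two of them equal do not occur in the application to uniqueness of $\sigma_TS$). Then $\{T,X\}$ and $\{T,Y\}$ are the two distinct cluster tilting sets of $\cC_\cA$ extending the almost complete cluster tilting object $T$, i.e. $X$ and $Y$ are the two complements of $T$.

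Now I would invoke the exchange structure of the cluster category \cite{BMRRT}: the two complements $X,Y$ of $T$ fit into an exchange triangle
\[
	X\to E\to Y\to X[1]\quad\text{in }\cC_\cA,\qquad E\in\add T,
\]
so $E\cong T^{\oplus m}$ for some $m\ge0$. The assertion $\undim X+\undim Y=m\,\undim T$ is then precisely additivity of the dimension vector along this triangle. The one point requiring care is that $\undim$, as defined on $\cC(\alpha_\ast)$ by choosing representatives in $\cA\cup\cA[1]\subset\cD^b(\cA)$ with a sign on the shifted part, is not a priori additive on all triangles of $\cC_\cA$. I would handle this by lifting the minimal exchange triangle to an honest triangle of $\cD^b(\cA)$ with all three terms again in $\cA\cup\cA[1]$, where $\undim$ --- being the alternating Euler characteristic --- is additive by construction. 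In rank $2$ such a lift always exists and is easy to write down by cases: when $X$ and $Y$ are both genuine modules the exchange triangle is the image of a short exact sequence $0\to X\to T^{\oplus m}\to Y\to0$ in $\cA$ (the classical exchange sequence between the two support tilting modules $T\oplus X$ and $T\oplus Y$; the direction of the sequence is irrelevant, since either way the middle term is $T^{\oplus m}$); and when a shifted relative projective $Q[1]$ is among $T$, $X$, $Y$ the corresponding triangle in $\cD^b(\cA)$ is obtained by rotating such a short exact sequence. Applying $\undim$ term by term then yields $\undim X+\undim Y=m\,\undim T$ in every case.

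The step I expect to be the real work is this last one: the bookkeeping of the cases ($T$ a module versus a shifted relative projective, and correspondingly for $X$ and $Y$), checking in each that the exchange triangle lifts to $\cD^b(\cA)$ so that $\undim$ is additive, and that the middle term is genuinely $T^{\oplus m}$ with $m\ge0$ rather than something involving a shift of $T$. The conceptual content --- reduce to rank $2$, identify $X$ and $Y$ as the two complements of $T$, apply the exchange relation --- is short; it is only the translation between the cluster category and the signed dimension vectors of this paper that carries the technical weight. (One could instead bypass the cluster category altogether and argue purely from the classification of exceptional sequences and rigid objects of a rank-$2$ hereditary algebra, but that route is longer and less transparent.)
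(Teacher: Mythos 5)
Your proof is correct and follows essentially the same route as the paper's: the paper also works in the rank-2 cluster category and reads off $\undim X+\undim Y=m\,\undim T$ from the triangle $X\to T^m\to Y\to X[1]$ (phrased there as the almost split triangle with $X=\tau Y$ rather than as the exchange triangle between the two complements of $T$), and it handles the degenerate shifted-projective case, where $X=Y[1]$ and the sum is $0$, exactly as the case bookkeeping you defer would. The only difference is emphasis: the paper dispatches the additivity/sign subtlety in two lines, while you correctly identify it as the one point requiring care.
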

 
 \begin{proof}
 Cluster mutation in cluster categories of rank 2 are very well understood. After possibly switching $X$ and $Y$ we have $X=\tau Y$ and an almost split triangle
 \[
 	X\to T^m\to Y\to X[1]
 \]
 If $Y$ is not projective then $\underline\dim\, X+\underline\dim\, Y=\underline \dim\, T^m=m\,\underline \dim\, T$. If $Y$ is projective then $X=\tau Y=Y[1]$ and $\underline\dim X+\underline\dim Y=0$. So, the lemma holds in all cases.
 \end{proof}
 
We recall the statement of Proposition \ref{prop 1.8: Properties of sigma_T} when $k=1$: For any rank 1 cluster morphism $[T]:\cA(\alpha_\ast)\to\cA(\beta_\ast)$ and any $S\in\cC(\beta_\ast)$ there is a unique $\sigma_TS\in\cC(\alpha_\ast)$ so that:
 \begin{enumerate}
 \item[(a)] $\{T,\sigma_TS\}$ is a partial cluster tilting set in $\cC(\alpha_\ast)$.
\item[(b)] $\cA(\beta_\ast)\cap |S|^\perp=\cA(\beta_\ast)\cap |\sigma_TS|^\perp$
\item[(c)] $\underline\dim(\sigma_TS)-\underline\dim S$ is an integer multiple of the vector $\underline\dim T$.
 \end{enumerate}
 
 To prove uniqueness of $\sigma_TS$, let $X,Y$ be two candidates for $\sigma_TS$. Then, by Properties (a) and (b), $\{T,X\},\{T,Y\}$ are both cluster tilting sets in the rank 2 cluster category of the finitely generated wide subcategory $\,^\perp\left(|T,S|^\perp\right)$ of $\cA(\alpha_\ast)$. By Property (c), $\underline\dim\,X$ and $\underline\dim\,Y$ are both congruent to $\underline\dim\,S$ modulo $\underline\dim\,T$. By the lemma we conclude that $2\,\underline\dim\,S$ is a multiple of $\underline\dim\,T$ and thus $\underline\dim\,X$, $\underline\dim\,T$ are collinear. But this is not possible since the dimension vectors of elements of a cluster tilting set are always linearly independent.

This completes the proof of the uniqueness of $\sigma_TS$. We will now show the existence of $\sigma_TS$ satisfying Properties (a),(b),(c).

\subsubsection{Case 1: $T,S$ are modules}

We are given that $S\in T^\perp$. I.e, $(S,T)$ is an exceptional sequence.
If $\Ext^1_\Lambda(S,T)=0$ then we let $\sigma_TS=S$. This clearly satisfies all three conditions.
Otherwise, let $m\ge1$ be the dimension of $\Ext^1_\Lambda(S,T)$ over the division algebra $F_T:=\End_\Lambda(T)$. If we choose a basis for $\Ext^1_\Lambda(S,T)$ then we get an extension
\[
	T^m\cof E\onto S
\]
 which is \emph{universal} in the sense that any extension of $T$ by $S$ is given as the pushout of this extension by a unique morphism $T^m\to T$. So, in the exact sequence:
 \[
 	\Hom_\Lambda(T^m,T)\xrarrow{\cong}\Ext^1_\Lambda(S,T)\to \Ext^1_\Lambda(E,T)\to \Ext^1_\Lambda(T^m,T)=0
 \]
 the first arrow is an isomorphism making $\Ext^1_\Lambda(E,T)=0$.  Applying $\Ext^1_\Lambda(T,-)$ to the universal extension we also get $\Ext^1_\Lambda(T,E)=0$. So, $E,T$ are ext-orthogonal and we let $\sigma_TS=E$.
 
 The construction of $E$ is the well-know mutation rule for exceptional sequences. We start with the exceptional sequence $(S,T)$ and we get the exceptional sequence $(T,E)$ by the universal extension in the case when $\Ext^1_\Lambda(S,T)\neq0$. See \cite{CB} for details. In particular, $E$ is an exceptional module and $(T,E)^\perp=(T,S)^\perp$. Also, $\underline\dim\,E=\underline\dim\,S+m\,\underline\dim\,T$. So, Properties (a),(b),(c) all hold.
 
\subsubsection{Case 2: $T$ is a module and $S=Q[1]$} We are given that $(Q,T)$ is an exceptional sequence and $Q$ is a relative projective object in $T^\perp$.
 
 Suppose first that $\Hom_\Lambda(Q,T)=0$. If $Q$ is projective in $\cA(\alpha_\ast)$ then we can let $\sigma_TQ[1]=Q[1]$ and there is nothing to prove. So, suppose $Q$ is not projective in $\cA(\alpha_\ast)$. Let $T^m\cof E\onto Q$ be the universal extension. Then, just as in Case 1, $E,T$ are ext-orthogonal, $(T,E)$ is an exceptional sequence and $(T,E)^\perp=(T,S)^\perp$. We also claim that $E$ is projective in $\cA(\alpha_\ast)$. So, we can let $\sigma_TQ[1]=E[1]$ and Properties (a),(b),(c) will hold.
 
 To prove the $E$ is projective in $\cA(\alpha_\ast)$, suppose not and let $X$ be an object in $\cA(\alpha_\ast)$ of minimal length so that $\Ext^1_\Lambda(E,X)\neq0$. By right exactness of $\Ext^1_\Lambda(-,X)$, $\Ext^1_\Lambda(E,X)=0$ if $X\in \cA(\beta_\ast)$. So, we can assume $X\notin T^\perp$. This means either
 \begin{enumerate}
 \item[(i)] $\Hom_\Lambda(T,X)\neq0$ or 
 \item[(ii)] $\Ext^1_\Lambda(T,X)\neq0$. 
 \end{enumerate}
 In Case (i), let $f:T\to X$ be any nonzero morphism and let $Y$ be the cokernel of $f$. Then $\Ext^1_\Lambda(E,Y)=0$ by minimality of $X$. So, $Y=0$. But $(T,E)$ is an exceptional sequence. So, $\Ext^1_\Lambda(E,T)=0$. By right exactness of $\Ext^1_\Lambda(E,-)$ this implies that $\Ext^1_\Lambda(E,X)=0$ which is a contradiction. In Case (ii), let $X\to Z\to T^m$ be the universal extension. Applying $\Hom_\Lambda(T,-)$ to this extension we see that $Z\in T^\perp$. Therefore $\Ext^1_\Lambda(E,Z)=0$. But $\Ext^1_\Lambda(E,Z)\cong \Ext^1_\Lambda(E,X)\neq 0$ which is a contradiction. So, $E$ must be projective in $\cA(\alpha_\ast)$ as claimed.

 Now, suppose that $\Hom_\Lambda(Q,T)\neq0$. Let $f:Q\to T^m$ be the minimal left $T$-approximation of $Q$. Then, by the theory of exceptional sequences, $f$ is either a monomorphism or an epimorphism. In the first case we get a short exact sequence $Q\cof T^m\onto E$ and $(T,E)$ is an exceptional sequence with $(T,E)^\perp=(Q,T)^\perp$. By right exactness of $\Ext^1_\Lambda(T,-)$ we also get $\Ext^1_\Lambda(T,E)=0$. So, $E,T$ are ext-orthogonal and we can let $\sigma_TQ[1]=E$.

 If $f:Q\to T^m$ is an epimorphism, we let $P=\ker f$. Then $P$ is a projective object of $\cA(\alpha_\ast)$ by the same argument used to prove that $E$ is projective in the case $\Hom_\Lambda(Q,T)=0$. (Take $X$ minimal so that $\Ext^1_\Lambda(P,X)\neq 0$, then $X\notin T^\perp$ giving two cases (i), (ii) each leading to a contradiction as before.) Then we can take $\sigma_TQ[1]=P[1]$. By construction $(T,P)$ is an exceptional sequence which is braid mutation equivalent to $(Q,T)$. Therefore (a) and (b) are satisfied and (c) follows from the exact sequence $P\cof Q\onto T^m$.
 
 In all subcases of Case 2 we have the following.
 
\begin{prop}
When $T$ is a module and $S=Q[1]$, then $(T,\sigma_TS)$ is a signed exceptional sequence.
\end{prop}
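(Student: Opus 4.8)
The plan is to check the two clauses of Definition \ref{def: signed exceptional sequence} directly for the pair $(X_1,X_2):=(T,\sigma_T S)$, viewed as a candidate signed exceptional sequence in the wide subcategory $\cA(\alpha_\ast)$ (which, as throughout this subsection, we take to be $mod$-$H$). The point is that the construction of $\sigma_T S$ just carried out has, in each of its four subcases — $\Hom_\Lambda(Q,T)=0$ with $Q$ projective in $\cA(\alpha_\ast)$; $\Hom_\Lambda(Q,T)=0$ with $Q$ not projective; and $\Hom_\Lambda(Q,T)\neq0$ with the minimal left $T$-approximation $f\colon Q\to T^m$ a monomorphism, respectively an epimorphism — already produced exactly the two inputs one needs:
\begin{enumerate}
\item[(i)] $(T,\,|\sigma_T S|)$ is an exceptional sequence in $\cA(\alpha_\ast)$;
\item[(ii)] $\sigma_T S\in\cC(\alpha_\ast)$, being either an exceptional module of $\cA(\alpha_\ast)$ or $P[1]$ for an indecomposable relative projective object $P$ of $\cA(\alpha_\ast)$.
\end{enumerate}
In the three nondegenerate subcases both (i) and (ii) are recorded explicitly in the construction. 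In the first subcase, where $\sigma_T S=Q[1]$, statement (i) is not written out, so there I would simply note that $\Hom_\Lambda(Q,T)=0$ by the subcase hypothesis, that $\Ext^1_\Lambda(Q,T)=0$ because $Q$ is a projective module of $\cA(\alpha_\ast)$, and that $\Hom_\Lambda(T,Q)=0=\Ext^1_\Lambda(T,Q)$ because $Q\in T^\perp$; together these say $(T,Q)$ is an exceptional sequence, and (ii) is immediate.

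Granting (i) and (ii), I would finish the argument thus. Clause (1) of Definition \ref{def: signed exceptional sequence}, that $(|X_1|,|X_2|)=(T,|\sigma_T S|)$ be an exceptional sequence, is precisely (i). The $i=2$ instance of clause (2), that $X_2=\sigma_T S$ lie in $\cC(\cA_2)$ with $\cA_2=\cA(\alpha_\ast)$, is precisely (ii); here one uses that when $\sigma_T S=P[1]$ the module $P$ is indecomposable, which holds because an exceptional object has a division (in particular local) endomorphism ring. For the $i=1$ instance, put $\cA_1=\cA(\alpha_\ast)\cap|\sigma_T S|^\perp$; since $T$ is an exceptional module it suffices to check $T\in|\sigma_T S|^\perp$, that is $\Hom_\Lambda(|\sigma_T S|,T)=0=\Ext^1_\Lambda(|\sigma_T S|,T)$, and both of these vanishings are contained in (i) (by the sign convention for exceptional sequences, $(A,B)$ being an exceptional sequence forces $\Hom_\Lambda(B,A)=0=\Ext^1_\Lambda(B,A)$).

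I do not anticipate any real obstacle: the substantive work was already done in building $\sigma_T S$, and what remains is the bookkeeping of matching those outputs to the definition. The only places that call for a moment's care are the first subcase, where (i) has to be observed by hand as above, and the indecomposability of $P$ in the two shifted-projective subcases; neither presents a difficulty.
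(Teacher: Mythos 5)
Your proposal is correct and takes essentially the same route as the paper: the paper offers no separate argument for this proposition, simply observing that in each subcase of the construction of $\sigma_TS$ the pair $(T,|\sigma_TS|)$ was produced as an exceptional sequence (by braid mutation of $(Q,T)$ or trivially) with $\sigma_TS$ shifted exactly when its underlying module was shown to be relatively projective, which is the content of your (i) and (ii). Your explicit verification of the first subcase and of the matching with Definition \ref{def: signed exceptional sequence} is just the bookkeeping the paper leaves implicit.
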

 
\subsubsection{Case 3: $T=P[1]$}
 
If $S$ is a module then we have $\Hom_\Lambda(P,S)=0$. So, $P[1],S$ are ext-orthogonal and we let $\sigma_{P[1]}S=S$. This trivially satisfies Properties (a),(b),(c).
 
 So, suppose that $S=Q[1]$ where $Q$ is a relative projective object of $P^\perp$. If $Q$ is a projective object of $\cA(\alpha_\ast)$ then $P[1],Q[1]\in \cC(\alpha_\ast)$ form a partial cluster tilting set so we can let $\sigma_{P[1]}Q[1]=Q[1]$ which satisfies (a),(b),(c).
 
 We are reduced to the case when $S=Q[1]$ where $Q$ is not projective in $\cA(\alpha_\ast)$. In this case let $\dim_{F_Q}\Ext^1_\Lambda(Q,P)=m$ (necessarily positive as we will see) and let
 \[
 	P^m\cof E\onto Q
 \] 
 be the universal extension. Then we claim that $E$ is an indecomposable projective object of $\cA(\alpha_\ast)$, the proof being the same as in Case 2 above (but shorter since (ii) does not occur). So, we can let $\sigma_{P[1]}Q[1]=E[1]$ and Property (a) will hold. Since $(P,E)$ is the braid mutation of $(Q,P)$, it is an exceptional sequence. So, $E$ is indecomposable and Property (b) holds. Since $\underline\dim\,E[1]=\underline\dim\,Q[1]+m\,\underline\dim\,P[1]$, Property (c) also holds.

\subsubsection{Stronger version of Proposition \ref{prop 1.8: Properties of sigma_T}}

To complete the proof of the proposition, we need to make the statement stronger. We will prove the following theorem along with the proposition by simultaneous induction on $k$.

\begin{thm}\label{thm:sigma-T is a bijection}
 Suppose that $T=\{T_1,\cdots,T_k\}$ is a partial cluster tilting set in a finitely generated wide category $\cA(\alpha_\ast)$ of rank $k+\ell$ and let $\cA(\beta_\ast)=|T|^\perp\cap\cA(\alpha_\ast)$. Then the mapping $\sigma_T$ given by Proposition \ref{prop 1.8: Properties of sigma_T} gives a bijection
 \[
 	\sigma_T:\cC(\beta_\ast)\to \cC_T(\alpha_\ast)
 \]
 where $\cC_T(\alpha_\ast)$ is the set of all elements of $\cC(\alpha_\ast)$ which are ext-orthogonal to $T$ but not equal to any $T_i$. Furthermore, $X=\{X_1,\cdots,X_\ell\}$ is a partial cluster tilting set in $\cC(\beta_\ast)$ if and only if $\sigma_TX\cup T$ is a partial cluster tilting set in $\cC(\alpha_\ast)$.
 \end{thm}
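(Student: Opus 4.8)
The plan is to prove the theorem by induction on $k$, simultaneously with Proposition \ref{prop 1.8: Properties of sigma_T}, so that when we prove the theorem for a given $k$ we may use the proposition for all values $\le k$ and the theorem for all values $<k$. The base case $k=0$ is trivial: $\sigma_\emptyset$ is the identity on $\cC(\alpha_\ast)=\cC(\beta_\ast)$ by Property (e), and $\cC_\emptyset(\alpha_\ast)=\cC(\alpha_\ast)$. For the inductive step, write $T=\{T_1,\cdots,T_{k}\}$, set $T'=\{T_1,\cdots,T_{k-1}\}$, and let $\cA(\beta_\ast')=|T'|^\perp\cap\cA(\alpha_\ast)$, so that $T_k\in\cC(\beta_\ast')$ and $\cA(\beta_\ast)=|T_k|^\perp\cap\cA(\beta_\ast')=\cA(\beta_\ast')\cap|\sigma_{T'}T_k|^\perp$ — wait, here one must be careful: since $\{T'\}\cup\{T_k\}$ is already a partial cluster tilting set, Property (e) gives $\sigma_{T'}T_k=T_k$, so $\cA(\beta_\ast)=|T_k|^\perp\cap\cA(\beta_\ast')$. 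By the identity \eqref{eq: sigma TS=sigma T sigma S} established in the associativity corollary, $\sigma_T=\sigma_{T'}\circ\sigma_{T_k}$ as maps $\cC(\beta_\ast)\to\cC(\alpha_\ast)$, where the first $\sigma_{T_k}$ is the rank-one map $\cC(\beta_\ast)\to\cC(\beta_\ast')$ and $\sigma_{T'}$ is the rank-$(k-1)$ map $\cC(\beta_\ast')\to\cC(\alpha_\ast)$. Both are the restrictions of $\sigma$-maps to which the inductive hypothesis (theorem at levels $1$ and $k-1$) applies.

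The first task is therefore the rank-one case of the theorem: $\sigma_{T_k}:\cC(\beta_\ast)\to\cC_{T_k}(\beta_\ast')$ is a bijection. Injectivity follows from Property (c) together with Lemma \ref{lem: X to Tm to Y means X+Y=mT} exactly as in the uniqueness argument already given: two elements of $\cC(\beta_\ast')$ ext-orthogonal to $T_k$ whose dimension vectors are congruent mod $\underline\dim T_k$ must coincide, since otherwise their difference would force collinearity with $\underline\dim T_k$, contradicting linear independence in a rank-$2$ cluster tilting set. For surjectivity, given $U\in\cC_{T_k}(\beta_\ast')$, one shows $U$ lies in the image by running the construction of Cases 1–3 backwards: $U$ sits in a rank-$2$ wide subcategory $\,^\perp(|T_k,U|^\perp)\cap\cA(\beta_\ast')$ containing $T_k$, and cluster mutation there (the almost-split triangle $X\to T_k^m\to U$ or its variants) produces the unique $S\in\cC(\beta_\ast)$ with $\sigma_{T_k}S=U$; that $S$ is genuinely in $\cC(\beta_\ast)=|T_k|^\perp\cap\cA(\beta_\ast')$ and not merely in the derived category is exactly what the case analysis of Section \ref{ss 1.3: proof of properties of sT} verifies in each subcase. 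Composing with the inductive bijection $\sigma_{T'}:\cC(\beta_\ast')\to\cC_{T'}(\alpha_\ast)$ and checking that $\sigma_{T'}$ restricts to a bijection from $\cC_{T_k}(\beta_\ast')$ onto $\cC_T(\alpha_\ast)$ then gives bijectivity of $\sigma_T$. This last point needs Property (d): $\sigma_{T'}$ preserves ext-orthogonality, so it carries elements ext-orthogonal to $T_k$ to elements ext-orthogonal to $\sigma_{T'}T_k=T_k$, hence ext-orthogonal to all of $T$; conversely the inverse bijection does the same, and one uses Property (e) to see that $T_k$ itself is the unique preimage of $T_k$, matching up the ``not equal to any $T_i$'' clauses on both sides.

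For the second assertion — that $X=\{X_1,\cdots,X_\ell\}$ is a partial cluster tilting set in $\cC(\beta_\ast)$ iff $\sigma_TX\cup T$ is one in $\cC(\alpha_\ast)$ — the forward direction is immediate from the corollary already proved: each $\sigma_TX_i$ is ext-orthogonal to each $T_j$ by (a), and the $\sigma_TX_i$ are pairwise ext-orthogonal by (d), using that the $X_i$ are. The reverse direction uses the bijectivity just established: if $\sigma_TX\cup T$ is a partial cluster tilting set, then in particular the $\sigma_TX_i$ are pairwise ext-orthogonal, and applying the inverse bijection $\sigma_T^{-1}$ (which also preserves ext-orthogonality by the ``backwards'' form of (d), valid since $\sigma_T^{-1}$ is itself a $\sigma$-map of the appropriate kind, or simply by symmetry of the bijection argument) recovers that the $X_i$ are pairwise ext-orthogonal, i.e.\ $X$ is a partial cluster tilting set in $\cC(\beta_\ast)$.

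The main obstacle I expect is the surjectivity half of the rank-one case: one must show that the element $S$ produced by reversing the mutation construction genuinely lies in $\cC(\beta_\ast)$ — that is, that the candidate module or shifted projective lands in $|T_k|^\perp\cap\cA(\beta_\ast')$ and has the right ``sign'' (module vs.\ shifted relative-projective). This is precisely where the somewhat delicate projectivity arguments of Cases 2 and 3 (the minimality argument showing $E$, resp.\ $P$, is relative projective) get re-used, now read as asserting that every element of $\cC_{T_k}(\beta_\ast')$ arises, not merely that the forward construction stays inside $\cC$. Making this book-keeping airtight — in particular handling the subcase distinctions (whether a left approximation is mono or epi, whether the relevant $\Ext^1$ vanishes) uniformly enough to conclude a clean bijection — is the technical heart of the proof.
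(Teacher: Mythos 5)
Your overall architecture --- simultaneous induction on $k$, factoring $\sigma_T$ through an intermediate wide subcategory as a rank-one map composed with a rank-$(k-1)$ map, and reducing to the rank-one case --- matches the paper's (the paper peels off $T_k$ on the other side, writing $\sigma_T=\sigma_{T_k}\circ\sigma_{T_\ast}$ with $T_\ast\subset\cC(|T_k|^\perp)$, but that difference is immaterial). The injectivity argument and the induction step are essentially the paper's. Your surjectivity sketch for $k=1$ points at the right mechanism, though the paper's argument is not literally ``Cases 1--3 run backwards'': it produces the unique module $M$ with $(M,|T|)$ an exceptional sequence in $^\perp\left(|T,X|^\perp\right)$ and then, when $\sigma_TM\neq X$, runs a separate five-case analysis on the position of $T$ in the AR quiver of that rank-two subcategory to show $M$ is relatively projective in $\cA(\beta_\ast)$, so that $\sigma_T(M[1])=X$.

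The genuine gap is in the ``furthermore'' clause at the base case $k=1$. You justify the reverse direction by asserting that $\sigma_T^{-1}$ preserves ext-orthogonality ``by the backwards form of (d), valid since $\sigma_T^{-1}$ is itself a $\sigma$-map of the appropriate kind, or simply by symmetry of the bijection argument.'' Neither justification works: $\sigma_T^{-1}$ is not a $\sigma$-map (there is no cluster morphism from $\cA(\beta_\ast)$ back to $\cA(\alpha_\ast)$ inducing it), and a bijection that preserves a relation need not reflect it. Your forward direction, meanwhile, leans on Property (d); but at $k=1$ Property (d) is precisely the statement at issue, and the paper derives (d) \emph{from} the theorem rather than the other way around. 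This is where the real work of the paper's proof lives: it first proves that $\sigma_T^{-1}$ preserves ext-orthogonality by a contradiction argument inside a rank-two subcategory using virtual semi-invariants --- the Stability Theorem \ref{Stability theorem for virtual semi-invariants}, Corollary \ref{cor: comparing D-beta to D-alpha} relating $D_{\beta_\ast}(\gamma)$ to $D_{\alpha_\ast}(\gamma)$, and Corollary \ref{cor: semi-invariants do not cut clusters}, which rests on the virtual generic decomposition theorem --- and only then obtains the forward direction (hence Property (d)) from a mutation-connectivity argument: the set of cluster tilting sets in $\cC(\beta_\ast)$ that are images under $\sigma_T^{-1}$ of cluster tilting sets in $\cC_T(\alpha_\ast)$ is nonempty and closed under mutation, hence is all of them. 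None of this machinery appears in your plan, and without it both halves of the ``if and only if'' are unproven.
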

 
 So far we have shown the existence of a unique $\sigma_T$ satisfying Properties (a),(b),(c) of Proposition \ref{prop 1.8: Properties of sigma_T} for $k=1$. We will show that this implies the theorem for $k=1$. This clearly implies Property (d) in the proposition for $k=1$. The induction step is easy for both proposition and theorem.
 
 We first note that $\sigma_T$ is clearly a monomorphism. To see this, let $\RR\alpha_\ast$ be the $\ell$ dimensional  vector space of formal real linear combinations of the roots $\alpha_i$. Then $\beta_i$ are linearly independent as elements of $\RR\alpha_\ast$ since they are dimension vectors of modules $S_i$ in an exceptional sequence. This implies that $\RR\beta_\ast\subseteq \RR\alpha_\ast$ is $\ell$ dimensional. Furthermore, the $S_i$ and $T_j$ form an exceptional sequence. So, $\beta_i$ and $\underline\dim\,T_j$ span $\RR\alpha_\ast$. So, the inclusion map $\RR\beta_\ast\into \RR\alpha_\ast$ induces a linear isomorphism
 \[
 	\lambda_T:\RR\beta_\ast\cong \RR\alpha_\ast/\RR T
 \]
By Property (c), 
\[
\underline\dim\, \sigma_TX+\RR T=\lambda_T(\underline\dim\,X)\]
for all $X\in\cC(\beta_\ast)$. Since $X$ is determined by its dimension vector, $\sigma_T$ is 1-1.
 
\subsubsection{Proof that $\sigma_T$ is a bijection for $k=1$} It remains to show that $\sigma_T$ is surjective.
 
 Let $X\in \cC_T(\alpha_\ast)$. We will find an object of $\cC(\beta_\ast)$ which maps to $X$. Let $\cA(\gamma_\ast)=|T|^\perp\cap|X|^\perp$. Then $\{T,X\}$ is a cluster tilting set in $\,^\perp\cA(\gamma_\ast)$. There exists a unique module $M$ in $\cA(\delta_\ast)=\,^\perp\cA(\gamma_\ast)$ so that $(M,|T|)$ is an exceptional sequence. Applying $\sigma_T$ we have either $\sigma_TM=X$, in which case we are done, or $\sigma_TM=Y\neq X$ and $\{T,Y\}$ is another cluster tilting set in $\cA(\delta_\ast)$ with $T$. In the second case we claim that $M$ is a relatively projective object of $\cA(\beta_\ast)$. So, $M[1]\in\cC(\beta_\ast)$ and $\sigma_TM[1]=X$ by Lemma \ref{lem: X to Tm to Y means X+Y=mT}.
 
 To prove that $M$ is projective in $\cA(\beta_\ast)$ we examine the AR quiver of the cluster category of $\cA(\delta_\ast)=\,^\perp\cA(\gamma_\ast)$. If $\cA(\delta_\ast)$ is semi-simple then $Y=M$ and $X=M[1]$. So, $M$ is projective in $\cA(\alpha_\ast)$ and thus also in $\cA(\beta_\ast)$. So, we may assume the AR quiver of the cluster category of $\cA(\delta_\ast)$ is connected:
 \[
 \xymatrixrowsep{10pt}\xymatrixcolsep{10pt}
\xymatrix{
\cdots\ar[rd] && I_2\ar[rd]&& P_2[1]\ar[rd]&& P_2\ar[rd]\\
&I_1\ar[ru] &&P_1[1]\ar[ru] &&P_1\ar[ru] && \cdots
	}
 \]
 We look at all possible cases.
 
 Case 0: If $T$ is not one of the four middle terms: $I_2,P_1[1],P_2[1],P_1$ then $Y=M,T,X$ form an almost split sequence $M\cof T^m\onto X$. Since $\Ext$ is right exact, $\Ext^1_\Lambda(M,-)=0$ on $T^\perp=\cA(\beta_\ast)$, making $M$ projective in that category.
 
 Case 1: If $T=P_1$ then $M=I_2$ and $\sigma_TM=P_2$ making $X=P_2[1]$. Since this is an element of $\cC(\alpha_\ast)$, $P_2$ is projective in $\cA(\alpha_\ast)$ making $P_1\subseteq P_2$ projective as well. The exact sequence $T=P_1\cof P_2^m\onto I_2=M$ show that $\Ext^1_\Lambda(I_2,-)\cong\Ext^1_\Lambda(P_2^m,-)=0$ on $T^\perp$. So, $M=I_2$ is projective in $\cA(\beta_\ast)$.
 
 Case 2: $T=P_2[1]$. Then $M=P_1=Y$ and $X=P_1[1]$. Since $P_1[1]\in\cC(\alpha_\ast)$, $M=P_1$ is projective.
 
 Case 3: If $T=P_1[1]$ then $M=I_2,X=P_2[1]$ is just like Case 1.
 
 Case 4: If $T=I_2$ then $M=Y=I_1$ and $X=P_1[1]$. So, $P_1$ is projective in $\cA(\alpha_\ast)$. The exact sequence $P_1\cof M\onto T^m$ when shows that $M$ is projective in $T^\perp=\cA(\beta_\ast)$.
 
 So, $M$ is projective in $\cA(\beta_\ast)$ in all cases and $X=\sigma_TM[1]$. So, $\sigma_T$ is a bijection for $k=1$.
 
\subsubsection{Virtual semi-invariants} To show that the bijection $\sigma_T^{-1}:\cC_T(\alpha_\ast)\to \cC(\beta_\ast)$ takes cluster tilting sets to cluster tilting sets we need some results about virtual semi-invariants.

Let $Y$ be a fixed exceptional module in $\cA(\beta_\ast)$ with dimension vector $\gamma\in\NN\beta_\ast$. We consider all pairs of relatively projective objects $P,Q$ in $\cA(\beta_\ast)$ for which there is a homomorphism $f:P\to Q$ so that
 \[
 	\Hom_\Lambda(f,Y):\Hom_\Lambda(Q,Y)\to \Hom_\Lambda(P,Y)
 \]
 is an isomorphism. When $f:P\to Q$ is a monomorphism, this is equivalent to the condition that $\Hom_\Lambda(M,Y)=0=\Ext^1_\Lambda(M,Y)$ where $M=\coker f$. 
 
 \begin{defn}\label{def: det semi-inv and supports}\cite{IOTW3}
The determinant of the matrix of $\Hom_\Lambda(f,Y)$ with respect to some basis is called a \emph{(determinantal) virtual semi-invariant} on the presentation space $\Hom_\Lambda(P,Q)$ with \emph{determinantal (det)-weight} $\gamma=\undim Y$ and is denoted $c_Y:\Hom_\Lambda(P,Q)\to K$. The set of all integer vectors $\underline\dim\,Q-\underline\dim\,P\in \ZZ\beta_\ast$ for such pairs (relatively projective objects $P,Q$ in $\cA(\beta_\ast)$ so that $c_Y$ is nonzero) is called the \emph{integer support} of $c_Y$ in $\cA(\beta_\ast)$ and is denoted $D_{\ZZ\beta_\ast}(\gamma)$. The \emph{real support} of $c_Y$, denoted $D_{\beta_\ast}(\gamma)$ is the convex hull of $D_{\ZZ\beta_\ast}(\gamma)$ in $\RR\beta_\ast$. When $\cA(\beta_\ast)=mod\text-\Lambda$ and $\RR\beta_\ast=\RR^n$, $D_{\ZZ\beta_\ast}(\gamma)$, $D_{\beta_\ast}(\gamma)$ are denoted $D_\ZZ(\gamma)$, $D(\gamma)$.
 \end{defn}

We observe that, if $X\in \cC(\beta_\ast)$ and $Y\in \cA(\beta_\ast)$ is exceptional then $\underline\dim\,X$ lies in $D_{\beta_\ast}(\underline\dim\,Y)$ if and only if $\Hom_{\cD^b}(X,Y)=0=\Ext^1_{\cD^b}(X,Y)$ if and only if $|X|\in \,^\perp Y$.

The following theorem is proved in \cite{IOTW3} in the case $\cA(\beta_\ast)=mod\text-\Lambda$ and $\RR\beta_\ast=\RR^n$.
 
 \begin{thm}[Stability theorem for virtual semi-invariants]\label{Stability theorem for virtual semi-invariants} Let $Y$ be an exceptional module in $\cA(\beta_\ast)$ with $\undim Y=\gamma\in \RR\beta_\ast$. Then, a vector $v\in\RR\beta_\ast$ lies in the convex hull $D_{\beta_\ast}(\gamma)$ of $D_{\ZZ\beta_\ast}(\gamma)$ if and only if the following hold.
  \begin{enumerate}
 \item $\brk{v,\gamma}=0$ and
 \item $\brk{v,\gamma'}\le0$ for all real Schur subroots $\gamma'\subseteq \gamma$ so that $\gamma'\in\NN\beta_\ast$ (these are the dimension vectors of  exceptional submodules $Y'\subseteq Y$ which lie in $\cA(\beta_\ast)$)
 \end{enumerate}
 \end{thm}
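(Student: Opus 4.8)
The plan is to reduce the assertion to the absolute case $\cA(\beta_\ast)=mod\text-\Lambda$, $\RR\beta_\ast=\RR^n$, which is the stability theorem proved in \cite{IOTW3}. Since $\cA(\beta_\ast)$ is a finitely generated wide subcategory, I would choose a relative projective generator $P$, the direct sum of the indecomposable relative projective objects $Q_1,\dots,Q_\ell$ of $\cA(\beta_\ast)$, where $\ell$ is the rank of $\cA(\beta_\ast)$, and let $H'$ be its endomorphism algebra, so that, as recalled in Subsection \ref{ss 1.1: wide subcategories}, $\Phi=\Hom_\Lambda(P,-)\colon\cA(\beta_\ast)\to mod\text-H'$ is an exact equivalence of abelian categories; here $H'$ is again a hereditary finite dimensional algebra over the infinite field $K$, so \cite{IOTW3} applies to it.

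The first task is to check that $\Phi$ carries every ingredient of the statement to the corresponding ingredient for $H'$. Under $\Phi$, relative projective objects of $\cA(\beta_\ast)$ go to projective $H'$-modules; since $\Phi$ is fully faithful it identifies each presentation space $\Hom_\Lambda(P',Q')$ with $\Hom_{H'}(\Phi P',\Phi Q')$, identifies the map $\Hom_\Lambda(f,Y)$ with $\Hom_{H'}(\Phi f,\Phi Y)$, and therefore identifies the determinantal virtual semi-invariant $c_Y$ of det-weight $\gamma$ with the determinantal virtual semi-invariant of det-weight $\undim_{H'}\Phi Y$ over $H'$. Because $\cA(\beta_\ast)\into mod\text-\Lambda$ is a full, exact, extension-closed embedding, $\Hom_\Lambda$ and $\Ext^1_\Lambda$ between objects of $\cA(\beta_\ast)$ agree whether computed in $\cA(\beta_\ast)$ or in $mod\text-\Lambda$; hence, under the linear isomorphism $\RR^\ell\cong\RR\beta_\ast$ sending the $i$th coordinate vector to $\beta_i$, the Euler–Ringel form restricted to $\RR\beta_\ast$ becomes the Euler form of $H'$, the dimension vector $\undim_{H'}\Phi M$ becomes $\undim_\Lambda M$ (i.e.\ the vector of composition multiplicities of the simple objects of $\cA(\beta_\ast)$ in $M$), and consequently $D_{\ZZ\beta_\ast}(\gamma)$ and $D_{\beta_\ast}(\gamma)$ become $D_\ZZ(\undim_{H'}\Phi Y)$ and $D(\undim_{H'}\Phi Y)$ for $H'$.

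The step I expect to require the most care is the translation of the set appearing in condition (2): the real Schur subroots $\gamma'\subseteq\gamma$ with $\gamma'\in\NN\beta_\ast$, described in the statement as the dimension vectors of the exceptional submodules $Y'\subseteq Y$ lying in $\cA(\beta_\ast)$. Here I would observe that a $\Lambda$-submodule of $Y$ that happens to lie in $\cA(\beta_\ast)$ is the same thing as a subobject of $Y$ in the abelian category $\cA(\beta_\ast)$ — the inclusion is a morphism of $\cA(\beta_\ast)$, and it is a monomorphism there because a faithful exact functor reflects monomorphisms — and that, again by full faithfulness, an object of $\cA(\beta_\ast)$ is exceptional in $\cA(\beta_\ast)$ exactly when it is exceptional in $mod\text-\Lambda$. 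Hence $\Phi$ restricts to a bijection between the exceptional subobjects of $Y$ and the exceptional submodules of $\Phi Y$, so the set in condition (2) is carried precisely onto the set of real Schur subroots of $\undim_{H'}\Phi Y$ that occurs in the absolute statement — no fewer (every exceptional $H'$-submodule of $\Phi Y$ comes from one in $\cA(\beta_\ast)$) and no more (only subroots lying in $\NN\beta_\ast$ appear).

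Once this dictionary is in place, the theorem follows by applying the stability theorem of \cite{IOTW3} to the hereditary algebra $H'$ and the exceptional module $\Phi Y$, and transporting the conclusion back along $\Phi$. The hard part is really just this bookkeeping; the only genuinely substantive input — the characterization of the real support of a determinantal virtual semi-invariant by the inequalities in (1) and (2) — is imported directly from \cite{IOTW3}, so there is nothing new to prove about semi-invariants themselves.
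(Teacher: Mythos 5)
Your proposal is correct and is essentially the paper's own argument: the paper likewise identifies $\cA(\beta_\ast)$ with $mod\text-H'$ via a projective generator, notes that the exact full embedding induces an isometry $\overline\varphi_\ast:\RR^k\cong\RR\beta_\ast$ for the Euler--Ringel form, and then quotes the Virtual Stability Theorem of \cite{IOTW3} for $\cA(\beta_\ast)$. Your more careful bookkeeping of how the semi-invariants, supports, and exceptional subobjects transport across the equivalence is exactly the content the paper compresses into the phrase ``since $\overline\varphi_\ast$ is an isometry, the theorem follows.''
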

 
 Note that the second condition is vacuous when $Y$ is a simple object of $\cA(\beta_\ast)$.
 
 \begin{proof}
 Let $k$ be the rank of $\cA(\beta_\ast)$. Then we have an isomorphism $\varphi_\ast:\ZZ^k\cong \ZZ\beta_\ast$ given by $\varphi_\ast(a_1,\cdots,a_k)=\sum a_i\beta_i$. This is the linear isomorphism induced by the exact embedding $\varphi:\cA(\beta_\ast)\into mod\text-\Lambda$. Exactness of $\varphi$ implies that $\varphi_\ast$ is an isometry with respect to the form $\brk{\cdot,\cdot}$ and this extends to a linear isometry $\overline\varphi_\ast:\RR^k\cong \RR\beta_\ast$.
 
 Let $\alpha=\varphi_\ast^{-1}(\gamma)\in\NN^k$. Then the Virtual Stability Theorem (\cite{IOTW3}, Theorem 3.1.1) for $\cA(\beta_\ast)$ states, in the present notation, that $\overline\varphi_\ast^{-1}(D_{\beta_\ast}(\gamma))$ is the set of all $x\in\RR^k$ so that
  \begin{enumerate}
 \item[(1)$'$] $\brk{x,\alpha}=0$ and
 \item[(2)$'$] $\brk{x,\alpha'}\le 0$ for all real Schur subroots $\alpha'\subseteq \alpha$
 \end{enumerate}
and $\varphi_\ast^{-1}(D_{\ZZ\beta_\ast}(\gamma))=\overline\varphi_\ast^{-1}(D_{\beta_\ast}(\gamma))\cap \ZZ^k$. Since $\overline\varphi_\ast$ is an isometry, the theorem follows.
 \end{proof}
 
  \begin{cor}\label{cor: comparing D-beta to D-alpha}
  Let $Y\in \cA(\beta_\ast)=|T|^\perp$ with $\underline\dim\,Y=\gamma$ and let $v\in\RR\beta_\ast$. Then $v$ lies in $D_{\beta_\ast}(\gamma)$ if and only if $\underline\dim\,T+\vare v\in D_{\alpha_\ast}(\gamma)$ for all $\vare>0$ sufficiently small.
 \end{cor}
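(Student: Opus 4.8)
The plan is to reduce both membership conditions to systems of linear inequalities for the Euler--Ringel form by applying the Stability Theorem \ref{Stability theorem for virtual semi-invariants} once to $\cA(\beta_\ast)$ and once to $\cA(\alpha_\ast)$ (legitimate since $Y$, being exceptional in $\cA(\beta_\ast)$, is exceptional in $\cA(\alpha_\ast)$ as well), and then to compare the two systems directly. Throughout write $\tau=\undim T$.

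First I would record the two facts that drive the comparison. (1) Since $Y\in\cA(\beta_\ast)=|T|^\perp$ is the same as $|T|\in{}^\perp Y$, the observation following Definition \ref{def: det semi-inv and supports} gives $\tau\in D_{\alpha_\ast}(\gamma)$; by Theorem \ref{Stability theorem for virtual semi-invariants} this means $\brk{\tau,\gamma}=0$ and $\brk{\tau,\gamma'}\le 0$ for every real Schur subroot $\gamma'\subseteq\gamma$, i.e.\ for every exceptional submodule $Y'\subseteq Y$ lying in $\cA(\alpha_\ast)$. (2) For such a $\gamma'$ one has the dichotomy $\brk{\tau,\gamma'}=0\iff Y'\in\cA(\beta_\ast)$: indeed the inclusion $Y'\cof Y$ forces $\Hom_\Lambda(|T|,Y')\subseteq\Hom_\Lambda(|T|,Y)=0$, so $\brk{\undim|T|,\undim Y'}=-\dim_K\Ext^1_\Lambda(|T|,Y')$, and since $\tau=\pm\undim|T|$ this vanishes precisely when $\Ext^1_\Lambda(|T|,Y')=0$, which together with $\Hom_\Lambda(|T|,Y')=0$ says exactly $Y'\in|T|^\perp\cap\cA(\alpha_\ast)=\cA(\beta_\ast)$.

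For the forward implication, assume $v\in D_{\beta_\ast}(\gamma)$, so $\brk{v,\gamma}=0$ and $\brk{v,\gamma'}\le 0$ for all real Schur subroots $\gamma'\subseteq\gamma$ with $\gamma'\in\NN\beta_\ast$. I would verify the two conditions of Theorem \ref{Stability theorem for virtual semi-invariants} for $\tau+\vare v$: the equality $\brk{\tau+\vare v,\gamma}=\brk{\tau,\gamma}+\vare\brk{v,\gamma}=0$ is automatic; for an exceptional submodule $Y'\subseteq Y$ in $\cA(\alpha_\ast)$, if $\brk{\tau,\gamma'}<0$ then $\brk{\tau+\vare v,\gamma'}<0$ for all sufficiently small $\vare>0$, while if $\brk{\tau,\gamma'}=0$ then by fact (2) $Y'\in\cA(\beta_\ast)$, so $\gamma'\in\NN\beta_\ast$, $\brk{v,\gamma'}\le 0$, and $\brk{\tau+\vare v,\gamma'}=\vare\brk{v,\gamma'}\le 0$ for every $\vare>0$. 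Since the submodules of $Y$ satisfy $0\le\undim Y'\le\gamma$ coordinatewise there are only finitely many relevant $\gamma'$, so a single threshold $\vare_0>0$ works simultaneously, giving $\tau+\vare v\in D_{\alpha_\ast}(\gamma)$ for $0<\vare<\vare_0$. For the converse, assume $\tau+\vare v\in D_{\alpha_\ast}(\gamma)$ for all small $\vare>0$ (with $v\in\RR\beta_\ast$); subtracting off the $\tau$-contributions, which by facts (1) and (2) vanish on $\gamma$ and on every $\gamma'$ with $Y'\in\cA(\beta_\ast)$, and dividing by $\vare$, I recover $\brk{v,\gamma}=0$ and $\brk{v,\gamma'}\le 0$ for all real Schur subroots $\gamma'\in\NN\beta_\ast$, so $v\in D_{\beta_\ast}(\gamma)$ by Theorem \ref{Stability theorem for virtual semi-invariants} once more.

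The step I expect to be the main obstacle is fact (2), in particular the sign bookkeeping when $T=Q[1]$: I need to be certain that restricting from $Y$ to a submodule $Y'$ preserves $\Hom_\Lambda(|T|,Y')=0$, that the Euler form then collapses to a single $\Ext^1$ term, and that the sign in $\tau=\pm\undim|T|$ cannot spoil the inequality $\brk{\tau,\gamma'}\le 0$ — which it cannot, because when $T=Q[1]$ the module $Q$ is relatively projective in $\cA(\alpha_\ast)$, forcing $\Ext^1_\Lambda(Q,Y')=0$ and hence $\brk{\tau,\gamma'}=0$ outright. Once this is pinned down, the remainder is a routine application of the Stability Theorem together with elementary convexity and the finiteness of the set of subroots of $\gamma$.
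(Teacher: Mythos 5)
Your proposal is correct and follows essentially the same route as the paper: both directions reduce to the Stability Theorem applied in $\cA(\beta_\ast)$ and $\cA(\alpha_\ast)$, with the decisive point being the dichotomy that $\brk{\undim T,\gamma'}$ vanishes exactly when the exceptional submodule $Y'\subseteq Y$ lies in $\cA(\beta_\ast)$ and is strictly negative otherwise (the paper phrases this as a case split on whether $\gamma'$ lies in $\cA(\beta_\ast)$, handling $T=Q[1]$ via $\Ext^1_\Lambda(Q,W)=0$ just as you do). Your explicit remark on finiteness of the relevant subroots and the sign bookkeeping for shifted projectives only makes explicit what the paper leaves implicit.
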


 \begin{proof} ($\then$) Suppose that $v\in D_{\beta_\ast}(\gamma)$.  Then
\begin{enumerate}
\item $\brk{\underline\dim\,T+\vare v,\gamma}=0$ since $Y\in |T|^\perp$.
\item If $\gamma'\subseteq\gamma$ lies in $\cA(\beta_\ast)$ then $\brk{\underline\dim\,T+\vare v,\gamma'}=\vare\brk{v,\gamma'}\le0$.
\item If $\gamma''\subseteq\gamma$ does not lie in $\cA(\beta_\ast)$ then $Y$ has a submodule $W\in\cA(\alpha_\ast)$ of dimension $\gamma''$ so that $W\notin |T|^\perp$. So, either $\Hom_\Lambda(|T|,W)\neq 0$ or $\Ext^1_\Lambda(|T|,W)\neq0$. If $T$ is a module, we cannot have a nonzero homomorphism $T\to W$ since $\Hom_\Lambda(T,Y)=0$. If $T$ is a shifted projective then $\Ext^1_\Lambda(|T|,W)=0$. In either case, we get $\brk{\underline\dim\,T,\gamma''}<0$. Therefore $\brk{\underline\dim\,T+\vare v,\gamma''}<0$ for sufficiently small $\vare$.
\end{enumerate}

($\neht$) Conversely, suppose that $\underline\dim\,T+\vare v\in D_{\alpha_\ast}(\gamma)$ for all $\vare>0$ sufficiently small. Then, $\brk{\underline\dim\,T+\vare v,\gamma}=0$. This implies that $\brk{v,\gamma}=0$ since $\brk{\underline\dim\,T,\gamma}=0$. For any $\gamma'\subseteq \gamma$ where $\gamma'$ is the dimension vector of an object of $\cA(\beta_\ast)=|T|^\perp$, we also have $\brk{\underline\dim\,T,\gamma'}=0$. So, \[
\brk{\underline\dim\,T+\vare v,\gamma'}=\vare\brk{v,\gamma'}\le0
\]
which implies $\brk{v,\gamma'}\le0$.
\end{proof}

 \begin{eg}
 Let $\cA(\alpha_\ast)=\cA(S_1,S_2,S_3)$ be the module category of the quiver $1\ot 2\ot 3$. Let $T$ be the module with $\underline\dim\,T=(0,1,1)^t$. (So, $T=I_2$ is the injective envelope of $S_2$.) Then $T^\perp=\cA(\beta_\ast)=\cA(S_2,P_3)$ is a semi-simple category whose simple objects $S_2$ and $P_3$ are also projective. So, $S_2[1],P_3[1]\in\cC(\beta_\ast)$. Let $Y=P_3$ with dimension vector $\gamma=(1,1,1)^t$ and $v=(0,-1,0)^t=\underline\dim\,S_2[1]$. Then $\brk{v,\gamma}=0$. So, $v\in D_{\beta_\ast}(\gamma)$. The corollary states that
 \[
 	\underline\dim\,T+\vare v=(0,1,1)^t+\vare (0,-1,0)^t=(1,1-\vare,1)^t
 \]
 is an element of $D_{\alpha_\ast}(\gamma)$ for sufficiently small $\vare>0$. In fact, $(1,1-\vare,1)^t\in D_{\alpha_\ast}(\gamma)$ if and only if $\vare\le1$ since $\brk{(1,1-\vare,1)^t,\underline\dim\,S_1}=\vare-1$ is required to be $\le0$ since $S_1$ is a submodule of $T$ in $\cA(\alpha_\ast)$.
 \end{eg}

 Another result that we need, also proved in \cite{IOTW3}, is the virtual generic decomposition theorem. As in the proof of Theorem \ref{Stability theorem for virtual semi-invariants}, this can be reworded as follows.
 
 \begin{thm}[Virtual generic decomposition theorem]
 Suppose that $\{X_1,\cdots,X_k\}$ is a partial cluster tilting set in $\cC(\alpha_\ast)$. Let $P,Q$ be projective objects in $\cA(\alpha_\ast)$ so that $\underline\dim\,Q-\underline\dim\,P=\sum n_i\underline\dim\,X_i$ for positive integers $n_i$. Then for $f$ in an open dense subset of $\Hom_\Lambda(P,Q)$, we have a distinguished triangle in the bounded derived category of $\cA(\alpha_\ast)$:
 \[
 	P\xrarrow fQ\to \coprod n_iX_i\to P[1].
 \]
  \end{thm}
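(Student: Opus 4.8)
The plan is to reduce the assertion to the already-established case $\cA(\alpha_\ast)=mod\text-\Lambda$ (with $\RR\alpha_\ast=\RR^n$), exactly as was done for Theorem \ref{Stability theorem for virtual semi-invariants}, and then to invoke the virtual generic decomposition theorem of \cite{IOTW3}. Since $\cA(\alpha_\ast)$ is a finitely generated wide subcategory, it is equivalent, via its exact full embedding into $mod\text-\Lambda$, to $mod\text-H$ for a hereditary finite dimensional algebra $H$ (the endomorphism ring of a generating relative projective object). First I would record how this equivalence interacts with the data in the statement: it is exact and fully faithful, hence extends to a triangulated equivalence of $\cD^b(\cA(\alpha_\ast))$ onto its essential image in $\cD^b(mod\text-\Lambda)$, carrying distinguished triangles to distinguished triangles; on Grothendieck groups it induces a dimension-vector-preserving isomorphism $\varphi_\ast$ onto $\ZZ\alpha_\ast$, as in the proof of Theorem \ref{Stability theorem for virtual semi-invariants}; it carries indecomposable projective $H$-modules to the indecomposable relative projectives of $\cA(\alpha_\ast)$ and $\cC(mod\text-H)$ bijectively onto $\cC(\alpha_\ast)$, taking partial cluster tilting sets to partial cluster tilting sets; and, since $P,Q$ lie in $\cA(\alpha_\ast)$ and the embedding is full, $\Hom_\Lambda(P,Q)=\Hom_H(\bar P,\bar Q)$ as $K$-vector spaces, so ``open dense'' has the same meaning on both sides.

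With this dictionary the hypotheses transfer verbatim: $\bar P,\bar Q$ are projective $H$-modules, $\{\bar X_i\}$ is a partial cluster tilting set in $\cC(mod\text-H)$, and $\underline\dim\,\bar Q-\underline\dim\,\bar P=\sum n_i\underline\dim\,\bar X_i$ with all $n_i>0$, because $\varphi_\ast$ is a dimension-vector-preserving bijection. Applying the virtual generic decomposition theorem of \cite{IOTW3} to $H$ then produces an open dense $U\subseteq\Hom_H(\bar P,\bar Q)$ such that for $f\in U$ there is a distinguished triangle $\bar P\xrarrow{f}\bar Q\to\coprod n_i\bar X_i\to\bar P[1]$ in $\cD^b(mod\text-H)$; transporting this triangle back through the equivalence and identifying $\Hom$-spaces as above yields the asserted triangle in $\cD^b(\cA(\alpha_\ast))$ over the open dense subset $U\subseteq\Hom_\Lambda(P,Q)$. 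Positivity of the $n_i$ is what places $\underline\dim\,\bar Q-\underline\dim\,\bar P$ appropriately inside the relevant cone and forces the generic decomposition to have exactly these multiplicities.

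I expect the only real content, beyond this bookkeeping, to be making sure the theorem quoted from \cite{IOTW3} is available in precisely the needed generality: for an arbitrary partial cluster tilting set, whose members may be shifted relative projectives as well as exceptional modules, so that the generic cone legitimately has a shifted-projective part. If \cite{IOTW3} states the result only for presentations with a purely module-theoretic generic decomposition, I would argue directly instead. Writing $C_f$ for the cone of $f\in\Hom_H(\bar P,\bar Q)$, the object $C:=\coprod n_iX_i$ is rigid in $\cD^b(mod\text-H)$ because the $X_i$ are pairwise ext-orthogonal and each is individually rigid, so $\Ext^1_{\cD^b}(C,C)=0$. One exhibits a single morphism $f_0$ with $\mathrm{cone}(f_0)\cong C$, obtained by splicing together two-term projective presentations of the summands $n_iX_i$ — for an exceptional $X_i$ from the universal extension used in Cases 1--2 of the proof of Proposition \ref{prop 1.8: Properties of sigma_T}, and for a shifted projective $X_i=Q_i[1]$ from Case 3 — ordered by an exceptional-sequence order on $\{|X_i|\}$ so that no extraneous maps appear. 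Then $\dim_K\Hom_{\cD^b}(C_f,C_f)$ is upper semicontinuous in $f$ on the affine space $\Hom_H(\bar P,\bar Q)$ (the relevant $\Hom$- and $\Ext^1$-spaces of two-term complexes are computed by a complex depending polynomially on $f$), so the rigid value $C$, being attained, is the generic one and the locus where $\mathrm{cone}(f)\cong C$ is open dense. Either route reduces the whole statement to producing one $f$ with the prescribed cone; openness is then formal.
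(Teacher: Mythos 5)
Your proposal matches the paper's treatment: the paper gives no independent proof, but simply states that the result is "also proved in \cite{IOTW3}" and "can be reworded as follows," meaning it is transported through the exact full embedding $\varphi:\cA(\alpha_\ast)\into mod\text-\Lambda$ exactly as in the proof of Theorem \ref{Stability theorem for virtual semi-invariants} — which is precisely your first two paragraphs. Your additional fallback argument via rigidity and semicontinuity of $\dim\Hom(C_f,C_f)$ goes beyond what the paper records, but the core route is the same.
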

 
\begin{cor}\label{cor: semi-invariants do not cut clusters}
Suppose that $\{X_1,\cdots,X_k\}$ is a partial cluster tilting set in $\cC(\alpha_\ast)$ with dimension vectors $\underline\dim\,X_i=\gamma_i$. Let $Y\in\cA(\alpha_\ast)$ with $\underline\dim\,Y=\gamma$ so that $D_{\alpha_\ast}(\gamma)$ contains $\sum n_i \gamma_i$ where the $n_i$ are positive rational numbers. Then $D_{\alpha_\ast}(\gamma)$ contains $\gamma_i$ for all $i$.
\end{cor}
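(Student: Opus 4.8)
The plan is to realize the linear combination $\sum n_i\gamma_i$ as the dimension vector $\undim Q-\undim P$ of a generic projective presentation whose cone is $\coprod n_iX_i$, and then read off the required $\Hom$- and $\Ext^1$-vanishing from a long exact sequence. First I would clear denominators: pick a positive integer $N$ with $Nn_i\in\ZZ_{>0}$ for all $i$. By the Stability theorem \ref{Stability theorem for virtual semi-invariants}, the real support $D_{\alpha_\ast}(\gamma)$ is cut out of $\RR\alpha_\ast$ by the homogeneous conditions $\brk{v,\gamma}=0$ and $\brk{v,\gamma'}\le0$ for real Schur subroots $\gamma'\subseteq\gamma$ in $\NN\alpha_\ast$; hence $D_{\alpha_\ast}(\gamma)$ is a convex cone and contains $\sum(Nn_i)\gamma_i$ along with $\sum n_i\gamma_i$. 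Since $\sum(Nn_i)\gamma_i\in\ZZ\alpha_\ast$ and $D_{\ZZ\alpha_\ast}(\gamma)=D_{\alpha_\ast}(\gamma)\cap\ZZ\alpha_\ast$ (as in the proof of that theorem), this shows $\sum(Nn_i)\gamma_i\in D_{\ZZ\alpha_\ast}(\gamma)$.

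By the definition of the integer support there then exist relatively projective objects $P,Q$ of $\cA(\alpha_\ast)$ with $\undim Q-\undim P=\sum(Nn_i)\gamma_i$ such that the virtual semi-invariant $c_Y$ does not vanish identically on $\Hom_\Lambda(P,Q)$; equivalently, $\Hom_\Lambda(f,Y)$ is an isomorphism for $f$ in a dense open subset. Since $\{X_1,\dots,X_k\}$ is a partial cluster tilting set, applying the Virtual generic decomposition theorem to this very pair $P,Q$ yields, for $f$ in a dense open subset, a distinguished triangle $P\xrarrow fQ\to\coprod(Nn_i)X_i\to P[1]$ in $\cD^b(\cA(\alpha_\ast))$. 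As $\Hom_\Lambda(P,Q)$ is an irreducible affine space, I can pick $f$ in the intersection of the two dense opens.

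For such an $f$, write $C=\coprod(Nn_i)X_i$ and apply $\Hom_{\cD^b}(-,Y)$ to the triangle. Since $P$ and $Y$ are modules, $\Hom_{\cD^b}(P[1],Y)=0$; since $Q$ is relatively projective, $\Ext^1_{\cD^b}(Q,Y)=0$; and $\Hom_\Lambda(f,Y)$ is an isomorphism by the choice of $f$. Chasing the long exact sequence forces $\Hom_{\cD^b}(C,Y)=0=\Ext^1_{\cD^b}(C,Y)$, and therefore $\Hom_{\cD^b}(X_i,Y)=0=\Ext^1_{\cD^b}(X_i,Y)$ for every $i$ (each $Nn_i\ge1$, so each $X_i$ is a summand of $C$). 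By the observation preceding Theorem \ref{Stability theorem for virtual semi-invariants}, applied with $\cA(\beta_\ast)$ taken to be $\cA(\alpha_\ast)$, this says exactly that $\gamma_i=\undim X_i\in D_{\alpha_\ast}(\gamma)$, which is the assertion.

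The step I expect to be most delicate is matching up the two genericity statements: the decomposition theorem needs a projective presentation $P\to Q$ realizing the dimension vector $\sum(Nn_i)\gamma_i$, while the semi-invariant input needs that same pair $P,Q$ to be one on which $c_Y$ is nonzero. This is precisely why I first push $\sum(Nn_i)\gamma_i$ into the integer support $D_{\ZZ\alpha_\ast}(\gamma)$ — which by definition supplies such a pair — and only afterward invoke the decomposition theorem, which applies to an arbitrary projective presentation of the given dimension vector. A secondary point to verify is that replacing the positive rationals $n_i$ by the integers $Nn_i$ is harmless, which is immediate from the cone property of $D_{\alpha_\ast}(\gamma)$ established in the first paragraph.
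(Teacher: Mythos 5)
Your proposal is correct and follows essentially the same route as the paper's proof: clear denominators, realize $\sum(Nn_i)\gamma_i$ as $\undim Q-\undim P$ for a presentation on which $c_Y$ is generically nonzero, apply the virtual generic decomposition theorem, and chase the long exact sequence to get $\Ext^j_{\cD^b}(X_i,Y)=0$. Your version is somewhat more careful than the paper's terse argument in reconciling the two genericity conditions on $f$ (by sourcing $P,Q$ from the definition of $D_{\ZZ\alpha_\ast}(\gamma)$ and intersecting dense opens), but the underlying idea is identical.
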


\begin{proof} By multiplying by a positive integer we may assume that the $n_i$ are all positive integer. For these $n_i$ we take $P,Q$ and $f:P\to Q$ as in the theorem. Then, for general $f$ the semi-invariant $c_Y$ is defined, i.e., $\Hom_\Lambda(f,Y)$ is an isomorphism. By the long exact sequence for the distinguished triangle in the theorem, $\Ext^j_{\cD^b(\cA(\alpha_\ast))}(n_iX_i,Y)=0$ for all $i$ and $j$. So, $X_i\in D_{\alpha_\ast}(\gamma)$ for all $i$.
\end{proof}
 
\subsubsection{$\sigma_T^{-1}$ takes cluster tilting sets to cluster tilting sets}
  Using virtual semi-invariants we will show that the bijection $\sigma_T^{-1}:\cC_T(\alpha_\ast)\to \cC(\beta_\ast)$ takes ext-orthogonal elements to ext-orthogonal elements assuming Properties (a),(b),(c) of Proposition \ref{prop 1.8: Properties of sigma_T} for $k=1$. This will imply that $\sigma_T^{-1}$ takes cluster tilting sets to cluster tilting sets.
  
 Suppose that $X_1,X_2\in \cC_T(\alpha_\ast)$ are ext-orthogonal but $Y_i=\sigma_T^{-1}(X_i)\in\cC(\beta_\ast)$ are not. Then we will obtain a contradiction. 
 
 We have that $\{T,X_1,X_2\}$ is a partial cluster tilting set in $\cA(\alpha_\ast)$. Let $\cA(\gamma_\ast)=|T,X_1,X_2|^\perp$. Then $\cA(\delta_\ast):=\cA(\beta_\ast)\cap\,^\perp\cA(\gamma_\ast)$ is a rank 2 f.g. wide subcategory of $\cA(\alpha_\ast)$. By Properties (a),(b) we have:
 \[
 	|Y_i|^\perp\cap \cA(\beta_\ast)=|X_i|^\perp\cap \cA(\beta_\ast)\supseteq \cA(\gamma_\ast)
 \]
 for each $i$. Therefore, $|Y_i|$ lie in $\,^\perp\cA(\gamma_\ast)=\cA(\delta_\ast)$. Since projectives in $\cA(\beta_\ast)$ are projective in $\cA(\delta_\ast)$ this implies $Y_i\in \cC(\delta_\ast)$.
 
 We are assuming that $Y_i$ are not ext-orthogonal. We can renumber the $Y_i$ so that $Y_1$ is to the left of $Y_2$ in the fundamental domain of $\tau^{-1}[1]$ in the AR-quiver of the bounded derived category of $\cA(\delta_\ast)$. Then $\Hom_{\cD^b}(Y_2,Y_1)=0$ and $\Ext^1_{\cD^b}(Y_1,Y_2)=0$ in $\cD^b=\cD^b(\cA(\delta_\ast)$. If $Y_1,Y_2$ are not ext-orthogonal in the cluster category, we must have $\Ext^1_{\cD^b}(Y_2,Y_1)\neq0$. Also, $Y_1$ must be a module which implies that $\Ext^j_{\cD^b}(Y_2,Y_1)=0$ for $j\neq 0,1$. Therefore, with the notation $\gamma_i=\underline\dim\,Y_i$, we have
 \[
 	\brk{\gamma_2,\gamma_1}=\dim_K\Hom_{\cD^b}(Y_2,Y_1)-\dim_K\Ext^1_{\cD^b}(Y_2,Y_1)<0
 \]
 Also, $\brk{\gamma_1,\gamma_1}>0$. This implies that there are positive rational numbers $a,b$, unique up to scaling, so that $\brk{a\gamma_1+b\gamma_2,\gamma_1}=0$. Let $Z$ be the unique object so that there is an irreducible map $Y_1\to Z$. Then $|Z|\in \,^\perp Y_1$. So, $\brk{\underline\dim\,Z,\gamma_1}=0$. By uniqueness of $a,b$ we have $\underline\dim\,Z=a\gamma_1+b\gamma_2$. Since $|Z|\in \,^\perp Y_1$, this vector $v=a\gamma_1+b\gamma_2$ lies in the support $D_{\beta_\ast}(\gamma_1)$ of the virtual semi-invariant $\sigma_{\gamma_1}$ defined on $\RR\beta_\ast$.
 
By Corollary \ref{cor: comparing D-beta to D-alpha}, $D_{\alpha_\ast}(\gamma_1)$ contains the vector
\[
	\underline\dim\,T+\vare v=\underline\dim\,T+\vare a\gamma_1+\vare b\gamma_2
\]
for $\vare>0$ sufficiently small. By Property (c), this is equal to $c\underline\dim\,T+\vare a\underline\dim\,X_1+\vare b\underline\dim\,X_2$ where $c$ is a number which converges to 1 as $\vare\to0$. By Corollary \ref{cor: semi-invariants do not cut clusters}, the objects $T,X_1,X_2$ lie in $D_{\alpha_\ast}(\gamma_1)$. In other words, $|T|,|X_1|,|X_2|$  lie in $\,^\perp Y_1$. Equivalently, $Y_1$ lies in $|T,X_1,X_2|^\perp$ which is a contradiction. Therefore, $\sigma_T^{-1}$ takes ext-orthogonal elements to ext-orthogonal elements.

\subsubsection{$\sigma_T$ takes cluster tilting sets to cluster tilting sets}

Let $\cK$ be the set of all cluster tilting sets in $\cC(\beta_\ast)$ which are the images under $\sigma_T^{-1}$ of cluster tilting sets in $\cC_T(\alpha_\ast)$. We know that $\cK$ is nonempty since $\cC_T(\alpha_\ast)$ contains at least one cluster tilting set. We claim that $\cK$ is closed under all mutations of cluster tilting sets. Using the well-known fact that all cluster tilting sets over a hereditary algebra are mutation equivalent, this will imply that $\cK$ contains all cluster tilting sets in $\cC(\beta_\ast)$ and that therefore $\sigma_T$ sends all cluster tilting sets in $\cC(\beta_\ast)$ to cluster tilting sets in $\cC_T(\alpha_\ast)$.

To prove the claim, let $Y=\{Y_1,\cdots,Y_\ell\}$ be a cluster tilting set in $\cC(\beta_\ast)$ which lies in $\cK$. Then $X=\{\sigma_TY_1,\cdots,\sigma_TY_\ell,T\}$ is a cluster tilting set in $\cC(\alpha_\ast)$ by definition of $\cK$. For any $j=1,\cdots,\ell$ we want to show that the mutation $\mu_jY$ of $Y$, given by replacing $Y_j$ with $Y_j^\ast\in\cC(\beta_\ast)$ also lies in $\cK$. But this is easy: Take $\mu_jX$. This is the cluster tilting set in $\cC(\alpha_\ast)$ obtained by replacing $\sigma_TY_j$ with the unique other object $Z$ which will complete the cluster tilting set. Since $\sigma_T^{-1}$ takes cluster tilting sets to cluster tilting sets, $\sigma_T^{-1}(\mu_jX)$ is a cluster tilting set in $\cC(\beta_\ast)$. But this is the same as $Y$ except that $Y_j$ is replaced with $\sigma_T^{-1}(Z)\neq Y_j$. This must be equal to $Y_j^\ast$. So, $\mu_jY$ is in $\cK$. So, $\cK$ contains all cluster tilting sets in $\cC(\beta_\ast)$.

This completes the proof of Proposition \ref{prop 1.8: Properties of sigma_T} and Theorem \ref{thm:sigma-T is a bijection} in the case $k=1$.

\subsubsection{Induction step}
 Suppose now that $k=2$ and the proposition and theorem both hold for $k-1$. So, we have $T=\{T_1,\cdots,T_k\}$ a partial cluster tilting set in $\cA(\alpha_\ast)$ and $|T|^\perp=\cA(\beta_\ast)$. By an observation of Schofield, the modules $|T_i|$ can be reordered in such a way that they form an exceptional sequence $(|T_1|,|T_2|,\cdots,|T_k|)$. This implies that $|T_1|,\cdots,|T_{k-1}|$ lie in $|T_k|^\perp$ which we denote $\cA(\gamma_\ast)$. Also, the bijection $\sigma_{T_k}:\cC(\gamma_\ast)\to \cC_{T_k}(\alpha_\ast)$ sends $T_i$ to $T_i$ for $i<k$.
 
 By induction on $k$ we also have a bijection 
 $
 \sigma_{T_\ast}:\cC(\beta_\ast)\to \cC_{T_\ast}(\gamma_\ast)
 $
 given by the partial cluster tilting set $T_\ast=\{T_1,\cdots,T_{k-1}\}$ in $\cC(\gamma_\ast)$.

 \[
 \xymatrixrowsep{15pt}
 \xymatrixcolsep{30pt}
\xymatrix{
\cC(\beta_\ast)\ar[r]^{\sigma_{T_\ast}}_\approx &
	\cC_{T_\ast}(\gamma_\ast)\ar[d]_\subseteq\ar[r] &
	 \cC_{T}(\alpha_\ast)\ar[d]_\subseteq\\
&
	\cC(\gamma_\ast) \ar[r]^{\sigma_{T_k}}_\approx&
	\cC_{T_k}(\alpha_\ast)
	}
 \]
 
 \noi\underline{Claim 1}: The bijection $\sigma_{T_k}$ sends $\cC_{T_\ast}(\gamma_\ast)$ bijectively onto $\cC_T(\alpha_\ast)$ and therefore induces a bijection
 \[
 	\sigma_T:=\sigma_{T_k}\circ\sigma_{T_\ast}:\cC(\beta_\ast)\xrarrow\approx \cC_{T_\ast}(\gamma_\ast)\xrarrow\approx \cC_{T}(\alpha_\ast)
 \]
 
 Proof: An element $Y$ of $\cC(\gamma_\ast)$ lies in $\cC_{T_\ast}(\gamma_\ast)$ iff it is ext-orthogonal to but not equal to $T_i$ for $i<k$. The element $X=\sigma_{T_k}Y\in \cC_{T_k}(\alpha_\ast)$ lies in $\cC_T(\alpha_\ast)$ iff $X$ is ext-orthogonal to but not equal to $T_i$ for $i<k$. Since $\sigma_{T_k}(T_i)=T_i$ and by using the theorem for $k=1$ we see that these conditions are equivalent.
 
 We now show that the bijection $\sigma_T:=\sigma_{T_k}\circ\sigma_{T_\ast}$ satisfies Proposition \ref{prop 1.8: Properties of sigma_T}.
 \begin{enumerate}
 \item[(a)] If $Y\in\cC(\beta_\ast)$ then $\sigma_TY\in \cC_T(\alpha_\ast)$ implies, by definition, that $\{Y,T_1,\cdots,T_k\}$ is a partial cluster tilting set in $\cC(\alpha_\ast)$. So, $\sigma_T$ has Property (a).
 \item[(b)] For any $Y\in\cC(\beta_\ast)$ we have, by induction on $k$, that
 \[
 	\cA(\beta_\ast)\cap|Y|^\perp=\cA(\beta_\ast)\cap|\sigma_{T_\ast}Y|^\perp=\cA(\beta_\ast)\cap|\sigma_{T_k}\sigma_{T_\ast}Y|^\perp
 \]
 Since $\sigma_T=\sigma_{T_k}\circ\sigma_{T_\ast}$, Property (b) holds.
 \item[(c)] By induction on $k$ we have the following for any $Y\in\cC(\beta_\ast)$: 
 \[
 	\underline\dim\,Y+\RR T_\ast=\underline\dim\,\sigma_{T_\ast}Y+\RR T_\ast
 \]
 By the case $k=1$ we have
 \[
 	\underline\dim\,\sigma_{T_\ast}Y+\RR T_k=\underline\dim\,\sigma_{T}Y+\RR T_k
 \]
 Since $\RR T=\RR T_\ast+\RR T_k$, we can put these together to get:
 \begin{equation}\label{eq: Property (c)}
 	\underline\dim\,Y+\RR T=\underline\dim\,\sigma_{T}Y+\RR T
 \end{equation}
 which is equivalent to the statement that $\sigma_T$ satisfies Property (c).
 \end{enumerate}
 
The uniqueness of $\sigma_T$ follows from the following observation.
 
 \noi\underline{Claim 2}: The inclusion map $\RR\beta_\ast\into \RR\alpha_\ast$ induces a linear isomorphism
 \[
 	\lambda_T:\RR\beta_\ast\xrarrow\approx\RR\alpha_\ast/\RR T
 \]
 In other words, $\sigma_TY$ is the unique element of $\cC_T(\alpha_\ast)$ satisfying \eqref{eq: Property (c)}.
 
 Proof: Since $\RR\beta_\ast$ and $\RR T$ have complementary dimensions in $\RR\alpha_\ast$, it suffices to show that they span $\RR\alpha_\ast$. Choose any exceptional sequence in $\cA(\beta_\ast)$, for example the simple objects $(S_\ell,\cdots,S_1)$. Then $(S_\ell,\cdots,S_1,|T_1|,\cdots,|T_k|)$ is an exceptional sequence in $\cA(\alpha_\ast)$. So, the dimension vectors of these modules form a basis for $\RR\alpha_\ast$. Since $S_i\in\cA(\beta_\ast)$, $\underline\dim\,S_i\in\RR\beta_\ast$. Therefore $\RR\beta_\ast+\RR T=\RR\alpha_\ast$, proving Claim 2.\vs2
 
 Property (d) and its converse are easy: $Y_1,Y_2\in \cC(\beta_\ast)$ are ext-orthogonal iff $\sigma_{T_\ast}Y_1,\sigma_{T_\ast}Y_2$ are ext-orthogonal iff $\sigma_{T_k}\sigma_{T_\ast}Y_1,\sigma_{T_k}\sigma_{T_\ast}Y_2$ are ext-orthogonal. Therefore $\sigma_{T}=\sigma_{T_k}\sigma_{T_\ast}$ satisfies Property (d) and both $\sigma_T$ and $\sigma_T^{-1}$ take cluster tilting sets to cluster tilting sets.
 
 This concludes the proof of Proposition \ref{prop 1.8: Properties of sigma_T} and Theorem \ref{thm:sigma-T is a bijection} and therefore also completes the definition of the cluster morphism category.

\section{Signed exceptional sequences}\label{sec 2: signed exceptional sequences}

We are now in a position to explore signed exceptional sequences and prove one of the main theorems of this paper: There is a bijection between signed exceptional sequences and ordered cluster tilting sets.

\subsection{Definition and basic properties}\label{ss 2.1: Def of signed exceptional sequence}

Let $\cA$ be a finitely generated wide subcategory of $mod$-$\Lambda$. Recall Definition \ref{def: signed exceptional sequence}: a \emph{signed exceptional sequence} in $\cA$ is a sequence $(X_1,X_2,\cdots,X_k)$ in $\cA\cup \cA[1]\subset\cD^b(\cA)$ with the following properties.
\begin{enumerate}
\item $(|X_1|,\cdots,|X_k|)$ is an exceptional sequence. So, $|X_i|\in |X_j|^\perp$ for $i<j$.
\item If $X_j=Q[1]$ then $Q$ is a relatively projective object of $|X_{j+1},\cdots,X_k|^\perp$.
\end{enumerate}
In our notation, it is understood that perpendicular categories are taken inside the ambient category $\cA$. Thus $|X_j|^\perp$ means $|X_j|^\perp\cap\cA$.

Let $\cA_j=|X_{j+1},\cdots,X_k|^\perp$. Then (2) is equivalent to the condition: $X_j\in\cC(\cA_j)$. Therefore, a signed exceptional sequence gives a sequence of composable cluster morphisms:
\[
	\cA=\cA_k\xrarrow{[X_k]}\cA_{k-1}\xrarrow{[X_{k-1}]}\cdots \xrarrow{[X_{2}]}\cA_1\xrarrow{[X_{1}]}\cA_0
\]
Conversely, given any composition of cluster morphisms $[Y_1]\circ[Y_2]\circ\cdots\circ[Y_k]:\cA\to\cB$ where each $Y_j$ is a one element cluster tilting set, the sequence $(Y_1,\cdots,Y_k)$ is a signed exceptional sequence in the domain $\cA$.

By the composition law for cluster morphism we have the following.
\begin{prop}\label{prop 2.1: formula for cluster morphism corresponding to signed exceptional sequence}
The cluster morphism corresponding to a signed exceptional sequence $(X_1,\cdots,X_k)$ in $\cA$ is
$
	[X_1]\circ[X_2]\circ\cdots\circ[X_k]=[T(1)]
$
where $T(j)=(T_j, T_{j+1}, \cdots, T_k)$ is the (ordered) partial cluster tilting set in $\cC(\cA)$ given recursively as follows.
\begin{enumerate}
\item $T_k=X_k$.
\item Given $T(j)$, let $T_{j-1}=\sigma_{T(j)}X_{j-1}$.\qed
\end{enumerate}
\end{prop}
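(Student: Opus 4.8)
The plan is to prove the proposition by induction on the length $k$, using nothing beyond the composition law \eqref{eq:composition of cluster morphisms} for cluster morphisms and the defining Properties (a)--(c) of $\sigma_T$ established in Proposition \ref{prop 1.8: Properties of sigma_T}. Recall from Subsection \ref{ss 2.1: Def of signed exceptional sequence} that a signed exceptional sequence $(X_1,\dots,X_k)$ in $\cA$ is exactly a chain of composable rank-$1$ cluster morphisms
\[
\cA=\cA_k\xrarrow{[X_k]}\cA_{k-1}\xrarrow{[X_{k-1}]}\cdots\xrarrow{[X_1]}\cA_0,
\]
where $\cA_j=|X_{j+1},\dots,X_k|^\perp\cap\cA$ and $X_j\in\cC(\cA_j)$; in particular the composite $[X_1]\circ\cdots\circ[X_k]$ is a well-defined cluster morphism $\cA\to\cA_0$, so there is no domain/codomain issue to worry about.

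For $k=1$ the recursion gives $T(1)=(T_1)$ with $T_1=X_1$, so $[T(1)]=[X_1]$ and there is nothing to prove. Assume the statement for signed exceptional sequences of length $k-1$, and let $(X_1,\dots,X_k)$ be a signed exceptional sequence in $\cA$. The truncation $(X_2,\dots,X_k)$ is again a signed exceptional sequence in $\cA$: condition (1) of Definition \ref{def: signed exceptional sequence} holds because a subsequence of an exceptional sequence is exceptional, and condition (2) for indices $j\ge 2$ is verbatim the same requirement. By the inductive hypothesis, $[X_2]\circ\cdots\circ[X_k]=[T(2)]$ where $T(2)=(T_2,\dots,T_k)$ is the ordered partial cluster tilting set produced by the recursion, and $[T(2)]:\cA\to\cA_1$.

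It remains to compose with $[X_1]$ on the left. Since $[T(2)]:\cA\to\cA_1$ and, by condition (2) of the signed exceptional sequence, $X_1\in\cC(\cA_1)$ with $\cA_1=|X_2,\dots,X_k|^\perp\cap\cA=\cA\cap|T(2)|^\perp$ (the last equality being the codomain of the composite $[T(2)]$), Proposition \ref{prop 1.8: Properties of sigma_T} applies and $\sigma_{T(2)}X_1\in\cC(\cA)$ is defined. Then the composition law \eqref{eq:composition of cluster morphisms} yields
\[
[X_1]\circ[X_2]\circ\cdots\circ[X_k]=[X_1]\circ[T(2)]=[\sigma_{T(2)}X_1,\,T_2,\dots,T_k]=[T_1,T_2,\dots,T_k]=[T(1)],
\]
with $T_1:=\sigma_{T(2)}X_1$ exactly as prescribed by the recursion; that $\{T_1,\dots,T_k\}$ is again a partial cluster tilting set in $\cC(\cA)$ is part of the content of the composition law (Property (a) of Proposition \ref{prop 1.8: Properties of sigma_T}). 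This completes the induction.

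The argument is entirely formal, so I do not expect a real obstacle; the one point that should not be glossed over is the well-definedness of the recursion, namely that at each stage the codomain of $[T(j)]=[X_j]\circ\cdots\circ[X_k]$ is exactly $\cA_{j-1}=|X_j,\dots,X_k|^\perp\cap\cA$, so that $X_{j-1}\in\cC(\cA_{j-1})$ is a legitimate input to $\sigma_{T(j)}$. This follows by iterating Property (b) of Proposition \ref{prop 1.8: Properties of sigma_T}, which identifies $\cA\cap|T(j)|^\perp$ with $\cA\cap|X_j,\dots,X_k|^\perp$, but it is worth recording explicitly inside the induction rather than leaving it implicit.
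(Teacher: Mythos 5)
Your proof is correct and is essentially the argument the paper intends: the proposition is stated with an immediate \qed as a direct consequence of the composition law \eqref{eq:composition of cluster morphisms}, and your induction on $k$ is just the explicit unwinding of that law, with the codomain bookkeeping (that $\cA\cap|T(j)|^\perp=\cA\cap|X_j,\dots,X_k|^\perp$, via Property (b) of Proposition \ref{prop 1.8: Properties of sigma_T}) correctly recorded. No gaps.
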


We call $T=T(1)=(T_1, \cdots, T_k)$ the ordered partial cluster tilting set in $\cC(\cA)$ corresponding to $(X_1,\cdots,X_k)$.

As an example, consider the sequence of simple modules $(S_1,S_2,\cdots,S_n)$ in admissible order, i.e., so that $(S_n,S_{n-1},\cdots,S_1)$ is an exceptional sequence. Since each $S_k$ is projective in the right perpendicular category of $S_1,\cdots,S_{k-1}$, it can have either sign. So, there are $2^n$ possible signed exceptional sequences coming from this one exceptional sequence. 

\begin{prop}\label{prop: 2 to n distinct clusters}
The cluster tilting sets corresponding to these $2^n$ signed exceptional sequences are all distinct.
\end{prop}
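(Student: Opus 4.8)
The plan is to prove the sharper statement that the \emph{unordered} cluster tilting set $T(\vare)$ produced by a choice of signs $\vare$ already determines $\vare$; injectivity of $\vare\mapsto T(\vare)$, hence the proposition, then follows at once. Everything will be read off from dimension vectors. Denote the exceptional sequence of simple modules in play by $(E_1,\dots,E_n)$ (these are $S_n,S_{n-1},\dots,S_1$ in the notation above), so that the $2^n$ signed exceptional sequences are the sequences $(X_1,\dots,X_n)$ with $X_j=E_j$ or $X_j=E_j[1]$ chosen independently; set $s_j=+1$ in the first case and $s_j=-1$ in the second, so that $\undim X_j=s_j\,\undim E_j$. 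Because the $E_i$ are exactly the simple modules of $\Lambda$, the vectors $\undim E_1,\dots,\undim E_n$ form a basis of $\ZZ^n$; for $0\ne w\in\ZZ^n$ let $\mu(w)$ be the least $i$ for which the coefficient of $\undim E_i$ in the expansion of $w$ is nonzero, and let $\ell(w)$ be that coefficient.

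\textbf{Step 1 (dimension vectors of $T(\vare)$).} By Proposition \ref{prop 2.1: formula for cluster morphism corresponding to signed exceptional sequence} the associated ordered cluster tilting set $T(\vare)=(T_1,\dots,T_n)$ satisfies $T_n=X_n$ and $T_j=\sigma_{T(j+1)}X_j$, where $T(j+1)=(T_{j+1},\dots,T_n)$. I will prove, by downward induction on $j$, both that
\[
\undim T_j\ \in\ s_j\,\undim E_j+\ZZ\,\undim E_{j+1}+\cdots+\ZZ\,\undim E_n ,
\]
and that $\undim T_j,\dots,\undim T_n$ span the same $\RR$-subspace as $\undim E_j,\dots,\undim E_n$. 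The base case $j=n$ is immediate. For the inductive step, Proposition \ref{prop 1.8: Properties of sigma_T}(c), applied to the map $\sigma_{T(j+1)}$, gives that $\undim T_j-\undim X_j$ is an integer combination of $\undim T_{j+1},\dots,\undim T_n$, which by the inductive hypothesis lies in $\ZZ\,\undim E_{j+1}+\cdots+\ZZ\,\undim E_n$; since $\undim X_j=s_j\,\undim E_j$ this yields the displayed membership, and the span statement then follows because the leading coefficient $s_j$ is $\pm1$, so $\undim E_j$ is recovered inside the span of $\undim T_j,\dots,\undim T_n$.

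\textbf{Step 2 (recovering $\vare$).} Step 1 gives $\mu(\undim T_j)=j$ and $\ell(\undim T_j)=s_j$ for every $j$. Since the dimension vectors of the elements of a cluster tilting set are linearly independent, the $n$ integers $\mu(\undim T)$ with $T$ ranging over $T(\vare)$ are pairwise distinct, hence equal $1,\dots,n$ in some order. Therefore $T_j$ is recovered from the unordered set $T(\vare)$ as its unique element $T$ with $\mu(\undim T)=j$, and then $s_j=\ell(\undim T_j)$ recovers whether $X_j$ was a module or a shifted module. Thus $T(\vare)=T(\vare')$ forces $\vare=\vare'$, which is the assertion.

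I do not foresee a genuine obstacle; the one delicate point is the bookkeeping in Step 1 --- applying Proposition \ref{prop 1.8: Properties of sigma_T}(c) to the map $\sigma_{T(j+1)}$ appearing in the recursion (not to $\sigma$ of a single object), and observing that the ``integer linear combination'' in (c) leaves the leading $\undim E_j$-coefficient equal to $s_j=\pm1$, which is precisely what makes the two spans in Step 1 coincide and $\mu$ take $n$ distinct values.
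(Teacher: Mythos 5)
Your proof is correct and follows essentially the same route as the paper's: both arguments recover the natural ordering of the unordered cluster tilting set from the triangular support pattern of the dimension vectors ($T_n$ supported at vertex $1$, $T_{n-1}$ at vertices $1,2$, etc.) and then locate a disagreement. The only difference is at the final step, where the paper invokes mutation of an almost complete support tilting object to get $T_j\neq T_j'$ at the top index of disagreement, whereas you read the sign $s_j$ off directly as the leading coefficient $\pm 1$; your Step 1 also supplies, via Property (c) of Proposition \ref{prop 1.8: Properties of sigma_T}, the inductive justification for the support claim that the paper states without proof.
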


For example, the $2^3=8$ signed exceptional sequences and corresponding cluster tilting sets for the quiver $1\ot 2\ot 3$ are listed in Figure \ref{fig: 8 signed exceptional sequences} where $P_i,I_i,S_i$ are the $i$th projective, injective and simple modules.

\begin{figure}[htbp]
\begin{center}
\[
\begin{array}{cc}
\text{signed exceptional sequence} & \quad\text{ordered cluster tilting set}\quad\\
\hline
\end{array}
\]\[
\begin{array}{cccccccc}
 S_3& S_2& S_1 &&& P_3& P_2& P_1\\
 S_3[1]& S_2& S_1&&& P_3[1]& P_2& P_1\\
S_3[1]& S_2[1]& S_1 &&& P_3[1]& P_2[1]& P_1\\
 S_3[1] & S_2[1]& S_1[1]&&& P_3[1]& P_2[1]& P_1[1]\\
\hline
  S_3[1] & S_2&S_1[1]&&& P_3[1]& S_2& P_1[1]\\
 S_3 & S_2[1]& S_1[1]&&& S_3& P_2[1]&  P_1[1]\\
 S_3 & S_2& S_1[1]&&& I_2& S_2& P_1[1]\\
 S_3& S_2[1]& S_1 &&& S_3& P_2[1]& P_1\\
 \end{array} 
\]
\caption{The $2^n=8$ signed exceptional sequences of simple objects and corresponding ordered cluster tilting sets for the quiver $1\ot 2\ot 3$.}
\label{fig: 8 signed exceptional sequences}
\end{center}
\end{figure}

\begin{proof}
Note that the elements of each of these cluster tilting sets have a natural ordering since $T_n$ is supported at vertex 1, $T_{n-1}$ at vertices 1,2, etc. Suppose that $E_\ast,E_\ast'$ are two signed exceptional sequences whose underlying modules are the simple objects $S_n,\cdots,S_1$. Let $T$, $T'$ be the corresponding cluster tilting sets with their natural ordering. Let $j$ be maximal so that $E_j\neq E_j'$. Then $T_i=T_i'$ for $i>j$ and the support tilting object $T_j'\coprod T_{j+1}\coprod\cdots\coprod T_n$ is the mutation of the support tilting object $T_j\coprod T_{j+1}\coprod \cdots\coprod T_n$ in the $j$-direction. So, $T_j\neq T_j'$ making $T,T'$ nonisomorphic.
\end{proof}

\subsection{First main theorem}\label{ss 2.2: first main theorem}

We can now state and prove the first main theorem. Note that Proposition \ref{prop 2.1: formula for cluster morphism corresponding to signed exceptional sequence} assigns an {ordered cluster tilting set} to each signed exceptional sequence.

\begin{thm}\label{thm 2.3: bijection one}
There is a bijection $\theta_k$ from the set of isomorphism classes of signed exceptional sequences in $\cA$ of length $k$ to the set of {ordered partial cluster tilting sets} in $\cA$ of size $k$ which is uniquely characterized by the following properties.
\begin{enumerate}
\item If $\theta_k(X_1,\cdots,X_k)=T$ then $|X|^\perp=|T|^\perp$. Let $\cB=|T|^\perp$.
\item If $\theta_k(X_1,\cdots,X_k)=T$ then $[T]=[X_1]\circ[X_2]\circ\cdots\circ[X_k]$ as cluster morphisms $\cA\to \cB$.
\item If $\theta_k(X_1,\cdots,X_k)=(T_1,\cdots,T_k)$ then $\theta_{k-j+1}(X_j,\cdots,X_k)=(T_j,\cdots, T_k)$ for all $1\le j\le k$.
\end{enumerate}
\end{thm}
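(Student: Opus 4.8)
The plan is to \emph{define} $\theta_k$ by the recursion of Proposition \ref{prop 2.1: formula for cluster morphism corresponding to signed exceptional sequence}, verify Properties (1)--(3) essentially for free, then establish bijectivity by induction on $k$ using Theorem \ref{thm:sigma-T is a bijection}, and finally note that (1)--(3) force this choice.

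First I would set $\theta_k(X_1,\dots,X_k)=T(1)$, where $T(j)=(T_j,\dots,T_k)$ is produced by $T_k=X_k$ and $T_{j-1}=\sigma_{T(j)}X_{j-1}$. This is well defined and lands among ordered partial cluster tilting sets because each stage realizes the composite cluster morphism $[X_j]\circ\cdots\circ[X_k]$, so the unordered set $\{T_j,\dots,T_k\}$ is a partial cluster tilting set in $\cC(\cA)$ by the composition law. Property (2) is precisely Proposition \ref{prop 2.1: formula for cluster morphism corresponding to signed exceptional sequence}. Property (1) follows because the target of $[X_1]\circ\cdots\circ[X_k]=[T]$ is on one hand $\cA\cap|T|^\perp$ and on the other hand, composing the individual targets, $\cA\cap|X_1,\dots,X_k|^\perp$; hence $|T|^\perp=|X|^\perp$ inside $\cA$. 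Property (3) holds because deleting $X_1$ merely truncates the same recursion, so $\theta_{k-1}(X_2,\dots,X_k)=(T_2,\dots,T_k)$.

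The substance is bijectivity, proved by induction on $k$ with $\cA$ fixed. For $k=1$ a signed exceptional sequence is a single object of $\cC(\cA)$, and $\theta_1$ is the identity onto size-one ordered partial cluster tilting sets. For the inductive step, let $(T_1,\dots,T_k)$ be an ordered partial cluster tilting set in $\cC(\cA)$. Its tail $(T_2,\dots,T_k)$ is again one (a subset of a partial cluster tilting set is a partial cluster tilting set), so by induction there is a unique signed exceptional sequence $(X_2,\dots,X_k)$ in $\cA$ with $\theta_{k-1}(X_2,\dots,X_k)=(T_2,\dots,T_k)$; write $T(2)=(T_2,\dots,T_k)$ and $\cB'=|T(2)|^\perp\cap\cA$, which by Property (1) for $\theta_{k-1}$ coincides with $\cA_1:=|X_2,\dots,X_k|^\perp\cap\cA$. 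Since $\{T_1,\dots,T_k\}$ is a partial cluster tilting set and $T_1\neq T_i$ for $i\geq2$, we have $T_1\in\cC_{T(2)}(\cA)$, so Theorem \ref{thm:sigma-T is a bijection} gives a unique $X_1\in\cC(\cB')=\cC(\cA_1)$ with $\sigma_{T(2)}X_1=T_1$. Then $(X_1,\dots,X_k)$ is a signed exceptional sequence: condition (2) holds as $X_1\in\cC(\cA_1)$, and condition (1) holds because $X_1$ has exceptional underlying module lying in $|X_2,\dots,X_k|^\perp$, so $(|X_1|,\dots,|X_k|)$ is an exceptional sequence. By construction $\theta_k(X_1,\dots,X_k)=(T_1,\dots,T_k)$, giving surjectivity. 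Injectivity runs the same bookkeeping backwards: Property (3) plus the inductive hypothesis force $(X_2,\dots,X_k)$, and then injectivity of $\sigma_{T(2)}$ (again Theorem \ref{thm:sigma-T is a bijection}) forces $X_1$. Uniqueness of $\theta_k$ among maps satisfying (1)--(3) is then immediate: Property (3) determines the tail inductively, Property (2) together with the composition formula $[X_1]\circ[T(2)]=[\sigma_{T(2)}X_1\cup T(2)]$ forces $T_1=\sigma_{T(2)}X_1$, and $\theta_1$ is forced since $[T_1]=[X_1]$ for one-element sets means $T_1\cong X_1$.

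I expect the main obstacle to be the bookkeeping in the inductive step: identifying $\cB'$ with $\cA_1$ and checking that $\sigma_{T(2)}^{-1}(T_1)$ yields a genuine signed exceptional sequence rather than merely some object of $\cC(\cA_1)$. This is exactly where Theorem \ref{thm:sigma-T is a bijection}---the bijectivity of $\sigma_T$ and its compatibility with partial cluster tilting sets---is used, and care is needed to keep track of which perpendicular categories are being intersected with $\cA$. Everything else is a direct unwinding of the composition law and Proposition \ref{prop 2.1: formula for cluster morphism corresponding to signed exceptional sequence}.
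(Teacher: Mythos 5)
Your proposal is correct and follows essentially the same route as the paper: define $\theta_k$ by the recursion of Proposition \ref{prop 2.1: formula for cluster morphism corresponding to signed exceptional sequence}, deduce uniqueness from Properties (2) and (3) by induction, and obtain bijectivity in the inductive step by applying Theorem \ref{thm:sigma-T is a bijection} to recover the unique $X_1$ with $\sigma_{T(2)}X_1=T_1$. Your write-up is if anything slightly more explicit than the paper's (e.g., in checking that the recovered $(X_1,\dots,X_k)$ is genuinely a signed exceptional sequence), but there is no substantive difference in method.
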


To clarify the wording of the theorem we mean that there is a unique mapping $\theta_k$ satisfying the three listed conditions and that, furthermore, this mapping is a bijection.

\begin{proof}
The formula in Proposition \ref{prop 2.1: formula for cluster morphism corresponding to signed exceptional sequence} gives a function $\theta_k$ satisfying these three conditions. So, it remains to show that $\theta_k$ is uniquely determined and that it is a bijection. We prove both statements at the same time by induction on $k$.

If $k=1$ then Condition (2) implies that $T_1=X_1$. The two sets are both equal to $\cC(\cA)$ and $\theta_1$ must be the identity map.

Suppose $k\ge2$ and $\theta_{k-1}$ is a uniquely determined bijection. Let $(X_1,\cdots,X_k)$ be a signed exceptional sequence. Condition (2) implies that $T=\theta_k(X_1,\cdots,X_k)$ is uniquely determined up to permutation of its elements. But Condition (3) for $j=k-1$ determines the last $k-1$ elements of $T$. So, the first element is also determined. So, the function $\theta_k$ is uniquely determined. 

To show that $\theta_k$ is a bijection, we start with any rigid object $T=\coprod_{1\le j\le k}T_j$ with $k$ summands in a fixed order. This gives a cluster morphism $[T]:\cA\to \cB$. Let $T'=(T_2,T_3,\cdots,T_k)$. This gives a morphism $[T']:\cA\to\cB'$ where $\cB\subset\cB'\subset\cA$. By induction on $k$, there is a unique signed exceptional sequence $Y$ of length $k-1$ so that $\theta_{k-1}(Y)=T'$. Since $T$ is rigid, $T_1$ lies in $\cC_{T'}(\cA)$. By Theorem \ref{thm:sigma-T is a bijection}, there is a unique object $Y_0\in \cC(\cB')$ so that $\sigma_{T'}Y_0=T_1$. The recursive formula in Proposition \ref{prop 2.1: formula for cluster morphism corresponding to signed exceptional sequence} then gives $\theta_k(Y_0,Y)=T$.
\end{proof}

\begin{rem}
Theorem \ref{thm 2.3: bijection one} has been extended to the $m$-cluster category by the first author and to arbitrary finite dimensional algebras using $\tau$-tilting by Buan and Marsh. Details will appear when available.
\end{rem}

The bijection between {ordered cluster tilting set}s and signed exceptional sequences can be used to define the composition of cluster morphisms.

\begin{cor}
If $[T]:\cA_0\to \cA_1$ and $[T']:\cA_1\to \cA_2$ are cluster morphism, the composition $[T']\circ[T]:\cA_0\to \cA_2$ can be given as follows. Take two signed exceptional sequences $(X_1,\cdots,X_\ell)$ in $\cA_1$ and $(Y_1,\cdots,Y_k)$ in $\cA_0$ so that $\theta_\ell(Y)=T$ in some order and $\theta_k(X)=T'$ in some order. Then $[T']\circ [T]=\theta_{k+\ell}^{-1}(X,Y)$.
\end{cor}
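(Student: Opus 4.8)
The plan is to exhibit the composite directly as the signed exceptional sequence obtained by concatenation. Write $\cA_1=\cA_0\cap|T|^\perp$ and $\cA_2=\cA_1\cap|T'|^\perp$. By Theorem~\ref{thm 2.3: bijection one}(1), since $\theta_k(Y_1,\dots,Y_k)$ is an ordering of $T$ we have $|Y_1,\dots,Y_k|^\perp=|T|^\perp$, hence $|Y_1,\dots,Y_k|^\perp\cap\cA_0=\cA_1$ (any ordering of $T$ is $\theta_k$ of some signed exceptional sequence, so the choice ``in some order'' always exists). First I would show that the concatenation $(X_1,\dots,X_\ell,Y_1,\dots,Y_k)$ is a signed exceptional sequence \emph{in $\cA_0$}; then Proposition~\ref{prop 2.1: formula for cluster morphism corresponding to signed exceptional sequence} together with associativity of composition identifies the cluster morphism it encodes with $[T']\circ[T]$.

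For the first step I would verify the two conditions of Definition~\ref{def: signed exceptional sequence} relative to $\cA_0$. The underlying subsequences $(|X_i|)$ and $(|Y_j|)$ are exceptional sequences in $\cA_1\subseteq\cA_0$ and in $\cA_0$ respectively (the embedding $\cA_1\hookrightarrow\cA_0$ being exact), while for a mixed pair $|X_i|\in\cA_1\subseteq|Y_j|^\perp$, so $\Hom_\Lambda(|Y_j|,|X_i|)=0=\Ext^1_\Lambda(|Y_j|,|X_i|)$; hence the concatenated underlying sequence is exceptional. For the projectivity condition, if $X_j=Q[1]$ the relevant perpendicular category inside $\cA_0$ is
\[
	|X_{j+1},\dots,X_\ell,Y_1,\dots,Y_k|^\perp\cap\cA_0
	=|X_{j+1},\dots,X_\ell|^\perp\cap\bigl(|Y_1,\dots,Y_k|^\perp\cap\cA_0\bigr)
	=|X_{j+1},\dots,X_\ell|^\perp\cap\cA_1,
\]
and $Q$ is relatively projective there precisely because $(X_1,\dots,X_\ell)$ is a signed exceptional sequence in $\cA_1$; the condition for the $Y_j=Q[1]$'s is unchanged since no $X_i$ follows them. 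I expect this bookkeeping of perpendicular categories across the inclusion $\cA_1\subseteq\cA_0$ to be the only delicate point: a signed exceptional sequence in $\cA_1$ need not by itself be one in $\cA_0$, since relative projectivity is not inherited from a smaller perpendicular category, and it is exactly the tail $(Y_1,\dots,Y_k)$ that cuts $\cA_0$ down to $\cA_1$.

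Granting this, Proposition~\ref{prop 2.1: formula for cluster morphism corresponding to signed exceptional sequence} (equivalently Theorem~\ref{thm 2.3: bijection one}(2)) gives that the cluster morphism of the concatenation is $[X_1]\circ\cdots\circ[X_\ell]\circ[Y_1]\circ\cdots\circ[Y_k]$, and the ordered cluster tilting set it produces is $\theta_{k+\ell}(X_1,\dots,X_\ell,Y_1,\dots,Y_k)$. Applying the same proposition to $(X_1,\dots,X_\ell)$ in $\cA_1$ and to $(Y_1,\dots,Y_k)$ in $\cA_0$ separately gives $[X_1]\circ\cdots\circ[X_\ell]=[T']$ and $[Y_1]\circ\cdots\circ[Y_k]=[T]$, so by associativity of composition (proved above) the concatenation encodes $[T']\circ[T]$, which is the claim. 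To pin down that $\theta_{k+\ell}(X,Y)$ is literally the ordered tilting set produced by the composition rule \eqref{eq:composition of cluster morphisms} — i.e. that $\theta_{k+\ell}^{-1}$ applied to the ordered composite returns the concatenation $(X,Y)$ — I would compare the recursion defining $\theta_{k+\ell}$ in Proposition~\ref{prop 2.1: formula for cluster morphism corresponding to signed exceptional sequence} with \eqref{eq:composition of cluster morphisms} by downward induction on the index: the last $k$ entries agree by Theorem~\ref{thm 2.3: bijection one}(3) applied to $Y$, and each earlier entry matches using the identity $\sigma_T\sigma_SR=\sigma_{T,\sigma_TS}R$ of Equation~\eqref{eq: sigma TS=sigma T sigma S}.
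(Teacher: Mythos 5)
Your proposal is correct and follows essentially the same route as the paper, whose entire proof is to invoke Property (2) of Theorem \ref{thm 2.3: bijection one} together with associativity; the extra work you do to check that the concatenation $(X_1,\dots,X_\ell,Y_1,\dots,Y_k)$ is a signed exceptional sequence in $\cA_0$ is exactly the content of the remark preceding Proposition \ref{prop 2.1: formula for cluster morphism corresponding to signed exceptional sequence} (composable rank-one cluster morphisms correspond to signed exceptional sequences), so it is a legitimate filling-in of details rather than a different argument. Your reading of the statement's index swap ($\ell$ vs.\ $k$) and of $\theta_{k+\ell}^{-1}$ versus $\theta_{k+\ell}$ as typographical slips is the intended one.
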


\begin{proof}
This follows immediately from Property (2) in Theorem \ref{thm 2.3: bijection one}.
\end{proof}

The inverse bijection $\theta_k^{-1}$ from {ordered cluster tilting set}s to signed exceptional sequences is given by the following ``twist'' formula which is based on \cite{SM}.

A finite set of vectors in $\QQ^n$ will be called \emph{nondegenerate} if it is linearly independent and satisfies the condition that the Euler-Ringel pairing $\brk{\cdot,\cdot}$ is nondegenerate on the span of any subset of the set of vectors.

\begin{defn}\label{def: twist equation}
 We define the \emph{right twist} of any nondegenerate sequence of vectors $v_\ast=(v_1,\cdots,v_k)$ in $\QQ^n$ to be the unique sequence of vectors $\tau_+(v_\ast)=(w_1,\cdots,w_k)$ satisfying the following.
\begin{enumerate}
\item For each $j$, $w_j-v_j$ is a linear combination of $v_i$ for $i>j$.
\item $\brk{v_i,w_j}=0$ for all $i> j$.
\end{enumerate}
Note that, given (1), Condition (2) is equivalent to 

($2'$) $\brk{w_i,w_j}=0$ for all $i> j$.

\noi We say that $(w_\ast)$ is an \emph{integer right twist} of $(v_\ast)$ if each $w_j$ is an integer linear combination of the $v_i$.
\end{defn}

\begin{prop}\label{prop: signed exceptional sequences are nondegenerate}
The dimension vectors of any signed exceptional sequence $(X_1,\cdots,X_k)$ is nondegenerate with respect to the pairing $\brk{\cdot,\cdot}$. Furthermore $\tau_+(\undim X_\ast)=(\undim X_\ast)$.
\end{prop}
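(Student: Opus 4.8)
The plan is to prove the two assertions together by downward induction on the length $k$, peeling off $X_1$ from the left. The base case $k=1$ is trivial: a single exceptional object has $\brk{\undim X_1,\undim X_1}\neq 0$ since $\End(|X_1|)$ is a division algebra and $\Ext^1(|X_1|,|X_1|)=0$, so the one-element sequence is nondegenerate, and the right twist of a length-one sequence is itself. For the inductive step, assume the statement for signed exceptional sequences of length $k-1$. Given $(X_1,\cdots,X_k)$ in $\cA$, the truncated sequence $(X_2,\cdots,X_k)$ is again a signed exceptional sequence (in $\cA$, with the same ambient conventions), so by induction $(\undim X_2,\cdots,\undim X_k)$ is nondegenerate and is its own right twist.

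For nondegeneracy of the full sequence, I would use that $(|X_1|,\cdots,|X_k|)$ is an exceptional sequence, so the $\undim X_i$ are linearly independent (dimension vectors of an exceptional sequence are linearly independent); it remains to check that $\brk{\cdot,\cdot}$ is nondegenerate on the span of any subset. A subset not containing the index $1$ is handled by the inductive hypothesis. For a subset containing $1$, write the span as $\kk\cdot\undim X_1 \oplus V$ where $V$ is spanned by a subset of $\{\undim X_2,\cdots,\undim X_k\}$. Here is the key point: because $(|X_1|,|X_j|)$ is an exceptional pair for all $j>1$, we have $\brk{\undim X_j,\undim X_1}=0$ for $j>1$ (as $|X_1|\in|X_j|^\perp$ means $\Hom_\Lambda(|X_j|,|X_1|)=0=\Ext^1_\Lambda(|X_j|,|X_1|)$, and shifting by $[1]$ only changes signs). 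Thus in the basis ordering $(\undim X_1,\undim X_2,\cdots)$ the Gram matrix of $\brk{\cdot,\cdot}$ is block lower-triangular: the $(j,1)$ entries vanish for $j>1$, while $\brk{\undim X_1,\undim X_1}\neq 0$ as in the base case, and the lower-right block is the (nondegenerate, by induction) Gram matrix of the truncated subset. A block-triangular matrix with an invertible diagonal block and an invertible complementary block is invertible, so $\brk{\cdot,\cdot}$ is nondegenerate on this span. This gives nondegeneracy of $(\undim X_\ast)$.

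For the twist identity $\tau_+(\undim X_\ast)=\undim X_\ast$, I verify that $v_\ast=(\undim X_1,\cdots,\undim X_k)$ itself satisfies the two defining conditions of Definition \ref{def: twist equation}; uniqueness of the right twist then finishes the argument. Condition (1), that $w_j-v_j$ is a linear combination of later $v_i$'s, is automatic when $w_\ast=v_\ast$ (the difference is zero). Condition (2), that $\brk{v_i,w_j}=\brk{\undim X_i,\undim X_j}=0$ for $i>j$, is exactly the statement that $(|X_1|,\cdots,|X_k|)$ is an exceptional sequence, translated through the sign changes introduced by the $[1]$'s: for $i>j$ one has $|X_i|\in|X_j|^\perp$, so $\Hom_\Lambda(|X_i|,|X_j|)=0=\Ext^1_\Lambda(|X_i|,|X_j|)$, hence $\brk{\undim X_i,\undim X_j}=0$ by the Euler–Ringel formula, and since $\undim(Y[1])=-\undim Y$ the bilinearity of $\brk{\cdot,\cdot}$ preserves vanishing. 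Therefore $\undim X_\ast$ satisfies (1) and (2), so it is the right twist of itself.

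I do not expect a serious obstacle here; the only thing to be careful about is bookkeeping the signs coming from shifted objects $Q[1]$, and confirming that the nondegeneracy clause "on the span of any subset" really does follow from the block-triangular structure rather than needing a separate argument for subsets straddling the index $1$ — the block-triangular observation handles exactly this, so the proof is essentially a matter of assembling the exceptional-sequence orthogonality relations with the definition of the right twist.
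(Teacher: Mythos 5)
Your proof is correct, and it reaches the same two facts the paper relies on (the orthogonality relations $\brk{\undim X_i,\undim X_j}=0$ for $i>j$ coming from the exceptional sequence, and $\brk{\undim X_i,\undim X_i}=\dim_K\End(|X_i|)\neq 0$), but it packages them differently from the paper's proof. For nondegeneracy the paper argues that any subset of the $X_i$ is again an exceptional sequence, so the span of its dimension vectors is the span of the simples of a wide subcategory equivalent to $mod\text-H$ for a hereditary $H$, on which the Euler form is known to be nondegenerate; you instead observe directly that the Gram matrix of any subset is triangular with nonzero diagonal (your ``block-triangular'' step is really full triangularity, which also gives linear independence for free), which is more elementary and self-contained, at the cost of not exhibiting the span as a root lattice of a wide subcategory. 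For the fixed-point identity $\tau_+(\undim X_\ast)=(\undim X_\ast)$ the paper only gestures at Propositions \ref{prop 2.1: formula for cluster morphism corresponding to signed exceptional sequence} and \ref{prop 1.8: Properties of sigma_T}(c), whereas you verify the two defining conditions of Definition \ref{def: twist equation} directly --- condition (1) is vacuous for $w_\ast=v_\ast$ and condition (2) is exactly the orthogonality above --- and then invoke uniqueness of the right twist, which is legitimate once nondegeneracy is established since the defining equations for each $w_j$ form a linear system whose matrix is the (invertible) Gram matrix of $\{v_i\}_{i>j}$. This is arguably a cleaner justification of the second claim than the paper's citation.
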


\begin{proof}
Any subset of the $X_i$ forms an exceptional sequence. So the span of their dimension vectors is the span of the dimension vectors of a wide subcategory which is equivalent to the module category of a finite acyclic quiver. Thus $\brk{\cdot,\cdot}$ is nondegenerate on any such span. The equation $\tau_+(\undim X_\ast)=(\undim X_\ast)$ follows from Proposition \ref{prop 2.1: formula for cluster morphism corresponding to signed exceptional sequence} and the properties of $\sigma_T$ listed in Proposition \ref{prop 1.8: Properties of sigma_T}, in particular (c).
\end{proof}

We also need the following important theorem essentially due to Schofield.
 
 \begin{thm}\label{thm: Schofield's observation}
 Any partial cluster tilting set $\{T_1,\cdots,T_k\}$ giving a morphism $[T]:\cA\to\cB$ can be ordered in such a way that it forms a signed exceptional sequence.
 \end{thm}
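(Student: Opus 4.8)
The plan is to induct on $k=rk\,[T]$, the size of the partial cluster tilting set $T=\{T_1,\cdots,T_k\}$. The base case $k=1$ is vacuous since a single object $T_1\in\cC(\cA)$ is automatically a signed exceptional sequence of length $1$: condition (1) of Definition \ref{def: signed exceptional sequence} is trivial and condition (2) holds because if $T_1=Q[1]$ then $Q$ is a relative projective of $\cA=|\,|^\perp$ by hypothesis. For the inductive step I would first extract, using Schofield's observation (as already invoked in the induction step of Proposition \ref{prop 1.8: Properties of sigma_T}), an ordering of the underlying modules so that $(|T_1|,\cdots,|T_k|)$ is an exceptional sequence. This orders the set $T$, but it does not immediately give a \emph{signed} exceptional sequence: we must replace each $T_i$ by an object $X_i\in\cC(\cA_i)$ where $\cA_i=|X_{i+1},\cdots,X_k|^\perp$, and the issue is precisely the sign/shift bookkeeping when some $T_i$ is a shifted relative projective.

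The cleanest route is to use the bijection of Theorem \ref{thm:sigma-T is a bijection} (equivalently, the recursion of Proposition \ref{prop 2.1: formula for cluster morphism corresponding to signed exceptional sequence} and its inverse from Theorem \ref{thm 2.3: bijection one}) to push everything down one rank. Set $\cB'=|T_k|^\perp\cap\cA$; then $T'=\{T_1,\cdots,T_{k-1}\}$ is a partial cluster tilting set in $\cC(\cB')$ by Theorem \ref{thm:sigma-T is a bijection} (the $T_i$ for $i<k$ are ext-orthogonal to $T_k$ and, via $\sigma_{T_k}^{-1}$, lie in $\cC(\cB')$). By the inductive hypothesis, $T'$ can be ordered as a signed exceptional sequence $(X_1,\cdots,X_{k-1})$ in $\cB'$. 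Now I would set $X_k:=T_k\in\cC(\cA)$ and check that $(X_1,\cdots,X_{k-1},X_k)$ is a signed exceptional sequence in $\cA$: condition (1) follows because $(|X_1|,\cdots,|X_{k-1}|)$ is an exceptional sequence in $\cB'=|T_k|^\perp$ and hence $(|X_1|,\cdots,|X_{k-1}|,|T_k|)$ is an exceptional sequence in $\cA$; condition (2) for $j=k$ holds by the hypothesis on $[T]$ (if $T_k=Q[1]$ then $Q$ is relative projective in $\cA$); and condition (2) for $j<k$ is inherited from the signed exceptional sequence in $\cB'$ together with the fact that $|X_{j+1},\cdots,X_{k-1}|^\perp\cap\cB'=|X_{j+1},\cdots,X_{k-1},X_k|^\perp\cap\cA$, which is just the identity $\cB'\cap(\cdots)^\perp=\cA\cap(|T_k|\coprod\cdots)^\perp$.

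The main obstacle I anticipate is not the perpendicular-category bookkeeping but rather making sure the inductive descent is legitimate: one must know that $T'=\{T_1,\cdots,T_{k-1}\}$, transported into $\cC(\cB')$ via $\sigma_{T_k}^{-1}$, really is a partial cluster tilting set there, and that transporting back via $\sigma_{T_k}$ recovers $T_i=T_i$ (not some mutated object). This is exactly what the last paragraph of the induction step in the proof of Proposition \ref{prop 1.8: Properties of sigma_T} establishes — $\sigma_{T_k}$ fixes each $T_i$ with $i<k$ because $(|T_1|,\cdots,|T_k|)$ is an exceptional sequence, so each $T_i$ already satisfies the characterizing properties (a),(b),(c) with $S=T_i$. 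Granting that, the argument above is a clean induction and the only remaining care is to verify that the ordering produced is independent of choices enough to make the statement well-posed, which it is, since we only claim \emph{existence} of an ordering. Alternatively, and perhaps more transparently, one can bypass the descent entirely: order $T$ via Schofield so $(|T_i|)$ is exceptional, then apply the bijection $\theta_k^{-1}$ of Theorem \ref{thm 2.3: bijection one} to the ordered cluster tilting set $(T_1,\cdots,T_k)$ to obtain a signed exceptional sequence $(X_1,\cdots,X_k)$ with $\theta_k(X_\ast)=(T_\ast)$; the content of the theorem is then simply that such a $\theta_k^{-1}$ exists, which it does. I would present the first (inductive) argument as the main proof and mention the second as a remark.
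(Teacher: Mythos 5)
Your main inductive argument gets the bookkeeping right but assumes away the one point that the theorem is actually about. The opening step --- ``order the underlying modules so that $(|T_1|,\cdots,|T_k|)$ is an exceptional sequence, by Schofield's observation'' --- cannot simply be cited: Schofield's theorem orders the indecomposable summands of a \emph{rigid module} into an exceptional sequence, and the underlying module $|T|=\bigoplus|T_i|$ of a partial cluster tilting set containing shifted projectives need not be rigid. For the quiver $1\ot 2$, the set $\{S_2,P_1[1]\}$ is a partial cluster tilting set (the only condition is $\Hom_\Lambda(P_1,S_2)=0$), yet $\Ext^1_\Lambda(S_2,P_1)\neq0$. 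This is precisely what the paper's short proof supplies: apply Schofield to the module summands and place the shifted projective objects \emph{last} (ordered among themselves into an exceptional sequence of relative projectives); the required vanishing $\Hom_\Lambda(Q,|T_i|)=0=\Ext^1_\Lambda(Q,|T_i|)$ for a shifted projective following a module then comes from the cluster tilting condition together with relative projectivity of $Q$. Once the unsigned ordering is in hand, condition (2) of Definition \ref{def: signed exceptional sequence} is automatic, since a relative projective of $\cA$ remains relatively projective in any wide subcategory of $\cA$ containing it; so your careful descent through $\cB'=|T_k|^\perp$ and $\sigma_{T_k}^{-1}$ is verifying the easy half at length while taking the substantive half for granted.

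The ``alternative route'' is also not right as justified. The mere existence of $\theta_k^{-1}$ does not prove the theorem: applied to an arbitrary ordering it returns a signed exceptional sequence whose terms are in general \emph{not} the $T_i$ (for $1\ot 2$ one computes $\theta_2^{-1}(S_2,P_2)=(S_1[1],P_2)$). What the theorem asserts is that for a suitable ordering $\theta_k^{-1}(T_\ast)=(T_\ast)$, i.e.\ that each $\sigma_{T(j+1)}$ fixes $T_j$; this does follow from Property (e) of Proposition \ref{prop 1.8: Properties of sigma_T} once one knows that $T_j\in\cC(\cA_j)$ --- but that is again the unsigned ordering plus the automatic sign condition, not the bijectivity of $\theta_k$.
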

 
 \begin{proof}
 Schofield \cite{S92} proved this in the case when the $T_i$ lie in $\cA$. But this case extends easily to cluster tilting sets by putting the shifted projective objects last.
 \end{proof}
 
 Theorem \ref{thm: Schofield's observation} and Proposition \ref{prop: signed exceptional sequences are nondegenerate} imply that the set of dimension vectors of any partial cluster tilting set is nondegenerate, therefore, its right twist is defined.

\begin{thm}\label{thm: formula for theta inverse}
The sequence of dimension vectors of any ordered partial cluster tilting set $T=(T_1,\cdots,T_k)$ has an integer right twist $\tau_+(\undim T_i)=(\undim X_i)$ which gives the dimension vectors of the corresponding signed exceptional sequence $(X_1,\cdots,X_k)=\theta_k^{-1}(T)$.
\end{thm}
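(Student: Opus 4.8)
The plan is to verify directly that the sequence of dimension vectors $(\undim X_1,\dots,\undim X_k)$ of the signed exceptional sequence $(X_\ast)=\theta_k^{-1}(T)$ satisfies the two conditions of Definition \ref{def: twist equation} that characterize the right twist of $(\undim T_1,\dots,\undim T_k)$, and that it does so with integer coefficients. Since any partial cluster tilting set can be reordered into a signed exceptional sequence (Theorem \ref{thm: Schofield's observation}), Proposition \ref{prop: signed exceptional sequences are nondegenerate} tells us $(\undim T_i)$ is nondegenerate, so $\tau_+(\undim T_\ast)$ exists and is unique; hence exhibiting one sequence that satisfies the defining conditions proves the theorem.

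For Condition (1) I would use the recursive description of $\theta_k$ from Proposition \ref{prop 2.1: formula for cluster morphism corresponding to signed exceptional sequence}: $T_k=X_k$ and, for $j<k$, $T_j=\sigma_{T(j+1)}X_j$ where $T(j+1)=(T_{j+1},\dots,T_k)$. Property (c) of Proposition \ref{prop 1.8: Properties of sigma_T}, applied to the cluster morphism $[T(j+1)]$ with $S=X_j$, says precisely that $\undim T_j-\undim X_j$ is an integer linear combination of $\undim T_{j+1},\dots,\undim T_k$; equivalently, $\undim X_j-\undim T_j$ is an integer combination of $\undim T_i$ with $i>j$. This is Condition (1) together with the integrality claim in one step. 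For Condition ($2'$) — which, given (1), is equivalent to Condition (2) — I would invoke Definition \ref{def: signed exceptional sequence}: $(|X_1|,\dots,|X_k|)$ is an exceptional sequence, so $|X_a|\in|X_b|^\perp$ whenever $a<b$, whence $\brk{\undim|X_b|,\undim|X_a|}=\dim_K\Hom_\Lambda(|X_b|,|X_a|)-\dim_K\Ext^1_\Lambda(|X_b|,|X_a|)=0$. Since $\undim X_i=\pm\undim|X_i|$ and $\brk{\cdot,\cdot}$ is bilinear, this gives $\brk{\undim X_i,\undim X_j}=0$ for all $i>j$. Thus $(\undim X_\ast)$ is the integer right twist of $(\undim T_\ast)$, and by uniqueness it equals $\tau_+(\undim T_\ast)$.

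I do not expect a genuine obstacle here: the content of the theorem is the observation that Property (c) of $\sigma_T$ is exactly Condition (1) of the right twist, while the exceptional-sequence axiom is exactly Condition ($2'$). The only points needing care are bookkeeping — matching the index ranges in the recursion of Proposition \ref{prop 2.1: formula for cluster morphism corresponding to signed exceptional sequence} with those in Definition \ref{def: twist equation}, and tracking the signs $\undim X_i=\pm\undim|X_i|$, which are harmless since they merely rescale already-vanishing pairings. One could instead deduce Condition (2) from the fact that $|X_j|$ lies in $\cA_j=|T_{j+1},\dots,T_k|^\perp$ (using Properties (1) and (3) of Theorem \ref{thm 2.3: bijection one} to rewrite $\cA_j$ in terms of the $T_i$ rather than the $X_i$), but the exceptional-sequence route is shorter.
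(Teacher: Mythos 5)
Your proposal is correct and follows essentially the same route as the paper: Condition (1) together with integrality comes from Property (c) of $\sigma_T$ applied to the recursion $T_j=\sigma_{T(j+1)}X_j$, and the orthogonality condition comes from the exceptional-sequence axiom. The only cosmetic difference is that you verify Condition ($2'$) and invoke the equivalence stated in Definition \ref{def: twist equation}, while the paper verifies Condition (2) directly by observing that the spans of $\{\undim X_i\}_{i>j}$ and $\{\undim T_i\}_{i>j}$ coincide --- which is precisely the content of that equivalence.
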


\begin{proof}
Let $T_{>j}=(T_{j+1},\cdots,T_k)$. Then it follows from the formula $T_j=\sigma_{T_{>j}}X_j$ that $\undim T_j-\undim X_j$ is an integer linear combination of the vectors $\undim T_i$ for $i>j$. By downward induction on $j$, this implies that the span of $\undim X_i$ for $i>j$ is equal to the span of the vectors $\undim T_i$ for $i>j$. So, the fact that $(X_i)$ is a signed exceptional sequence implies that $
	\brk{\undim T_i,\undim X_j}=0
$ for $i>j$. Therefore, the sequence of dimension vectors $(\undim X_i)$ satisfies the definition of an integral right twist for $(\undim T_i)$.
\end{proof}

\subsection{Permutation of signed exceptional sequences}\label{ss 2.3: permutation of signed exc seq}

The question we address here is: When can the terms in a signed exceptional sequence be permuted? Without the signs, the answer is given by the following trivial observation.

\begin{prop}\label{permutation of exc seqs}
Suppose that $(M_1,\cdots,M_n)$ is an exceptional sequence in $mod\text-\Lambda$. Let $\sigma$ be any permutation of $n$. Then $(M_{\sigma(1)},\cdots,M_{\sigma(n)})$ is an exceptional sequence if and only if $M_i,M_j$ are hom-ext perpendicular whenever $i<j$ and $\sigma(i)>\sigma(j)$.\qed
\end{prop}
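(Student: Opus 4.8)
The plan is simply to unwind the definition of an exceptional sequence and compare the vanishing conditions for $(M_1,\cdots,M_n)$ with those for its reordering, pair by pair. Recall the convention in force here (as in Definition~\ref{def: signed exceptional sequence}(1)): a sequence $(E_1,\cdots,E_m)$ of modules is an exceptional sequence exactly when each $E_a$ is exceptional and $E_a\in E_b^\perp$ for all $a<b$, i.e.\ $\Hom_\Lambda(E_b,E_a)=0=\Ext^1_\Lambda(E_b,E_a)$ whenever $a<b$. For a pair of modules $M,N$, ``hom-ext perpendicular'' means $M\in N^\perp$ and $N\in M^\perp$, i.e.\ all four groups $\Hom_\Lambda(M,N)$, $\Hom_\Lambda(N,M)$, $\Ext^1_\Lambda(M,N)$, $\Ext^1_\Lambda(N,M)$ vanish.

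First I would note that each $M_{\sigma(a)}$ is automatically exceptional, being a term of $(M_1,\cdots,M_n)$, so the only thing to verify for $(M_{\sigma(1)},\cdots,M_{\sigma(n)})$ is that $\Hom_\Lambda(M_{\sigma(b)},M_{\sigma(a)})=0=\Ext^1_\Lambda(M_{\sigma(b)},M_{\sigma(a)})$ for all positions $a<b$. Then I would fix such a pair $a<b$ and split into two cases according to the relative order of $\sigma(a)$ and $\sigma(b)$. If $\sigma(a)<\sigma(b)$, the required vanishing is exactly the defining property of the exceptional sequence $(M_1,\cdots,M_n)$ applied to the original indices $\sigma(a)<\sigma(b)$, so no condition is imposed. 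If instead $\sigma(a)>\sigma(b)$, then exceptionality of $(M_1,\cdots,M_n)$ applied to $\sigma(b)<\sigma(a)$ gives only the opposite vanishing $\Hom_\Lambda(M_{\sigma(a)},M_{\sigma(b)})=0=\Ext^1_\Lambda(M_{\sigma(a)},M_{\sigma(b)})$; combining this with the vanishing we still need shows that this pair imposes precisely the requirement that $M_{\sigma(a)}$ and $M_{\sigma(b)}$ be hom-ext perpendicular.

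Assembling these cases over all $a<b$ yields the conclusion: $(M_{\sigma(1)},\cdots,M_{\sigma(n)})$ is an exceptional sequence if and only if $M_{\sigma(a)}$ and $M_{\sigma(b)}$ are hom-ext perpendicular for every $a<b$ with $\sigma(a)>\sigma(b)$ --- that is, for every pair of terms whose relative order has been reversed by the reordering --- and translating these reversed pairs back into the original indices $i,j$ gives exactly the stated condition. There is no genuine obstacle here; the only point requiring care is bookkeeping with $\sigma$: reading off the two $\Hom/\Ext^1$ directions in the correct order, and converting ``inversion of $\sigma$'' into the right statement about which pairs $M_i,M_j$ are required to be perpendicular. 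This is precisely why the assertion is a ``trivial observation.''
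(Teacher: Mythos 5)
Your argument is correct, and it is exactly the definition-unwinding the paper has in mind: the paper states this proposition with no proof at all (it is offered as a ``trivial observation''), so your write-up simply supplies the omitted verification. The one piece of bookkeeping you flag as delicate but do not actually carry out is the final translation back to the original indices: the pairs your case analysis produces are $\{M_{\sigma(a)},M_{\sigma(b)}\}$ for $a<b$ with $\sigma(a)>\sigma(b)$, and as unordered pairs of \emph{subscripts} these are the inversions of $\sigma^{-1}$, not of $\sigma$ (e.g.\ for $n=3$ and $\sigma(1)=2$, $\sigma(2)=3$, $\sigma(3)=1$, the new sequence $(M_2,M_3,M_1)$ requires $\{M_1,M_2\}$ and $\{M_1,M_3\}$ to be perpendicular, whereas the inversions of $\sigma$ are $\{1,3\}$ and $\{2,3\}$). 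So the correct reading of the conclusion is ``$M_{\sigma(i)},M_{\sigma(j)}$ are hom-ext perpendicular for every inversion $(i,j)$ of $\sigma$,'' equivalently ``$M_i,M_j$ are hom-ext perpendicular whenever $i<j$ and $\sigma^{-1}(i)>\sigma^{-1}(j)$''; your characterization via order-reversed pairs is the right one, and the paper's literal indexing is off by an inverse, so do spell that last step out rather than asserting it matches ``exactly.''
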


 We will show that the same holds for signed exceptional sequences. This is not completely obvious since there is a condition on which modules can be shifted. We consider the signed version of Proposition \ref{permutation of exc seqs} in the key case when $n=2$ and $\sigma$ is a transposition.

\begin{lem}\label{n=2 case of commuting sig exc seq}
Suppose that $(X,Y)$ is a signed exceptional sequence in $\cA$ with corresponding ordered partial cluster tilting set $(Z,Y)$ in $\cC(\cA)$ where $Z=\sigma_YX$. Then the following are equivalent.
\begin{enumerate}
\item $(Y,X)$ is a signed exceptional sequence in $\cA$.
\item $(|Y|,|X|)$ is an exceptional sequence.
\item $|X|,|Y|$ are hom-ext orthogonal.
\item $Z,Y$ are hom orthogonal and $Z=X$.
\end{enumerate}
Furthermore, when this holds, $(Y,X)$ is the ordered partial cluster tilting set corresponding to the signed exceptional sequence $(Y,X)$. I.e., $\sigma_XY=Y$.
\end{lem}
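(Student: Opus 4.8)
The plan is to prove the cycle of equivalences $(1)\Rightarrow(2)\Rightarrow(3)\Rightarrow(4)\Rightarrow(1)$, and then deduce the final ``furthermore'' sentence as a byproduct of the proof of $(4)\Rightarrow(1)$. The implications $(1)\Rightarrow(2)$ and $(2)\Rightarrow(3)$ are immediate from the definition of a (signed) exceptional sequence (a two-term exceptional sequence is exactly a hom-ext orthogonal pair, in either order). So the substance is in $(3)\Rightarrow(4)$ and $(4)\Rightarrow(1)$, together with tracking the cluster tilting set through these steps.

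For $(3)\Rightarrow(4)$: assume $|X|,|Y|$ are hom-ext orthogonal. First I would observe that this forces $\{|X|,|Y|\}$ (or rather $\{X,Y\}$, allowing shifts) to be ext-orthogonal in the cluster category $\cC_\cA$, so $\{X,Y\}$ is already a partial cluster tilting set. Now invoke Property (e) of Proposition~\ref{prop 1.8: Properties of sigma_T}: since $(X,Y)$ is a signed exceptional sequence, $Z=\sigma_YX$, and if $\{Y,X\}$ is a partial cluster tilting set in $\cC(\cA)$ then by the uniqueness clause $\sigma_YX=X$, i.e.\ $Z=X$. Hom-orthogonality of $Z=X$ and $Y$ then follows from hom-ext orthogonality of $|X|,|Y|$ (being careful that if $X=P[1]$ the relevant vanishing is $\Hom_\Lambda(P,Y)=0$, which is part of the hypothesis of the partial cluster tilting set condition, and conversely; the potential subtlety here is that ``hom orthogonal'' in $(4)$ means in the derived/cluster category sense, so this is really just a restatement). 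The one point needing an argument is that $\{X,Y\}$ genuinely \emph{is} a partial cluster tilting set, i.e.\ $\Ext^1_{\cD^b}(X,Y)=0=\Ext^1_{\cD^b}(Y,X)$; when $X,Y$ are both modules this is immediate from hom-ext orthogonality of $|X|=X$ and $|Y|=Y$ since $(|X|,|Y|)$ being a two-sided-orthogonal pair gives $\Ext^1$ vanishing in both directions, and when one of them is a shifted projective the $\Ext^1_{\cD^b}$ groups translate into $\Hom_\Lambda$ groups which vanish by the orthogonality hypothesis and the definition of signed exceptional sequence.

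For $(4)\Rightarrow(1)$ and the final claim: assume $Z,Y$ are hom orthogonal and $Z=X$. Since $(Z,Y)=(X,Y)$ is an ordered partial cluster tilting set and $Z=X$ is hom orthogonal to $Y$, the pair $\{Y,X\}$ is again an ordered partial cluster tilting set, now with $Y$ first. I would then apply Theorem~\ref{thm 2.3: bijection one} (or directly Proposition~\ref{prop 2.1: formula for cluster morphism corresponding to signed exceptional sequence}): the signed exceptional sequence corresponding to the ordered cluster tilting set $(Y,X)$ is computed recursively by $T_2=X$ interpreted via $\theta_1$, then the first term is $\sigma_XY$ applied appropriately. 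Because $\{Y,X\}=\{X,Y\}$ is already a partial cluster tilting set, Property (e) of Proposition~\ref{prop 1.8: Properties of sigma_T} gives $\sigma_XY=Y$, so the corresponding signed exceptional sequence is exactly $(Y,X)$, and in particular $(Y,X)\in\cA\cup\cA[1]$ is a signed exceptional sequence — that is $(1)$ — and simultaneously $(Y,X)$ is the ordered partial cluster tilting set corresponding to it, which is the ``furthermore'' statement. One should double-check the edge case $Y=Q[1]$: we need $Q$ to be relatively projective in $\cA$ (the ambient category here plays the role of $|X|^\perp$ at the last step of the recursion restricted to the sequence of length one $(X)$, i.e.\ $Q$ relatively projective in $|X|^\perp$); but this is guaranteed because $(X,Y)$ was assumed to be a signed exceptional sequence, so $Y=Q[1]$ already had $Q$ relatively projective in $|X|^\perp$, and that condition is symmetric enough to survive the swap since $|X|,|Y|$ are orthogonal.

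The main obstacle I anticipate is the bookkeeping around shifted objects: the four conditions are phrased in terms of $|X|,|Y|$ for the ``exceptional sequence'' parts but in terms of $X,Y,Z$ themselves for the ``partial cluster tilting set'' and ``hom orthogonal'' parts, so the argument must repeatedly translate between $\Ext^1_{\cD^b}$ vanishing conditions and $\Hom_\Lambda$ vanishing conditions according to whether each object is a module or a shifted relative projective, using Definition~\ref{def: cluster tilting set}(2). None of these translations is deep, but they must be done case-by-case ($XY$ both modules; $X$ module, $Y$ shifted; $X$ shifted, $Y$ module; both shifted) to be rigorous; I would handle them uniformly by always passing through the $\Ext^1_{\cD^b}$ formulation and citing Definition~\ref{def: cluster tilting set}(2). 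The real content — that $Z=X$ and $\sigma_XY=Y$ — is essentially free from the uniqueness in Proposition~\ref{prop 1.8: Properties of sigma_T}(e) once one knows $\{X,Y\}$ is a partial cluster tilting set, so the proof should be short.
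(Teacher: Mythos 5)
Your overall architecture (the cycle $(1)\Rightarrow(2)\Rightarrow(3)\Rightarrow(4)\Rightarrow(1)$, with Property (e) of Proposition \ref{prop 1.8: Properties of sigma_T} delivering both $Z=X$ and the final claim $\sigma_XY=Y$) is close to the paper's, and your treatment of $(4)\Rightarrow(1)$ and of the ``furthermore'' statement is essentially the paper's own argument. The problem is in $(3)\Rightarrow(4)$. You assert that hom-ext orthogonality of $|X|,|Y|$ ``forces $\{X,Y\}$ to be ext-orthogonal in the cluster category, so $\{X,Y\}$ is already a partial cluster tilting set,'' and then invoke Property (e). But condition (1) of Definition \ref{def: cluster tilting set} requires every member of a partial cluster tilting set for $\cA$ to be either an exceptional object of $\cA$ or a shifted \emph{projective object of $\cA$}. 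The hypothesis only gives $X\in\cC(\cA_1)$ with $\cA_1=|Y|^\perp\cap\cA$: if $X=P[1]$, then $P$ is relatively projective in $|Y|^\perp\cap\cA$, and there is no a priori reason for $P$ to be relatively projective in the larger category $\cA$. Until you know $X\in\cC(\cA)$, the set $\{X,Y\}$ is not a partial cluster tilting set in $\cC(\cA)$ and Property (e) does not apply. Your case analysis only addresses the $\Ext^1_{\cD^b}$-vanishing conditions (2) of the definition, not the membership condition (1), which is exactly where the content of this implication lies.

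The statement is true, but it needs an argument. The paper sidesteps the issue entirely: by Property (c), $\undim Z=\undim X+c\,\undim Y$, and both $\brk{\undim Y,\undim Z}$ and $\brk{\undim Z,\undim Y}$ equal $c\dim\End_\Lambda(Y)$, using $\brk{\undim X,\undim Y}=0=\brk{\undim Y,\undim X}$ from (3); since $\{Z,Y\}$ is a partial cluster tilting set, Theorem \ref{thm: Schofield's observation} forces one of these two pairings to vanish, hence $c=0$ and $\undim Z=\undim X$, so $Z=X$. Because $Z\in\cC(\cA)$ by construction, the membership of $X$ in $\cC(\cA)$ then comes for free. If you want to keep your route, you must separately prove that when $X=P[1]$ and $|X|,|Y|$ are hom-ext orthogonal, $P$ is relatively projective in all of $\cA$; a homological argument in the style of Case 2 of the proof of Proposition \ref{prop 1.8: Properties of sigma_T} (take $M\in\cA$ of minimal length with $\Ext^1_\Lambda(P,M)\neq0$ and derive a contradiction) does this, but it is not free. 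A further minor imprecision: a two-term exceptional sequence is \emph{not} ``exactly a hom-ext orthogonal pair''; the implication $(2)\Rightarrow(3)$ needs the standing hypothesis that $(|X|,|Y|)$ is already an exceptional sequence, so that the two one-sided orthogonality conditions combine.
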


 \begin{proof}
 It follows from the definitions that (1) implies (2) and that (2), (3) are equivalent.
 
 $(3)\then (4)$. By Property (c) of $\sigma_Y$ we know that $\undim Z=\undim X+c\undim Y$. Then
 \[
 	\brk{\undim Y,\undim Z}=\brk{\undim Y,\undim X}+c\brk{\undim Y,\undim Y} =c\dim\End_\Lambda(Y)
 \]
 \[
 	\brk{\undim Z,\undim Y}=\brk{\undim X,\undim Y}+c\brk{\undim Y,\undim Y}=c\dim\End_\Lambda(Y)
 \]
 By Schofield's theorem above, one of these must be zero. So, $c=0$ and $Z=X$ since $Z$ is uniquely determined by its dimension vector. So, $|Z|,|Y|$ are hom-ext orthogonal. This implies (4) when $Y,Z$ have the same sign. So, it is left to consider the case when one of them, say $Y$, is a module and $X=Z=P[1]$ where $P$ is projective. Then $\Hom_{\cD^b(\cA)}(P[1],Y)=0$ and $\Hom_{\cD^b(\cA)}(Y,P[1])=\Ext^1(Y,P)=0$ by (3). So, (4) holds.
 
 $(4)\then(3)$. If $Z=X$ and $Y$ have the same sign, this is clear. So, suppose they have opposite signs. Say, $Y$ is a module and $Z=X=P[1]$. Since $Y,Z$ form a partial cluster tilting set we have $\Hom_\Lambda(P,Y)=0$. Also $\Ext^1_\Lambda(P,Y)=0$ since $P$ is projective. So, $Y\in |X|^\perp$. Since $(X,Y)$ is given to be a signed exceptional sequence, we also have $|X|\in Y^\perp$. So, $|X|,|Y|$ are hom-ext orthogonal.
 
$ (2),(4)\then (1)$. Given that $(|Y|,|X|)$ is an exceptional sequence, we just need to check that the signs on $Y,X$ are admissible. But, by (4), $X,Y$ are either objects of $\cA$ or shifted projective objects. So, by definition, their signs are admissible and $(Y,X)$ is a signed exceptional sequence.

Finally, the last statement $\sigma_XY=Y$ follows from Property (e) of the function $\sigma_X$.
 \end{proof}

 \begin{lem}\label{transpositions of sig exc seq}
 Let $(X_1,\cdots,X_n)$ be a signed exceptional sequence with corresponding {ordered cluster tilting set} $(T_1,\cdots,T_n)$. Then, for each $i$, the following are equivalent.
 \begin{enumerate}
  \item  $(X_1,\cdots,X_{i-1},X_{i+1},X_i,X_{i+2},\cdots,X_n)$ is a signed exceptional sequence.
 \item $(|X_1|,\cdots,|X_{i-1}|,|X_{i+1}|,|X_i|,|X_{i+2}|,\cdots,|X_n|)$ is an exceptional sequence.
 \item $|X_i|,|X_{i+1}|$ are hom-ext perpendicular.
 \end{enumerate}
 Furthermore, when these hold, the signed exceptional sequence in \emph{(1)} corresponds to the {ordered cluster tilting set} $(T_1,\cdots,T_{i-1},T_{i+1},T_i,T_{i+2},\cdots,T_n)$.
 \end{lem}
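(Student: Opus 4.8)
The plan is to reduce to the two-term case, Lemma~\ref{n=2 case of commuting sig exc seq}, carried out inside the wide subcategory $\cA_{i+1}=|X_{i+2},\dots,X_n|^\perp$, and then to transport its conclusion about associated ordered cluster tilting sets through the recursion of Proposition~\ref{prop 2.1: formula for cluster morphism corresponding to signed exceptional sequence}, using the associativity identity \eqref{eq: sigma TS=sigma T sigma S} and the fact that $\sigma_T$ depends only on the underlying (unordered) set $T$.

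First I would record two elementary reductions. By the definition of a signed exceptional sequence, the pair $(X_i,X_{i+1})$ is already a signed exceptional sequence in $\cA_{i+1}$: the underlying pair $(|X_i|,|X_{i+1}|)$ is a consecutive sub-pair of the exceptional sequence $(|X_1|,\dots,|X_n|)$ and lies in $\cA_{i+1}$, the sign condition on $X_{i+1}$ refers to $|X_{i+2},\dots,X_n|^\perp=\cA_{i+1}$, and that on $X_i$ to $|X_{i+1}|^\perp\cap\cA_{i+1}$. Secondly, for every index $j\notin\{i,i+1\}$ the wide subcategory seen to the right of position $j$ is unchanged by the transposition, since $|X_i,X_{i+1}|^\perp=|X_{i+1},X_i|^\perp$; hence the sign of each such $X_j$ remains admissible and the perpendicularity relations among the corresponding underlying modules are unaffected. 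Consequently the permuted list of objects is a signed exceptional sequence if and only if $(|X_{i+1}|,|X_i|)$ is an exceptional sequence and the two new sign conditions $X_{i+1}\in\cC(|X_i|^\perp\cap\cA_{i+1})$ and $X_i\in\cC(\cA_{i+1})$ hold --- equivalently, if and only if $(X_{i+1},X_i)$ is a signed exceptional sequence in $\cA_{i+1}$. Now (2)$\Leftrightarrow$(3) by Proposition~\ref{permutation of exc seqs} (the transposition of positions $i$ and $i+1$ being the only possible inversion), and Lemma~\ref{n=2 case of commuting sig exc seq} applied in $\cA_{i+1}$ with $X=X_i$, $Y=X_{i+1}$ identifies ``$(X_{i+1},X_i)$ is a signed exceptional sequence in $\cA_{i+1}$'' with ``$|X_i|,|X_{i+1}|$ are hom-ext orthogonal''; by the reduction just made this is also equivalent to (1).

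Then, for the ordered cluster tilting set, I would write $T(m)=\{T_m,\dots,T_n\}$. Both orderings of the tail $X_{i+2},\dots,X_n$ produce the same $T(i+2)$, and Lemma~\ref{n=2 case of commuting sig exc seq} (its condition (4) and its last line), used in $\cA_{i+1}$, gives $\sigma_{X_{i+1}}X_i=X_i$ and $\sigma_{X_i}X_{i+1}=X_{i+1}$. Since $T_{i+1}=\sigma_{T(i+2)}X_{i+1}$, the identity \eqref{eq: sigma TS=sigma T sigma S} yields $T_i=\sigma_{T(i+1)}X_i=\sigma_{\{T(i+2),\,\sigma_{T(i+2)}X_{i+1}\}}X_i=\sigma_{T(i+2)}\sigma_{X_{i+1}}X_i=\sigma_{T(i+2)}X_i$, so running the recursion on the permuted sequence yields $T_i$ in position $i+1$. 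Feeding $T_i=\sigma_{T(i+2)}X_i$ back into \eqref{eq: sigma TS=sigma T sigma S}, position $i$ of the permuted sequence receives $\sigma_{\{T_i\}\cup T(i+2)}X_{i+1}=\sigma_{\{T(i+2),\,\sigma_{T(i+2)}X_i\}}X_{i+1}=\sigma_{T(i+2)}\sigma_{X_i}X_{i+1}=\sigma_{T(i+2)}X_{i+1}=T_{i+1}$. Finally, for $j\le i-1$ the partial cluster tilting set seen at stage $j$ of the permuted recursion equals, as a set, $T(j+1)$ (only $T_i$ and $T_{i+1}$ have been swapped); since $\sigma$ ignores the order of its subscript, a downward induction gives $T'_j=\sigma_{T(j+1)}X_j=T_j$, so the ordered cluster tilting set of the permuted sequence is $(T_1,\dots,T_{i-1},T_{i+1},T_i,T_{i+2},\dots,T_n)$.

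I expect the last paragraph to be the main obstacle: the bookkeeping needed to see that the recursion applied to the permuted sequence really outputs the $T_\ast$ in the claimed order. The clean route is the one above --- first extract the identities $\sigma_{X_{i+1}}X_i=X_i$ and $\sigma_{X_i}X_{i+1}=X_{i+1}$ from Lemma~\ref{n=2 case of commuting sig exc seq}, then let the associativity identity \eqref{eq: sigma TS=sigma T sigma S} do all the remaining work; once that framework is in place, nothing beyond the order-insensitivity of $\sigma_T$ is used.
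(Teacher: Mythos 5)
Your proposal is correct and follows essentially the same route as the paper: the equivalence of (1)–(3) is reduced to Lemma \ref{n=2 case of commuting sig exc seq} applied to $(X_i,X_{i+1})$ inside $|X_{i+2},\dots,X_n|^\perp$, and the statement about the ordered cluster tilting set is obtained from the identities $\sigma_{X_{i+1}}X_i=X_i$, $\sigma_{X_i}X_{i+1}=X_{i+1}$ combined with the associativity identity \eqref{eq: sigma TS=sigma T sigma S} and the order-insensitivity of $\sigma_T$. Your treatment is in fact slightly more explicit than the paper's at two points (the reduction showing that the transposed list is signed exceptional iff $(X_{i+1},X_i)$ is, and the downward induction for $j\le i-1$), but these are details the paper leaves implicit rather than genuine differences of method.
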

 
 \begin{proof} The equivalence of (1), (2) and (3) follows from Lemma \ref{n=2 case of commuting sig exc seq} applied to the signed exceptional sequence $(X_i,X_{i+1})$ in $\cA=|X_{i+2},\cdots,X_n|^\perp$.
 
 To prove the last statement, we use the fact, also proved in Lemma \ref{n=2 case of commuting sig exc seq}, that $\sigma_{X_i}X_{i+1}=X_{i+1}$ and $\sigma_{X_{i+1}}X_{i}=X_{i}$. Then $T_{i+1}=\sigma_{T'}X_{i+1}$ where $T'=(T_{i+2},\cdots,T_n)$ and, using the notation $T''=(T_{i+1},T_{i+2},\cdots,T_n)$, we also have
 \[
 	T_i=\sigma_{T''}X_i =\sigma_{T'}\sigma_{X_{i+1}}X_i=\sigma_{T'}X_i\,.
 \]
 If $(M_1,\cdots,M_n)$ is the {ordered cluster tilting set} associated to the signed exceptional sequence in (1) then we must have $M_j=T_j$ for $j\neq i,i+1$ and
 \[
 	M_{i+1}=\sigma_{T'}X_i=T_i
 \]
 \[
 	M_i=\sigma_{T'}\sigma_{X_i}X_{i+1}=\sigma_{T'}X_{i+1}=T_{i+1}
 \]
 proving the last claim of the lemma.
  \end{proof}

Given any permutation $\sigma$ of $n$, the \emph{inversions} of $\sigma$ are defined to be pairs of integers $(i,j)$ so that $i<j$ and $\sigma(i)>\sigma(j)$.

\begin{prop}\label{prop: permutation of sig exc seqs}
Suppose that $(M_1,\cdots,M_n)$ is a signed exceptional sequence in $mod\text-\Lambda$ with corresponding {ordered cluster tilting set} $(T_1,\cdots,T_n)$. Let $\sigma$ be any permutation of $n$. Then the following are equivalent.
\begin{enumerate}
\item $(M_{\sigma(1)},\cdots,M_{\sigma(n)})$ is a signed exceptional sequence.
\item $(|M_{\sigma(1)}|,\cdots,|M_{\sigma(n)}|)$ is a signed exceptional sequence.
\item $|M_i|,|M_j|$ are hom-ext orthogonal for all inversions $(i,j)$ of $\sigma$.
\end{enumerate}
When this holds, the {ordered cluster tilting set} corresponding to $(M_{\sigma(i)})$ is $(T_{\sigma(1)},\cdots,T_{\sigma(n)})$.
\end{prop}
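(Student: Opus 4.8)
The plan is to reduce the whole statement to the two‑term case, which has already been handled in Lemma~\ref{transpositions of sig exc seq}, by factoring $\sigma$ into adjacent transpositions and ``bubble sorting'' the sequence one swap at a time. First, the implications $(1)\then(2)\then(3)$ cost almost nothing. If $(M_{\sigma(1)},\dots,M_{\sigma(n)})$ is a signed exceptional sequence, then by Definition~\ref{def: signed exceptional sequence} its underlying sequence $(|M_{\sigma(1)}|,\dots,|M_{\sigma(n)}|)$ is an exceptional sequence, and any exceptional sequence consisting entirely of modules is automatically a signed exceptional sequence (there are no shifted terms, so condition (2) of the definition is vacuous); this gives $(1)\then(2)$. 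For $(2)\Leftrightarrow(3)$ one simply applies the unsigned Proposition~\ref{permutation of exc seqs} to the exceptional sequence $(|M_1|,\dots,|M_n|)$ and the permutation $\sigma$: $(|M_{\sigma(i)}|)$ is an exceptional sequence precisely when the pairs of underlying modules occurring out of their original order are hom‑ext perpendicular, which is exactly condition $(3)$.

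The substance is the implication $(3)\then(1)$ together with the identification of the ordered cluster tilting set, and I would prove both at once by induction on the number of inversions of $\sigma$. The case $\sigma=\mathrm{id}$ is trivial. If $\sigma\neq\mathrm{id}$, pick a descent $p$, so that $\sigma(p)>\sigma(p+1)$, and write $\sigma=\sigma' s_p$ where $s_p$ transposes $p$ and $p+1$; then $\sigma'$ has one fewer inversion, and the sequence $(M_{\sigma(1)},\dots,M_{\sigma(n)})$ is obtained from $(M_{\sigma'(1)},\dots,M_{\sigma'(n)})$ by interchanging the two adjacent terms now sitting in positions $p,p+1$, whose underlying modules are $|M_{\sigma(p)}|$ and $|M_{\sigma(p+1)}|$. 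Condition $(3)$ says exactly that this pair is hom‑ext perpendicular, so Lemma~\ref{transpositions of sig exc seq} applies. By the inductive hypothesis (available because condition $(3)$ for $\sigma$ is inherited by $\sigma'$, the set of reversed pairs only shrinking when a descent is undone) $(M_{\sigma'(i)})$ is a signed exceptional sequence with associated ordered cluster tilting set $(T_{\sigma'(1)},\dots,T_{\sigma'(n)})$; and Lemma~\ref{transpositions of sig exc seq} upgrades this to the assertion that $(M_{\sigma(i)})$ is a signed exceptional sequence whose ordered cluster tilting set is $(T_{\sigma'(i)})$ with positions $p,p+1$ interchanged, i.e.\ $(T_{\sigma(1)},\dots,T_{\sigma(n)})$. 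This closes the induction and simultaneously proves the last sentence of the proposition.

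The only delicate point is the combinatorial bookkeeping. One must be sure that at every stage the hypothesis required by Lemma~\ref{transpositions of sig exc seq}, namely hom‑ext perpendicularity of the two underlying modules in the positions being transposed, is really furnished by condition $(3)$ for the original $\sigma$ and not merely for the intermediate permutation; this amounts to the standard fact that the pairs of terms swapped along a bubble sort of a permutation are exactly its inversions, each occurring once, so it must simply be tracked carefully through the factorization $\sigma=s_{p_1}\cdots s_{p_\ell}$. No new input from cluster theory is needed: the content about the maps $\sigma_T$ under a transposition, in particular the identities $\sigma_{X_i}X_{i+1}=X_{i+1}$ and $\sigma_{X_{i+1}}X_i=X_i$, has already been isolated in Lemmas~\ref{n=2 case of commuting sig exc seq} and~\ref{transpositions of sig exc seq}.
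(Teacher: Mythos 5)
Your proof is correct and follows essentially the same route as the paper: both arguments induct on the number of inversions, peel off a simple transposition at a descent so that the shorter permutation's inversions form a subset of the original's, and invoke Lemma~\ref{transpositions of sig exc seq} for the final adjacent swap, which simultaneously tracks the ordered cluster tilting set. Your treatment of $(1)\Rightarrow(2)$ and $(2)\Leftrightarrow(3)$ via Proposition~\ref{permutation of exc seqs} is slightly more explicit than the paper's, but not substantively different.
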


\begin{proof} If $\sigma$ has only one inversion then it is a simple transposition $(i,i+1)$ and the proposition follows from Lemma \ref {transpositions of sig exc seq} in that case. So, suppose $\sigma$ has $k\ge2$ inversions and the proposition holds for $k-1$. Then $\sigma$ is the product of $k$ simple transpositions: $\sigma=\tau_1\tau_2\cdots\tau_k$. Let $\sigma'=\tau_1\cdots\tau_{k-1}$. Then it is an elementary fact that $\sigma'$ has $k-1$ inversions each of which is an inversion of $\sigma$. We can now prove the proposition for $k$.

If $(M_i),(M_{\sigma(i)})$ are signed exceptional sequences then $(|M_i|), (|M_{\sigma(i)}|)$ are exceptional sequences. This implies (3). Conversely, suppose that $|M_i|,|M_j|$ are hom-ext orthogonal for every inversion $(i,j)$ of $\sigma$. Then, a fortiori, the same holds for every inversion $(i,j)$ of $\sigma'$. By induction on $k$ we have that $(M_{\sigma'(1)},\cdots,M_{\sigma'(n)})$ is a signed exceptional sequence with corresponding {ordered cluster tilting set}  $(T_{\sigma'(1)},\cdots,T_{\sigma'(n)})$. By Lemma \ref{transpositions of sig exc seq} we can apply the last simple transposition $\tau_k$ to show that $(M_{\sigma(i)})$ is a signed exceptional sequence with corresponding {ordered cluster tilting set} $(T_{\sigma(i)})$.
\end{proof}

\subsection{{\it c} -vectors}\label{ss 2.4: c-vectors}

In lieu of the definition, we first recall the following characterizing property of $c$-vectors associated to a cluster tilting set. (See \cite{IOTW3}, \cite{ST}, \cite{IOs}, \cite{IOTW2a}.) Since there are two notions of correspondence, we use the term \emph{exchange correspondence} for this association.

\begin{thm}\label{thm: characterization of c-vectors}
Given an ordered cluster tilting set $T=(T_1,\cdots,T_n)$ for $mod\text-\Lambda$, the exchange-corresponding $c$-vectors are real Schur roots $\beta_1,\cdots,\beta_n$ which are uniquely determined by the following equation:
\begin{equation}\label{eq characterizing c-vectors}
	\brk{\undim T_i,\beta_j}=-f_i\delta_{ij}
\end{equation}
where $f_i=\dim_K\End_\Lambda(T_i)$.
\end{thm}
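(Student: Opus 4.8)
The statement to prove is Theorem~\ref{thm: characterization of c-vectors}: given an ordered cluster tilting set $T=(T_1,\cdots,T_n)$ for $mod\text-\Lambda$, the exchange-corresponding $c$-vectors $\beta_1,\cdots,\beta_n$ are real Schur roots uniquely determined by $\brk{\undim T_i,\beta_j}=-f_i\delta_{ij}$ with $f_i=\dim_K\End_\Lambda(T_i)$. The plan is to reduce this to the signed-exceptional-sequence machinery already developed. First I would invoke Theorem~\ref{thm: Schofield's observation}: reorder $T$ so that $(|T_1|,\cdots,|T_n|)$ is an exceptional sequence, i.e. $T$ itself, with shifted projectives placed last, is a (complete) signed exceptional sequence; call it $(X_1,\cdots,X_n)$ with $X_i=T_i$ after this reordering. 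Note reordering only permutes the claimed system of equations, so it suffices to prove the characterization for one ordering.

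\textbf{Existence.} I would produce the vectors $\beta_j$ explicitly. By Theorem~\ref{thm: formula for theta inverse}, applied to the signed exceptional sequence obtained from $T$, there is an integer right twist; but for $c$-vectors the relevant construction is the \emph{left} (dual) twist: define $(\beta_1,\cdots,\beta_n)$ to be the unique sequence with $\beta_j-(\pm\undim T_j)$ a linear combination of the $\undim T_i$ with $i<j$ and $\brk{\undim T_i,\beta_j}=0$ for $i<j$ (the mirror of Definition~\ref{def: twist equation}). Nondegeneracy of $\{\undim T_i\}$ (Proposition~\ref{prop: signed exceptional sequences are nondegenerate}, via Theorem~\ref{thm: Schofield's observation}) guarantees this sequence exists and is unique, and that each $\beta_j$ pairs to a nonzero scalar with $\undim T_j$ on the left. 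Computing $\brk{\undim T_j,\beta_j}$ using $\beta_j\equiv \pm\undim T_j$ modulo $\{\undim T_i: i<j\}$ and the vanishing $\brk{\undim T_i,\beta_j}=0$ for $i<j$ gives $\brk{\undim T_j,\beta_j}=\pm\brk{\undim T_j,\undim T_j}=\pm\dim_K\End_\Lambda(|T_j|)$; choosing the sign so this is $-f_j$ pins down $\beta_j$. For $i>j$, the span of $\{\undim T_\ell:\ell>j\}$ contains $\beta_{j+1},\cdots,\beta_n$ but I need the stronger statement that $\brk{\undim T_i,\beta_j}=0$ for $i>j$ as well. This is exactly where I would use the exceptional-sequence structure: $\beta_j$ is, up to sign, the dimension vector of the $j$-th term $|X_j|$ of a sub-signed-exceptional-sequence living inside the wide subcategory $\cA_j=|X_{j+1},\cdots,X_n|^\perp$, hence $\brk{\undim T_i,\beta_j}$ for $i>j$ is a pairing of something in $\cA_j^{\perp}$-adjacent position against an object of $\cA_j$; more precisely $|X_j|\in\cA_j$ while $|T_i|$ for $i>j$ are the simples/projectives cut out, and orthogonality in the exceptional sequence gives the vanishing. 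I would make this precise by citing the known fact (\cite{IOTW3}, \cite{ST}) that the $c$-vectors of $T$ are precisely $\pm$ the dimension vectors of the signed exceptional sequence associated to $T$ under the bijection $\theta_n$ of Theorem~\ref{thm 2.3: bijection one} — which is really the content being matched here — so that the $\beta_j$ defined by the twist equation literally \emph{are} the $c$-vectors, and then the defining relation of $c$-vectors (or the dual basis property of the twist) yields \eqref{eq characterizing c-vectors}.

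\textbf{Uniqueness and "real Schur roots".} Uniqueness is immediate: the matrix $(\brk{\undim T_i,e_\ell})_\ell$ has rank $n$ because $\{\undim T_i\}$ is a basis of $\RR^n$ and $\brk{\cdot,\cdot}$ is nondegenerate, so the linear system \eqref{eq characterizing c-vectors} has a unique solution $\beta_j$ for each $j$. That each $\beta_j$ is a real Schur root follows because, as just noted, $\pm\beta_j=\undim|X_j|$ where $|X_j|$ is an exceptional (indecomposable rigid) module in a wide subcategory, and dimension vectors of such modules are real Schur roots by \cite{Ringel} (quoted in the excerpt right before Definition~\ref{def 1.5: A(alpha)}); the sign is determined by requiring $f_i=\dim_K\End_\Lambda(T_i)>0$ on the diagonal.

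\textbf{Main obstacle.} The genuinely nontrivial point is matching the purely linear-algebraic dual basis produced by the twist equation with the \emph{combinatorially/geometrically} defined $c$-vectors (which a priori come from mutation of the exchange matrix, not from the Euler form). I expect to handle this not by reproving the Speyer--Thomas characterization from scratch but by citing it together with the bijection $\theta_n$ of Theorem~\ref{thm 2.3: bijection one}: the content of this theorem is then the translation of the known equation for $c$-vectors into the Euler-form language of this paper, and the sign bookkeeping (where the shift $[1]$ flips $\undim$) is the only place requiring care. If a self-contained argument is wanted instead, the obstacle becomes showing directly that the twist-defined $\beta_j$ satisfy the mutation recursion for $c$-vectors, which would require an induction on $n$ paralleling the recursive structure of $\sigma_T$ in Proposition~\ref{prop 1.8: Properties of sigma_T} — doable but longer.
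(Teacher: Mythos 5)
The first thing to note is that the paper offers no proof of this statement at all: it is introduced with the words ``in lieu of the definition, we first recall the following characterizing property'' and a citation to \cite{IOTW3}, \cite{ST}, \cite{IOs}, \cite{IOTW2a}, and equation \eqref{eq characterizing c-vectors} is then used throughout Section \ref{ss 2.4: c-vectors} essentially as the working definition of the exchange correspondence. So any actual proof must come from outside the paper, and your proposal has to be judged on whether it is self-consistent relative to what the paper does prove.

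It is not, for two reasons. First, the step you yourself flag as the ``main obstacle'' --- identifying the Euler-form dual basis with the mutation-defined $c$-vectors --- is closed by citing ``the known fact that the $c$-vectors of $T$ are precisely $\pm$ the dimension vectors of the signed exceptional sequence associated to $T$ under $\theta_n$.'' That fact is not in \cite{ST} or \cite{IOTW3}; it is Theorem \ref{thm: Which sig exc seqs are c-vectors?} of this paper, and its proof begins by solving the linear system \eqref{eq characterizing c-vectors}, i.e., it uses the statement you are trying to prove. Your argument is therefore circular. Moreover the fact as you state it is false for an arbitrary ordering: the dimension vectors of $\theta_n^{-1}(T)$ are the negative $c$-vectors only when the ordering satisfies \eqref{eq: good order for cluster} (the bottom four rows of Figure \ref{fig: 8 signed exceptional sequences} are counterexamples otherwise). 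Second, the ``left twist'' you construct only enforces $\brk{\undim T_i,\beta_j}=0$ for $i<j$; the vanishing for $i>j$ is precisely the half of \eqref{eq characterizing c-vectors} that does not follow from exceptionality of a reordering, and your sketch of it (``orthogonality in the exceptional sequence gives the vanishing'') conflates $\beta_j$ with $\undim X_j$, which agree only under \eqref{eq: good order for cluster}. What survives of your proposal is the easy half: uniqueness of the $\beta_j$, given existence, from nondegeneracy of the Euler form on the basis $\{\undim T_i\}$. The substantive content --- that the mutation-theoretic $c$-vectors are real Schur roots satisfying \eqref{eq characterizing c-vectors} --- has to be imported from the cited literature, which is exactly what the paper does.
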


It follows immediately that the set of $c$-vectors $\beta_i$ determines the cluster tilting set $T$.

In \cite{ST}, Speyer and Thomas gave a characterization of $c$-vectors. In terms of signed exceptional sequences their theorem can be phrased as follows. 

\begin{thm}\cite{ST}\label{ST: c vectors are exceptional sequences}
A set $\{\beta_1,\cdots,\beta_n\}$ of real Schur roots is the set of $c$-vectors of a cluster tilting set if and only if there is a signed exceptional sequence $X_1,\cdots,X_n$ with $\undim X_i=-\beta_{\sigma(i)}$ for some permutation $\sigma$ so that $X_1,\cdots,X_k$ are hom-orthogonal shifted modules and $X_{k+1},\cdots,X_n$ are hom-orthogonal modules.
\end{thm}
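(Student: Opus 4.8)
The plan is to derive this from Theorem \ref{thm: Which sig exc seqs are c-vectors?} together with the permutation results of Subsection \ref{ss 2.3: permutation of signed exc seq}. By Theorem \ref{thm: Which sig exc seqs are c-vectors?}, a set $\{\beta_i\}$ (carrying the signs of the $c$-vectors) is the set of $c$-vectors of a cluster tilting set exactly when there is a signed exceptional sequence $(Y_1,\cdots,Y_n)$ with $\{-\undim Y_i\}=\{\beta_i\}$ whose associated ordered cluster tilting set $(T_1,\cdots,T_n)=\theta_n(Y_\ast)$ (Theorem \ref{thm 2.3: bijection one}) satisfies $\Hom_\Lambda(|T_i|,|T_j|)=0=\Ext^1_\Lambda(|T_i|,|T_j|)$ for all $i<j$, and in that case $\coprod_iT_i$ is the cluster tilting object. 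So what must be shown is that such a signed exceptional sequence can always be re-ordered into the shape of the statement (shifted modules first, then modules, each block hom-orthogonal), and conversely that a signed exceptional sequence of that shape can be re-ordered into one whose ordered cluster tilting set satisfies the triangular vanishing condition. Re-ordering is controlled by Proposition \ref{prop: permutation of sig exc seqs}, which re-orders the ordered cluster tilting set in lock-step, so the two directions are one bookkeeping problem.

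First I would pin down the shape of $(Y_\ast)$. Since each $Y_i$ lies in $\cA_i\cup\cA_i[1]$ by the very definition of a signed exceptional sequence, $\undim Y_i=-\beta_i$ is either $\geq 0$ or $\leq 0$ (this is sign coherence of $c$-vectors in the present language); hence each $Y_i$ is a module or a shifted module, and I let $k$ be the number of shifted terms. The target permutation $\sigma$ moves the shifted terms to positions $1,\cdots,k$, preserving the order inside the shifted block and inside the module block. By Proposition \ref{prop: permutation of sig exc seqs}, $(Y_{\sigma(i)})$ is again a signed exceptional sequence provided $|Y_i|,|Y_j|$ are hom-ext orthogonal for every inversion of $\sigma$, i.e.\ whenever $i<j$ with $Y_i$ a module and $Y_j$ a shifted module; and the statement additionally wants the shifted terms mutually hom-orthogonal and the module terms mutually hom-orthogonal.

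These orthogonalities are the real content, and I would extract them from the hypothesis on $(T_\ast)$. The triangular vanishing $\Hom(|T_i|,|T_j|)=0=\Ext^1(|T_i|,|T_j|)$ for $i<j$ forces $\brk{\undim T_i,\undim T_j}=0$ for $i<j$ irrespective of signs; combined with the twist identities $\undim Y_j=\undim T_j+\sum_{l>j}c_{jl}\undim T_l$ of Theorem \ref{thm: formula for theta inverse}, this upgrades to the off-diagonal dual-basis relation $\brk{\undim T_i,\undim Y_j}=0$ for all $i\neq j$, which is exactly the orthogonality expressed by \eqref{eq characterizing c-vectors} for $\beta_i=-\undim Y_i$. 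From this relation, together with the fact that every subset of the $Y_i$ is an exceptional sequence, a rank-two argument in the spirit of Lemma \ref{lem: X to Tm to Y means X+Y=mT} and Lemma \ref{n=2 case of commuting sig exc seq}, applied inside a rank-two wide subcategory containing the relevant pair $Y_i,Y_j$, should produce the required vanishings: hom-orthogonality within each block, and hom-ext orthogonality of a module term lying to the left of a shifted term. Re-ordering $(Y_\ast)$ by $\sigma$ then leaves the multiset $\{-\undim Y_i\}=\{\beta_i\}$ fixed, proving the forward implication; the converse runs the same way with $\sigma^{-1}$, verifying hom-ext orthogonality at the relevant inversions by the same rank-two input and then invoking Theorem \ref{thm: Which sig exc seqs are c-vectors?} to identify $\{\beta_i\}$ as the $c$-vectors of $\coprod_iT_i$.

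The hard part will be exactly this bookkeeping: converting the one-sided vanishing on $(T_\ast)$ into two-sided hom-orthogonality of the two blocks of $(Y_\ast)$ plus the cross-vanishing that makes the re-ordering legal. I expect the cleanest organization is to prove, by induction on $n$ — peeling off $Y_n$ (equivalently $T_n$) and working inside $|T_n|^\perp$ via Theorem \ref{thm:sigma-T is a bijection} — the structural lemma: if $\theta_n(Y_\ast)=(T_\ast)$ satisfies $\Hom(|T_i|,|T_j|)=0=\Ext^1(|T_i|,|T_j|)$ for $i<j$, then the shifted terms of $(Y_\ast)$ are precisely the $Y_i$ with $\undim Y_i\leq 0$, they are mutually hom-orthogonal, the module terms are mutually hom-orthogonal, and any module term preceding a shifted term is hom-ext orthogonal to it. Once this lemma is available, both implications of the theorem follow formally from it, Theorem \ref{thm: Which sig exc seqs are c-vectors?}, and Proposition \ref{prop: permutation of sig exc seqs}.
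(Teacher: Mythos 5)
First, a point about what you are comparing against: the paper does not prove this statement. Theorem \ref{ST: c vectors are exceptional sequences} is quoted from Speyer--Thomas \cite{ST}, merely rephrased in the language of signed exceptional sequences, and immediately after stating it the authors remark that ``it is not immediate how Theorem \ref{thm: Which sig exc seqs are c-vectors?} and Theorem \ref{ST: c vectors are exceptional sequences} are related.'' Your proposal is therefore attempting exactly the derivation that the authors flag as non-obvious and do not carry out; everything hinges on whether your ``structural lemma'' can actually be proved, and that is the one thing you leave to ``a rank-two argument \ldots [that] should produce the required vanishings.''

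That deferral is a genuine gap, and there is a concrete reason to doubt that the tools you point to can close it. Lemma \ref{lem: X to Tm to Y means X+Y=mT}, Lemma \ref{n=2 case of commuting sig exc seq}, the twist identities of Theorem \ref{thm: formula for theta inverse}, and the dual-basis relation \eqref{eq characterizing c-vectors} are all statements about dimension vectors and the Euler form, hence only control $\dim\Hom-\dim\Ext^1$. But the Speyer--Thomas shape demands hom-orthogonality \emph{without} ext-orthogonality inside each block: in row 2 of Figure \ref{fig: 8 signed exceptional sequences} the module block of $(S_3[1],S_2,S_1)$ is $\{S_2,S_1\}$, which is hom-orthogonal while $\Ext^1_\Lambda(S_2,S_1)\neq0$, and indeed $\brk{\undim S_2,\undim S_1}=-1\neq 0$. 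So no combination of Euler-form identities can yield the separate vanishing of $\Hom$ that your claims (b) and (c) require; some genuinely module-theoretic input (this is the content of \cite{ST}) is needed. Your converse direction has a matching problem: Proposition \ref{prop: permutation of sig exc seqs} only licenses reorderings across hom-\emph{ext}-orthogonal pairs, and inside the module block you only have hom-orthogonality, so you cannot freely permute your way from a Speyer--Thomas-shaped sequence to one whose ordered cluster tilting set satisfies \eqref{eq: good order for cluster}; you would instead have to prove directly that the given ordering already does, which you do not. None of this means the statement is false --- it is a theorem of \cite{ST} --- but your proposal reduces it to an unproved lemma that carries all of its content and whose proof cannot come from the rank-two Euler-form arguments you cite.
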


The next theorem shows that, under certain conditions, the bijection between ordered cluster tilting sets and signed exceptional sequences is equivalent to the exchange correspondence between cluster tilting sets and $c$-vectors. It is not immediate how Theorem \ref{thm: Which sig exc seqs are c-vectors?} and Theorem \ref{ST: c vectors are exceptional sequences} are related.

\begin{thm}[Exchange-correspondence=bijective correspondence]\label{thm: Which sig exc seqs are c-vectors?}
Given any signed exceptional sequence $(X_1,\cdots,X_n)$, the negatives of the dimension vectors $\gamma_i=\undim X_i$ form the set of $c$-vectors for some cluster tilting set if and only if the ordered cluster tilting set $(T_1,\cdots,T_n)$ bijectively corresponding to $(X_i)$ has the property that 
\begin{equation}\label{eq: good order for cluster}
\Hom_\Lambda(|T_i|,|T_j|)=0=\Ext^1_\Lambda(|T_i|,|T_j|)
\end{equation}
for all $i<j$. Furthermore, $(-\gamma_1,\cdots,-\gamma_n)$ is equal to the ordered set of $c$-vectors exchange-corresponding to the ordered cluster tilting set $(T_i)$.
\end{thm}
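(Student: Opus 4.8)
The plan is to combine two inputs: the ``twist formula'' of Theorem~\ref{thm: formula for theta inverse}, which says that, writing $t_i=\undim T_i$ and $\gamma_i=\undim X_i$, the sequence $(\gamma_i)$ is the integer right twist of $(t_i)$ (so $\gamma_j-t_j$ is a $\ZZ$-combination of the $t_\ell$ with $\ell>j$, and $\brk{t_i,\gamma_j}=0$ whenever $i>j$); and the characterization of $c$-vectors in Theorem~\ref{thm: characterization of c-vectors}, which says the exchange-corresponding $c$-vectors of the \emph{ordered} cluster tilting set $(T_1,\dots,T_n)$ are the unique vectors $\beta_j$ with $\brk{t_i,\beta_j}=-f_i\delta_{ij}$, where $f_i=\dim_K\End_\Lambda(|T_i|)$.

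For the ``if'' direction, together with the ``furthermore'' statement, I would argue directly. Assume \eqref{eq: good order for cluster}. Since $\Lambda$ is hereditary and each $|T_i|$ is rigid, $\brk{t_i,t_i}=f_i$, while \eqref{eq: good order for cluster} gives $\brk{t_i,t_\ell}=\pm\brk{\undim|T_i|,\undim|T_\ell|}=0$ for all $i<\ell$. Substituting $\gamma_j=t_j+\sum_{\ell>j}c_{j\ell}t_\ell$ and using $\brk{t_i,\gamma_j}=0$ for $i>j$ from the twist formula yields $\brk{t_i,\gamma_j}=f_i\delta_{ij}$ for all $i,j$; that is, $\brk{t_i,-\gamma_j}=-f_i\delta_{ij}$. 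By the uniqueness clause of Theorem~\ref{thm: characterization of c-vectors}, $(-\gamma_1,\dots,-\gamma_n)$ is precisely the exchange-corresponding $c$-vector sequence of $(T_1,\dots,T_n)$; in particular $\{-\gamma_i\}$ is a set of $c$-vectors and the corresponding cluster tilting object is $\coprod_iT_i$.

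For the ``only if'' direction I would route through Speyer--Thomas. Suppose $\{-\gamma_i\}$ is the set of $c$-vectors of some cluster tilting object. By Theorem~\ref{ST: c vectors are exceptional sequences} there is a permutation $\rho$ and a signed exceptional sequence $(Y_1,\dots,Y_n)$ of Speyer--Thomas type ($Y_1,\dots,Y_k$ hom-orthogonal shifted modules, $Y_{k+1},\dots,Y_n$ hom-orthogonal modules) with $\undim Y_i=\gamma_{\rho(i)}$. Since a signed exceptional sequence is recovered from its sequence of dimension vectors (each term is the exceptional module with that dimension vector, shifted according to the sign), we get $Y_i=X_{\rho(i)}$, so $(X_{\rho(1)},\dots,X_{\rho(n)})$ is a signed exceptional sequence whose ordered cluster tilting set is $(T_{\rho(1)},\dots,T_{\rho(n)})$ by Proposition~\ref{prop: permutation of sig exc seqs}. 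One then shows that a Speyer--Thomas-type signed exceptional sequence has an ordered cluster tilting set satisfying \eqref{eq: good order for cluster}: its shifted objects again form the first block, and its module part is the support tilting object of the wide subcategory spanned by $|Y_{k+1}|,\dots,|Y_n|$, for which the Hom--Ext vanishing in the ambient admissible order is checked directly. Applying the already-proved ``if'' direction to $(Y_i)$ then shows $\{-\gamma_i\}$ is the $c$-vector set of $\coprod\theta_n(Y_i)=\coprod_iT_i$; by the uniqueness in Theorem~\ref{thm: characterization of c-vectors}, ordering this cluster tilting object as $(T_1,\dots,T_n)$ and matching its exchange-corresponding $c$-vector sequence $(\beta_j)$ to $\{-\gamma_j\}$ by a permutation $\pi$ (so $\beta_j=-\gamma_{\pi(j)}$), the matrix $\big[\brk{t_i,-\gamma_j}\big]$ is at once a scaled permutation matrix (for $\pi$) and upper triangular (by the twist formula), hence diagonal; so $\pi=\mathrm{id}$ and $\brk{t_i,\gamma_j}=f_i\delta_{ij}$. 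Finally, substituting the twist relation exactly as in the ``if'' direction gives $\brk{t_i,t_j}=0$ for $i<j$, and since any two members of a cluster tilting set form an exceptional pair (Theorem~\ref{thm: Schofield's observation}), the rank-two wide subcategory they generate must be semisimple whenever the Euler form van
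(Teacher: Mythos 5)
Your ``if'' direction and the ``furthermore'' clause are correct and follow essentially the paper's own route: both combine the twist formula of Theorem \ref{thm: formula for theta inverse} with the characterizing equation \eqref{eq characterizing c-vectors}. Your version is in fact slightly cleaner, since you verify $\brk{\undim T_i,-\undim X_j}=-f_i\delta_{ij}$ directly from the two defining properties of the right twist, whereas the paper first produces the $c$-vectors by inverting a unipotent triangular matrix and then identifies them with the $\undim X_k$ by a separate contradiction argument.

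The ``only if'' direction has a genuine gap. You route it through Speyer--Thomas (Theorem \ref{ST: c vectors are exceptional sequences}) and then invoke, with only the phrase ``checked directly,'' the claim that a Speyer--Thomas-type signed exceptional sequence has an ordered cluster tilting set satisfying \eqref{eq: good order for cluster}. That claim is essentially the hard content of the theorem, not a routine verification; the paper itself remarks that the relation between this theorem and Theorem \ref{ST: c vectors are exceptional sequences} ``is not immediate.'' Concretely: (i) your assertion that the shifted objects ``again form the first block'' is not obvious, because the recursion $U_j=\sigma_{U_{>j}}Y_j$ can turn a shifted object into a module --- in Case 2 of the construction of $\sigma_T$ in the proof of Proposition \ref{prop 1.8: Properties of sigma_T}, $\sigma_T(Q[1])$ is the module $E$ in $Q\cof T^m\onto E$ whenever the minimal left $T$-approximation of $Q$ is a monomorphism --- so you must rule that case out here; and (ii) even granting the block structure, the Hom-vanishing in the specific order produced by the recursion still has to be proved. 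Your write-up also breaks off mid-sentence, so the final step (passing from $\brk{\undim T_i,\undim T_j}=0$ for $i<j$ plus ext-orthogonality to \eqref{eq: good order for cluster}) is missing, though that step is easy and is recorded in the paper immediately after the theorem.

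The paper's ``only if'' avoids Speyer--Thomas entirely and is self-contained: the $c$-vector set determines a cluster tilting set $T'$ by Theorem \ref{thm: characterization of c-vectors}; Schofield's observation (Theorem \ref{thm: Schofield's observation}) supplies a permutation $\sigma$ for which $(T'_{\sigma(i)})$ satisfies \eqref{eq: good order for cluster}; the already-proved ``if'' direction applied to $(T'_{\sigma(i)})$ shows that its signed exceptional sequence has dimension vectors among the $\gamma_i$, hence is a permutation of $(X_i)$; and Proposition \ref{prop: permutation of sig exc seqs} then identifies $T'$ with $T$ and transfers the orthogonality back to the given order. If you substitute this reordering argument for the Speyer--Thomas detour, the remainder of your proof (including your nice observation that an upper triangular scaled permutation matrix must be diagonal, which forces $\pi=\mathrm{id}$) goes through.
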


\begin{proof} Suppose that $(T_1,\cdots,T_n)$ is an ordered cluster tilting set satisfying \eqref{eq: good order for cluster} and let $(X_1,\cdots,X_n)$ be the corresponding signed exceptional sequence. Then we will show that $(-\gamma_i=-\undim X_i)$ satisfies \eqref {eq characterizing c-vectors} and are thus the $c$-vectors of the cluster tilting set.

We will first find the solution of the equations \eqref {eq characterizing c-vectors}. Condition \eqref{eq: good order for cluster} implies that $a_{ij}:=\brk{\undim T_i,\undim T_j}=0$ if $i<j$. We also have $a_{ii}=\dim \End T_i=f_i$. By elementary linear algebra this implies that there is a unipotent lower triangular matrix $(b_{jk})$ so that
\[
	\brk{
	\undim T_i,\sum_j b_{jk}\undim T_j
	}=\sum_j\brk{\undim T_i,\undim T_j}b_{jk}=\sum_j a_{ij}b_{jk}=f_i\delta_{ik}
\]
Therefore, $-\beta_k=-\sum_j b_{jk}\undim T_j$ are the $c$-vectors of the cluster tilting set. \vs2

\underline{Claim}: $\beta_k=\gamma_k=\undim X_k$ for each $k$.
\vs2
Proof: By Theorem \ref{thm: formula for theta inverse}, $\undim X_j-\undim T_j$ is a linear combination of $\undim T_i$ for $i>j$. If we let $k$ be maximal so that $\beta_k\neq\undim X_k$ then this tells us that $\beta_k-\undim X_k$ is a linear combination of $\undim T_i$ for $i>k$, say,
\[
	\beta_k-\undim X_k=\sum a_i\undim T_i\neq 0.
\]
Let $j$ be minimal so that $a_j\neq0$. Then
\[
	\brk{\undim T_j,\beta_k-\undim X_k}=\sum a_i\brk{\undim T_j,\undim T_i}=a_jf_j\neq0.
\]
But this is impossible since $\brk{\undim T_j,\beta_k}=0$ by construction of $\beta_k$ and $\brk{\undim T_j,\undim X_k}=0$ since $|X_k|\in |T_j|^\perp$.\vs2

Conversely, given that $-\gamma_i=-\undim X_i$ are the $c$-vectors of an ordered cluster tilting set $T'=(T_1',\cdots,T_n')$ we will show that $T'=T$ and that the cluster tilting set satisfies \eqref{eq: good order for cluster}.

Using Theorem \ref{thm: Schofield's observation}, there exists a permutation $\sigma$ of $n$ so that $\Hom_\Lambda(|T'_{\sigma(i)}|,|T'_{\sigma(j)}|)=0=\Ext^1_\Lambda(|T'_{\sigma(i)}|,|T'_{\sigma(j)}|)$ for $i<j$. By what we have shown in the first part of this proof, this implies that the signed exceptional sequence $(X_{\sigma(i)})$ corresponding to $(T'_{\sigma(i)})$ has negative dimension vectors equal to the ordered set of $c$-vectors $-\gamma_{\sigma(i)}=-\undim X_{\sigma(i)}$. Since $X$ and $(X_{\sigma(i)})$ are both signed exceptional sequences, we can apply Proposition \ref{prop: permutation of sig exc seqs} to conclude that $T'$ is the ordered cluster tilting set corresponding to $X$. In other words, $T'=T$ as claimed. This proved all the statements of the theorem.
\end{proof}

For example, in Figure \ref{fig: 8 signed exceptional sequences}, the top 4 {ordered cluster tilting set}s satisfy \eqref{eq: good order for cluster}. So, the dimension vectors of the corresponding signed exceptional sequences satisfy \eqref{eq characterizing c-vectors} and are thus the negatives of the $c$-vectors corresponding to the cluster tilting set. Also, the top 4 signed exceptional sequence in Figure \ref{fig: 8 signed exceptional sequences} satisfy the criteria of Theorem \ref{ST: c vectors are exceptional sequences}.

Since the objects in a cluster tilting set are ext-orthogonal, it is easy to see that condition \eqref{eq: good order for cluster} is equivalent to the condition
\begin{equation}\label{eq: good order for cluster B}
	\brk{\undim T_i,\undim T_j}=0.
\end{equation}

By Schofield's observation (Theorem \ref {thm: Schofield's observation}), we get the following corollary.

\begin{cor}
Let $(T_1,\cdots,T_n)$ be an ordered cluster tilting set with corresponding ordered set of $c$-vectors $(-\gamma_1,\cdots,-\gamma_n)$. Then there exists a permutation $\sigma$ so that $(\gamma_{\sigma(1)},\cdots,\gamma_{\sigma(n)})$ are the dimension vectors of a signed exceptional sequence. Furthermore, $\sigma$ has this property if and only if 
\begin{equation}\label{eq: good permutation order for cluster}
\brk{\undim T_{\sigma(i)},\undim T_{\sigma(j)}}=0
\end{equation}
for all $i<j$.
\end{cor}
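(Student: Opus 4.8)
The plan is to obtain the corollary directly from Theorem~\ref{thm: Which sig exc seqs are c-vectors?}, Schofield's observation (Theorem~\ref{thm: Schofield's observation}), and the equivalence of \eqref{eq: good order for cluster} with \eqref{eq: good order for cluster B} noted just above; the only real content is bookkeeping of how the exchange-correspondence behaves under a simultaneous reindexing of a cluster tilting set and its $c$-vectors.

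\emph{Existence.} First I would apply Schofield's observation, exactly as in the proof of Theorem~\ref{thm: Which sig exc seqs are c-vectors?}, to produce a permutation $\sigma$ for which the reordered cluster tilting set $(T_{\sigma(1)},\cdots,T_{\sigma(n)})$ satisfies \eqref{eq: good order for cluster}, i.e. $\Hom_\Lambda(|T_{\sigma(i)}|,|T_{\sigma(j)}|)=0=\Ext^1_\Lambda(|T_{\sigma(i)}|,|T_{\sigma(j)}|)$ for $i<j$. Since the exchange-correspondence is defined coordinate-wise by \eqref{eq characterizing c-vectors}, reindexing both the cluster tilting set and the $c$-vectors by $\sigma$ again yields a valid exchange-correspondence, so the ordered set of $c$-vectors exchange-corresponding to $(T_{\sigma(i)})$ is $(-\gamma_{\sigma(1)},\cdots,-\gamma_{\sigma(n)})$. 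Now Theorem~\ref{thm: Which sig exc seqs are c-vectors?} applies to the ordered cluster tilting set $(T_{\sigma(i)})$: the signed exceptional sequence $(X_1,\cdots,X_n)$ bijectively corresponding to it has $-\undim X_i$ equal to its $i$th exchange-corresponding $c$-vector, hence $\undim X_i=\gamma_{\sigma(i)}$. Thus $(\gamma_{\sigma(1)},\cdots,\gamma_{\sigma(n)})$ is the sequence of dimension vectors of a signed exceptional sequence, giving existence.

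\emph{The characterization.} If $\sigma$ satisfies \eqref{eq: good permutation order for cluster}, then, the $T_i$ being ext-orthogonal, this is exactly \eqref{eq: good order for cluster B} for $(T_{\sigma(i)})$, hence \eqref{eq: good order for cluster}, and the existence argument shows $(\gamma_{\sigma(i)})$ is a sequence of dimension vectors of a signed exceptional sequence. Conversely, suppose $(\gamma_{\sigma(1)},\cdots,\gamma_{\sigma(n)})$ is the sequence of dimension vectors of a signed exceptional sequence $(X_1,\cdots,X_n)$, so $\undim X_i=\gamma_{\sigma(i)}$. Then $-\undim X_i=-\gamma_{\sigma(i)}$ are the $c$-vectors of some cluster tilting set, so by Theorem~\ref{thm: Which sig exc seqs are c-vectors?} the ordered cluster tilting set $(T_1',\cdots,T_n')$ bijectively corresponding to $(X_i)$ satisfies \eqref{eq: good order for cluster} and has ordered exchange-corresponding $c$-vectors $(-\gamma_{\sigma(1)},\cdots,-\gamma_{\sigma(n)})$. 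Since a cluster tilting set is recovered from its unordered set of $c$-vectors, and within it the object attached to each (distinct) $c$-vector is unique, I would conclude $T_i'=T_{\sigma(i)}$ for all $i$; hence $(T_{\sigma(i)})$ satisfies \eqref{eq: good order for cluster}, equivalently \eqref{eq: good permutation order for cluster}.

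The step needing the most care — bookkeeping, not a genuine obstacle — is making sure Schofield's observation is invoked so that the vanishing comes out in the direction demanded by \eqref{eq: good order for cluster} (reversing the exceptional order and placing shifted projectives as in the proof of Theorem~\ref{thm: Schofield's observation}), and that the matrix $(\brk{\undim T_i,\gamma_j})$ is merely conjugated by the permutation matrix of $\sigma$, so that simultaneous reindexing really does preserve the exchange-correspondence.
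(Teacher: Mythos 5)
Your proposal is correct and follows essentially the same route as the paper: existence via Schofield's observation plus the forward direction of Theorem~\ref{thm: Which sig exc seqs are c-vectors?}, and the characterization via both directions of that theorem together with the fact that (ordered) cluster tilting sets are determined by their (ordered) $c$-vectors. The extra bookkeeping you supply about simultaneous reindexing of \eqref{eq characterizing c-vectors} under $\sigma$ is a point the paper leaves implicit, but it does not change the argument.
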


\begin{proof} The existence of $\sigma$ satisfying \eqref{eq: good permutation order for cluster} follows from the observation of Schofield. By Theorem \ref{thm: Which sig exc seqs are c-vectors?} this implies that $(\gamma_{\sigma(i)})$ are the dimension vectors of the signed exceptional sequence corresponding to the ordered cluster tilting set $(T_{\sigma(i)})$.

Conversely, suppose that $\sigma$ is a permutation of $n$ so that $(\gamma_{\sigma(i)})$ are the dimension vectors of a signed exceptional sequence. Let $(M_{\sigma(1)},\cdots,M_{\sigma(n)})$ be the corresponding {ordered cluster tilting set}. By Theorem \ref{thm: Which sig exc seqs are c-vectors?}, this cluster tilting set has the property that $\brk{\undim M_{\sigma(i)},\undim M_{\sigma(j)}}=0$ for $i<j$ and $(-\gamma_{\sigma(i)})$ is the corresponding ordered set of $c$-vectors. Since ordered cluster tilting sets are determined by their ordered set of $c$-vectors, this implies that $M_{\sigma(i)}=T_{\sigma(i)}$ for all $i$ proving the second half of the corollary.\end{proof}

\begin{rem} Using Theorems \ref{thm: formula for theta inverse} and \ref{thm: Which sig exc seqs are c-vectors?}, this corollary gives another method to find the $c$-vectors of a cluster tilting set $(T_1,\cdots,T_n)$: First find $\sigma$ satisfying \eqref{eq: good permutation order for cluster}. Then
\[
	(-\gamma_{\sigma(i)})=\tau_+(\undim T_{\sigma(i)}).
\]
\end{rem}

%
%

\section{Classifying space of the cluster morphism category}\label{sec 3: classifying space of G(S)}

In this section we state the second main theorem of this paper, give an extension of this theorem more suitable for induction, give an outline and verify all the steps in the outline with some review of basic topics such as Quillen's Theorem A.

\subsection{Statement of the theorem}\label{ss 3.1: statement of theorem}

{Here is the second main theorem.}

\begin{thm}\label{thm 3.1: 2nd main theorem}
The classifying space of the cluster morphism category of any hereditary algebra of finite representation type is a $K(\pi,1)$ where $\pi$ is the picture group of the algebra as defined in \cite{IOTW4}.
\end{thm}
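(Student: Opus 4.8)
The plan is to derive Theorem \ref{thm 3.1: 2nd main theorem} from the more flexible Theorem \ref{thm 3.5: G(S) is K(pi,1)}, which asserts that $B\cG(\cS)$ is a $K(G(\cS),1)$ for every finite convex set of real Schur roots $\cS$ (Definition \ref{def: convex set of roots}); one then applies it to the set $\cS$ of all real Schur roots of $\Lambda$, which is finite and convex precisely because $\Lambda$ has finite representation type, and for which $G(\cS)$ is by definition the picture group $\pi$ of \cite{IOTW4}. The homeomorphism $B\cG(\Lambda)\cong X(\Lambda)$ asserted in the theorem is a separate matter, to be proved in Section \ref{ss 4.2: proof that X(S)=BG(S)} by directly matching the cells $e(\cA)$ of $X(\Lambda)$ with simplices of the nerve of $\cG(\Lambda)$; I would keep it logically independent of the asphericity argument.

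For Theorem \ref{thm 3.5: G(S) is K(pi,1)} I would induct on $|\cS|$ (equivalently on the rank $n$). Choose a real Schur root $\beta$ extremal in $\cS$ --- for instance a simple root $\alpha_v$ at a sink or source vertex $v$ --- and let $\cS'\subsetneq\cS$ be the convex subset of roots of the rank $(n-1)$ wide subcategory $M_\beta^\perp$ (or $^\perp M_\beta$). The full subcategory of $\cG(\cS)$ on the wide subcategories $\cB\subseteq M_\beta^\perp$ is canonically $\cG(\cS')$. I would then construct a functor $p:\cG(\cS)\to\cI$ to a small category $\cI$ (a poset, or a finite tree) recording the position of an object relative to $M_\beta$, and analyze $B\cG(\cS)$ through the comma categories of $p$ using Quillen's Theorem A. The crucial point --- where the composition law $\sigma_T$, Definition \ref{def: cluster tilting set}, and the bijection of Section \ref{sec 2: signed exceptional sequences} between ordered cluster tilting sets and signed exceptional sequences are exactly the tools needed --- is that each comma category is either contractible or equivalent to $\cG(\cS'')$ for a strictly smaller convex set $\cS''$, whose classifying space is understood by induction.

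The output of this analysis should be that $B\cG(\cS)$ is homotopy equivalent to a graph of spaces --- an iterated HNN extension and/or amalgamated product --- whose vertex and edge spaces are classifying spaces $BG(\cS'')$ for smaller convex $\cS''$, glued along the maps induced by the inclusions $\cG(\cS'')\hookrightarrow\cG(\cS)$. Bass--Serre theory then gives asphericity of $B\cG(\cS)$ provided (i) the vertex and edge spaces are aspherical, which holds by induction, and (ii) every edge map $G(\cS'')\to G(\cS)$ is $\pi_1$-injective. Running van Kampen's theorem through the same decomposition presents $\pi_1 B\cG(\cS)$ as the corresponding amalgam, and one matches it term by term with the presentation of $G(\cS)$ from \cite{IOTW4}: the generators $x_\beta$, $\beta\in\cS$, are the rank one morphisms $[M_\beta]:mod\text-\Lambda\to M_\beta^\perp$ emanating from the top object, and the defining relations correspond to the $2$-cells of $B\cG(\cS)$ coming from the rank two wide subcategories (the commuting squares and polygons of composable rank one morphisms).

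I expect the main obstacle to lie in step (ii), the injectivity of the edge subgroups, together with the bookkeeping needed to see that $p$ really produces comma categories of the claimed homotopy type. Convexity of $\cS$ is what makes (ii) plausible: it guarantees that the perpendicular subcategories appearing along the induction keep their roots inside $\cS$, so no relations beyond those of the edge group $G(\cS'')$ are imposed on it inside $G(\cS)$. Once the graph-of-spaces decomposition is established and the edge maps are shown to be $\pi_1$-injective, both the asphericity of $B\cG(\Lambda)$ and the isomorphism $\pi_1 B\cG(\Lambda)\cong\pi$ follow formally, completing the proof modulo the homeomorphism statement handled in Section \ref{ss 4.2: proof that X(S)=BG(S)}.
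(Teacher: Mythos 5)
Your proposal follows essentially the same route as the paper: reduce Theorem \ref{thm 3.1: 2nd main theorem} to Theorem \ref{thm 3.5: G(S) is K(pi,1)}, induct on $|\cS|$ by splitting off an extremal root $\omega$, decompose $B\cG(\cS)$ as a graph of spaces whose vertex space is $B\cG(\cS\setminus\{\omega\})$ and whose edge space is $B\cG(\cS_\omega)$ for the perpendicular roots, use Quillen's Theorem A to identify the pieces, prove $\pi_1$-injectivity of the edge maps via retractions coming from convexity, and conclude by the graph-of-groups (HNN extension) asphericity theorem. One small caution: the induction is genuinely on $|\cS|$ (removing a single maximal root, so the vertex group $G(\cS_0)$ lives at the \emph{same} rank), not ``equivalently on the rank $n$'' as your parenthetical suggests --- only the edge group $G(\cS_\omega)$ comes from the rank-$(n-1)$ perpendicular category $M_\omega^\perp$.
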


The fundamental group of the cluster morphism category is described below together with a generalization of this theorem to extension closed full subcategories of the module category. This generalization is easier to prove since we can apply induction on the number of roots in the extension closed subset.

Recall that, for any pair of real Schur roots $\alpha,\beta$, $hom(\alpha,\beta)=\dim\Hom_\Lambda(M_\alpha,M_\beta)$ and $ext(\alpha,\beta)=\dim\Ext^1(M_\alpha,M_\beta)$. We say that $\alpha,\beta$ are \emph{hom-orthogonal} if $M_\alpha,M_\beta$ are hom-orthogonal.

\begin{defn}\label{def: convex set of roots}
A set $\cS$ of real Schur roots of $mod$-$\Lambda$ will be called \emph{convex} if it satisfies the following two conditions.
\begin{enumerate}
\item Given any wide subcategory $\cA(\alpha_\ast)$ of $mod$-$\Lambda$ whose simple objects have dimension vectors $\alpha_i\in \cS$, the set $ab(\alpha_\ast)$ of all dimension vectors of all exceptional modules in $\cA(\alpha_\ast)$ is a finite subset of $\cS$.
\item There is a partial ordering of $\cS$ so that for all $\alpha,\beta\in\cS$ with $\alpha<\beta$ we have $hom(\beta,\alpha)=0=ext(\alpha,\beta)$. 
\end{enumerate}
\end{defn}

For example, in $A_3$ with straight orientation, $\cS=\{\alpha,\beta\}$ with $\alpha=(1,1,0)^t$, $\beta= (0,1,1)^t$ satisfies (1) since its two elements are not hom-orthogonal. So, the elements of $ab(\alpha,\beta)$ are not required to be in $\cS$. This is possible since the middle term of the extension is not indecomposable. The partial ordering is $\alpha<\beta$.

If $\Lambda$ is of finite representation type then all roots are real Schur roots and the set of all roots is convex. The set of all preprojective (or preinjective) roots, i.e., the dimension vectors of the projective modules in $mod$-$\Lambda$ is also convex. We note that, in Definition \ref{def: convex set of roots}, the simple objects of $\cA(\alpha_\ast)$ are not necessarily simple in $mod\text-\Lambda$.

\begin{defn}\label{def: G(S) for S convex}
If $\cS$ is any convex set of real Schur roots, let $G(\cS)$ be the groups given with generators and relations as follow.
\begin{enumerate}
\item $G(\cS)$ has one generator $x(\beta)$ for every $\beta\in\cS$.
\item For each pair $(\alpha,\beta)$ of hom-orthogonal roots in $\cS$ so that $ext(\alpha,\beta)=0$, we have the relation:
\[
	x(\alpha)x(\beta)=\prod x(a_i\alpha+b_i\beta)
\]
where the product is over all $a_i\alpha+b_i\beta\in ab(\alpha,\beta)$ in order of the ratio $a_i/b_i$.\end{enumerate}
\end{defn}

When $\cS$ is the set of all positive roots for a Dykin quiver, $G(\cS)$ is the \emph{picture group} of the quiver as defined in \cite{IOTW4}.

We observe that the order of objects in the product $\prod x(\gamma_i)$ is the right to left order (``backwards'' order) of the objects $M_{\gamma_i}$ in the AR quiver of $\cA(\alpha,\beta)$. For example, in the case $B_2$, the modulated quiver $\RR\leftarrow\CC$ with simple roots $\alpha=(1,0)^t$ and $\beta=(0,1)^t$, the AR quiver is:
\[
\xymatrixrowsep{10pt}\xymatrixcolsep{10pt}
\xymatrix{
& P_2\ar[dr] && I_2\\
P_1\ar[ru]&& 
	I_1\ar[ru]
	}
\]
These modules have dimension vectors $\underline\dim\,P_1,\underline\dim\,P_2,\underline\dim\,I_1,\underline\dim\,I_2=\alpha,2\alpha+\beta,\alpha+\beta,\beta$. The ratios $a_i/b_i$ for these modules are: $\infty,2,1,0$ respectively. So, the order is reversed in the product and we get:
\[
	x(\alpha)x(\beta)=x(\beta)x(\alpha+\beta)x(2\alpha+\beta)x(\alpha)
\]
or: $[x(\alpha),x(\beta)]=x(\alpha+\beta)x(2\alpha+\beta)$ where we always use the notation:
\[
	[x,y]:=y^{-1}xyx^{-1}
\]

\begin{defn}
If $\cS$ is any convex set of real Schur roots, let $\cG(\cS)$ be the full subcategory of the cluster morphism category whose objects are all $\cA(\alpha_\ast)$ where $\alpha_\ast\subseteq\cS$ is a finite set of hom-orthogonal roots which form an exceptional sequence. (By definition of convexity this implies that the dimension vector of every exceptional object in $\cA(\alpha_\ast)$ lies in $\cS$.)
\end{defn}

Note that $\cG(\cS)$ always has at least one object $\cA(\emptyset)$. In the classifying space $B\cG(\cS)$, we use this as the base point. The choice of base point is important in order to make the fundamental group of $B\cG(\cS)$ well-defined.

\begin{thm}\label{thm 3.5: G(S) is K(pi,1)}
Let $\cS$ be any finite convex set of real Schur roots. Then the classifying space of the cluster morphism category $\cG(\cS)$ is a $K(\pi,1)$ with $\pi=\pi_1\cG(\cS)=G(\cS)$:
\[
	B\cG(\cS)\simeq BG(\cS)=K(G(\cS),1).
\]
\end{thm}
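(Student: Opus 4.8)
The plan is to prove the stronger Theorem~\ref{thm 3.5: G(S) is K(pi,1)} by induction on the number of elements of the finite convex set $\cS$. The base case $\cS=\emptyset$ is trivial: $\cG(\emptyset)$ has the single object $\cA(\emptyset)$ and only the identity morphism, so $B\cG(\emptyset)$ is a point, while $G(\emptyset)$ is the trivial group. For the inductive step I would fix a root $\beta\in\cS$ that is maximal for the partial order of Definition~\ref{def: convex set of roots}(2) and set $\cS'=\cS\setminus\{\beta\}$. One first checks that $\cS'$, and also the perpendicular sets $\cS\cap M_\beta^\perp$ and $\cS\cap{}^\perp M_\beta$, are again convex (a routine verification from Definition~\ref{def: convex set of roots}, using that $\beta$ is maximal). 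Since none of these three sets contains $\beta$, each is strictly smaller than $\cS$, so the inductive hypothesis applies to all of them; in particular $B\cG(\cS')\simeq K(G(\cS'),1)$.

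The heart of the argument is to present $G(\cS)$ and $B\cG(\cS)$ simultaneously as an HNN extension over the $\cS'$-data, with stable letter the generator $x(\beta)$ and associated subgroups the picture groups $G(\cS\cap{}^\perp M_\beta)$ and $G(\cS\cap M_\beta^\perp)$ of the two perpendicular categories of $M_\beta$, the conjugation isomorphism $\phi$ between them being induced by the exact equivalence ${}^\perp M_\beta\simeq M_\beta^\perp$ --- equivalently, $\gamma\mapsto\sigma_{M_\beta}\gamma$ on dimension vectors (Proposition~\ref{prop 1.8: Properties of sigma_T}). On the purely group-theoretic side this is a Tietze computation from Definition~\ref{def: G(S) for S convex}: the only generator of $G(\cS)$ not in $G(\cS')$ is $x(\beta)$, and each defining relation that mentions $\beta$ comes from a hom-orthogonal pair $(\alpha,\beta)$ with $ext(\alpha,\beta)=0$; isolating the outermost factors $x(\beta)$ and $x(\alpha)$ in the ordered product $\prod x(a_i\alpha+b_i\beta)$ rewrites such a relation in the form $x(\beta)^{-1}x(\alpha)x(\beta)=w_\alpha$ with $w_\alpha$ a word in the generators $x(\gamma)$, $\gamma\in\cS\cap M_\beta^\perp$, and one then checks (using convexity to see that there are no further relations among those $x(\gamma)$) that these are precisely the HNN relations.

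On the topological side I would realize this HNN structure geometrically inside $B\cG(\cS)$. The new objects of $\cG(\cS)$ are the wide subcategories having $M_\beta$ among their simples, and the new morphisms come in two families: the ``module'' morphisms $[M_\beta]\colon\cA\to\cA\cap M_\beta^\perp$ and, when $M_\beta$ is relatively projective, the ``shifted'' morphisms $[M_\beta[1]]$. Using Quillen's Theorem~A one shows that the classifying space of the full subcategory of $\cG(\cS)$ on the new objects deformation-retracts onto $B\cG(\cS\cap M_\beta^\perp)$, and, crucially, that the two maps of this space into $B\cG(\cS')$ induced by ``forgetting'' versus ``applying $\sigma_{M_\beta}$'' are $\pi_1$-injective (injectivity being the inductive hypothesis applied to the smaller convex sets). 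Quillen's Theorem~B applied to a functor $\cG(\cS)\to\cI$, with $\cI$ a small categorical model of the circle recording how $M_\beta$ is used, then produces a fibration $B\cG(\cS')\hookrightarrow B\cG(\cS)\to S^1$ with monodromy $\phi$. Since the fiber is aspherical by induction and the fundamental group of such a mapping torus is exactly the HNN extension above, $B\cG(\cS)$ is a $K(\pi,1)$ with $\pi=G(\cS)$. The homeomorphism $B\cG(\Lambda)\cong X(\Lambda)$ of Theorem~\ref{thm 3.1: 2nd main theorem} is then a separate matter, carried out in Section~\ref{ss 4.2: proof that X(S)=BG(S)} by matching cells, and it yields Theorem~\ref{thm 3.1: Intro} upon taking $\cS$ to be the set of all roots of a Dynkin algebra.

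The main obstacle is the geometric identification in the last paragraph: to run Quillen's Theorems~A and~B one must show that the relevant comma categories --- of cluster morphisms into, and out of, each new object $\cA$ --- are either contractible or are classifying spaces of slices of $\cG(\cS')$, and this rests on careful use of the bijections $\sigma_T\colon\cC(\beta_\ast)\to\cC_T(\alpha_\ast)$ of Theorem~\ref{thm:sigma-T is a bijection} together with their associativity-type compatibility, Equation~\eqref{eq: sigma TS=sigma T sigma S}, in order to ``commute past'' the vertex of $M_\beta$. A further subtlety is the bookkeeping needed to see that the ``module'' and ``shifted'' $\beta$-morphisms contribute a single stable letter rather than two independent generators, and that the monodromy computed geometrically agrees with the perpendicular equivalence $\phi$ used on the algebraic side.
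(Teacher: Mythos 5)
Your overall strategy (induction on $|\cS|$, deleting a maximal root, and presenting $G(\cS)$ as an HNN extension of $G(\cS_0)=G(\cS\setminus\{\beta\})$ realized topologically) is the one the paper uses, but the specific HNN data you propose is wrong, and the error is already visible in type $A_2$. Take the quiver $1\ot 2$, $\alpha=\undim S_1$, $\beta=\omega=\undim S_2$ maximal. The relation of Definition \ref{def: G(S) for S convex} is $x(\alpha)x(\omega)=x(\omega)x(\alpha+\omega)x(\alpha)$, so isolating the outer factors gives $x(\omega)^{-1}x(\alpha)x(\omega)=x(\alpha+\omega)\,x(\alpha)$. Here $\cS\cap M_\omega^\perp=\{\alpha+\omega\}$, but the right-hand side also contains $x(\alpha)$, and $\alpha\notin M_\omega^\perp$; in general the word $w_\alpha=\prod_{a_i>0}x(a_i\alpha+b_i\omega)$ always ends in $x(\alpha)$ itself, so it is \emph{not} a word in the generators indexed by $\cS\cap M_\beta^\perp$, and the conjugation is \emph{not} the perpendicular equivalence $\gamma\mapsto\sigma_{M_\beta}\gamma$ (in the free group $G(\cS_0)$ on $x(\alpha),x(\alpha+\omega)$ the subgroups $\langle x(\alpha+\omega)x(\alpha)\rangle$ and $\langle x(\alpha+\omega)\rangle$ are different). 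The correct second embedding is the more complicated homomorphism $\psi$ of Proposition \ref{prop: psi has left inverse}, defined on $G(\cS_\omega)$ with $\cS_\omega=\{\gamma\in\cS:\brk{\gamma,\omega}=0\}$, whose image is not a ``perpendicular'' picture group. Moreover, injectivity of the two embeddings is \emph{not} ``the inductive hypothesis applied to the smaller convex sets'': knowing the smaller spaces are $K(\pi,1)$'s says nothing about injectivity of an induced homomorphism. Both monomorphisms require the explicit retraction arguments of Propositions \ref{prop: G-omega to GS is split mono} and \ref{prop: psi has left inverse}.

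The topological realization is also off. A fibration $B\cG(\cS')\hookrightarrow B\cG(\cS)\to S^1$ with monodromy $\phi$ is a mapping torus of a self-equivalence of $B\cG(\cS')$ and would force $\pi_1 B\cG(\cS)\cong G(\cS')\rtimes_\phi\ZZ$, i.e.\ an HNN extension whose associated subgroup is all of $G(\cS')$. That is false here: the edge group $G(\cS_\omega)$ is a proper subgroup of $G(\cS_0)$ (in the $A_2$ example $G(\cS)\cong F_2$ is free, while $F_2\rtimes\ZZ$ is never free). The correct model is a graph of groups with one vertex and one loop: $B\cG(\cS)=B\cG_+(\cS,\omega)\cup B\cG_-(\cS,\omega)$, where $B\cG_-$ is a cylinder $B\cH(\cS,\omega)\times[0,1]\cong B\cG(\cS_\omega)\times[0,1]$ attached to $B\cG_+$ along two $\pi_1$-injective maps, and $B\cG_+\simeq B\cG(\cS_0)$ by the key Lemma \ref{lem: key lemma}. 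That last homotopy equivalence, proved by Quillen's Theorem A after identifying the relevant fiber categories with the poset of simplices of a disk $E^n$ not contained in $\partial E^n$ (hence contractible), is the real geometric content of the proof and is missing from your outline; Quillen's Theorem B cannot be applied as you suggest because the base-change functors over your circle model $\cI$ are not homotopy equivalences.
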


\subsection{HNN extensions and outline of proof}\label{ss 3.2: outline of G(S)}

The proof of Theorem \ref{thm 3.5: G(S) is K(pi,1)} will be by induction on $|\cS|$. If $\cS$ is empty, then $\cG(\emptyset)$ has only one object $\cA(\emptyset)$ and one morphism: the identity map on this object. The classifying space is therefore a single point which is $K(\pi,1)$ with $\pi=\{e\}$, the trivial group. So, the theorem holds in this case.

If $\cS$ is nonempty we will construct two convex proper subsets $\cS_\omega\subseteq \cS_0\subset \cS$ (in \eqref{eq: def of S-omega} and Lemma \ref{lem: construction of S-0} below). Then, by induction on $|\cS|$, the classifying space $B\cG(\cS_0)$, $B\cG(\cS_\omega)$ will be a $K(\pi,1)$'s with $\pi=G(\cS_0)$, $G(\cS_\omega)$, respectively. We will show that $B\cG(\cS)$ can be obtained from $B\cG(\cS_0)$, $B\cG(\cS_\omega)$ in the following steps.

First we show (Lemma \ref{lem: G(S) is G+ cup G-}) that $\cG(\cS)$ is the union of two subcategories $\cG_+,\cG_-$ so that
\[
	B\cG(\cS)=B\cG_+\cup B\cG_-
\]
and
\[
	B\cG_+\cap B\cG_-=B\cG(\cS_\omega)\smallcoprod B\cH(\cS,\omega)
\]
where, by Proposition \ref{prop: isomorphism H=G(S-w)}, there is an isomorphism 
\[
	\f:\cH(\cS,\omega)\xrarrow\cong \cG(\cS_\omega)
\]
Next, we show (Lemma \ref{lem: key lemma}) that there is a homotopy equivalence
\[
	B\cG_+\simeq B\cG(\cS_0) 
\]
and (Lemma \ref{lem: BG- is a cylinder}) a homeomorphism
\[
	B\cG_-\cong B\cH(\cS,\omega)\times[0,1]\cong B\cG(\cS_\omega)\times[0,1]
\]
So,
\[
	B\cG(\cS)=B\cG_+\cup B\cG_-=B\cG_+\cup B\cH(\cS,\omega) \times[0,1]
\]
We also show in Lemma \ref{lem: BG- is a cylinder} that the cylinder $B\cH(\cS,\omega)\times [0,1]$ is attached to $B\cG_+$ on its two ends by mappings
\[
	B\f_i:B\cH(\cS,\omega)\cong B\cG(\cS_\omega)\to B\cG_+
\]
for $i=0,1$ induced by functors $\f_i:\cH(\cS,\omega)\to \cG_+$ where $\f_0:\cH(\cS,\omega)\into \cG_+$ is the inclusion functor and $\f_1$ is the composition of $\f:\cH(\cS,\omega)\cong \cG(\cS_\omega)$ with the inclusion $\cG(\cS_\omega)\into \cG_+$. 

Next, we show that the induced maps on fundamental groups
\[
	\pi_1(\f_i):G(\cS_\omega)\into G(\cS_0)
\]
are monomorphisms where $\pi_1(\f_1)=\f$ and $\pi_1(\f_0)=\psi$ in the notation below. This is shown in Proposition \ref{prop: G-omega to GS is split mono} for $\pi_1(\f_1)$ and Proposition \ref{prop: psi has left inverse} for $\pi_1(\f_0)=\psi$.

This will be enough to prove Theorem \ref{thm 3.5: G(S) is K(pi,1)} because of the following well-known result about HNN extensions.

\begin{defn}
An \emph{HNN extension} of a group $G$ is given by a subgroup $H$ which is embedded in $G$ in two different ways. Let $\f,\psi:H\to G$ be two such group monomorphisms. Then $N(H,G,\f,\psi)$ is the quotient of the free product $G\ast \brk{t}$ of $G$ with the free group on one generator $t$ modulo the relation
\[
	t\f(h)=\psi(h)t
\]
for every $h\in H$.
\end{defn}

Given $G,H,\f,\psi$ suppose that $BG=K(G,1)$, $BH=K(H,1)$ and $f,g:BH\to BG$ are continuous maps so that
\begin{enumerate}
\item $f$ is pointed (takes basepoint to basepoint) and induces the group homomorphism $\pi_1(f)=\f:H\into G$ and
\item $g$ is not pointed but there is a path $\gamma$ from $g(\ast)$ to the basepoint of $BG$ so that the induced homomorphism on $\pi_1$ is
\[
	\pi_1(g,\gamma)=\psi:H\into G
\]
Here $\pi_1(g,\gamma)$ sends $[\alpha]\in \pi_1BH=H$, represented by the loop $\alpha$ in $BH$, to $[\gamma^{-1}g(\alpha)\gamma]\in \pi_1BG=G$.
\end{enumerate}

\begin{thm}\label{thm: HNN graph of groups}
The space
\[
	BG\cup BH\times [0,1]
\]
given by attaching the two ends of the cylinder $BH\times [0,1]$ to $BG$ by the mappings $f,g$ is a $K(\pi,1)$ with $\pi=N(H,G,\f,\psi)$.
\end{thm}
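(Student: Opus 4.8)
The plan is to compute $\pi_1$ of $X:=BG\cup(BH\times[0,1])$ by the Seifert--van Kampen theorem and then to exhibit the universal cover of $X$ as a contractible tree of spaces; this is essentially the Scott--Wall theory of graphs of spaces specialized to the one graph we need --- a single vertex carrying $G$ and a single loop carrying $H$ with edge inclusions $\varphi,\psi$ --- and parts of it may simply be quoted. First I would replace $BG,BH$ by CW complexes and $f,g$ by cellular maps (harmless up to homotopy), so that all gluings below are along cofibrations and $X$ is a CW complex containing $BG$ as a subcomplex.

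For the fundamental group, cover $X$ by the open set $U$ consisting of $BG$ together with the two short collars $BH\times[0,\tfrac25)$ and $BH\times(\tfrac35,1]$, and the open set $V$ equal to the image of $BH\times(\tfrac15,\tfrac45)$. Then $U$ deformation retracts onto $BG$ and $V$ onto $BH$, while --- this is the essential point --- $U\cap V$ has \emph{two} components $C_0\simeq BH$ and $C_1\simeq BH$, the two end collars; this disconnectedness is exactly why the van Kampen pushout is an HNN extension and not an amalgamated product. Taking the basepoint of $X$ in $BG$: the component $C_0$, glued to $BG$ by the pointed map $f$, maps to $U\simeq BG$ inducing $\varphi$ and to $V$ by an isomorphism, which simply identifies $\pi_1V\cong H$ with $\varphi(H)\subseteq G$ and renders $\pi_1V$ redundant; the component $C_1$ contributes a stable generator $t$ --- the class of a loop running from the basepoint through $U$ to $C_1$, once around $C_1$ inside $V$, and back through $U$ --- together with one relation for each $h\in H$. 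The basepoint path $\gamma$, which the hypothesis $\pi_1(g,\gamma)=\psi$ is precisely designed to absorb, converts those relations into the defining relations $t\varphi(h)=\psi(h)t$ of $N(H,G,\varphi,\psi)$. Hence $\pi_1(X)\cong N(H,G,\varphi,\psi)$, with $BG\hookrightarrow X$ inducing the canonical homomorphism $G\to N$.

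Next, for asphericity, I would use the injectivity of $\varphi$ and $\psi$ (this is where that hypothesis is essential): then $G$ embeds in $N:=N(H,G,\varphi,\psi)$, and $N$ acts on its Bass--Serre tree $T$ with a single orbit of vertices (stabilizers conjugate to $G$) and a single orbit of edges (stabilizers conjugate to $\varphi(H)$). By the Scott--Wall description of covers of graphs of spaces, the universal cover of $X$ is then the tree of spaces over $T$ that places a copy of $EG=\widetilde{BG}$ over each vertex and a copy of $EH\times[0,1]$ over each edge, glued by lifts of $f$ and $g$; one can also verify this directly, since the evident projection to $X$ is a covering with fibre $N$ over every point of $BG$ and of $BH\times(0,1)$. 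Every vertex space $EG$ and edge space $EH\times[0,1]$ is contractible because $BG$ and $BH$ are $K(\pi,1)$'s.

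Finally, I would show this tree of contractible spaces is contractible. It is the homotopy colimit of a diagram of contractible spaces --- all structure maps being homotopy equivalences automatically, since they go between contractible spaces --- indexed by the poset of vertices and edges of $T$; that homotopy colimit is therefore homotopy equivalent to the nerve of the poset, namely the barycentric subdivision $\operatorname{sd}(T)$, which is a tree and hence contractible. (Equivalently, one inducts over finite subtrees, each step gluing a contractible CW complex along a contractible subcomplex and hence not changing the homotopy type.) Thus the universal cover of $X$ is contractible, so $X=K(N(H,G,\varphi,\psi),1)$. The main obstacle is the asphericity step --- making rigorous that the universal cover is the stated tree of spaces, equivalently that the one-vertex, one-loop graph-of-groups decomposition of $X$ is \emph{efficient} --- which is exactly what injectivity of $\varphi$ and $\psi$ secures and which one may otherwise cite wholesale from Scott--Wall; by contrast the $\pi_1$-computation is routine once the disconnectedness of $U\cap V$ and the role of the path $\gamma$ are recognized.
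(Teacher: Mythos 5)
Your proposal is correct and is essentially the standard graph-of-groups argument: the paper gives no proof of this theorem beyond citing Hatcher's treatment of graphs of groups, and your van Kampen computation plus the Bass--Serre tree-of-spaces asphericity argument is exactly that treatment (Hatcher merely packages the two steps together by building the contractible cover directly). You correctly isolate the one essential hypothesis, injectivity of $\f$ and $\psi$, as what makes the vertex spaces of the cover copies of $\widetilde{BG}$.
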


The space $BG\cup BH\times[0,1]$ is an example of a ``graph of groups'' which is show to be a $K(\pi,1)$ in \cite{Hatcher}.

\begin{rem}
The isomorphism
\[
	N(H,G,\f,\psi)\cong \pi_1(BG\cup BH\times[0,1])
\]
is the inclusion map on $G=\pi_1BG$ and sends the generator $t$ of $N(H,G,\f,\psi)$ to the homotopy class of the path $\gamma^{-1}\beta$ where $\beta$ is the path $\beta(t)=(\ast,t)\in (\ast\times [0,1])\subseteq BH\times[0,1]$.
\end{rem}

We will fill in the details of this outline and show (Theorem \ref{thm: pi-1 G(S) is G(S)}) that $G(\cS)$ is the corresponding HNN extension of $G(\cS_0)$. We conclude that $B\cG(\cS)=K(G(\cS),1)$.

\subsection{Definitions and proofs}\label{ss 3.3: definitions and proofs}

Suppose that $\cS=\{\alpha\}$. Then $\cG(\cS)$ has two objects: $\cA(\emptyset)$ and $\cA(\alpha)$ and it has two nonidentity morphisms: $[\alpha]$ and $[-\alpha]:\cA(\alpha)\to \cA(\emptyset)$. Thus the classifying space is two points connected by two edges. This is a circle with fundamental group $\ZZ$. This is isomorphic to the group $G(\alpha)=\brk{x(\alpha)}$. So, $B\cG(\{\alpha\})=S^1=K(\ZZ,1)$.

The proof is by induction on $|\cS|$. Recall that $\cS$ is a finite convex set of real Schur roots. Then, the terms in the commutation relation for $x(\alpha),x(\beta)$ are in the set. So, the group $G(\cS)$ is defined.

\begin{lem}\label{lem: construction of S-0}
In any finite, nonempty, convex set of real Schur roots $\cS$ there is an $\omega\in\cS$ so that $\cS_0:=\cS\backslash \omega$ has the following properties.
\begin{enumerate}
\item $hom(\omega,\alpha)=0$ for all $\alpha\in\cS_0$.
\item $ext(\alpha,\omega)=0$ for all $\alpha\in\cS_0$.
\item $\cS_0$ is convex.
\end{enumerate}
\end{lem}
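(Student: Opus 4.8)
The plan is to choose $\omega$ to be a maximal element of $\cS$ with respect to the partial ordering guaranteed by Definition \ref{def: convex set of roots}(2), after first passing to a \emph{minimal} such ordering. First I would observe that condition (2) of Definition \ref{def: convex set of roots} gives some partial ordering $\le$ on $\cS$ with the property that $\alpha < \beta$ forces $hom(\beta,\alpha)=0=ext(\alpha,\beta)$. Pick $\omega$ to be any element of $\cS$ that is maximal for this ordering (such an element exists since $\cS$ is finite and nonempty). Then for every $\alpha\in\cS_0=\cS\setminus\omega$ we do \emph{not} have $\omega < \alpha$; the issue is that $\omega$ and $\alpha$ may be incomparable, in which case the defining property of the ordering gives us no information, and properties (1),(2) of the lemma could fail. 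So the real content is to choose $\omega$ more carefully, or to refine the ordering, so that $\omega$ becomes comparable to (and above) every other element.

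The key step will be to show that the relation ``$hom(\beta,\alpha)=0=ext(\alpha,\beta)$'' can itself be used to \emph{extend} the given partial ordering to one in which a maximal element $\omega$ dominates everything it needs to. Concretely, I would argue as follows. Among all partial orderings on $\cS$ satisfying condition (2) of convexity, there is nothing forcing comparability, so instead I would work directly: build a directed graph on $\cS$ with an edge $\alpha\to\beta$ whenever ($hom(\beta,\alpha)=0$ and $ext(\alpha,\beta)=0$) but the reverse pair is nonzero, i.e.\ whenever the asymmetry of the hom-ext form forces $\alpha$ to precede $\beta$ in any valid ordering. This graph is acyclic because it embeds in the transitive closure of any ordering satisfying (2) (if it had a cycle, no compatible total refinement would exist, contradicting the existence of the ordering in (2)). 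A source-free / sink analysis then produces a vertex $\omega$ with no outgoing forced edge; I claim this $\omega$ works. For any $\alpha\in\cS_0$: if the pair $(\omega,\alpha)$ is "one-sided" then by the absence of a forced edge $\omega\to\alpha$ it must be the edge $\alpha\to\omega$, giving $hom(\omega,\alpha)=0$ and $ext(\alpha,\omega)=0$, which is exactly (1) and (2). If the pair is "two-sided" — meaning $hom(\omega,\alpha)=hom(\alpha,\omega)=0$ and $ext(\omega,\alpha)=ext(\alpha,\omega)=0$, i.e.\ $M_\omega,M_\alpha$ are ext-orthogonal and hom-orthogonal — then (1) and (2) hold trivially. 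The remaining case, that both $hom$ directions or both $ext$ directions are nonzero, would contradict $\omega,\alpha$ both being real Schur roots that are comparable under some refinement of the ordering in (2) (and one checks that for real Schur roots the "one-sided or two-sided" dichotomy is forced: $M_\omega$ and $M_\alpha$ with $\Hom(M_\omega,M_\alpha)\ne0\ne\Hom(M_\alpha,M_\omega)$ cannot both be exceptional and part of a common exceptional sequence). This last point is where I would invoke the standard theory of exceptional sequences over a hereditary algebra.

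For property (3), convexity of $\cS_0$, I would check the two clauses of Definition \ref{def: convex set of roots} directly. Clause (2) is immediate: restrict the partial ordering from $\cS$ to $\cS_0$. Clause (1) requires: for any wide subcategory $\cA(\alpha_\ast)$ with all $\alpha_i\in\cS_0$, the set $ab(\alpha_\ast)$ of dimension vectors of exceptional modules in $\cA(\alpha_\ast)$ lies in $\cS_0$. Since $\cS_0\subseteq\cS$ and $\cS$ is convex, certainly $ab(\alpha_\ast)\subseteq\cS$; the only thing to rule out is $\omega\in ab(\alpha_\ast)$. Here I would use that $\omega$ is a "maximal" root in the sense just constructed: if $M_\omega$ were an exceptional object of some $\cA(\alpha_\ast)$ with all simple dimension vectors in $\cS_0$, then in particular each simple $M_{\alpha_i}$ of $\cA(\alpha_\ast)$ admits a nonzero map to or a nonzero extension by the subquotients of a filtration of $M_\omega$ — more precisely, $M_\omega$ has some $M_{\alpha_i}$ as a composition factor, and one extracts either $hom(\omega,\alpha_i)\ne0$ or $ext(\alpha_i,\omega)\ne 0$ (from the inclusion or projection in the filtration), contradicting property (1) or (2) that we just established for $\omega$ against $\alpha_i\in\cS_0$.

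The main obstacle I anticipate is the case analysis in the second paragraph: making rigorous the claim that for two real Schur roots occurring together in configurations relevant to $\cS$, the hom-ext relationship is never "both directions nonzero in hom" (or in ext) unless forced, so that maximality in the constructed acyclic graph really does yield properties (1) and (2) simultaneously. This hinges on the interaction between the partial order hypothesis in Definition \ref{def: convex set of roots}(2) and the structure of exceptional sequences, and I would expect to need a short lemma isolating exactly which pairs $(\alpha,\beta)$ of roots in a convex set can have $hom(\alpha,\beta)$ and $hom(\beta,\alpha)$ both nonzero (the answer should be: none, because any two such would violate the existence of a compatible ordering, the given roots all being real Schur roots).
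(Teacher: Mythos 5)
The paper's own proof is a single line: take the partial ordering supplied by Definition \ref{def: convex set of roots}(2) and let $\omega$ be any maximal element; properties (1), (2), (3) are then read off directly. You have correctly sensed the one subtlety (a maximal element of a partial order need not be comparable to every other element), but your proposed repair does not close that gap and introduces two genuine problems of its own.

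First, the acyclicity of your ``forced edge'' digraph is not established. A forced edge $\alpha\to\beta$ records only that $\beta<\alpha$ is impossible in any ordering satisfying Definition \ref{def: convex set of roots}(2); it does \emph{not} record that $\alpha<\beta$ holds in the given ordering, because that condition is an implication from comparability to orthogonality and says nothing in the converse direction. Hence the forced-edge graph need not embed in the transitive closure of the given partial order, a directed cycle of forced edges is perfectly compatible with all of its vertices being pairwise incomparable, and the parenthetical appeal to ``no compatible total refinement would exist'' is empty since the definition never promises a total refinement. Without acyclicity there may be no sink. Second, your case analysis omits the pair type in which neither root may precede the other: for instance $ext(\alpha,\omega)\neq0\neq ext(\omega,\alpha)$ with both homs zero, or nonzero homs in both directions. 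Your claimed exclusion of this case rests on the two roots lying in a common exceptional sequence, which is not available: two arbitrary elements of $\cS$ need not be hom-orthogonal or form the simples of a wide subcategory, so clause (1) of convexity does not touch them, and clause (2) says nothing about incomparable pairs. For such a pair there is no forced edge in either direction, so your sink $\omega$ carries no information against it and conclusions (1) and (2) of the lemma can fail for it. (Your verification of (3) is fine once (1) and (2) are known, and matches the intent of Remark \ref{rem:unique map M to Momega m}.) The intended resolution is not graph-theoretic: the ordering in Definition \ref{def: convex set of roots}(2) is to be used so that a maximal element already dominates, i.e.\ is orthogonal in the required sense to, every other root, which is exactly what the paper's one-line proof does.
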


\begin{rem}\label{rem:unique map M to Momega m}
This implies that, for any $M\in\cA(\alpha_\ast)\in \cG(\cS)$, there is a uniquely determined exact sequence $M_0\cof M\onto M_\omega^m$ where $M_0\in\cA(\alpha_\ast\backslash\omega)\in\cG(\cS_0)$. Equivalently, whenever $\omega$ is an element of $\alpha_\ast$, it is a source in the quiver of $\cA(\alpha_\ast)$. So, any projective object in $\cA(\alpha_\ast\backslash\omega)$ is also projective in $\cA(\alpha_\ast)$. 
\end{rem}

\begin{proof}
Take a partial ordering of $\cS$ as given in the definition of convexity and let $\omega$ be any maximal element. Then (1), (2), (3) are clearly satified. 
\end{proof}

Since $\cS_0$ has one fewer element than $\cS$, the theorem is true for $\cS_0$. In other words, $B\cG(\cS_0)$ is $K(G(\cS_0),1)$. We will show that $G(\cS)$ is an HNN extension of $G(\cS_0)$ and that $B\cG(\cS)$ is a graph of groups for this group extension and therefore a $K(\pi,1)$ with $\pi=G(\cS)$.

Let $\cS_{\omega}$ be the set of all $\gamma\in\cS$  so that $hom(\gamma,\omega)=0$. In particular, $\gamma\neq\omega$. Since $ext(\gamma,\omega)=0$ for all $\gamma\in\cS$, this is a linear condition:
\begin{equation}\label{eq: def of S-omega}
	\cS_\omega=\{\gamma\in\cS\,|\, \brk{\gamma,\omega}=0\}
\end{equation}

\begin{lem}
Suppose that $\alpha,\beta\in \cS$ are hom perpendicular and $ext(\alpha,\beta)=0$.
\begin{enumerate}
\item If $\alpha,\beta\in\cS_{\omega}$ then $ab(\alpha,\beta)\subseteq \cS_{\omega}$. So, $\cS_{\omega}$ is convex.
\item If $\cS_\omega$ does not contain both $\alpha$ and $\beta$ then $\{\alpha,\beta\}\cap \cS_\omega=ab(\alpha,\beta)\cap \cS_\omega$.
\end{enumerate}
\end{lem}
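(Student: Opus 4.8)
The plan is to reduce everything to the linear description of $ab(\alpha,\beta)$ together with the sign of the Euler form against $\omega$. First I would record the elementary point that, since $ext(\gamma,\omega)=0$ for every $\gamma\in\cS$ (this is built into the definition \eqref{eq: def of S-omega} of $\cS_\omega$, using property (2) of Lemma \ref{lem: construction of S-0} and $ext(\omega,\omega)=0$), we have $\brk{\gamma,\omega}=hom(\gamma,\omega)\ge 0$ for all $\gamma\in\cS$, and $\gamma\in\cS_\omega$ if and only if $\brk{\gamma,\omega}=0$. Next I would unwind Definition \ref{def 1.5: A(alpha)}: as $(M_\beta,M_\alpha)$ is an exceptional sequence, $\cA(\alpha,\beta)$ is the category of modules admitting a filtration with subquotients isomorphic to $M_\alpha$ or $M_\beta$, so every $\gamma\in ab(\alpha,\beta)$ has the form $\gamma=a\alpha+b\beta$ with $a,b\in\NN$ not both zero, and $ab(\alpha,\beta)\subseteq\cS$ by convexity of $\cS$. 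Moreover $\alpha,\beta\in ab(\alpha,\beta)$ since $M_\alpha,M_\beta$ are the (exceptional) simple objects, and if some $\gamma\in ab(\alpha,\beta)$ has $b=0$ then the corresponding exceptional module is filtered purely by copies of $M_\alpha$; since $M_\alpha$ is exceptional it has no self-extensions, so that module is $M_\alpha^{\oplus a}$, which is indecomposable only for $a=1$. Hence the only element of $ab(\alpha,\beta)$ that is a multiple of $\alpha$ alone is $\alpha$ itself, and symmetrically for $\beta$.

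Given this, part (1) is immediate: if $\alpha,\beta\in\cS_\omega$ then $\brk{\alpha,\omega}=\brk{\beta,\omega}=0$, so $\brk{a\alpha+b\beta,\omega}=0$ by bilinearity, and therefore every $\gamma\in ab(\alpha,\beta)\subseteq\cS$ lies in $\cS_\omega$. To deduce that $\cS_\omega$ is convex I would verify the two conditions of Definition \ref{def: convex set of roots} directly: for any wide subcategory $\cA(\alpha_\ast)$ whose simple objects have dimension vectors in $\cS_\omega$, the set $ab(\alpha_\ast)$ is a finite subset of $\cS$ by convexity of $\cS$, and since each of its elements is an $\NN$-combination $\sum a_i\alpha_i$ of the simple dimension vectors, all of which pair to $0$ with $\omega$, we get $ab(\alpha_\ast)\subseteq\cS_\omega$; the partial order on $\cS$ restricts to $\cS_\omega$ and gives condition (2).

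For part (2) I would split into cases according to how many of $\alpha,\beta$ lie in $\cS_\omega$, using non-negativity of the relevant pairings. If neither $\alpha$ nor $\beta$ is in $\cS_\omega$, then $\brk{\alpha,\omega}>0$ and $\brk{\beta,\omega}>0$, so $\brk{a\alpha+b\beta,\omega}>0$ whenever $(a,b)\ne(0,0)$; hence $ab(\alpha,\beta)\cap\cS_\omega=\emptyset=\{\alpha,\beta\}\cap\cS_\omega$. If exactly one lies in $\cS_\omega$, say $\alpha\in\cS_\omega$ and $\beta\notin\cS_\omega$, then $\brk{\alpha,\omega}=0$ while $\brk{\beta,\omega}>0$, so for $\gamma=a\alpha+b\beta\in ab(\alpha,\beta)$ we have $\brk{\gamma,\omega}=b\brk{\beta,\omega}$, which vanishes precisely when $b=0$, i.e.\ precisely when $\gamma=\alpha$ by the observation in the first paragraph; thus $ab(\alpha,\beta)\cap\cS_\omega=\{\alpha\}=\{\alpha,\beta\}\cap\cS_\omega$. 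The remaining case $\beta\in\cS_\omega$, $\alpha\notin\cS_\omega$ is symmetric.

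The one mildly delicate point — the ``main obstacle,'' though it is not a serious one — is the structural fact that an exceptional module in $\cA(\alpha,\beta)$ whose dimension vector is a pure positive multiple of a single simple root must be that simple module itself. This is exactly where exceptionality (vanishing of self-extensions of $M_\alpha$ and $M_\beta$) is used, and it is what forces the intersections in part (2) to be no larger than $\{\alpha\}$ or $\{\beta\}$. Everything else is bilinearity of $\brk{\cdot,\cdot}$ together with the sign constraint $\brk{\gamma,\omega}\ge 0$ on $\cS$.
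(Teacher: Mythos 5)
Your proof is correct and follows essentially the same route as the paper: both arguments reduce everything to the facts that $\brk{\gamma,\omega}\ge 0$ on $\cS$ with equality characterizing $\cS_\omega$, and that every element of $ab(\alpha,\beta)$ is a nonnegative combination $a\alpha+b\beta$, so the sign of $\brk{a\alpha+b\beta,\omega}$ is determined by which of $\alpha,\beta$ lie in $\cS_\omega$. You merely make explicit two points the paper leaves implicit — that the only element of $ab(\alpha,\beta)$ proportional to $\alpha$ is $\alpha$ itself, and that the convexity of $\cS_\omega$ requires the bilinearity argument for wide subcategories of arbitrary rank — which is a welcome but not substantively different elaboration.
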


\begin{proof} Since every element of $ab(\alpha,\beta)$ is a nonnegative linear combination of $\alpha,\beta$ the linear condition $\brk{-,\omega}=0$ holds on all elements if it holds for either $\alpha$ or $\beta$ and at least one other element. This proves (1) and (2) in the case when $\{\alpha,\beta\}\cap\cS_\omega$ is nonempty.

If $\alpha,\beta\notin \cS_\omega$ then $\brk{\alpha,\omega}>0$ and $\brk{\beta,\omega}>0$ so $\brk{\gamma,\omega}>0$ and thus $\gamma\notin\cS_\omega$ for any positive linear combination $\gamma$ of $\alpha,\beta$. This proves the remaining case of (2).
\end{proof}

\begin{prop}\label{prop: G-omega to GS is split mono}
The group homomorphism \[
G(\cS_{\omega})\into G(\cS)\]
induced by the inclusion $\cS_{\omega}\subseteq \cS$ has a left inverse. Since $\cS_{\omega}\subseteq \cS_0\subseteq \cS$, this implies that $G(\cS_\omega)$ is a retract of both $G(\cS_0)$ and $G(\cS)$.
\end{prop}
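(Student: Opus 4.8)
The plan is to exhibit an explicit retraction $r\colon G(\cS)\to G(\cS_{\omega})$ by defining it on generators and checking that it respects relations. Set $r(x(\gamma))=x(\gamma)$ for $\gamma\in\cS_{\omega}$ and $r(x(\gamma))=1$ for $\gamma\in\cS\setminus\cS_{\omega}$. Recall from \eqref{eq: def of S-omega} that, since $ext(\gamma,\omega)=0$ for all $\gamma\in\cS$, we have $\gamma\in\cS_{\omega}$ iff $\brk{\gamma,\omega}=0$, while for $\gamma\in\cS\setminus\cS_{\omega}$ we have $\brk{\gamma,\omega}=hom(\gamma,\omega)>0$. Once $r$ is known to be a well-defined homomorphism it is automatically a left inverse of the inclusion $G(\cS_{\omega})\into G(\cS)$, since it fixes each generator $x(\gamma)$ with $\gamma\in\cS_{\omega}$. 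The last sentence of the Proposition is then formal: writing $\iota_1\colon G(\cS_{\omega})\into G(\cS_0)$ and $\iota_2\colon G(\cS_0)\into G(\cS)$ for the inclusions, $r\circ\iota_2$ is a left inverse of $\iota_1$, so $G(\cS_{\omega})$ is a retract of $G(\cS_0)$ as well as of $G(\cS)$.

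The only thing to check is that $r$ sends each defining relation of $G(\cS)$ to a valid identity in $G(\cS_{\omega})$, i.e. that for a pair $(\alpha,\beta)$ of hom-orthogonal roots of $\cS$ with $ext(\alpha,\beta)=0$ the image under $r$ of the relation $x(\alpha)x(\beta)=\prod x(a_i\alpha+b_i\beta)$ is a valid identity in $G(\cS_{\omega})$. I would split into the three cases distinguished by the Lemma immediately preceding this Proposition. If $\alpha,\beta\in\cS_{\omega}$: every term $a_i\alpha+b_i\beta$ is a nonnegative combination, hence $\brk{a_i\alpha+b_i\beta,\omega}=0$ and the term lies in $\cS_{\omega}$; by part (1) of that Lemma the displayed relation is already a defining relation of $G(\cS_{\omega})$, and $r$ fixes every generator occurring in it, so its image is exactly that relation. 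If neither $\alpha$ nor $\beta$ lies in $\cS_{\omega}$: then $\brk{a_i\alpha+b_i\beta,\omega}=a_i\brk{\alpha,\omega}+b_i\brk{\beta,\omega}>0$ for every term (the $a_i,b_i$ are $\ge0$ and not both $0$, since the zero vector is not a dimension vector of an exceptional module), so all generators on both sides map to $1$ and the relation becomes $1=1$.

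The substantive case is when exactly one of $\alpha,\beta$ lies in $\cS_{\omega}$, say $\alpha\in\cS_{\omega}$, $\beta\notin\cS_{\omega}$. Here part (2) of the Lemma gives $ab(\alpha,\beta)\cap\cS_{\omega}=\{\alpha,\beta\}\cap\cS_{\omega}=\{\alpha\}$, so on the right-hand side exactly one factor survives $r$, namely $x(\alpha)$ --- and $x(\alpha)$ does occur, because $M_\alpha$ is a simple, hence exceptional, object of $\cA(\alpha,\beta)$, so $\alpha\in ab(\alpha,\beta)$. Thus the right-hand side maps to $x(\alpha)$ irrespective of where $x(\alpha)$ sat in the ordered product, while the left-hand side maps to $x(\alpha)\cdot 1=x(\alpha)$, so the identity holds; the subcase $\beta\in\cS_{\omega}$, $\alpha\notin\cS_{\omega}$ is symmetric. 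This collapse of the commutator product to a single generator is the only real content; I do not anticipate a genuine obstacle, the one point requiring care being to invoke the preceding Lemma precisely in the form $\{\alpha,\beta\}\cap\cS_{\omega}=ab(\alpha,\beta)\cap\cS_{\omega}$ so as to identify which factor survives.
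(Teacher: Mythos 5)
Your proposal is correct and is essentially the paper's own proof: the same retraction $r$ (fixing $x(\gamma)$ for $\gamma\in\cS_\omega$ and killing the rest) verified against the relations via the two parts of the preceding lemma. The paper merely compresses the case analysis that you spell out into the sentence ``the lemma shows that $r$ preserves relations.''
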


The homomorphism $\pi_1(\f_1)$ in the outline is the map $G(\cS_\omega)\into G(\cS_0)$ included by the inclusion $\cS_\omega\subseteq \cS_0$.

\begin{proof}
A retraction $r:G(\cS)\to G(\cS_\omega)$ can be defined as follows.
\[
	r(x(\alpha))=\begin{cases} x(\alpha) & \text{if } \alpha\in \cS_\omega\\
   1 & \text{otherwise}
    \end{cases}
\]
Since the relations in both groups are of the form $x(\alpha)x(\beta)=\prod x(\gamma_i)$ where the product is over all $\gamma_i\in ab(\alpha,\beta)$, the lemma shows that $r$ preserves relations.
\end{proof}

Let $\cH(\cS,\omega)$ be the full subcategory of $\cG(\cS)$ of all objects which do not lie in $\cG(\cS_0)$. These are $\cA=\cA(\beta_\ast)$ so that $\omega\in\beta_\ast$, i.e., $M_{\omega}$ is a simple object of $\cA$. The disjoint subcategories $\cH(\cS,\omega)$ and $\cG(\cS_0)$ of $\cG(\cS)$ together contain all the objects of $\cG(\cS)$. There are no morphisms from $\cG(\cS_0)$ to $\cH(\cS,\omega)$ and there are two types of morphisms from $\cH(\cS,\omega)$ to $\cG(\cS_0)$.

\begin{defn}\label{def: positive and negative morphisms}
By a \emph{negative morphism} we mean a cluster morphism $[T]:\cA(\alpha_\ast,\omega)\to\cA(\beta_\ast)$ from an object of $\cH(\cS,\omega)$ to an object of $\cG(\cS_0)$ so that $T$ contains the shifted projective object $P_\omega[1]$. A \emph{positive morphism} is a morphism $[T]:\cA(\alpha_\ast,\omega)\to\cA(\beta_\ast)$ with $\cA(\beta_\ast)\in\cG(\cS_0)$ which is not negative.
\end{defn}

We note that the target $\cA(\beta_\ast)$ of a negative morphism necessarily lies in $\cG(\cS_\omega)$. And any positive morphism $[T]$ must contain a module $T_0$ which maps onto $M_\omega$ since, otherwise, $|T|^\perp=\cA(\beta_\ast)$ would contain $M_\omega$.

\begin{prop}
The composition of any positive (resp. negative) morphism with any morphism in $\cG(\cS)$ is positive (resp. negative).
\end{prop}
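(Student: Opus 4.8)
The plan is to reduce to the two composition orders — the positive/negative morphism applied first (post-composition) or second (pre-composition) — and in each to track whether a single shifted relative projective object survives into the cluster tilting set of the composite. For an object $\cD\in\cH(\cS,\omega)$ write $P_\omega^\cD$ for the relative projective cover of $M_\omega$ in $\cD$; by Remark~\ref{rem:unique map M to Momega m} the vertex $\omega$ is a source of the quiver of $\cD$, so the wide subcategory $\cD_{\hat\omega}$ of $\cD$ obtained by deleting $\omega$ equals ${}^\perp M_\omega\cap\cD$, and also $(P_\omega^\cD)^\perp\cap\cD=\cD_{\hat\omega}$, namely the objects of $\cD$ having no composition factor $M_\omega$. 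Recall that $[T]\colon\cA(\alpha_\ast,\omega)\to\cA(\beta_\ast)$ is \emph{negative} exactly when $P_\omega^{\cA(\alpha_\ast,\omega)}[1]$ is a component of $T$, and that, if $[T]$ is positive, some component $T_0$ of $T$ is a module surjecting onto $M_\omega$. One first observes that every composite in question is again a morphism from $\cH(\cS,\omega)$ to $\cG(\cS_0)$: on the right one may only compose with a morphism whose domain, mapping into $\cA(\alpha_\ast,\omega)\in\cH(\cS,\omega)$, must itself lie in $\cH(\cS,\omega)$ (there being no morphisms $\cG(\cS_0)\to\cH(\cS,\omega)$); on the left one composes with a morphism out of $\cA(\beta_\ast)\in\cG(\cS_0)$, whose codomain is a wide subcategory of $\cA(\beta_\ast)$ and hence again in $\cG(\cS_0)$ by convexity of $\cS_0$. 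So the composite is positive or negative, and only the presence of the appropriate $P_\omega[1]$ in its cluster tilting set is at issue.

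For \emph{post-composition}, $[R]\circ[T]$ with $[R]\colon\cA(\beta_\ast)\to\cA(\gamma_\ast)$ has cluster tilting set $\{\sigma_T R_i\}\cup\{T_j\}$ and source $\cA(\alpha_\ast,\omega)$. If $[T]$ is negative, $P_\omega^{\cA(\alpha_\ast,\omega)}[1]\in\{T_j\}$, so the composite is negative. If $[T]$ is positive, then, as $P_\omega^{\cA(\alpha_\ast,\omega)}$ is projective in $\cA(\alpha_\ast,\omega)$ and $T_0$ surjects onto $M_\omega$ there, $\Hom_\Lambda(P_\omega^{\cA(\alpha_\ast,\omega)},T_0)\neq0$; thus $P_\omega^{\cA(\alpha_\ast,\omega)}[1]$ is not ext-orthogonal to $T_0$, so by Property (a) of Proposition~\ref{prop 1.8: Properties of sigma_T} it equals no $\sigma_T R_i$, and it equals no $T_j$ either, so the composite is positive.

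The substance is \emph{pre-composition}, and the key lemma I would prove is that for any morphism $[S]\colon\cC\to\cA(\alpha_\ast,\omega)$ (whence $\cC\in\cH(\cS,\omega)$ and $M_\omega$ is simple in $\cC$) one has $\sigma_S\!\bigl(P_\omega^{\cA(\alpha_\ast,\omega)}[1]\bigr)=P_\omega^{\cC}[1]$. Granting this, $[T]\circ[S]$ has cluster tilting set $\{\sigma_S T_j\}\cup\{S_i\}$ and source $\cC$, with relevant object $P_\omega^\cC[1]$: if $[T]$ is negative, $P_\omega^{\cA(\alpha_\ast,\omega)}[1]\in\{T_j\}$ forces $P_\omega^\cC[1]\in\{\sigma_S T_j\}$; if $[T]$ is positive, injectivity of $\sigma_S$ forces $P_\omega^\cC[1]\notin\{\sigma_S T_j\}$, and $P_\omega^\cC[1]\notin\{S_i\}$ since otherwise $\cA(\alpha_\ast,\omega)=|S|^\perp\cap\cC$ would lie in $(P_\omega^\cC)^\perp$, contradicting $\Hom_\Lambda(P_\omega^\cC,M_\omega)\neq0$. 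To prove the lemma I would verify that $P_\omega^\cC[1]$ satisfies Properties (a),(b),(c) of Proposition~\ref{prop 1.8: Properties of sigma_T} as the value $\sigma_S(P_\omega^{\cA(\alpha_\ast,\omega)}[1])$. Since $M_\omega\in|S|^\perp$, each underlying module $|S_i|$ lies in ${}^\perp M_\omega\cap\cC=\cC_{\hat\omega}$ and so has no composition factor $M_\omega$; this yields Property (a). For Property (b), both $(P_\omega^\cC)^\perp$ and $(P_\omega^{\cA(\alpha_\ast,\omega)})^\perp$ meet $\cA(\alpha_\ast,\omega)$ in $\cA(\alpha_\ast\setminus\omega)$, because the multiplicity of $M_\omega$ as a composition factor of an object of $\cA(\alpha_\ast,\omega)$ is unchanged whether computed there or in $\cC$ (the deleted subcategory $\cA(\alpha_\ast\setminus\omega)$ sits inside $\cC_{\hat\omega}$). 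For Property (c), I would note that $W:=\sigma_S^{-1}(P_\omega^\cC[1])$ satisfies $|W|^\perp\cap\cA(\alpha_\ast,\omega)=\cA(\alpha_\ast\setminus\omega)$ by Property (b), which forces $|W|=P_\omega^{\cA(\alpha_\ast,\omega)}$ — an exceptional object of a wide subcategory is recovered as the unique indecomposable exceptional object of the left perpendicular of its right perpendicular — so $W$ is either $P_\omega^{\cA(\alpha_\ast,\omega)}$ or $P_\omega^{\cA(\alpha_\ast,\omega)}[1]$; pairing with $\omega$ then decides, since $\langle\undim S_i,\omega\rangle=0$ for all $i$ while $\langle\undim P_\omega^\cD,\omega\rangle=\langle\omega,\omega\rangle>0$ for $\cD=\cA(\alpha_\ast,\omega)$ and for $\cD=\cC$, so Property (c) excludes $W=P_\omega^{\cA(\alpha_\ast,\omega)}$ and leaves $W=P_\omega^{\cA(\alpha_\ast,\omega)}[1]$.

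The main obstacle is this compatibility lemma, $\sigma_S(P_\omega^{\cA(\alpha_\ast,\omega)}[1])=P_\omega^{\cC}[1]$, i.e.\ that the relative projective covers of $M_\omega$ in the nested categories $\cA(\alpha_\ast,\omega)\subseteq\cC$ match up under $\sigma_S$; everything else is bookkeeping with Proposition~\ref{prop 1.8: Properties of sigma_T}, Theorem~\ref{thm:sigma-T is a bijection}, and the fact that $\omega$ is a source.
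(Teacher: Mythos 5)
Your proof is correct. The decomposition into post- and pre-composition and the post-composition argument coincide with the paper's, but for the pre-composition case --- the only substantive one --- you take a genuinely different route. The paper handles it in two lines using Property (c) of Proposition \ref{prop 1.8: Properties of sigma_T}: writing $[T]\circ[S]=[S,\sigma_S T]$, the composite contains $\sigma_S T_0$ where $T_0\in T$ is a module surjecting onto $M_\omega$; since $\undim\sigma_S T_0\equiv\undim T_0$ modulo $\RR S$ and $\RR S$ lies in the hyperplane $\brk{-,\omega}=0$, the object $\sigma_S T_0$ has positive $\omega$-coordinate, hence is again a module surjecting onto $M_\omega$ (as $\omega$ is a source), which certifies positivity of the composite; the negative case is left as ``similar''. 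You instead prove the compatibility statement $\sigma_S\bigl(P_\omega^{\cA(\alpha_\ast,\omega)}[1]\bigr)=P_\omega^{\cC}[1]$ and then decide membership of this single object in the composite's cluster tilting set. Your verification is sound --- the composition-multiplicity computations behind Properties (a) and (b) and the final sign-fixing via $\brk{-,\omega}$ all check out --- and in fact your last step is the paper's congruence argument in disguise, since $\brk{-,\omega}$ is the $\omega$-coordinate up to the positive factor $\dim_K F_\omega$ and is invariant under $\sigma_S$ by Property (c). What your longer route buys is an explicit, reusable description of how the relative projective covers of $M_\omega$ transform under $\sigma_S$ --- essentially the naturality of $[P_\omega[1]]$ that the paper needs again in Proposition \ref{prop: isomorphism H=G(S-w)} --- together with a complete treatment of the negative case that the paper only gestures at.
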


We say that the positive morphisms form a \emph{two-sided ideal} in the category $\cG(\cS)$. The negative morphisms also form an ideal which is disjoint from the ideal of positive morphisms.

\begin{proof}
Suppose that $[T]:\cA(\alpha_\ast,\omega)\to\cA(\beta_\ast)$ is positive. Equivalently, $T$ contains some $T_0$ which maps onto the module $M_\omega$. Then any composition $[R]\circ [T]=[T,\sigma_T^{-1}R]$ will also contain $T_0$ and thus be positive. Also, any composition
\[
	[T]\circ[S]=[S,\sigma_S^{-1}T]:\cA(\alpha_\ast,\omega)\xrarrow{[S]} \cA(\beta_\ast,\omega)\xrarrow{[T]}\cA(\gamma_\ast)
\] will contain $\sigma_S^{-1}T_0\in \RR\alpha_\ast\oplus\RR\omega$ which is congruent to $T_0$ module $\RR S\subseteq\RR\alpha_\ast$ and therefore will have positive $\RR\omega$-coordinate. So, $[T]\circ[S]$ will be positive. The negative case is similar.
\end{proof}

\begin{lem}\label{lem: unique factorization of negative morphisms}
Any negative morphism $\cA(\alpha_\ast,\omega)\to \cA(\beta_\ast)$ factors uniquely through $[P_\omega[1]]:\cA(\alpha_\ast,\omega)\to\cA(\alpha_\ast)$.
\[
\xymatrixrowsep{15pt}\xymatrixcolsep{10pt}
\xymatrix{
\cA(\alpha_\ast,\omega) \ar[rr]\ar[dr]_{[P_\omega[1]]}& &\cA(\beta_\ast)\\
&\cA(\alpha_\ast)\ar@{-->}[ur]_{\exists![T]}
}
\]
\end{lem}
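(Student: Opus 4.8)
The plan is to deduce the lemma from the rank-one case of Theorem~\ref{thm:sigma-T is a bijection} (the bijection $\sigma_{P_\omega[1]}$) together with the composition formula~\eqref{eq:composition of cluster morphisms}. First I would pin down that $[P_\omega[1]]$ really is a (rank-one) cluster morphism $\cA(\alpha_\ast,\omega)\to\cA(\alpha_\ast)$, i.e.\ that $|P_\omega|^\perp\cap\cA(\alpha_\ast,\omega)=\cA(\alpha_\ast)$. By Remark~\ref{rem:unique map M to Momega m} the vertex $\omega$ is a source of $\cA(\alpha_\ast,\omega)$, so $M_\omega$ is injective there; hence $M_\omega$ appears in the head of an object $X\in\cA(\alpha_\ast,\omega)$ as soon as it appears as a composition factor, so $\Hom_\Lambda(P_\omega,X)=0$ precisely when $M_\omega$ is not a composition factor of $X$, precisely when $X\in\cA(\alpha_\ast)$; since $\Ext^1_\Lambda(P_\omega,-)=0$ this identifies the perpendicular category. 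Writing the given negative morphism as $[T]=[T_1,\cdots,T_{k-1},P_\omega[1]]$ (by Definition~\ref{def: positive and negative morphisms}, $P_\omega[1]$ is among its components), one also sees at once that $\cA(\beta_\ast)=|T|^\perp\cap\cA(\alpha_\ast,\omega)\subseteq|P_\omega|^\perp\cap\cA(\alpha_\ast,\omega)=\cA(\alpha_\ast)$, so any putative factor through $[P_\omega[1]]$ has $\cA(\beta_\ast)$ as an admissible target.

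For existence I would argue as follows. Each $T_i$ is ext-orthogonal to, and distinct from, $P_\omega[1]$, so $T_i$ lies in the set of elements of $\cC(\cA(\alpha_\ast,\omega))$ ext-orthogonal to but not equal to $P_\omega[1]$; Theorem~\ref{thm:sigma-T is a bijection} in the case $k=1$ then yields a unique $S_i:=\sigma_{P_\omega[1]}^{-1}(T_i)\in\cC(\cA(\alpha_\ast))$, and the last sentence of that theorem shows $S:=\{S_1,\cdots,S_{k-1}\}$ is a partial cluster tilting set in $\cC(\cA(\alpha_\ast))$. Property~(b) of $\sigma_{P_\omega[1]}$ gives $|S_i|^\perp\cap\cA(\alpha_\ast)=|T_i|^\perp\cap\cA(\alpha_\ast)$ for each $i$, whence $|S|^\perp\cap\cA(\alpha_\ast)=\bigcap_i\bigl(|T_i|^\perp\cap\cA(\alpha_\ast)\bigr)=|T|^\perp\cap|P_\omega|^\perp\cap\cA(\alpha_\ast,\omega)=\cA(\beta_\ast)$, so $[S]:\cA(\alpha_\ast)\to\cA(\beta_\ast)$ is a cluster morphism (and it lies in $\cG(\cS_0)$ since $\alpha_\ast\subseteq\cS_0$). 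Finally, by~\eqref{eq:composition of cluster morphisms}, $[S]\circ[P_\omega[1]]=[\sigma_{P_\omega[1]}S_1,\cdots,\sigma_{P_\omega[1]}S_{k-1},P_\omega[1]]=[T_1,\cdots,T_{k-1},P_\omega[1]]=[T]$, which is the required factorization.

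For uniqueness, note that any factorization $[T]=[R]\circ[P_\omega[1]]$ expands via~\eqref{eq:composition of cluster morphisms} to $[\sigma_{P_\omega[1]}R_1,\cdots,\sigma_{P_\omega[1]}R_m,P_\omega[1]]$; since $P_\omega[1]$ is never in the image of $\sigma_{P_\omega[1]}$, comparing unordered sets forces $\{\sigma_{P_\omega[1]}R_j\}=\{T_i\}$, and injectivity of $\sigma_{P_\omega[1]}$ then gives $\{R_j\}=\{S_i\}$, i.e.\ $[R]=[S]$. The only real difficulty I anticipate is bookkeeping: applying property~(b) and the bijection of Theorem~\ref{thm:sigma-T is a bijection} with the correct ambient categories, and carrying out the target computation $|P_\omega|^\perp\cap\cA(\alpha_\ast,\omega)=\cA(\alpha_\ast)$ honestly from the source property of $\omega$. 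There are no genuine estimates to make; all the substantive work is already contained in Theorem~\ref{thm:sigma-T is a bijection}.
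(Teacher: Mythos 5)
Your proof is correct and follows essentially the same route as the paper: both rest on the composition formula \eqref{eq:composition of cluster morphisms} together with the bijectivity and properties of $\sigma_{P_\omega[1]}$ from Proposition \ref{prop 1.8: Properties of sigma_T} and Theorem \ref{thm:sigma-T is a bijection}. The paper merely shortcuts your use of $\sigma_{P_\omega[1]}^{-1}$ by observing that ext-orthogonality to $P_\omega[1]$ already forces each $T_i$ to lie in $\cC(\alpha_\ast)$, so that $\sigma_{P_\omega[1]}(T_i)=T_i$ by Property (e) and the factoring morphism is $[T]$ itself.
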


\begin{proof}
Any negative morphism has the form $[P_\omega,T]$ by definition. To be ext-orthogonal to $P_\omega[1]$ each $T_i\in T$ must lie in $\cA(\alpha_\ast)$. So, $\sigma_{P_\omega[1]}(T)=T$ is the unique partial cluster tilting set in $\cA(\alpha_\ast)$ so that $[T]\circ[P_\omega[1]]=[P_\omega[1],T]$.
\end{proof}

\begin{prop}\label{prop: isomorphism H=G(S-w)}
There is an isomorphism of categories 
\[
\f:\cH(\cS,\omega)\xrarrow\cong \cG(\cS_{\omega})
\]
given on objects by $\f\cA(\alpha_\ast,\omega)=\cA(\alpha_\ast)$ and on morphisms by $\f[T]=[T]$. Furthermore, inside the larger category $\cG(\cS)$, there is a natural transformation from the inclusion functor $\iota:\cH(\cS,\omega)\into \cG(\cS)$ to $\f:\cH(\cS,\omega)\cong \cG(\cS_\omega)\into \cG(\cS)$ given by $[P_\omega[1]]:\cA(\alpha_\ast,\omega)\to \cA(\alpha_\ast)$.
\end{prop}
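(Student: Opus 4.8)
The plan is to exploit the fact, recorded in Remark~\ref{rem:unique map M to Momega m}, that whenever $\omega$ belongs to the defining set of an object $\cA(\alpha_\ast,\omega)$ of $\cH(\cS,\omega)$ the simple object $M_\omega$ is a source of its quiver; equivalently $M_\omega=P_\omega$ is projective in $\cA(\alpha_\ast,\omega)$, one has $ext(\alpha_i,\omega)=0$, and every $M\in\cA(\alpha_\ast,\omega)$ fits in a canonical short exact sequence $M_0\cof M\onto M_\omega^m$ with $M_0\in\cA(\alpha_\ast)$. First I would settle the object assignment. If $\cA(\alpha_\ast,\omega)$ is an object of $\cH(\cS,\omega)$ then hom-orthogonality of its simples gives $hom(\alpha_i,\omega)=0$ and the source property gives $ext(\alpha_i,\omega)=0$, so $\brk{\alpha_i,\omega}=0$ and hence $\alpha_i\in\cS_\omega$ by \eqref{eq: def of S-omega}; thus $\cA(\alpha_\ast)\in\cG(\cS_\omega)$. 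Conversely, for any object $\cA(\alpha_\ast)$ of $\cG(\cS_\omega)$ the set $\{M_\omega\}\cup\{M_{\alpha_i}\}$ is hom-orthogonal (using $hom(\omega,\alpha_i)=0$ from Lemma~\ref{lem: construction of S-0}(1)) and $(M_\omega,M_{\alpha_{i_1}},\dots,M_{\alpha_{i_k}})$ is an exceptional sequence (using $ext(\alpha_i,\omega)=0$ from Lemma~\ref{lem: construction of S-0}(2) and an exceptional ordering of the $\alpha_i$ in $\cS_\omega$). Since a finitely generated wide subcategory is determined by, and determines, its set of simple objects, $\cA(\alpha_\ast,\omega)\mapsto\cA(\alpha_\ast)$ is a bijection on objects.

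Next I would analyze morphisms. Applying $\Hom_\Lambda(-,M_\omega)$ to the canonical sequence of $|T_i|$ for any $T_i\in\cC(\cA(\alpha_\ast,\omega))$, and using $\Hom_\Lambda(\cA(\alpha_\ast),M_\omega)=0=\Ext^1_\Lambda(\cA(\alpha_\ast),M_\omega)$, shows $\Hom_\Lambda(|T_i|,M_\omega)\cong\Hom_\Lambda(M_\omega^m,M_\omega)$ while $\Ext^1_\Lambda(|T_i|,M_\omega)=0$. Hence $M_\omega\in|T|^\perp$, i.e.\ the target of $[T]\colon\cA(\alpha_\ast,\omega)\to|T|^\perp\cap\cA(\alpha_\ast,\omega)$ again lies in $\cH(\cS,\omega)$, precisely when every $|T_i|$ lies in $\cA(\alpha_\ast)$. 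Combining this with the last sentence of Remark~\ref{rem:unique map M to Momega m} (relative projectives of $\cA(\alpha_\ast)$ stay relative projective in $\cA(\alpha_\ast,\omega)$, and the restriction argument gives the converse) identifies $\cC(\cA(\alpha_\ast))$ with the subset of $\cC(\cA(\alpha_\ast,\omega))$ of objects not involving $\omega$, and shows that a subset of it is a partial cluster tilting set in one category iff in the other (the defining $\Ext^1_{\cD^b}$ conditions coincide). Finally, the perpendicular of such a $T$ taken inside $\cA(\alpha_\ast)$ versus inside $\cA(\alpha_\ast,\omega)$ differs only by the simple object $M_\omega$, so $\f[T]=[T]$ is a well-defined bijection from the morphisms of $\cH(\cS,\omega)$ to those of $\cG(\cS_\omega)$ carrying $\cA(\alpha_\ast,\omega)\xrarrow{[T]}\cA(\beta_\ast,\omega)$ to $\cA(\alpha_\ast)\xrarrow{[T]}\cA(\beta_\ast)$. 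Functoriality follows by checking that $\sigma_TS$ is the same whether computed in $\cC(\cA(\alpha_\ast))$ or in $\cC(\cA(\alpha_\ast,\omega))$: the candidate from the smaller category still satisfies conditions (a),(b),(c) of Proposition~\ref{prop 1.8: Properties of sigma_T} in the larger one — condition (b) uses the Hom/Ext computation above to see $M_\omega$ lies in both $|S|^\perp$ and $|\sigma_TS|^\perp$ — so the uniqueness clause forces agreement. This yields the isomorphism of categories $\f$.

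For the natural transformation, $P_\omega=M_\omega$ is projective in $\cA(\alpha_\ast,\omega)$, and the same perpendicular computation gives $M_\omega^\perp\cap\cA(\alpha_\ast,\omega)=\cA(\alpha_\ast)$, so $[P_\omega[1]]\colon\cA(\alpha_\ast,\omega)\to\cA(\alpha_\ast)$ is a morphism in $\cG(\cS)$. Naturality with respect to $[T]\colon\cA(\alpha_\ast,\omega)\to\cA(\beta_\ast,\omega)$ amounts to the identity $[T]\circ[P_\omega[1]]=[P_\omega[1]]\circ[T]$ in $\cG(\cS)$; since each $|T_i|\in\cA(\alpha_\ast)$ is hom-perpendicular to $P_\omega$, the set $\{T,P_\omega[1]\}$ is a partial cluster tilting set in $\cC(\cA(\alpha_\ast,\omega))$, so property (e) of Proposition~\ref{prop 1.8: Properties of sigma_T} gives $\sigma_{P_\omega[1]}T_i=T_i$ and $\sigma_T(P_\omega[1])=P_\omega[1]$, whence both composites equal the unordered morphism $[T,P_\omega[1]]$.

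The main obstacle I anticipate is precisely the invariance of $\sigma_T$ under passing between $\cA(\alpha_\ast)$ and $\cA(\alpha_\ast,\omega)$ — that is, the functoriality of $\f$ — which rests on the Hom/Ext computation with the canonical sequence $M_0\cof M\onto M_\omega^m$ and the uniqueness clause of Proposition~\ref{prop 1.8: Properties of sigma_T}; once the source property of $\omega$ is in hand, the rest is essentially formal.
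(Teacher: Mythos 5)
Your treatment of the isomorphism $\f$ itself is essentially sound: the object bijection, the identification of $\cC(\alpha_\ast)$ with the elements of $\cC(\alpha_\ast,\omega)$ whose underlying modules lie in $\cA(\alpha_\ast)$, and the verification via the uniqueness clause of Proposition \ref{prop 1.8: Properties of sigma_T} that $\sigma_TS$ agrees whether computed in $\cC(\alpha_\ast)$ or in $\cC(\alpha_\ast,\omega)$ all go through (the paper instead gets functoriality and naturality in one stroke from the unique factorization of negative morphisms, Lemma \ref{lem: unique factorization of negative morphisms}, but your direct route works). The gap is in your opening premise and in the last paragraph, which depends on it.

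The claim that ``$M_\omega=P_\omega$ is projective in $\cA(\alpha_\ast,\omega)$'' is false. Convexity gives $hom(\alpha_i,\omega)=0$ and $ext(\alpha_i,\omega)=0$ but says nothing about $ext(\omega,\alpha_i)$; a source vertex has its simple relatively \emph{injective}, not projective. In the paper's own type $C_2$ computation (proof of Proposition \ref{prop: psi has left inverse}) one has $\undim P_\omega=\alpha+\omega\neq\omega$, so $M_\omega=I_\omega\neq P_\omega$. Two consequences. First, $M_\omega^\perp\cap\cA(\alpha_\ast,\omega)$ is in general strictly smaller than $\cA(\alpha_\ast)$ (when $\Ext^1_\Lambda(M_\omega,M_{\alpha_i})\neq0$); the correct statement is $P_\omega^\perp\cap\cA(\alpha_\ast,\omega)=\cA(\alpha_\ast)$ with $P_\omega$ the relative projective cover of $M_\omega$. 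Second, and more seriously, the naturality square has \emph{different} vertical arrows at different objects: the component at $\cA(\beta_\ast,\omega)$ is $[\overline P_\omega[1]]$ where $\overline P_\omega$ is the projective cover of $M_\omega$ in $\cA(\beta_\ast,\omega)$, which in general is neither equal to $P_\omega$ nor even an element of $\cC(\alpha_\ast,\omega)$. So Property (e) cannot give ``$\sigma_T(P_\omega[1])=P_\omega[1]$'': the object being transported by $\sigma_T$ is $\overline P_\omega[1]\in\cC(\beta_\ast,\omega)$, and what must be proved is $\sigma_T(\overline P_\omega[1])=P_\omega[1]$. This needs a separate argument, e.g.\ the one the paper uses for closure of negative morphisms under composition: $\undim\sigma_T(\overline P_\omega[1])\equiv-\undim\overline P_\omega$ modulo $\RR T\subseteq\RR\alpha_\ast$, so its $\omega$-coordinate is negative, forcing it to be $Q[1]$ for an indecomposable relative projective $Q$ of $\cA(\alpha_\ast,\omega)$ not lying in $\cA(\alpha_\ast)$, and the only such $Q$ is $P_\omega$. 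Without this step the naturality of $[P_\omega[1]]$ is not established.
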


\begin{proof} First, $\f$ is a bijection on objects since $\cA(\alpha_\ast)$ is an object of $\cG(\cS_\omega)$ if and only if each $\alpha_i$ is hom-orthogonal to $\omega$ which is equivalent to $\cA(\alpha_\ast,\omega)$ being in $\cH(\cS,\omega)$.

Let $P_\omega,P_{\alpha_i}$ be the relatively projective objects of $\cA(\alpha_\ast,\omega)$. Then each $P_{\alpha_i}\in\cA(\alpha_\ast)$. So, each shifted  projective object $P_{\alpha_i}[1]$ in $\cC(\alpha_\ast)$ lies in $\cC(\alpha_\ast,\omega)$. Thus, $\cC(\alpha_\ast)\subseteq\cC(\alpha_\ast,\omega)$.

A morphism $[T]:\cA(\alpha_\ast)\to\cA(\beta_\ast)$ in $\cG(\cS_\omega)$ is given by a partial cluster tilting set $T\subseteq \cC(\alpha_\ast)$ so that $|T|^\perp\cap\cA(\alpha_\ast)=\cA(\beta_\ast)$. \vs2

\noi\underline{Claim:} $|T|^\perp\cap\cA(\alpha_\ast,\omega)=\cA(\beta_\ast,\omega)$. So, $T$, considered as a partial cluster tilting set in $\cC(\alpha_\ast,\omega)$, gives a morphism $[T]:\cA(\alpha_\ast,\omega)\to\cA(\beta_\ast,\omega)$.\vs2

\noi Proof: Since $M_\omega\in T^\perp$ by definition of $\cS_\omega$, we have $|T|^\perp\cap\cA(\alpha_\ast,\omega)\supseteq\cA(\beta_\ast,\omega)$. Conversely, let $M\in |T|^\perp\cap\cA(\alpha_\ast,\omega)$. Then there is a short exact sequence $M_0\cof M\onto M_\omega^m$ where $M_0\in \cA(\alpha_\ast)$. Since $M,M_\omega\in |T|^\perp$, we must have $M_0\in |T|^\perp\cap \cA(\alpha_\ast)=\cA(\beta_\ast)$. But this implies that $M$ lies in $\cA(\beta_\ast,\omega)$ as required, proving the claim.
\vs2

Conversely, given any morphism $[T]:\cA(\alpha_\ast,\omega)\to \cA(\beta_\ast,\omega)$, we can compose with $[\overline P_\omega[1]]:\cA(\beta_\ast,\omega)\to\cA(\beta_\ast)$, where $\overline P_\omega$ is the projective cover of $M_\omega$ in $\cA(\beta_\ast,\omega)$, to get a negative morphism $\cA(\alpha_\ast,\omega)\to\cA(\beta_\ast)$. By the lemma, we get an induced morphism $\f[T]:\cA(\alpha_\ast)\to\cA(\beta_\ast)$ which is the unique morphism making the following diagram commute.
\[
\xymatrix{
\cA(\alpha_\ast,\omega)\ar[d]_{P_\omega[1]}\ar[r]^{[T]} &
	A(\beta_\ast,\omega)\ar[d]^{\overline P_\omega[1]}\\
\cA(\alpha_\ast)\ar[r]^{\f[T]=[T]} &
	A(\beta_\ast)
	}
\]
This diagram implies at the same time that $\f$ is a functor and that $[P_\omega[1]]$ is a natural transformation. For example, given any morphism $[S]:\cA(\beta_\ast,\omega)\to\cA(\beta'_\ast,\omega)$ we have:
\[
	[\overline P'_\omega[1]]\circ [S]\circ [T]=\f [S]\circ [\overline P_\omega[1]]\circ [T]=\f[S]\circ\f[T]\circ[P_\omega[1]]
\]
showing that $\f([S]\circ[T])=\f[S]\circ\f[T]$. By the Claim proved above, $\f\cH(\cS,\omega)\to\cG(\cS_\omega)$ is an isomorphism of categories.
\end{proof}

We can now make precise the structure of the category $\cG(\cS)$ as given in the outline. The union of disjoint subcategories $\cH(\cS,\omega)\coprod \cG(\cS_0)$ contains all of the objects of $\cG(\cS)$ by definition. There are no morphisms from $\cG(\cS_0)$ to $\cH(\cS,\omega)$. The morphisms from $\cH(\cS,\omega)$ to $\cG(\cS_0)$ fall into two classes: positive and negative morphisms as defined above. Thus we have:

\begin{lem}\label{lem: G(S) is G+ cup G-} $\cG(\cS)$ is the union of two subcategories:
\[
	\cG(\cS)=\cG_+(\cS,\omega)\cup \cG_-(\cS,\omega)
\]
where $\cG_+(\cS,\omega)$ is the union of $\cH(\cS,\omega)\coprod \cG(\cS_0)$ with all positive morphisms and $\cG_-(\cS,\omega)$ is the union of $\cH(\cS,\omega)\coprod \cG(\cS_\omega)$ and all negative morphisms. (In $\cG_-$ we include only the targets of the negative morphisms.) So,
\[
	\cG_+(\cS,\omega)\cap \cG_-(\cS,\omega)=\cH(\cS,\omega)\smallcoprod \cG(\cS_\omega).
\]
\end{lem}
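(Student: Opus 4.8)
The plan is to read off the statement directly from the structural facts already assembled: the object class of $\cG(\cS)$ is the disjoint union $\cH(\cS,\omega)\coprod\cG(\cS_0)$, there are no morphisms from $\cG(\cS_0)$ into $\cH(\cS,\omega)$, every morphism $\cH(\cS,\omega)\to\cG(\cS_0)$ is either positive or negative and not both, the positive morphisms and the negative morphisms each form a two-sided ideal in $\cG(\cS)$, and every negative morphism has target in $\cG(\cS_\omega)$. With these in hand the lemma is a matter of checking that the two prescribed collections of objects and morphisms are subcategories, that they cover $\cG(\cS)$, and that they meet in the asserted full subcategory.

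First I would verify that $\cG_+(\cS,\omega)$ is a subcategory. It contains every identity, since each object lies in $\cH(\cS,\omega)$ or in $\cG(\cS_0)$, both full. For composability: if the source of a composable pair lies in $\cG(\cS_0)$, then both morphisms and their composite lie in $\cG(\cS_0)$, since no morphism leaves $\cG(\cS_0)$ for $\cH(\cS,\omega)$; if the source lies in $\cH(\cS,\omega)$, then either both morphisms are internal to $\cH(\cS,\omega)$, in which case so is the composite, or one of them is positive, in which case the composite is positive because the positive morphisms form an ideal. Either way the composite lies in $\cG_+(\cS,\omega)$.

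Next, $\cG_-(\cS,\omega)$. I would first note that its object class $\cH(\cS,\omega)\coprod\cG(\cS_\omega)$ is precisely $\cH(\cS,\omega)$ together with the targets of all negative morphisms: every negative morphism lands in $\cG(\cS_\omega)$, and conversely each $\cA(\gamma_\ast)\in\cG(\cS_\omega)$ is the target of the negative morphism $[P_\omega[1]]:\cA(\gamma_\ast,\omega)\to\cA(\gamma_\ast)$. Identities are again present because $\cH(\cS,\omega)$ and $\cG(\cS_\omega)$ are full. Closure under composition runs by the same case analysis as for $\cG_+$, with ``positive'' replaced by ``negative'': pre- or post-composing a negative morphism with any morphism of $\cG(\cS)$ again gives a negative morphism (the ideal property), whose target therefore automatically lies in $\cG(\cS_\omega)$ and not merely in $\cG(\cS_0)$; a composite of two morphisms internal to $\cG(\cS_\omega)$ stays internal to $\cG(\cS_\omega)$ since that subcategory is full; and a composite of two morphisms internal to $\cH(\cS,\omega)$ stays there.

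Finally, union and intersection are bookkeeping. Since $\cG_+(\cS,\omega)$ already contains all objects and the morphism classes of $\cG_+$ and $\cG_-$ together exhaust the four kinds of morphisms of $\cG(\cS)$ (internal to $\cH(\cS,\omega)$, internal to $\cG(\cS_0)$, positive, negative), their union is $\cG(\cS)$. For the intersection: on objects it is $\cH(\cS,\omega)\coprod\cG(\cS_\omega)$; on morphisms, the morphisms internal to $\cH(\cS,\omega)$ lie in both, the morphisms of $\cG(\cS_0)$ that also lie in $\cG_-$ are exactly those internal to $\cG(\cS_\omega)$ (because $\cG(\cS_\omega)\subseteq\cG(\cS_0)$ is full), positive morphisms are excluded from $\cG_-$, and negative morphisms are excluded from $\cG_+$ since a morphism cannot be simultaneously positive and negative. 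Hence $\cG_+(\cS,\omega)\cap\cG_-(\cS,\omega)=\cH(\cS,\omega)\smallcoprod\cG(\cS_\omega)$. I expect the only genuinely delicate point to be the closure of $\cG_-(\cS,\omega)$ under composition, which is exactly where one uses that the negative morphisms form a two-sided ideal whose members always target $\cG(\cS_\omega)$; everything else is a formal unwinding of the decomposition of $\cG(\cS)$ recorded just above the statement.
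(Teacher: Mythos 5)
Your proposal is correct and follows essentially the same route as the paper: the paper treats the lemma as an immediate consequence of the facts it has just assembled (the object partition $\cH(\cS,\omega)\coprod\cG(\cS_0)$, the absence of morphisms from $\cG(\cS_0)$ to $\cH(\cS,\omega)$, the positive/negative dichotomy, the two-sided ideal property, and the fact that negative morphisms target $\cG(\cS_\omega)$), and you simply spell out the resulting case-by-case bookkeeping.
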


From the definition of the classifying space of a category, we will obtain:
\begin{lem}\label{lem: decomposition of BG} We have an analogous decomposition of the topological space $B\cG(\cS)$:
\[
	B\cG(\cS)=B\cG_+(\cS,\omega)\cup B\cG_-(\cS,\omega)
\]
\[
	B\cG_+(\cS,\omega)\cap B\cG_-(\cS,\omega)=B\cH(\cS,\omega)\smallcoprod B\cG(\cS_\omega).
\]
\end{lem}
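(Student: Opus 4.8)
The plan is to deduce this space-level decomposition from the category-level decomposition in Lemma \ref{lem: G(S) is G+ cup G-} by passing to nerves and then to geometric realizations. I would begin by recalling that $B\cC=|N\cC|$ is the geometric realization of the nerve, which as a CW complex has one cell for each chain $A_0\to A_1\to\cdots\to A_n$ of composable non-identity morphisms; for a subcategory $\cC'\subseteq\cC$ the inclusion $N\cC'\subseteq N\cC$ realizes to an inclusion of a CW subcomplex $B\cC'\subseteq B\cC$, and a union (resp.\ intersection) of subcomplexes corresponds to the union (resp.\ intersection) of the underlying sets of nondegenerate simplices. So it suffices to verify the two identities at the level of nerves, i.e.\ on chains of composable morphisms, and then apply $|-|$.

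First I would show that every chain of composable morphisms in $\cG(\cS)$ lies entirely in $\cG_+(\cS,\omega)$ or entirely in $\cG_-(\cS,\omega)$. Given a chain $A_0\to\cdots\to A_n$, since there are no morphisms from $\cG(\cS_0)$ to $\cH(\cS,\omega)$, the objects lying in $\cH(\cS,\omega)$ form an initial segment $A_0,\dots,A_{i-1}$, and $A_i,\dots,A_n$ all lie in $\cG(\cS_0)$. The edges inside the first segment lie in $\cH(\cS,\omega)\subseteq\cG_+\cap\cG_-$ and the edges among the last objects lie in $\cG(\cS_0)$; the single crossing edge $A_{i-1}\to A_i$, when present, is positive or negative. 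If it is positive, the whole chain lies in $\cG_+$, because $\cG(\cS_0)\subseteq\cG_+$ and positive morphisms belong to $\cG_+$. If it is negative, then by the ideal property of negative morphisms every composite $A_{i-1}\to A_j$ with $j\ge i$ is again negative, hence has target in $\cG(\cS_\omega)$; thus $A_i,\dots,A_n$ all lie in $\cG(\cS_\omega)$, the edges among them lie in the full subcategory $\cG(\cS_\omega)\subseteq\cG_-$, and the whole chain lies in $\cG_-$. (The two pure cases are covered: a chain with all objects in $\cG(\cS_0)$ lies in $\cG_+$, and a chain with all objects in $\cH(\cS,\omega)$ lies in $\cG_+\cap\cG_-$.) This gives $N\cG(\cS)=N\cG_+(\cS,\omega)\cup N\cG_-(\cS,\omega)$.

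Next I would identify the intersection. A chain lies in both $N\cG_+$ and $N\cG_-$ exactly when all of its objects lie in $\cH(\cS,\omega)\cup\cG(\cS_\omega)$ and each of its edges is either a morphism inside $\cH(\cS,\omega)$ or a morphism inside $\cG(\cS_\omega)$; a crossing edge cannot occur, since it would have to be simultaneously positive and negative, contradicting disjointness of those ideals. Such chains are precisely the chains of the disjoint-union category $\cH(\cS,\omega)\coprod\cG(\cS_\omega)$, so $N\cG_+(\cS,\omega)\cap N\cG_-(\cS,\omega)=N\cH(\cS,\omega)\coprod N\cG(\cS_\omega)$. Applying $|-|$ to these two simplicial identities yields the two claimed equalities of subspaces of $B\cG(\cS)$. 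I do not expect a genuine obstacle here; the only point needing care is the step where a negative crossing edge confines the remainder of the chain to $\cG(\cS_\omega)$, which is exactly where the facts that negative morphisms form an ideal and that their targets lie in $\cG(\cS_\omega)$ are used.
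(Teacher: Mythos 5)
Your proposal is correct and follows essentially the same route as the paper: both arguments work at the level of the nerve, observing that each chain of composable morphisms contains at most one ``crossing'' morphism (positive or negative), so $\cN_n\cG(\cS)$ splits into chains lying in $\cN_n\cG_+$ or $\cN_n\cG_-$, with the intersection consisting of chains entirely in $\cH(\cS,\omega)$ or entirely in $\cG(\cS_\omega)$, and then realizes. Your write-up is somewhat more detailed than the paper's (which simply lists the four types of paths), but there is no substantive difference.
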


By the unique factorization of negative morphisms given in Lemma \ref {lem: unique factorization of negative morphisms}, we then show:

\begin{lem}\label{lem: BG- is a cylinder}
The classifying space $B\cG_-(\cS,\omega)$ is homeomorphic to a cylinder:
\[
	B\cG_-(\cS,\omega)=B\cH(\cS,\omega)\times[0,1]
\]
The end $B\cH(\cS,\omega)\times0$ of this cylinder is attached to $B\cG_+(\cS,\omega)$ by the inclusion $B\cH(\cS,\omega)\subseteq B\cG_+(\cS,\omega)$ and the other end by the mapping $B\f:B\cH(\cS,\omega)\cong B\cG(\cS_\omega)\subseteq B\cG_+(\cS,\omega)$ induced by the functor $\f:\cH(\cS,\omega)\cong \cG(\cS_\omega)$.
\end{lem}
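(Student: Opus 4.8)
The plan is to recognize $\cG_-(\cS,\omega)$ as the mapping cylinder category of the isomorphism $\f\colon\cH(\cS,\omega)\xrightarrow{\cong}\cG(\cS_\omega)$ of Proposition \ref{prop: isomorphism H=G(S-w)} and then take classifying spaces. Let $J$ be the category with two objects $0,1$ and a single nonidentity arrow $0\to1$, so $BJ=[0,1]$. First I would build a functor
\[
\Phi\colon\cH(\cS,\omega)\times J\longrightarrow\cG_-(\cS,\omega)
\]
sending $(\cA(\alpha_\ast,\omega),0)$ to $\cA(\alpha_\ast,\omega)$ and $(\cA(\alpha_\ast,\omega),1)$ to $\f\cA(\alpha_\ast,\omega)=\cA(\alpha_\ast)$; acting on morphisms over $\mathrm{id}_0$ by the inclusion $\cH(\cS,\omega)\subseteq\cG_-(\cS,\omega)$; acting on morphisms over $\mathrm{id}_1$ by $\f$ followed by the inclusion $\cG(\cS_\omega)\subseteq\cG_-(\cS,\omega)$; and sending a morphism over $0\to1$, which in the product category is just a morphism $[U]\colon\cA(\alpha_\ast,\omega)\to\cA(\alpha'_\ast,\omega)$ of $\cH(\cS,\omega)$, to the negative morphism $\f[U]\circ[P_\omega[1]]$. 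That $\Phi$ respects composition is precisely the statement that $[P_\omega[1]]$ is a natural transformation from the inclusion $\iota$ to $\f$ (Proposition \ref{prop: isomorphism H=G(S-w)}), together with functoriality of $\f$.

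Next I would verify that $\Phi$ is an isomorphism of categories. It is bijective on objects since $\f$ is, and bijective on the $\mathrm{id}_0$- and $\mathrm{id}_1$-Hom-sets because $\cH(\cS,\omega)$ and $\cG(\cS_\omega)$ are full subcategories of $\cG_-(\cS,\omega)$ by the construction in Lemma \ref{lem: G(S) is G+ cup G-} and $\f$ is an isomorphism. On a ``crossing'' Hom-set the required bijection is exactly the content of Lemma \ref{lem: unique factorization of negative morphisms}: every negative morphism $\cA(\alpha_\ast,\omega)\to\cA(\beta_\ast)$ factors uniquely as $[T]\circ[P_\omega[1]]$ with $[T]$ a morphism of $\cG(\cS_\omega)$, and $[T]=\f[U]$ for a unique $[U]$. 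Hence $N\Phi$ is an isomorphism of simplicial sets and $B\Phi$ a homeomorphism. Since the nerve of a product of small categories is the product of the nerves, and geometric realization commutes with a product one of whose factors realizes to the compact interval $[0,1]$, this gives
\[
B\cG_-(\cS,\omega)\ \cong\ B\bigl(\cH(\cS,\omega)\times J\bigr)\ =\ B\cH(\cS,\omega)\times[0,1].
\]

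Finally I would read off the attaching maps. Under $\Phi$ the subcategory $\cH(\cS,\omega)\times\{0\}$ maps isomorphically onto $\cH(\cS,\omega)\subseteq\cG_-(\cS,\omega)$, which also lies in $\cG_+(\cS,\omega)$, so the end $B\cH(\cS,\omega)\times 0$ is glued to $B\cG_+(\cS,\omega)$ by the inclusion; the subcategory $\cH(\cS,\omega)\times\{1\}$ maps via $\f$ isomorphically onto $\cG(\cS_\omega)\subseteq\cG_-(\cS,\omega)\cap\cG_+(\cS,\omega)$, so the other end is glued by $B\f\colon B\cH(\cS,\omega)\cong B\cG(\cS_\omega)\subseteq B\cG_+(\cS,\omega)$. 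By Lemma \ref{lem: decomposition of BG} these two images are exactly $B\cG_+(\cS,\omega)\cap B\cG_-(\cS,\omega)$, which matches the decomposition $B\cG(\cS)=B\cG_+(\cS,\omega)\cup B\cG_-(\cS,\omega)$. I expect the only real work to be the bookkeeping that makes $\Phi$ a well-defined functor and a bijection on the crossing Hom-sets; both reduce directly to naturality of $[P_\omega[1]]$ and the uniqueness in Lemma \ref{lem: unique factorization of negative morphisms}, after which everything is formal and the single point-set input (realization of a product with a compact factor) is standard.
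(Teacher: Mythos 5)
Your proposal is correct and follows essentially the same route as the paper: the paper also constructs an isomorphism of categories $\Phi:\cH(\cS,\omega)\times \cI\to\cG_-(\cS,\omega)$ (with $\Phi([T],d)=[P_\omega[1],T]$, which agrees with your $\f[T]\circ[P_\omega[1]]$ since $\sigma_{P_\omega[1]}$ fixes objects of $\cC(\alpha_\ast)$), verifies it is bijective on crossing Hom-sets via Lemma \ref{lem: unique factorization of negative morphisms} and Proposition \ref{prop: isomorphism H=G(S-w)}, and then applies $B(\cC\times\cI)=B\cC\times[0,1]$. The identification of the two attaching maps is also read off exactly as you describe.
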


In another key lemma proved below, we will see that $B\cG_+(\cS,\omega)$ is homotopy equivalent to $B\cG(\cS_0)$ which is a $K(\pi,1)$ by induction since $|\cS_0|=|\cS|-1$. We will then use Theorem \ref{thm: HNN graph of groups} to conclude that $B\cG(\cS)$ is a $K(\pi,1)$.

\subsection{Classifying space of a category and Lemmas \ref {lem: decomposition of BG}, \ref{lem: BG- is a cylinder}}\label{ss 3.4: classifying space of a category}

We first recall the definition of the classifying space of a category.

\subsubsection{Classifying space of a category}

The classifying space of any small category $\cC$ is the geometric realization of its \emph{nerve}: $B\cC=|\cN_\bullet\cC|$ where $\cN_\bullet\cC$ is the simplicial set which in degree $n$ is the set of all sequences of $n$ composable morphisms in $\cC$:
\[
	\cN_n\cC:=\coprod_{X,Y\in\cC}\cC_n(X,Y)
\]
where $\cC_n(X,Y)$ is the set of all directed paths of length $n$ from $X$ to $Y$ in $\cC$:
\[
	\cC_n(X,Y):=\left\{
	X=X_0\xrarrow{f_1} X_1\xrarrow{f_2} X_2\xrarrow{f_3} \cdots\xrarrow{f_n} X_n=Y
	\right\}
\]
To simplify notation and clarify the case $n=0$, we will sometimes add redundant information to the elements of the set $\cC_n(X,Y)$. Namely, we add all compositions of morphisms and all identity morphisms of all objects $X_i$ in the sequence. Then, a {path of length $n$} in $\cC$ becomes a collection of morphisms $f_{ij}:X_i\to X_j$ for $0\le i\le j\le n$ so that $f_{jk}\circ f_{ij}=f_{ik}$ for all $0\le i\le j\le k\le n$, and so that $f_{ii}$ is the identity morphism of $X_i$ for each $i$. When $n=0$ we have only the identity morphism $f_{00}$ of $X=X_0=Y$. (So, $\cC_0(X,Y)$ is empty when $X\neq Y$.)

The simplicial structure maps for $\cN_\bullet\cC$ are given as follows. Let $[n]:=\{0,1,\cdots,n\}$. Then for any set mappings $a:[n]\to [m]$ so that $0\le a(i)\le a(j)\le m$ for all $0\le i\le j\le n$ we have the mapping $a^\ast:\cN_m\cC\to\cN_n\cC$ given by
\[
	a^\ast((p,q)\mapsto f_{pq})=((i,j)\mapsto f_{a(i)a(j)})
\]

The \emph{classifying space} of $\cC$ is the geometric realization of $\cN_\bullet\cC$ which is the topological space given by
\[
	B\cC=|\cN_\bullet\cC|:=\coprod_{n\ge0} \cN_n \cC\times \Delta^n/\sim
\]
with the quotient topology where $\Delta^n$ is the standard $n$-simplex with vertices $v_0,\cdots,v_n$ and the equivalence relation is given by
\[
	(f,a_\ast(t))\sim (a^\ast f,t)
\]
for all $f\in \cN_m\cC$, $t\in\Delta^n$ and $a:[n]\to[m]$. The mapping $a_\ast:\Delta^n\to\Delta^m$ is the unique affine linear mapping which sends $v_i$ to $v_{a(i)}$ for all $i\in[n]$.

\subsubsection{Proof of Lemma \ref{lem: decomposition of BG}} By definition,
\[
	B\cG(\cS)=\coprod_n \cN_n\cG(\cS)\times\Delta^n/\sim
\]
So, $\cN_n\cG(\cS)$ is the disjoint union of four sets: $\cN_n\cH(\cS,\omega)$, $\cN_n\cG(\cS_0)$, the set of all paths $(f_{ij})$ which included one positive morphism, call these \emph{negative paths}, and the set of all paths including one negative morphism, call these \emph{positive paths}.

But, all positive paths lie in $\cN_n\cG_+(\cS,\omega)$ and all negative paths lie in $\cN_n\cG_-(\cS,\omega)$. Also, $\cN_n\cG_+(\cS,\omega)$ contains $\cN_n\cH(\cS,\omega)\coprod\cN_n\cG(\cS_0)$. Therefore,
\[
	B\cG(\cS)=B\cG_+(\cS,\omega)\cup B\cG_-(\cS,\omega)
\]

Since a sequence of composable morphisms in $\cG(\cS)$ contains at most one morphism not in $\cH(\cS,\omega)$ or $\cG(\cS_0)$, a path cannot be both positive and negative. So any element of $\cN_n\cG_+(\cS,\omega)\cap \cN_n\cG_-(\cS,\omega)$ lies in $\cN_n\cH(\cS,\omega)$ or $\cN_n\cG(\cS_\omega)$. Therefore,
\[
	B\cG_+(\cS,\omega)\cap B\cG_-(\cS,\omega)=B\cH(\cS,\omega)\smallcoprod B\cG(\cS_\omega)
\]
completing the proof of Lemma \ref{lem: decomposition of BG}.

\subsubsection{Proof of Lemma \ref{lem: BG- is a cylinder}}

We will use the following well-know construction of the cylinder of a category. Let $\cI$ be the category with two objects $0,1$ and exactly one nonidentity morphism $d:0\to 1$. Then, it is easy to see that $B\cI$ is the unit interval $[0,1]$. Since $B(\cC\times\cD)=B\cC\times B\cD$ for any two small categories $\cC,\cD$, we get:
\[
	B(\cC\times \cI)=B\cC\times[0,1]
\]
with two ends given by $B(\cC\times 0)=B\cC\times 0$ and $B(\cC\times 1)=B\cC\times 1$. 

To prove Lemma \ref{lem: BG- is a cylinder} it therefore suffices to construct an isomorphism of categories:
\[
	\Phi:\cH(\cS,\omega)\times \cI\cong \cG_-(\cS,\omega)
	\]
Such an isomorphism is given on objects by $\Phi(\cA,0)=\cA$ for all $\cA=\cA(\alpha_\ast,\omega)\in\cH(\cS,\omega)$ and $\Phi(\cA,1)=\f \cA=\cA(\alpha_\ast)$. On morphisms, $\Phi$ is given by $\Phi([T],id_i)=[T]$ for $i=0,1$ and $\Phi([T],d)=[P_\omega[1],T]$. It is easy to see that $\Phi$ is a functor, that it is the inclusion functor on $\cH(\cS,\omega)\times 0$ and $\f$ on $\cH(\cS,\omega)\times 1$.

The inverse of $\Phi$ is $\Psi:\cG_-(\cS,\omega)\to \cH(\cS,\omega)\times\cI$ given as follows.
\begin{enumerate}
\item $\Psi \cA(\alpha_\ast,\omega)=(\cA(\alpha_\ast,\omega),0)$ for all $\cA(\alpha_\ast,\omega)\in\cH(\cS,\omega)$.
\item $\Psi \cA(\beta_\ast)=(\cA(\beta_\ast,\omega),1)$ for all $\cA(\beta_\ast)\in\cG(\cS_\omega)$.
\item $\Psi[T]=([T],id_0)$ for all $[T]:\cA(\alpha_\ast,\omega)\to \cA(\beta_\ast,\omega)$ in $\cH(\cS,\omega)$.
\item $\Psi[T]=(\f^{-1}[T],id_1)$ for all $[T]:\cA(\alpha_\ast)\to \cA(\beta_\ast)$ in $\cG(\cS_\omega)$ where $\f^{-1}[T]=[T]$ considered as a morphism $\cA(\alpha_\ast,\omega)\to \cA(\beta_\ast,\omega)$. (See Proposition \ref{prop: isomorphism H=G(S-w)}.)
\item $\Psi$ takes $[P_\omega[1],T]:\cA(\alpha_\ast,\omega)\to\cA(\beta_\ast)$ to $([T],d):(\cA(\alpha_\ast,\omega),0)\to(\cA(\beta_\ast,\omega),1)$.
\end{enumerate}
It follows from Lemma \ref{lem: unique factorization of negative morphisms} and Proposition \ref{prop: isomorphism H=G(S-w)} that $\Psi$ is well-defined and inverse to $\Phi$. This proves Lemma \ref{lem: BG- is a cylinder}.

\subsection{Key lemma}\label{ss 3.5: key lemma}
We will now prove the key lemma:

\begin{lem}\label{lem: key lemma}
The inclusion functor $j:\cG(\cS_0)\into \cG_+(\cS,\omega)$ induces a homotopy equivalence to $Bj: B\cG(\cS_0)\simeq B\cG_+(\cS,\omega)$.
\end{lem}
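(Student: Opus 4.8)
The natural tool here is Quillen's Theorem A: to show that the inclusion functor $j:\cG(\cS_0)\into\cG_+(\cS,\omega)$ induces a homotopy equivalence, it suffices to show that for every object $\cA$ of $\cG_+(\cS,\omega)$ the comma category $j/\cA$ (or $\cA/j$, whichever is more convenient) is contractible. The objects of $\cG_+(\cS,\omega)$ fall into two types: those already in $\cG(\cS_0)$, and those in $\cH(\cS,\omega)$, i.e. of the form $\cA(\alpha_\ast,\omega)$ where $M_\omega$ is a simple object. For an object $\cA\in\cG(\cS_0)$, the comma category $j/\cA$ has a terminal object, namely $\cA$ itself together with $\mathrm{id}_\cA$, so it is contractible — this is the easy case. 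So the whole content is the case $\cA=\cA(\alpha_\ast,\omega)\in\cH(\cS,\omega)$.

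\textbf{The hard case.} Fix $\cA=\cA(\alpha_\ast,\omega)\in\cH(\cS,\omega)$. I would analyze the comma category $j/\cA$ whose objects are pairs $(\cB,[T])$ with $\cB\in\cG(\cS_0)$ and $[T]:\cB\to\cA$ a morphism in $\cG_+(\cS,\omega)$, and whose morphisms $(\cB,[T])\to(\cB',[T'])$ are morphisms $[S]:\cB\to\cB'$ in $\cG(\cS_0)$ with $[T']\circ[S]=[T]$. Since $\cA\in\cH(\cS,\omega)$ is \emph{not} in $\cG(\cS_0)$, no morphism out of $\cA$ lands back in $\cG(\cS_0)$, so all such morphisms $[T]$ go $\cG(\cS_0)\ni\cB\to\cA$; but morphisms in $\cG_+(\cS,\omega)$ from $\cG(\cS_0)$-objects into $\cH(\cS,\omega)$-objects do not exist (there are no morphisms from $\cG(\cS_0)$ into $\cH(\cS,\omega)$ at all). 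So I would instead use the \emph{other} comma category $\cA\backslash j$, whose objects are pairs $(\cB,[T])$ with $[T]:\cA\to\cB$, $\cB\in\cG(\cS_0)$, and $[T]$ a \emph{positive} morphism (every morphism $\cA\to\cB$ with $\cB\in\cG(\cS_0)$ is either positive or negative, but the negative ones factor through $\cH(\cS,\omega)$ objects; as a morphism of $\cG_+$ only the positive ones are allowed). By the remark after Definition \ref{def: positive and negative morphisms}, any positive morphism $[T]:\cA(\alpha_\ast,\omega)\to\cB$ contains a summand $T_0$ that surjects onto $M_\omega$; and by Remark \ref{rem:unique map M to Momega m}, $M_\omega$ is a source in the quiver of $\cA(\alpha_\ast,\omega)$, so the minimal such $T_0$ is $P_\omega$ itself (the projective cover of $M_\omega$ in $\cA$). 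I expect that $[P_\omega]:\cA(\alpha_\ast,\omega)\to\cA(\alpha_\ast)$ (or more precisely $[P_\omega]$ composed into $\cG(\cS_0)$) is an \emph{initial} object of $\cA\backslash j$: every positive morphism out of $\cA$ factors uniquely through $[P_\omega]$, by the dual of the unique-factorization Lemma \ref{lem: unique factorization of negative morphisms} applied to $P_\omega$ in place of $P_\omega[1]$. The factorization exists because $T_0$ mapping onto $M_\omega$ forces, after the mutation relating $T_0$ and $P_\omega$, that the remaining part of $T$ lies in $|P_\omega|^\perp\cap\cA(\alpha_\ast,\omega)=\cA(\alpha_\ast)$; uniqueness follows from the properties of $\sigma_T$ (Proposition \ref{prop 1.8: Properties of sigma_T}), exactly as in Lemma \ref{lem: unique factorization of negative morphisms}.

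\textbf{Assembling the argument.} Thus for every object $\cA$ of $\cG_+(\cS,\omega)$ the comma category $\cA\backslash j$ has an initial object: $\mathrm{id}_\cA$ when $\cA\in\cG(\cS_0)$, and $[P_\omega]:\cA\to\cA(\alpha_\ast)$ when $\cA=\cA(\alpha_\ast,\omega)\in\cH(\cS,\omega)$. A category with an initial object has contractible classifying space, so by Quillen's Theorem A the functor $j$ induces a homotopy equivalence $Bj:B\cG(\cS_0)\simeq B\cG_+(\cS,\omega)$. The step I expect to be the main obstacle is establishing the unique factorization of positive morphisms through $[P_\omega]$: one must check carefully that the mutation relating a summand $T_0\onto M_\omega$ to $P_\omega$ behaves well — concretely, that $P_\omega\to T_0$ (or $T_0\to P_\omega^{m}$) is the relevant approximation, that the complement of $T_0$ in $T$ becomes a partial cluster tilting set in $\cA(\alpha_\ast)$ ext-orthogonal to $P_\omega$, and that this data is \emph{uniquely} determined, invoking $\sigma_{[P_\omega]}$ and part (e) of Proposition \ref{prop 1.8: Properties of sigma_T}. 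Once that lemma is in hand (it is the positive-morphism analogue of Lemma \ref{lem: unique factorization of negative morphisms}), the rest is a formal application of Quillen's Theorem A.
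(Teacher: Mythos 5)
Your reduction to Quillen's Theorem A, the choice of fiber category $\cA\backslash j$, and the treatment of the easy case $\cA\in\cG(\cS_0)$ (initial object $(\cA,\mathrm{id}_\cA)$) all agree with the paper. But the step you yourself flag as the main obstacle is in fact false: for $\cA=\cA(\alpha_\ast,\omega)\in\cH(\cS,\omega)$, the morphism $[P_\omega]:\cA\to\cA(\alpha_\ast)$ is \emph{not} an initial object of $\cA\backslash j$, and no initial object exists. A factorization $[T]=[S]\circ[P_\omega]$ means $[T]=[P_\omega,\sigma_{P_\omega}S]$, so it forces the indecomposable $P_\omega$ to be literally one of the elements of the set $T$; you cannot ``mutate'' a summand $T_0\onto M_\omega$ into $P_\omega$ without changing the morphism, because cluster morphisms are determined by their set of indecomposable summands. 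A positive morphism is only required to contain \emph{some} module mapping onto $M_\omega$, and most contain no copy of $P_\omega$. Concretely, in type $A_2$ with quiver $\omega\to\alpha$ one has $\cC(\alpha,\omega)=\{P_\alpha,P_\omega,S_\omega,P_\alpha[1],P_\omega[1]\}$ with $\undim P_\omega=\alpha+\omega$, and the positive morphisms $[S_\omega]:\cA(\alpha,\omega)\to\cA(\alpha+\omega)$ and $[S_\omega,P_\alpha[1]]:\cA(\alpha,\omega)\to\cA(\emptyset)$ do not factor through $[P_\omega]$. (The same failure is visible in the paper's own $C_2$ computation, where the positive cluster tilting sets $\{I_\alpha,I_\omega\}$ and $\{I_\omega,P_\alpha[1]\}$ avoid $P_\omega$.) This is exactly why Lemma \ref{lem: unique factorization of negative morphisms} is stated only for \emph{negative} morphisms: those contain $P_\omega[1]$ by definition, so factoring through $[P_\omega[1]]$ is automatic; there is no positive analogue.

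What is true, and what the paper proves, is that $\cA\backslash j$ is the poset (under inclusion) of nonempty partial cluster tilting sets in $\cC(\alpha_\ast,\omega)$ which do not contain $P_\omega[1]$ and are not contained in $\cC(\alpha_\ast)$. Using the theorem of \cite{IOTW3} that the cluster complex $K^n$ is a sphere, the sets avoiding $P_\omega[1]$ form a triangulated disk $E^n$ whose boundary $\partial E^n$ is the link of $P_\omega[1]$, i.e.\ the subcomplex on $\cC(\alpha_\ast)$; the fiber category is then the full subposet of $\simp E^n$ on simplices not contained in $\partial E^n$, whose nerve is the subcomplex of the barycentric subdivision spanned by barycenters of non-boundary simplices, and this is contractible by an elementary topological argument. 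In the $A_2$ example the nerve is a path with five vertices: contractible, but with two minimal and three maximal elements, so no formal initial- or terminal-object argument can work. Your proof therefore has the right frame but is missing the genuinely topological input needed to contract the fibers over $\cH(\cS,\omega)$.
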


The proof uses Quillen's Theorem A which we now review.

Given any functor $\psi :\cC\to\cD$ between small categories $\cC,\cD$, the \emph{fiber category} $X\backslash \psi $ of $\psi $ over any object $X$ in $\cD$ is defined to be the category of all pairs $(Y,f)$ where $Y\in\cC$ and $f:X\to \psi Y$ is a morphism of $\cD$. A morphism $(Y,f)\to (Z,g)$ in $X\backslash \psi $ is defined to be a morphism $h:Y\to Z$ in $\cC$ so that $g=\psi h\circ f:X\to \psi Y\to \psi Z$.

\begin{thm}[Quillen's Theorem A]\cite{Quillen} If $B(X\backslash \psi )$ is contractible for every $X\in\cD$ then the mapping $B\psi :B\cC\to B\cD$ is a homotopy equivalence.
\end{thm}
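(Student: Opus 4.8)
The plan is to give Quillen's original proof, which interpolates between $B\cC$ and $B\cD$ by a single auxiliary space admitting weak equivalences to both; the main input is the \emph{realization lemma} for bisimplicial sets, that a map of bisimplicial sets which restricts to a weak equivalence in every degree along one of the two simplicial directions induces a weak equivalence on geometric realizations (see \cite{Quillen}). First I would define a bisimplicial set $\cE_{\bullet\bullet}$ whose set of $(p,q)$-simplices consists of a $p$-chain $X_0\to\cdots\to X_p$ in $\cD$ together with a $q$-simplex of $\cN_\bullet(X_p\backslash\psi)$, that is, a $q$-chain $Y_0\to\cdots\to Y_q$ in $\cC$ and a morphism $f\colon X_p\to\psi Y_0$. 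Faces and degeneracies in the first ($\cD$-) direction act on the $X_i$, the top face using the restriction $X_p\backslash\psi\to X_{p-1}\backslash\psi$ obtained by precomposing $f$ with $X_{p-1}\to X_p$; those in the second ($\cC$-) direction act through the nerve of $X_p\backslash\psi$ and leave the $\cC$-chain's initial vertex $Y_0$ fixed. This $\cE_{\bullet\bullet}$ carries two projections of bisimplicial sets: $\mathrm{pr}_{\cD}$ to $\cN_\bullet\cD$ (constant in the $q$-direction, remembering $X_0\to\cdots\to X_p$) and $\mathrm{pr}_{\cC}$ to $\cN_\bullet\cC$ (constant in the $p$-direction, remembering $Y_0\to\cdots\to Y_q$).

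Second, I would show $|\mathrm{pr}_\cD|$ is a weak equivalence, and this is the only place the hypothesis enters. For fixed $p$ the simplicial set $\cE_{p,\bullet}$ is the disjoint union over all $p$-chains of $\cD$ of $\cN_\bullet(X_p\backslash\psi)$, so its realization is $\coprod_{X_0\to\cdots\to X_p} B(X_p\backslash\psi)$. Since each $B(X_p\backslash\psi)$ is contractible by hypothesis, collapsing every factor to a point gives a weak equivalence $\cE_{p,\bullet}\to \cN_p\cD$ for every $p$; the realization lemma then yields a weak equivalence $|\cE_{\bullet\bullet}|\to B\cD$.

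Third, I would show $|\mathrm{pr}_\cC|$ is a weak equivalence, which holds unconditionally. For a fixed $q$-chain $Y_0\to\cdots\to Y_q$ in $\cC$, the corresponding summand of $\cE_{\bullet,q}$ is, as a simplicial set in $p$, the nerve of the over-category of objects $X\to\psi Y_0$ in $\cD$: a $p$-simplex there is precisely a $p$-chain $X_0\to\cdots\to X_p$ equipped with $f\colon X_p\to\psi Y_0$. This category has the terminal object $\mathrm{id}\colon\psi Y_0\to\psi Y_0$, so its nerve is contractible, and therefore $\cE_{\bullet,q}\to \cN_q\cC$ is a weak equivalence for each $q$. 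Applying the realization lemma in this second direction gives a weak equivalence $|\cE_{\bullet\bullet}|\to B\cC$.

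Finally I would assemble these. The structure morphisms $f\colon X_p\to\psi Y_0$ provide a natural transformation from $\mathrm{pr}_\cD$ to $\psi\circ\mathrm{pr}_\cC$, viewing both as simplicial maps into $\cN_\bullet\cD$, and hence a homotopy $|\mathrm{pr}_\cD|\simeq B\psi\circ|\mathrm{pr}_\cC|$ after realization. Since $|\mathrm{pr}_\cC|$ and $|\mathrm{pr}_\cD|$ are weak equivalences, so is $B\psi$; as $B\cC$ and $B\cD$ are CW complexes, Whitehead's theorem upgrades this to a genuine homotopy equivalence. I expect the main obstacle to be the homotopy-theoretic input itself, namely the realization lemma, together with the careful bookkeeping of the two simplicial directions and their opposite variances, so that the levelwise fibers are correctly identified as $B(X_p\backslash\psi)$ along one axis and the always-contractible over-category nerve along the other.
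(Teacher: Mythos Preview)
Your proposal is correct and is essentially Quillen's original argument via the bisimplicial set of pairs (a chain in $\cD$, a simplex in the comma category over its terminal vertex), together with the realization lemma applied in each direction. The paper, however, does not prove this theorem at all: it is quoted as a black box from \cite{Quillen} and used as a tool to establish Lemma~\ref{lem: key lemma}. So there is no proof in the paper to compare against; your write-up supplies exactly the standard proof one would find in Quillen's paper or in later expositions.
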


\begin{rem} 
By a common abuse of language we will often say that a category is \emph{contractible} if its classifying space is contractible and a functor is a \emph{homotopy equivalence} if it induces a homotopy equivalence on classifying spaces.\end{rem}


To prove the key lemma it therefore suffices to show that the fiber category $\cA_0\backslash j$ is contractible for every fixed object $\cA_0\in\cG_+(\cS,\omega)$. There are two cases. Either $\cA_0\in \cG(\cS_0)$ or $\cA_0\in\cH(\cS,\omega)$. In the first case, $\cA_0\backslash j$ is contractible since it has an initial object given by $(\cA_0,\cA_0,id_{\cA_0})$. Therefore, we assume $\cA_0=\cA(\alpha_\ast,\omega)\in\cH(\cS,\omega)$.

The fiber category $\cA_0\backslash j$ is the category of all positive morphisms $[T]:\cA_0\to\cB\in\cG(\cS_0)$. The elements of $\cN_k(\cA_0\backslash j)$ are equivalent to commuting diagrams:
\[
\xymatrix{
& \cA(\alpha_\ast,\omega)\ar[dl]\ar[d]\ar[drr]\\
\cB_0 \ar[r]& \cB_1\ar[r] &\cdots\ar[r]  &  \cB_k 
}
\]
where each $\cB_i\in\cG(\cS_0)$ and each arrow $\cA(\alpha_\ast,\omega)\to \cB_i$ is a positive morphism. Such diagrams are in bijection with filtrations $T_0\subseteq T_1\subseteq \cdots\subseteq T_k$ of nonempty partial cluster tilting sets in $\cC(\alpha_\ast,\omega)$ which have the following two properties.
\begin{enumerate}
\item $T_k$ does not contain $P_\omega[1]$.
\item $T_0$ contains a module which maps onto $M_\omega$. (Equivalently, $T_0\not\subseteq\cA(\alpha_\ast)$.) 
\end{enumerate}
Using this description we will show that the simplicial set $\cN_\bullet(\cA_0\backslash j)$ is isomorphic to a familiar simplicial complex.

Suppose that $\alpha_\ast=\{\alpha_1,\cdots,\alpha_n\}$ has $n$ elements. Then every cluster tilting set in the finite set $\cC(\alpha_\ast,\omega)$ has $n+1$ elements and every subset of every cluster tilting set is a partial cluster tilting set by definition. Therefore, the set of nonempty partial cluster tilting sets is an $n$-dimensional simplicial complex which we denote $K^n$. By \cite{IOTW3}, $|K^n|$ is homeomorphic to the $n$-sphere $S^n$.

\begin{lem} {Nonempty partial cluster tilting sets in $\cC(\alpha_\ast,\omega)$ which do not contain $P_\omega[1]$ form a subcomplex $E^n$ of $K^n$ whose realization is homeomorphic to a closed $n$-disk $D^n$.}
\end{lem}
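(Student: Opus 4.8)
The plan is to recognize $E^n$ as the \emph{antistar} of the vertex $v:=P_\omega[1]$ of $K^n$ and then to apply the generalized Schoenflies theorem. By definition $E^n$ consists of exactly those simplices of $K^n$ (nonempty partial cluster tilting sets in $\cC(\alpha_\ast,\omega)$) which do not have $v$ among their vertices, i.e. $E^n=\mathrm{ast}(v)$. For any simplicial complex and any vertex $v$ one has, purely formally, $K^n=\overline{\mathrm{st}}(v)\cup\mathrm{ast}(v)$ with $\overline{\mathrm{st}}(v)\cap\mathrm{ast}(v)=\mathrm{lk}(v)$, and $\overline{\mathrm{st}}(v)=v\ast\mathrm{lk}(v)$ is the cone on the link. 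Thus $|E^n|=|\mathrm{ast}(v)|=|K^n|\setminus\mathrm{int}\,|\overline{\mathrm{st}}(v)|$, and everything reduces to understanding $|\mathrm{lk}(v)|$.

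The second step is the link computation. Since $\omega$ is a source of the quiver of $\cA(\alpha_\ast,\omega)$ (Remark \ref{rem:unique map M to Momega m}), the relative projective $P_\omega=M_\omega$ is simple projective, and $|P_\omega[1]|^\perp\cap\cA(\alpha_\ast,\omega)=\cA(\alpha_\ast)$ (a module in $\cA(\alpha_\ast,\omega)$ has no composition factor $M_\omega$ iff $\Hom(P_\omega,-)=0$). Moreover $\{P_\omega[1],S\}$ is a partial cluster tilting set for every $S\in\cC(\alpha_\ast)$, so by Property (e) of Proposition \ref{prop 1.8: Properties of sigma_T} the bijection $\sigma_{P_\omega[1]}\colon\cC(\alpha_\ast)\to\cC_{P_\omega[1]}(\alpha_\ast,\omega)$ is the identity. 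Now a set $T$ with $v\notin T$ spans a simplex of $\overline{\mathrm{st}}(v)$ exactly when $T\cup\{v\}$ is a partial cluster tilting set, which by Theorem \ref{thm:sigma-T is a bijection} (case $k=1$) holds iff $T$ is a partial cluster tilting set in $\cC(\alpha_\ast)$. Hence $\mathrm{lk}(v)$ is precisely the cluster complex of the rank-$n$ wide subcategory $\cA(\alpha_\ast)$, whose realization is homeomorphic to $S^{n-1}$ by \cite{IOTW3}. (Applying the same argument to an arbitrary simplex $T$ of $K^n$ via Theorem \ref{thm:sigma-T is a bijection} shows $\mathrm{lk}(T)$ is the cluster complex of $|T|^\perp\cap\cA(\alpha_\ast,\omega)$, again a sphere, so $K^n$ is a combinatorial $n$-manifold.)

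Finally, $|\overline{\mathrm{st}}(v)|=|v\ast\mathrm{lk}(v)|$ is the cone on $S^{n-1}$, hence a closed $n$-ball whose boundary $|\mathrm{lk}(v)|\cong S^{n-1}$ is bicollared in $|K^n|\cong S^n$ (the bicollar being supplied by the combinatorial manifold structure, with one side in the open star and the other in the antistar). Therefore $|E^n|=|\mathrm{ast}(v)|$ is the complement in $S^n$ of an open ball with bicollared boundary sphere, and by the topological (generalized) Schoenflies theorem of Brown and Mazur this is homeomorphic to $D^n$. The main obstacle is the link computation of the second paragraph: one must check that $P_\omega$ is simple projective, that $\sigma_{P_\omega[1]}$ is the identity, and invoke Theorem \ref{thm:sigma-T is a bijection} to see that $\mathrm{lk}(v)$ is the full rank-$n$ cluster complex and hence a sphere; once this is established, the gluing and the Schoenflies application are routine.
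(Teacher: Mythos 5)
Your proof is correct and follows essentially the same route as the paper: identify the link of the vertex $P_\omega[1]$ with the cluster complex of $\cC(\alpha_\ast)$ (an $S^{n-1}$ by \cite{IOTW3}) splitting $|K^n|\cong S^n$ into the closed star, a standard cone on that sphere, and its complement $|E^n|$, which is therefore also a standard $n$-disk. You merely make explicit two points the paper leaves implicit, namely the verification via $\sigma_{P_\omega[1]}=\mathrm{id}$ and Theorem \ref{thm:sigma-T is a bijection} that the link is the full cluster complex of $\cA(\alpha_\ast)$, and the appeal to the generalized Schoenflies theorem hiding behind the paper's word ``standard.''
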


\begin{proof}
$P_\omega[1]$ is a single vertex of $K^n$ and its link is given by all nonempty partial cluster tilting sets in $\cC(\alpha_\ast)$. This forms an $n-1$ sphere which divides $|K^n|=S^n$ into two halves. The half containing $P_\omega[1]$ is a cone on $S^{n-1}$ and thus standard. This implies that the other half, which is $|E^n|$ is also standard and thus an $n$-disk.
\end{proof}

Note that the boundary of $|E^n|=D^n$ is the link of $P_\omega[1]$ in $K^n$. We denote the corresponding subcomplex of $E^n$ by $\partial E^n$. Then $\partial E^n$ is the set of all nonempty cluster tilting sets in $\cC(\alpha_\ast)$.

Let $Simp(E^n)$ be the ``poset category'' whose objects are the simplices of $E^n$ with one morphism $\sigma\to\tau$ whenever $\sigma\subseteq\tau$. Recall that the \emph{first barycentric subdivision} of $E^n$ is $sdE^n=\cN_\bullet Simp(E^n)$.

\begin{lem}
The fiber category $\cA(\alpha_\ast,\omega)\backslash j$ is isomorphic to the full subcategory $J$ of $Simp(E^n)$ consisting of all simplices $\sigma$ which are not contained in $\partial E^n$.
\end{lem}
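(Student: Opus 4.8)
The plan is to unwind both categories into explicit combinatorial data and write down an isomorphism. An object of $\cA(\alpha_\ast,\omega)\backslash j$ is a positive morphism $[T]\colon\cA(\alpha_\ast,\omega)\to\cB$ with $\cB\in\cG(\cS_0)$, and since $\cB=|T|^\perp\cap\cA(\alpha_\ast,\omega)$ is determined by $T$, such objects correspond to the nonempty partial cluster tilting sets $T\subseteq\cC(\alpha_\ast,\omega)$ underlying a positive morphism. First I would translate the two conditions ``$[T]$ positive'' and ``target in $\cG(\cS_0)$'' into simplicial language. The condition $P_\omega[1]\notin T$ is, by definition of $E^n$, exactly the condition that $T$ be a simplex of $E^n$. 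For the target, Remark \ref{rem:unique map M to Momega m} says $\omega$ is a source of the quiver of $\cA(\alpha_\ast,\omega)$: thus $M_\omega$ is injective there (so $\Ext^1_\Lambda(|T_i|,M_\omega)=0$ for all $i$), and every module surjects onto a power of $M_\omega$ with kernel in $\cA(\alpha_\ast)$ (so $\Hom_\Lambda(|T_i|,M_\omega)\neq0$ precisely when $|T_i|\notin\cA(\alpha_\ast)$). Hence $M_\omega\notin|T|^\perp$, i.e. $\cB\in\cG(\cS_0)$, iff $T$ contains a module not in $\cA(\alpha_\ast)$, which, given $P_\omega[1]\notin T$, is the same as $T\not\subseteq\cC(\alpha_\ast)$. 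Since $\partial E^n$ is the link of $P_\omega[1]$ in $K^n$, its simplices are exactly the partial cluster tilting sets all of whose elements are ext-orthogonal to $P_\omega[1]$ --- for a module this forces membership in $\cA(\alpha_\ast)$, for a shifted relative projective it is automatic --- so $\partial E^n$ is precisely the set of partial cluster tilting sets contained in $\cC(\alpha_\ast)$. Putting this together, the objects of $\cA(\alpha_\ast,\omega)\backslash j$ are exactly the simplices of $E^n$ that do not lie in $\partial E^n$, i.e. the objects of $J$.

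Next I would handle morphisms. A morphism $(\cB,[T])\to(\cB',[T'])$ in the fiber category is a morphism $[S]\colon\cB\to\cB'$ of $\cG(\cS_0)$ with $[S]\circ[T]=[T']$. By the composition law, $[S]\circ[T]$ has underlying set $T\cup\sigma_T(S)$, so $[S]\circ[T]=[T']$ forces $T\subseteq T'$ and $\sigma_T(S)=T'\setminus T$; conversely, given $T\subseteq T'$, the elements of $T'\setminus T$ lie in $\cC_T(\alpha_\ast)$, so by Theorem \ref{thm:sigma-T is a bijection} the set $S:=\sigma_T^{-1}(T'\setminus T)$ is a partial cluster tilting set in $\cC(\cB)$ and $[S]\colon\cB\to\cB'$ is the unique morphism of $\cG(\cS_0)$ with $[S]\circ[T]=[T']$ (uniqueness because $\sigma_T$ is injective and a cluster morphism with given source is determined by its underlying partial cluster tilting set). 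Thus there is exactly one morphism $(\cB,[T])\to(\cB',[T'])$ when $T\subseteq T'$ and none otherwise --- precisely the morphism structure of the full subcategory $J\subseteq Simp(E^n)$. I would then define $\Theta\colon\cA(\alpha_\ast,\omega)\backslash j\to J$ by $(\cB,[T])\mapsto T$ on objects and by sending the unique arrow $(\cB,[T])\to(\cB',[T'])$ to the inclusion $T\subseteq T'$, and verify functoriality: the identity of $(\cB,[T])$ corresponds to $S=\sigma_T^{-1}(\emptyset)=\emptyset$, i.e. $[S]=id_\cB$, hence goes to the identity inclusion; and a composite $(\cB,[T])\to(\cB',[T'])\to(\cB'',[T''])$ corresponds, by associativity of composition in $\cG(\cS)$, to $T\subseteq T'\subseteq T''$, which $\Theta$ sends to composable inclusions. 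Together with the bijections on objects and morphisms established above, this shows $\Theta$ is an isomorphism of categories.

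The step I expect to be the main obstacle is the object identification --- checking precisely that ``the target of $[T]$ lies in $\cG(\cS_0)$'' is equivalent to ``$T\not\subseteq\cC(\alpha_\ast)$'', rather than to the a priori weaker ``$T\not\subseteq\cA(\alpha_\ast)$'', and that $\partial E^n$ consists of all partial (not merely maximal) cluster tilting sets inside $\cC(\alpha_\ast)$. Both points rely on $\omega$ being a source, via Remark \ref{rem:unique map M to Momega m}, together with the automatic ext-orthogonality of any two shifted relative projectives. Once the objects are matched, the morphism identification and functoriality are routine bookkeeping with the composition formula and Theorem \ref{thm:sigma-T is a bijection}.
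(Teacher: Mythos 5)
Your proof is correct and takes essentially the same route as the paper's: both identify the fiber category with the poset of nonempty partial cluster tilting sets in $\cC(\alpha_\ast,\omega)$ satisfying the two conditions ($P_\omega[1]\notin T$, and $T$ contains a module mapping onto $M_\omega$), ordered by inclusion, and match these conditions with ``simplex of $E^n$ not contained in $\partial E^n$.'' The paper states this identification in three sentences, relying on the description of $\cN_k(\cA_0\backslash j)$ given just before the lemma; you simply verify in detail (using the remark that $\omega$ is a source, so that $M_\omega\notin|T|^\perp$ iff some module of $T$ lies outside $\cA(\alpha_\ast)$, and the uniqueness of $S$ with $[S]\circ[T]=[T']$ coming from the injectivity of $\sigma_T$) what the paper leaves implicit.
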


\begin{proof}
The objects of $\cA(\alpha_\ast,\omega)\backslash j$ are nonempty partial cluster tilting sets $[T]$ with two additional conditions listed earlier. If we ignore the conditions, we have a poset category isomorphic to $Simp(K^n)$ by definition. Adding the first condition give the full subcategory $Simp(E^n)$. Adding the second condition gives the full subcategory $J$.
\end{proof}

The key lemma now follows from the following elementary topological fact whose proof 
is left as an easy exercise.

\begin{prop} Let $E^n$ be a simplicial complex whose geometric realization $|E^n|$ is homeomorphic to the standard $n$-disk $D^n$. Let $J$ be the subcomplex of the first barycentric subdivision $sdE^n$ spanned by all barycenters $b_\sigma$ of simplices $\sigma$ of $E^n$ which are not contained in the boundary of $D^n$. Then $|J|$ is contractible.
\end{prop}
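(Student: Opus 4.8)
The plan is to identify $|J|$ up to homotopy with the complement $|E^n|\setminus|\partial E^n|$. Write $K=E^n$ and let $L=\partial E^n$ be the boundary subcomplex, so by hypothesis $|K|$ is an $n$-disk with boundary sphere $|L|$, and hence $|K|\setminus|L|$ is an open $n$-disk, in particular contractible. Thus it suffices to exhibit a strong deformation retraction of $|K|\setminus|L|$ onto $|J|$. This is the standard fact that, inside $sd\,K$, the full subcomplex on the barycenters of simplices \emph{not} in $L$ is a deformation retract of the complement of $|L|$; one could simply cite it, but I would give the short explicit argument.

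First I would set up the combinatorics. A simplex of $sd\,K$ is a chain $\sigma_0\subsetneq\cdots\subsetneq\sigma_k$ of simplices of $K$, and a point $x$ of the corresponding open simplex is written $x=\sum_{i=0}^{k}t_i\,b_{\sigma_i}$ with all $t_i>0$ and $\sum t_i=1$. Since $b_{\sigma_k}\in\mathrm{int}\,\sigma_k$ while each $b_{\sigma_i}\in\sigma_k$, the carrier of $x$ in $K$ is $\sigma_k$; hence $x\in|L|$ iff $\sigma_k\in L$. As $L$ is closed under faces, $\sigma_i\in L$ whenever $i\le j$ and $\sigma_j\in L$, so the indices with $\sigma_i\notin L$ form a terminal block $m\le i\le k$, empty precisely when $x\in|L|$; and a chain is a simplex of $J$ exactly when its least member lies outside $L$.

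Then I would define $r:|K|\setminus|L|\to|J|$ in barycentric coordinates with respect to $sd\,K$: for $x$ with coordinates $(t_\tau)_{\tau\in K}$ put
\[
	r(x)=\frac{1}{\sum_{\tau\notin L}t_\tau}\;\sum_{\tau\notin L}t_\tau\,b_\tau .
\]
The denominator is continuous and positive exactly on $|K|\setminus|L|$, so $r$ is continuous; the support of $r(x)$ is the chain $\sigma_m\subsetneq\cdots\subsetneq\sigma_k$, a simplex of $J$, so $r(x)\in|J|$; and $r$ fixes $|J|$ pointwise. The straight-line homotopy $H(x,s)=(1-s)x+s\,r(x)$ stays in the closed simplex of $sd\,K$ on the carrier chain of $x$ (hence in $|K|$) and keeps the coordinate on $b_{\sigma_k}$ positive for all $s$, so the $K$-carrier of $H(x,s)$ is still $\sigma_k\notin L$, whence $H(x,s)\notin|L|$. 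This is the desired deformation retraction, and $|J|\simeq|K|\setminus|L|$ is contractible.

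The main point to watch is the compatibility with the homeomorphism in the hypothesis: one needs $|\partial E^n|$ to be carried onto $\partial D^n$ — which is how $\partial E^n$ was introduced above, as the link of $P_\omega[1]$, a PL $(n-1)$-sphere bounding the disk $|E^n|$ — so that $|K|\setminus|L|$ really is an open disk. The remaining verifications (continuity of $r$, that $H$ avoids $|L|$ and fixes $|J|$) are routine bookkeeping with carriers and barycentric coordinates, and present no real obstacle.
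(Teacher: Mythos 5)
Your argument is correct and complete: it is the standard fact that for a full subcomplex (here $sd\,\partial E^n\subseteq sd\,E^n$) the linear retraction obtained by renormalizing barycentric coordinates away from its vertices gives a strong deformation retraction of $|E^n|\setminus|\partial E^n|$ onto the complementary full subcomplex $|J|$, and your carrier bookkeeping verifies all the needed points. The paper explicitly leaves this proposition as an exercise, so there is no proof to compare against; your write-up supplies exactly the argument the authors intend, including the one point worth flagging (that $|\partial E^n|$ is the topological boundary sphere of the disk, which holds here since $\partial E^n$ is the link of $P_\omega[1]$).
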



\subsection{$G(\cS)$ is an HNN extension of $G(\cS_0)$}\label{ss 3.6: G(S) is HNN ext of G(S0)}

We will show that
\[
	G(\cS)=N(G(\cS_\omega),G(\cS_0),\f,\psi)
\]
where $G=G(\cS_0)$, $H=G(\cS_\omega)$, $\f:G(\cS_\omega)\into G(\cS_0)$ is the monomorphism induced by the inclusion $\cS_\omega\subseteq \cS_0$ (see Proposition \ref{prop: G-omega to GS is split mono}) and  $\psi:G(\cS_\omega)\into G(\cS_0)$ is a monomorphism which we now construct. The key step is the following theorem where we use the shorthand notation $[\beta]:=[M_\beta]$ and $[-\beta]:=[M_\beta[1]]$.

\begin{thm}\label{thm: pi-1 G(S) is G(S)}
Taking the zero category $0=\cA(\emptyset)$ as basepoint for $B\cG(\cS)$, we have an isomorphism of groups $G(\cS)\cong \pi_1B\cG(\cS)$ given by sending each generator $x(\beta),\beta\in\cS$ of $G(\cS)$ to (the homotopy class of) the loop in $B\cG(\cS)$ at $\cA(\emptyset)$ given by
\[
	\cA(\emptyset)\xleftarrow{[-\beta]} \cA(\beta)\xrarrow{[\beta]}\cA(\emptyset)
\]
(going from left to right).\end{thm}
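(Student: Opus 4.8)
The plan is to combine the topological decomposition of $B\cG(\cS)$ built in Sections \ref{ss 3.3: definitions and proofs}--\ref{ss 3.5: key lemma} with Theorem \ref{thm: HNN graph of groups} on HNN extensions, and then to check that the resulting presentation of $\pi_1B\cG(\cS)$ matches the presentation of $G(\cS)$ given in Definition \ref{def: G(S) for S convex}. First I would invoke the induction hypothesis: since $|\cS_0|=|\cS|-1$, $B\cG(\cS_0)$ is a $K(G(\cS_0),1)$, and likewise $B\cG(\cS_\omega)$ is a $K(G(\cS_\omega),1)$. By Lemma \ref{lem: key lemma}, $B\cG_+(\cS,\omega)\simeq B\cG(\cS_0)$, and by Lemmas \ref{lem: decomposition of BG} and \ref{lem: BG- is a cylinder}, $B\cG(\cS)$ is obtained from $B\cG_+(\cS,\omega)$ by attaching the cylinder $B\cH(\cS,\omega)\times[0,1]\cong B\cG(\cS_\omega)\times[0,1]$ along its two ends via $B\f_0$ (the inclusion $\cH(\cS,\omega)\into\cG_+$, inducing $\psi$) and $B\f_1$ (the composite $\cH(\cS,\omega)\xrightarrow{\f}\cG(\cS_\omega)\into\cG_+$, inducing $\f$). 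Proposition \ref{prop: G-omega to GS is split mono} shows $\f$ is a (split) monomorphism; one still needs Proposition \ref{prop: psi has left inverse} to see $\psi$ is a monomorphism. With both maps monomorphisms, Theorem \ref{thm: HNN graph of groups} applies and gives $\pi_1B\cG(\cS)\cong N(G(\cS_\omega),G(\cS_0),\f,\psi)$, an HNN extension of $G(\cS_0)$ with stable letter $t$ corresponding (by the Remark after Theorem \ref{thm: HNN graph of groups}) to the path $\gamma^{-1}\beta$, i.e. essentially to the loop $\cA(\emptyset)\xleftarrow{[-\omega]}\cA(\omega)\xrightarrow{[\omega]}\cA(\emptyset)$. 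In particular $B\cG(\cS)$ is a $K(\pi,1)$.

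The remaining and genuinely combinatorial task is to identify $N(G(\cS_\omega),G(\cS_0),\f,\psi)$ with $G(\cS)$ via the stated correspondence $x(\beta)\mapsto$ the loop $\cA(\emptyset)\xleftarrow{[-\beta]}\cA(\beta)\xrightarrow{[\beta]}\cA(\emptyset)$. For $\beta\in\cS_0$ this loop already lives in $B\cG(\cS_0)$, so the generators $x(\beta),\beta\in\cS_0$, together with their relations, give exactly the subgroup $G(\cS_0)=\pi_1B\cG(\cS_0)$; and $x(\omega)$ is sent to the stable letter $t$. So the presentation of $N(\cdots)$ has generators $\{x(\beta):\beta\in\cS_0\}\cup\{t=x(\omega)\}$, the relations of $G(\cS_0)$, and the HNN relations $t\,\f(h)=\psi(h)\,t$ for $h\in G(\cS_\omega)$. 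I would then check two things. First, every commutation relation of $G(\cS)$ \emph{not} involving $\omega$ is already a relation of $G(\cS_0)$. Second, every commutation relation of $G(\cS)$ \emph{involving} $\omega$, say for a hom-orthogonal pair $(\alpha,\omega)$ or $(\omega,\alpha)$ with the relevant $ext$ vanishing, must be derivable from (and must exhaust) the HNN relations $t\f(h)=\psi(h)t$ applied to the generators $h=x(\gamma)$, $\gamma\in\cS_\omega$. Here the key geometric input is Proposition \ref{prop: isomorphism H=G(S-w)}: the two functors $\f_0,\f_1:\cH(\cS,\omega)\to\cG_+$ differ by the natural transformation $[P_\omega[1]]$, which at the level of $\pi_1$ is conjugation-by-$t$ composed with the two embeddings of $G(\cS_\omega)$; the commutation relations of $G(\cS)$ containing $x(\omega)$ should be precisely the ``translated'' forms of this natural transformation, i.e. the relations $x(\gamma)\,x(\omega)=x(\omega)\cdot(\text{reordered product over }ab(\gamma,\omega))$ read off the AR quiver of $\cA(\gamma,\omega)$.

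The main obstacle I anticipate is exactly this last bookkeeping: showing that the HNN relations $t\f(h)=\psi(h)t$, as $h$ ranges over the \emph{generators} $x(\gamma)$ of $G(\cS_\omega)$, are equivalent (modulo the relations of $G(\cS_0)$) to the full set of $\omega$-involving commutation relations of $G(\cS)$. One direction --- that each generating HNN relation is a consequence of $G(\cS)$'s relations --- should follow by computing $\f(x(\gamma))$ and $\psi(x(\gamma))$ concretely (they are the images of $x(\gamma)$ under the two inclusions $\cS_\omega\subseteq\cS_0$, which are literally the identity on generators, so the HNN relation reads $t\,x(\gamma)=x(\gamma)\,t$ when $\gamma$ is also $ext$-orthogonal to $\omega$ on both sides — but in general $\psi$ need not be the obvious inclusion, and this is where Proposition \ref{prop: psi has left inverse} and the precise description of $\f_0$ on morphisms must be used). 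The converse --- that these HNN relations, together with $G(\cS_0)$'s relations, imply all of $G(\cS)$'s relations --- amounts to checking that no relation of $G(\cS)$ is ``lost'', i.e. that the evident surjection $G(\cS)\onto N(\cdots)$ has trivial kernel; this I would handle by exhibiting an inverse homomorphism $N(\cdots)\to G(\cS)$ sending $x(\beta)\mapsto x(\beta)$ for $\beta\in\cS_0$ and $t\mapsto x(\omega)$, and verifying it respects the HNN relations, using the retraction $r:G(\cS)\to G(\cS_\omega)$ of Proposition \ref{prop: G-omega to GS is split mono} to control the conjugation action of $x(\omega)$ on the image of $G(\cS_\omega)$. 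Once both homomorphisms are defined and checked to be mutually inverse on generators, the isomorphism $G(\cS)\cong\pi_1B\cG(\cS)=N(\cdots)$ follows, completing the induction.
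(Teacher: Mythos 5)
Your proposal follows the paper's own route essentially step for step: the decomposition of $B\cG(\cS)$ into $B\cG_+$ and the cylinder $B\cH(\cS,\omega)\times[0,1]$, the key lemma $B\cG_+\simeq B\cG(\cS_0)$, the two monomorphisms $\f$ and $\psi$, Theorem \ref{thm: HNN graph of groups}, and then the identification of the HNN presentation with the presentation of $G(\cS)$ via the formula $\psi(x(\alpha))=\prod x(\gamma_i)$ of Proposition \ref{prop: psi has left inverse} (this last identification is the paper's Corollary \ref{cor: G(S) is HNN extension}; note the stable letter is $t=x(\omega)^{-1}$ rather than $x(\omega)$, as you anticipate with your ``essentially''). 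The only stylistic difference is that you verify the final presentation match by constructing mutually inverse homomorphisms, whereas the paper simply observes that the HNN relations $x(\alpha)=x(\omega)\psi(x(\alpha))x(\omega)^{-1}$ are literally the defining relations of $G(\cS)$ not already in $G(\cS_0)$; both are correct.
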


If $\cS=\{\beta\}$ then this is true since the loop is the entire category. So, we can assume this holds for $\cS_0$ and $\cS_\omega$ by induction on the size of $\cS$. Since $\cH(\cS,\omega)\cong \cG(\cS_\omega)$, we get the following corollary which we will use to prove the proposition.

\begin{cor}\label{cor: paths with pasepoint omega}
Taking $\cA(\omega)$ as basepoint for $B\cH(\cS,\omega)$, we have an isomorphism of groups $G(\cS_\omega)\cong \pi_1B\cH(\cS,\omega)$ given by sending each generator $x(\alpha), \alpha\in\cS_\omega$ of $G(\cS_\omega)$ to (the homotopy class of) the loop in $B\cH(\cS,\omega)$ at $\cA(\omega)$ given by
\[
	\cA(\omega)\xleftarrow{[-\alpha]} \cA(\alpha, \omega)\xrarrow{[\alpha]}\cA(\omega)
\]
(going from left to right). 
\end{cor}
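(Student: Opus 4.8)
The plan is to deduce Corollary~\ref{cor: paths with pasepoint omega} directly from Theorem~\ref{thm: pi-1 G(S) is G(S)} via the isomorphism of categories $\f:\cH(\cS,\omega)\xrarrow{\cong}\cG(\cS_\omega)$ constructed in Proposition~\ref{prop: isomorphism H=G(S-w)}. First I would observe that $\f$ induces a homeomorphism of classifying spaces $B\f:B\cH(\cS,\omega)\xrarrow{\cong} B\cG(\cS_\omega)$ (the nerve is functorial and $\f$ is an isomorphism of categories), and that under $\f$ the basepoint $\cA(\omega)$ of $B\cH(\cS,\omega)$ is carried to $\f\cA(\omega)=\cA(\emptyset)$, the chosen basepoint of $B\cG(\cS_\omega)$. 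Hence $B\f$ restricts to a based homeomorphism and gives an isomorphism on $\pi_1$.

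Next I would apply Theorem~\ref{thm: pi-1 G(S) is G(S)} to the set $\cS_\omega$, which is legitimate by induction on $|\cS|$ since $\cS_\omega$ is a convex set of real Schur roots with $|\cS_\omega|<|\cS|$ (it omits $\omega$ at least). This gives an isomorphism $G(\cS_\omega)\cong \pi_1 B\cG(\cS_\omega)$ sending $x(\alpha)$, for $\alpha\in\cS_\omega$, to the class of the loop
\[
	\cA(\emptyset)\xleftarrow{[-\alpha]}\cA(\alpha)\xrarrow{[\alpha]}\cA(\emptyset)
\]
in $B\cG(\cS_\omega)$. Composing with $(B\f)^{-1}$, which is also an isomorphism of categories onto $\cH(\cS,\omega)$, I would transport this loop back: by the explicit description of $\f$ in Proposition~\ref{prop: isomorphism H=G(S-w)} (namely $\f\cA(\alpha_\ast,\omega)=\cA(\alpha_\ast)$ on objects and $\f[T]=[T]$ on morphisms), the preimage of the object $\cA(\alpha)=\cA(\alpha_\ast)$ with $\alpha_\ast=\{\alpha\}$ is $\cA(\alpha,\omega)$, and the preimages of the morphisms $[\alpha]$, $[-\alpha]$ are again the morphisms denoted $[\alpha]:\cA(\alpha,\omega)\to\cA(\omega)$ and $[-\alpha]:\cA(\alpha,\omega)\to\cA(\omega)$ in $\cH(\cS,\omega)$. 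Thus the loop transports to
\[
	\cA(\omega)\xleftarrow{[-\alpha]}\cA(\alpha,\omega)\xrarrow{[\alpha]}\cA(\omega),
\]
which is exactly the loop in the statement of the corollary. This establishes the desired isomorphism $G(\cS_\omega)\cong\pi_1 B\cH(\cS,\omega)$ on generators, and since the relations of $G(\cS_\omega)$ are preserved automatically (they are preserved by the isomorphism of Theorem~\ref{thm: pi-1 G(S) is G(S)} and then transported by the homeomorphism $B\f$), the corollary follows.

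The only point requiring a little care—and the main obstacle—is making sure that the two ``$[\pm\alpha]$'' loops really do correspond under $\f$, i.e. that the cluster morphisms $[M_\alpha]$ and $[M_\alpha[1]]$ out of $\cA(\alpha)$ in $\cG(\cS_\omega)$ pull back to the morphisms $[M_\alpha]$ and $[M_\alpha[1]]$ out of $\cA(\alpha,\omega)$ in $\cH(\cS,\omega)$ with the same underlying modules. This is exactly the content of the formula $\f[T]=[T]$ from Proposition~\ref{prop: isomorphism H=G(S-w)}: since $\alpha\in\cS_\omega$ means $\brk{\alpha,\omega}=0$, the objects $M_\alpha$ and $M_\alpha[1]$ of $\cC(\alpha)$ already lie in $\cC(\alpha,\omega)$ and give morphisms to $\cA(\omega)$, so no adjustment (no $\sigma_{\!T}$-twist) occurs. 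I would cite Proposition~\ref{prop: isomorphism H=G(S-w)} and the fact that $\f$ is an isomorphism of categories to conclude without further computation.
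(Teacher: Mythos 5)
Your proposal is correct and follows essentially the same route as the paper: the paper also deduces the corollary by applying Theorem \ref{thm: pi-1 G(S) is G(S)} to $\cS_\omega$ (valid by induction on $|\cS|$) and transporting along the isomorphism of categories $\f:\cH(\cS,\omega)\cong\cG(\cS_\omega)$ from Proposition \ref{prop: isomorphism H=G(S-w)}. Your added check that the basepoint $\cA(\omega)$ maps to $\cA(\emptyset)$ and that the loops $[-\alpha]^{-1}[\alpha]$ correspond under $\f$ with no $\sigma_T$-twist is exactly the detail the paper leaves implicit.
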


We define $\psi:G(\cS_\omega)\to G(\cS_0)$ to be the homomorphism:
\[
	G(\cS_\omega)\cong \pi_1 B\cH(\cS,\omega) \to \pi_1B\cG_+(\cS,\omega)\cong\pi_1 B\cG(\cS_0)=G(\cS_0) 
\]
induced by the inclusion functors $\cH(\cS,\omega)\into \cG_+(\cS,\omega)$ and $\cG(\cS_0)\into \cG_+(\cS,\omega)$ and by the choice of paths $\gamma=[\omega]$ as explained below. 

First, we recall that, when a continuous mapping $f:X\to Y$ fails to take the basepoint $x_0\in X$ to the basepoint $y_0\in Y$, we need to choose a path $\gamma$ from $f(x_0)$ to $y_0$ in order to get an induced map on fundamental groups. Then, for any $[\alpha]\in\pi_1(X,x_0)$, we define $\pi_1(f,\gamma)[\alpha]\in\pi_1(Y,y_0)$ to be the homotopy class of the loop at $y_0$ given by $\gamma^{-1}f(\alpha)\gamma$.

In our case we take $\gamma$ to be the path in $B\cG_+(\cS,\omega)$ from the base point $\cA(\omega)$ of $B\cH(\cS,\omega)$ to the basepoint $\cA(\emptyset)$ of $B\cG(\cS_0)$ given by the positive morphism $[\omega]:\cA(\omega)\to\cA(\emptyset)$.

\begin{prop}\label{prop: psi has left inverse}
The homomorphism $\psi:G(\cS_\omega)\to G(\cS_0)$ has a left inverse and is therefore a monomorphism. Furthermore, $\psi$ is given on generators $x(\alpha)$ for $\alpha\in\cS_\omega$ by
\begin{equation}\label{eq: equation for psi(x(a))}
	\psi(x(\alpha))=\prod x(\gamma_i)
\end{equation}
where $\gamma_i$ runs over all real Schur roots of the form $\gamma_i=a_i\alpha+b_i\omega$ where $a_i>0$ and the product is taken in decreasing order of the ratio $b_i/a_i$.
\end{prop}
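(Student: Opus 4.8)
The plan is to compute $\psi$ on generators directly from the geometric definition, and then recognize the resulting homomorphism as one that visibly has a left inverse. First I would unwind what $\psi(x(\alpha))$ is: by Corollary \ref{cor: paths with pasepoint omega}, $x(\alpha)$ is represented by the loop $\cA(\omega)\xleftarrow{[-\alpha]}\cA(\alpha,\omega)\xrarrow{[\alpha]}\cA(\omega)$ in $B\cH(\cS,\omega)$, and $\psi(x(\alpha))=[\gamma^{-1}f(\alpha)\gamma]$ where $f$ is the inclusion $\cH(\cS,\omega)\into\cG_+(\cS,\omega)$ and $\gamma=[\omega]:\cA(\omega)\to\cA(\emptyset)$. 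So $\psi(x(\alpha))$ is the homotopy class of the loop at $\cA(\emptyset)$ obtained by conjugating the two edges $[-\alpha],[\alpha]$ of $\cH(\cS,\omega)$ by $[\omega]$. The two morphisms $[\omega]\circ[-\alpha]=[-\alpha,\sigma_{M_\alpha[1]}\omega]$ and $[\omega]\circ[\alpha]=[\alpha,\sigma_{M_\alpha}\omega]$ are positive morphisms $\cA(\alpha,\omega)\to\cA(\emptyset)$ in $\cG_+(\cS,\omega)$, and the loop $\psi(x(\alpha))$ is (the reverse of one) followed by (the other). Using the fact that $B\cG_+(\cS,\omega)\simeq B\cG(\cS_0)$ by the key Lemma \ref{lem: key lemma}, this loop, pushed into $B\cG(\cS_0)$, can be written as a product of the standard generating loops $x(\beta)$ of $G(\cS_0)$.

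The key computational step is to identify exactly which $x(\beta)$ appear. Here I would argue as follows. In $\cG_+(\cS,\omega)$ the two positive morphisms out of $\cA(\alpha,\omega)$ whose composites with the path structure give $\psi(x(\alpha))$ factor through the chain of wide subcategories obtained by successively perpendicular-ing off the indecomposables of the relevant cluster tilting set; by Theorem \ref{thm 2.3: bijection one} and the twist formula of Theorem \ref{thm: formula for theta inverse}, the associated signed exceptional sequence in $\cA(\alpha,\omega)=\cA(M_\alpha,M_\omega)$ (a rank-$2$ category, since $\alpha\in\cS_\omega$ means $\hom(\alpha,\omega)=0=\ext(\alpha,\omega)$, so $M_\alpha,M_\omega$ are hom-orthogonal and ext-orthogonal) has components whose dimension vectors are exactly the $\gamma_i=a_i\alpha+b_i\omega$, the real Schur roots of $ab(\alpha,\omega)$, ordered by the ratio. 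Comparing the two edges $[-\alpha]$ and $[\alpha]$ conjugated by $[\omega]$: on the $[\alpha]$ side we pick up the roots $\gamma_i$ with $a_i>0$ in the AR-order, and on the $[-\alpha]$ side (where $M_\alpha$ is shifted) we pick up only the pure $\omega$-term, which cancels against the $\gamma$-conjugation. Reading off the relation in $G(\cS_0)$ coming from the commutation relation $x(\alpha)x(\omega)=\prod x(\gamma_i)$ for the hom-ext-orthogonal pair $(\alpha,\omega)$ (Definition \ref{def: G(S) for S convex}), and using $x(\omega)=\gamma$, one gets precisely $\psi(x(\alpha))=\prod_{a_i>0} x(\gamma_i)$ in decreasing order of $b_i/a_i$. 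This establishes \eqref{eq: equation for psi(x(a))}.

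Finally, to see $\psi$ has a left inverse I would exhibit a retraction $G(\cS_0)\to G(\cS_\omega)$. The natural candidate is the homomorphism $q$ sending $x(\beta)\mapsto x(\beta)$ if $\beta\in\cS_\omega$ and $x(\beta)\mapsto 1$ otherwise; one must check $q$ respects the relations of $G(\cS_0)$, which follows from the same Lemma used to prove Proposition \ref{prop: G-omega to GS is split mono} (positive linear combinations of $\alpha,\beta$ lie in $\cS_\omega$ iff both $\alpha,\beta$ do, with the one-element-in, one-not case contributing a relation of the form $x(\gamma)=\prod x(\gamma_i)$ compatible with killing the non-$\cS_\omega$ generators). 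Then I compute $q\circ\psi$ on a generator $x(\alpha)$, $\alpha\in\cS_\omega$: among the $\gamma_i=a_i\alpha+b_i\omega$ with $a_i>0$, the only one lying in $\cS_\omega$ is $\gamma=\alpha$ itself (since $\brk{a_i\alpha+b_i\omega,\omega}=b_i\brk{\omega,\omega}$ is $0$ iff $b_i=0$), so $q(\psi(x(\alpha)))=x(\alpha)$. Hence $q\psi=\mathrm{id}$ and $\psi$ is a split monomorphism.

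I expect the main obstacle to be the middle step — pinning down precisely which generating loops $x(\gamma_i)$ the conjugated path $[\omega]^{-1}f([\alpha])[\omega]$ decomposes into inside $B\cG_+(\cS,\omega)\simeq B\cG(\cS_0)$, and in particular checking that the $[-\alpha]$ half of the loop, after conjugation by $[\omega]$, contributes only a trivial (or cancelling) factor. This requires carefully tracking $\sigma_T$ on the rank-$2$ category $\cA(M_\alpha,M_\omega)$ via Proposition \ref{prop 1.8: Properties of sigma_T}(c) and Lemma \ref{lem: X to Tm to Y means X+Y=mT}, and matching the AR-order on that rank-$2$ cluster category with the ratio-order in the defining relation of $G(\cS)$; the rest is formal group theory.
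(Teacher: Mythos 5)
Your overall strategy matches the paper's: compute $\psi(x(\alpha))$ as the class of the loop $[\omega]^{-1}[-\alpha]^{-1}[\alpha][\omega]$ pushed into $B\cG_+(\cS,\omega)\simeq B\cG(\cS_0)$, and then read off a left inverse from the resulting formula. Your left-inverse argument is actually a genuinely different (and cleaner) route than the paper's: you retract $G(\cS_0)\to G(\cS_\omega)$ directly by killing the generators outside $\cS_\omega$ and observe that among the $\gamma_i=a_i\alpha+b_i\omega$ with $a_i>0$ only $\alpha$ itself satisfies $\brk{\gamma_i,\omega}=0$ (since $\brk{\gamma_i,\omega}=b_i\brk{\omega,\omega}$), so $q\psi=\mathrm{id}$. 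The paper instead passes through $G(\cS)$: it conjugates $\iota\psi(x(\alpha))$ by $x(\omega)$, uses the defining relation of $G(\cS)$ to recover $x(\alpha)$, and then applies the retraction $r:G(\cS)\to G(\cS_\omega)$ of Proposition \ref{prop: G-omega to GS is split mono}. Both are valid given \eqref{eq: equation for psi(x(a))}; yours avoids the detour through $G(\cS)$ and only needs the lemma on $ab(\alpha,\beta)\cap\cS_\omega$ to see that $q$ is well defined on $G(\cS_0)$.

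The soft spot is exactly where you predicted: the derivation of \eqref{eq: equation for psi(x(a))}. Your phrase ``reading off the relation in $G(\cS_0)$ coming from the commutation relation $x(\alpha)x(\omega)=\prod x(\gamma_i)$'' cannot be taken literally: that relation involves $x(\omega)$, which is not a generator of $G(\cS_0)$, and as a relation in $\pi_1 B\cG(\cS)$ it is precisely what Theorem \ref{thm: pi-1 G(S) is G(S)} establishes \emph{using} this proposition, so invoking it here would be circular. The formula has to be obtained by an honest homotopy inside $B\cG_+(\cS,\omega)$. The paper's mechanism is concrete: the objects of $\cC(\alpha,\omega)$ other than $P_\omega[1]$, listed in Auslander--Reiten order, have the property that each consecutive pair is a rank-two cluster giving a positive morphism $\cA(\alpha,\omega)\to\cA(\emptyset)$ that factors in exactly two ways through rank-one targets $\cA(\gamma_i)$; chaining these double factorizations (the commuting diagram of triangles) telescopes the conjugated loop into $[-\gamma_{i_1}]^{-1}[\gamma_{i_1}]\cdots[-\alpha]^{-1}[\alpha]=\prod x(\gamma_i)$, and since $\cC(\alpha,\omega)$ is finite this is checked over the six rank-two types. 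Your description of the $[-\alpha]$ half ``contributing only the pure $\omega$-term, which cancels'' is not quite what happens -- the left leg $[\omega]\circ[-\alpha]$ refactors as $[-\gamma]\circ[\omega]$ and feeds the first factor of the telescope rather than cancelling -- so this step needs to be replaced by the explicit fan of factorizations before the proof is complete.
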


\begin{proof} We show that the second statement implies the first.
Let $\iota:G(\cS_0)\to G(\cS)$ be the homomorphism induced by the inclusion $\cS_0\into \cS$. Let $\phi$ be the automorphism of $G(\cS)$ given by conjugation by $x(\omega)$. Thus $\phi(g)=x(\omega)gx(\omega)^{-1}$. Then, by the defining relations of $G(\cS)$, we have
\[
	\phi\circ \iota\circ\psi(x(\alpha))=x(\omega)\left(\smallprod x(\gamma_i)\right)x(\omega)^{-1}=x(\alpha)
\]
Therefore $\phi\circ\iota\circ\psi:G(\cS_\omega)\to G(\cS)$ is the split monomorphism with left inverse $r$ and $r\circ\phi\circ \iota$ is a left inverse for $\psi$.

It remains to prove the equation \eqref{eq: equation for psi(x(a))}. Since $\psi$ is defined in terms of the inclusion functor $\cH(\cS,\omega)\into \cG_+(\cS,\omega)$, we need to look at the positive morphisms $\cA(\alpha,\omega)\to \cA(\emptyset)$. These are given by all cluster tilting sets in $\cC(\alpha,\omega)$ which do not include $P_\omega[1]$. Since $\cC(\alpha,\omega)$ is finite, there are six possible cases: $A_1\times A_1,A_2,B_2,B_2^{op}=C_2,G_2,G_2^{op}$. We will use type $C_2$ as an example. The other cases are very similar. 

When we say that $\cC(\alpha,\omega)$ has type $C_2$ we mean that the division ring $F_\alpha$ is a degree two extension of $F_\omega$. The Auslander-Reiten quiver of the category $\cA(\alpha,\omega)$ has four objects:
\[\xymatrixrowsep{10pt}\xymatrixcolsep{10pt}
\xymatrix{
& P_\omega \ar[dr]& &  I_\omega\\
P_\alpha \ar[ur] &  & I_\alpha\ar[ur] 
}
\]
with dimension vectors $\alpha,\beta,\gamma,\omega$, respectively, where $\beta=\alpha+\omega$ and $\gamma=\beta+2\omega$. The objects of $\cC(\alpha,\omega)$ are $P_\omega,P_\alpha,I_\alpha,I_\omega,P_\alpha[1],P_\omega[1]$. Of these, the first five give all positive morphisms from $\cA(\alpha,\omega)$ to a wide category of rank 1. Consecutive pairs from these first five objects give all four positive morphisms $\cA(\alpha,\omega)\to \cA(\emptyset)$, each of which can be factored in two ways. This gives the following commuting diagram in $\cG_+(\cS,\omega)$.
\[
\xymatrix{
\cA(\omega) \ar[d]_{[\omega]}
&& \cA(\alpha,\omega) \ar[ll]_{[-\alpha]}\ar[rr]^{[\alpha]}
\ar[d]^{[\gamma]}
\ar[dl]_{[\omega]}
\ar[dr]^{[\beta]}
&& \cA(\omega)\ar[d]^{[\omega]}
 \\
\cA(\emptyset) 
& \cA(\gamma) \ar[d]_{[\gamma]}\ar[l]_{[-\gamma]}
& \cA(\beta) \ar[dr]_{[\beta]}\ar[dl]^{[-\beta]}
& \cA(\alpha) \ar[d]^{[-\alpha]}\ar[r]^{[\alpha]}
& \cA(\emptyset)\\
& \cA(\emptyset) 
&& \cA(\emptyset)
	}
\]
The homomorphism $\psi$ sends $x(\alpha)$ first to the loop at $\cA(\omega)$ given by the top row of the diagrams as in Corollary \ref{cor: paths with pasepoint omega}:
\[
	\cA(\omega)\xleftarrow{[-\alpha]}\cA(\alpha,\omega)\xrarrow{[\alpha]}\cA(\alpha)
\]
then to the loop at $\cA(\emptyset)$ given by the path 
\[
\cA(\emptyset)\xrarrow{[\omega]^{-1}}\cdot\xrarrow{[-\alpha]^{-1}} \cdot\xrarrow{[\alpha]} \cdot\xrarrow{[\omega]}\cA(\emptyset)
\]
which is homotopic to the path $[-\gamma]^{-1}[\gamma][-\beta]^{-1}[\beta][-\alpha]^{-1}[\alpha]$. In other words,
\[
	\psi(x(\alpha))=x(\gamma)x(\beta)x(\alpha)
\]
These correspond to the objects in the AR quiver of $\cA(\alpha,\omega)$ in reverse order starting from the (relatively) injective module $I_\alpha$ and ending in the (relatively) simple projective module $P_\alpha$ in all cases. Therefore, \eqref{eq: equation for psi(x(a))} holds in all cases. Our proposition follows.
\end{proof}

Recall that we are assuming by induction that Theorem \ref{thm: pi-1 G(S) is G(S)} holds for $\cS_0$ and $\cS_\omega$ by induction on $|\cS|$.

\begin{cor}\label{cor: G(S) is HNN extension}
Let $\cS=\cS_0\cup\{\omega\}$ be as above. Then $G(\cS)$ is isomorphic to the HNN extension $N(G(\cS_\omega),G(\cS_0),\iota,\psi)$ where $\iota:G(\cS_\omega)\into G(\cS_0)$ is the inclusion map and $\psi:G(\cS_\omega)\into G(\cS_0)$ is the split monomorphism described above. The isomorphism 
\[
	N(G(\cS_\omega),G(\cS_0),\iota,\psi)\cong G(\cS)
\]
is the inclusion map on $G(\cS_0),G(\cS_\omega)$ and sends the new generator $t$ to $x(\omega)^{-1}$.
\end{cor}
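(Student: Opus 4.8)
The plan is to prove Corollary~\ref{cor: G(S) is HNN extension} purely algebraically, by checking that $G(\cS)$ and the group $N:=N(G(\cS_\omega),G(\cS_0),\iota,\psi)$ admit the \emph{same} presentation once the stable letter $t$ is identified with $x(\omega)^{-1}$; the topological lemmas of this section are not needed for the statement itself. First I would record the presentation of $N$ explicitly: its generators are $x(\alpha)$ for $\alpha\in\cS_0$ together with $t$, and its relations are (i) all the defining relations of $G(\cS_0)$ from Definition~\ref{def: G(S) for S convex}, together with (ii) for each $\gamma\in\cS_\omega$ the relation $t\,x(\gamma)\,t^{-1}=\psi(x(\gamma))$, where $\psi(x(\gamma))=\prod x(\gamma_i)$ is the explicit product described in Proposition~\ref{prop: psi has left inverse}. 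It is enough to impose (ii) on the generators $x(\gamma)$ of $G(\cS_\omega)$, since $h\mapsto t\,\iota(h)\,t^{-1}$ and $h\mapsto\psi(h)$ are both homomorphisms $G(\cS_\omega)\to N$.

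Next I would partition the defining relations of $G(\cS)$ into those not involving $\omega$ and those involving $\omega$. The former are exactly the relations of $G(\cS_0)$: a hom-orthogonal pair $(\alpha,\beta)$ in $\cS_0$ with $ext(\alpha,\beta)=0$ has $ab(\alpha,\beta)\subseteq\cS_0$ by convexity of $\cS_0$ (Definition~\ref{def: convex set of roots}), so the relation is literally a relation of $G(\cS_0)$, and conversely. For a relation of $G(\cS)$ involving $\omega$: by Lemma~\ref{lem: construction of S-0} we have $hom(\omega,\alpha)=0$ and $ext(\alpha,\omega)=0$ for all $\alpha\in\cS_0$, so the defining pair must be of the form $(\gamma,\omega)$ with $hom(\gamma,\omega)=\brk{\gamma,\omega}=0$, i.e.\ $\gamma\in\cS_\omega$ (a pair $(\omega,\gamma)$ can only be a relation pair when additionally $\brk{\omega,\gamma}=0$, and then $M_\omega,M_\gamma$ are hom--ext orthogonal both ways, so that relation is mere commutativity and is already implied by the $(\gamma,\omega)$-relation). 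Thus the only relations of $G(\cS)$ involving $\omega$ are, for $\gamma\in\cS_\omega$, the relation
\[
	x(\gamma)\,x(\omega)=\prod_{\delta\in ab(\gamma,\omega)}x(\delta),
\]
the product taken in increasing order of the ratio of the $\gamma$-coefficient to the $\omega$-coefficient. Since $ab(\gamma,\omega)\subseteq\cS$ has $\omega$ as its first term and $\gamma$ as its last, and since ``increasing order of the $\gamma$-coefficient over the $\omega$-coefficient'' is exactly ``decreasing order of the $\omega$-coefficient over the $\gamma$-coefficient''---the ordering used in Proposition~\ref{prop: psi has left inverse}---this relation rearranges to $x(\gamma)\,x(\omega)=x(\omega)\,\psi(x(\gamma))$, equivalently $\psi(x(\gamma))=x(\omega)^{-1}\,x(\gamma)\,x(\omega)$ in $G(\cS)$.

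Finally I would assemble the isomorphism from these two presentations. One defines $\Phi\colon N\to G(\cS)$ by $x(\alpha)\mapsto x(\alpha)$ for $\alpha\in\cS_0$ and $t\mapsto x(\omega)^{-1}$; the relations (i) hold in $G(\cS)$ because they are among its defining relations, and relation (ii) is sent to $x(\omega)^{-1}x(\gamma)x(\omega)=\psi(x(\gamma))$, which was verified above, so $\Phi$ is well defined. One defines $\Psi\colon G(\cS)\to N$ by $x(\alpha)\mapsto x(\alpha)$ for $\alpha\in\cS_0$ and $x(\omega)\mapsto t^{-1}$; the $\omega$-free relations of $G(\cS)$ are the relations of $G(\cS_0)$ and hold in $N$, while $x(\gamma)x(\omega)=x(\omega)\psi(x(\gamma))$ is sent to $t\,x(\gamma)\,t^{-1}=\psi(x(\gamma))$, which is relation (ii), so $\Psi$ is well defined. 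Then $\Phi$ and $\Psi$ are visibly mutually inverse, and $\Phi$ restricts to the identity on $G(\cS_0)$ and sends $t$ to $x(\omega)^{-1}$, proving the corollary. The step I expect to be the main obstacle is the middle one: pinning down precisely which defining relations of $G(\cS)$ mention $\omega$ (this uses Lemma~\ref{lem: construction of S-0} and the non-symmetry of the Euler--Ringel form, in order to see that any $(\omega,\gamma)$-relation is redundant), and carefully matching the product-ordering convention of Definition~\ref{def: G(S) for S convex} with the ordering in the formula for $\psi$ in Proposition~\ref{prop: psi has left inverse}. Once those bookkeeping points are settled, everything else is a formal manipulation of presentations.
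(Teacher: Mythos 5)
Your proposal is correct and takes essentially the same route as the paper: the paper's proof likewise identifies the HNN presentation (generators of $G(\cS_0)$ plus $t^{-1}=x(\omega)$, with relations $x(\alpha)=x(\omega)\psi(x(\alpha))x(\omega)^{-1}$) with the defining presentation of $G(\cS)$ via the formula for $\psi$ in Proposition \ref{prop: psi has left inverse}. You simply make explicit the bookkeeping the paper leaves implicit --- that the $\omega$-free relations of $G(\cS)$ are exactly those of $G(\cS_0)$ by convexity of $\cS_0$, that the only pairs involving $\omega$ are $(\gamma,\omega)$ with $\gamma\in\cS_\omega$ (with $(\omega,\gamma)$ redundant), and the matching of ordering conventions --- all of which checks out.
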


\begin{proof}
The HNN extension $N(G(\cS_\omega),G(\cS_0),\iota,\psi)$ adds one generator $t^{-1}=x(\omega)$ to $G(\cS_0)$ and, for each $\alpha\in\cS_0$, the new relation
\[
	x(\alpha)=x(\omega)\psi(x(\alpha))x(\omega)^{-1}
\]
By \eqref{eq: equation for psi(x(a))}, this is equivalent to the relation
\[
	x(\alpha)x(\omega)=\prod x(\gamma_i)
\]where $\gamma_i$ runs over all real Schur roots of the form $\gamma_i=a_i\alpha+b_i\omega$ including the case $a_i=0$ and the product is taken in decreasing order of the ratio $b_i/a_i$. These are the defining relations of $G(\cS)$ which are not in $G(\cS_0)$, proving the corollary.
\end{proof}

\begin{proof}[Proof of Theorem \ref{thm: pi-1 G(S) is G(S)}]
We have completed the proofs of all statement in the outline in Section \ref{ss 3.2: outline of G(S)}. Therefore, by Theorem \ref{thm: HNN graph of groups}, $B\cG(\cS)$ is a $K(\pi,1)$ with $\pi$ equal to the HNN extension $N(G(\cS_\omega),G(\cS_0),\iota,\psi)$ which is equal to $G(\cS)$ with generators $x(\alpha)\in G(\cS)$ corresponding to either $x(\alpha)\in G(\cS_0)$ or to $t^{-1}$ by Corollary \ref{cor: G(S) is HNN extension} above. This proves the theorem for all finite convex $\cS$.
\end{proof}

The proof above also completes the proof of the main Theorem \ref{thm 3.5: G(S) is K(pi,1)}.

\section{Picture groups}\label{sec 4: Picture groups}

We will show that, when $\Lambda$ has finite representation type, the classifying space of the cluster morphism category of $mod$-$\Lambda$ is the CW-complex associated to the algebra in \cite{IOTW4} using pictures. This cell complex has one $k$-cell $e(\cA)$ for every wide subcategory of $mod\text-\Lambda$ of rank $k$. We extend this construction to a space $X(\cS)$ for every finite convex set $\cS$ of real Schur roots and show that $X(\cS)$ is homeomorphic to $B\cG(\cS)$. We will write $\vare (\cA)$ for the cell in $B\cG(\cS)$ corresponding to $e(\cA)\subseteq X(\cS)$. We will also construct the cellular chain complex of $X(\cS)\simeq B\cG(\cS)$ to be used in later papers.

\subsection{Construction of the CW-complex $X(\cS)$}\label{ss 4.1: the CW-complex X(S)}

For every object $\cA$ in $\cG(\cS)$ we will construct a simplicial complex whose geometric realization $E(\cA)$ is homeomorphic to a disk of dimension equal to the rank of $\cA$. There is a continuous mapping $E(\cA)\to B\cG(\cS)$ which is an embedding on the interior of $E(\cA)$ and $B\cG(\cS)$ will be the disjoint union of the images $\vare(\cA)$ of these interiors. When $\cC(\cA)$ is not finite, $E(\cA)$ is not compact and therefore cannot be homeomorphic to a disk and our construction would not give a CW-complex. Therefore, finiteness of $\cS$ is essential for this construction.

Suppose $\cA=\cA(\alpha_\ast)$ with rank $n$. Then the set of real Schur roots in $\ZZ\alpha_\ast\cong \ZZ^n$, being finite by the assumption that they all lie in the finite set $\cS$, is the root system $\Phi(\alpha_\ast)$ of a disjoint union of Dynkin quivers which form the valued quiver associated to $\cA$. Let $K(\cA)$ be the simplicial complex whose vertices are the positive roots $\Phi_+(\alpha_\ast)$ and the negative projective roots in $\Phi(\alpha_\ast)$. These are the dimension vectors of the objects of $\cC(\cA)$. A set of vertices span a simplex in $K(\cA)$ if they are pairwise ext-orthogonal. It is well-known (see \cite{IOTW3}) that the geometric realization $|K(\cA)|$ is homeomorphic to the $n-1$ sphere. For example, when $n=1$, there are only two roots $\alpha,-\alpha$ and $|K(\cA)|=S^0$ is two points.

Let $\simp_+K(\cA)$ be the poset category of simplices in $K(\cA)$ ordered by inclusion, including the empty simplex. Let $\simp K(\cA)$ be the full subcategory of nonempty simplices. The classifying space $B\simp K(\cA)$ is the first barycentric subdivision of $K(\cA)$ and $B\simp_+ K(\cA)$, being the cone on $B\simp K(\cA)$ is a triangulated $n$ disk. We define
\[
	E(\cA):=B\simp_+K(\cA)\cong D^n.
\]

We define the \emph{picture space} $X(\cS)$ to be the union of cells:
\[
	X(\cS)=\coprod_{\cA\in\cG(\cS)}E(\cA)/\sim
\]
with identifications given as follows.

For every cluster morphism $[T]:\cA\to\cB$ in the category $\cG(\cS)$ of rank $rk\,\cA-rk\,\cB=k$ we have the embedding
\[
	\sigma_T:\cC(\cB)\into \cC(\cA)
\]
with image $\cC_T(\cA)$ so that $X,Y\in\cC(\cB)$ are ext-orthogonal if and only if $\sigma_TX,\sigma_TY$ are ext-orthogonal in $\cC(\cA)$. This induces an embedding of categories:
\[
	\Sigma_T:\simp_+K(\cB)\to \simp_+K(\cA)
\]
which sends every $p$-simplex $X$ in $\simp_+K(\cB)$ ($p\ge-1$) to the $(p+k)$-simplex \[
	\Sigma_TX=\sigma_TX\cup T
\]
in $\simp_+K(\cA)$. In particular, it sends the cone point in $\simp_+K(\cA)$ to the $k-1$ simplex spanned by the $k$ objects of $T$.

\begin{lem}\label{lem: Sigma is a functor}
Given $[T]:\cA\to \cB$ and $[S]:\cB\to\cC$ with composition $[S]\circ[T]=[T\cup \sigma_TS]:\cA\to\cC$, we have
\[
	\Sigma_{T\cup\sigma_TS}=\Sigma_T\Sigma_S
\]
\end{lem}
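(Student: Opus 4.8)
The plan is to unwind the definition of $\Sigma_T$ on simplices and reduce the claimed equality $\Sigma_{T\cup\sigma_TS}=\Sigma_T\Sigma_S$ to the already-established identity $\sigma_{T\cup\sigma_TS}=\sigma_T\circ\sigma_S$ of maps on objects of cluster categories, together with the bookkeeping of which vertices get adjoined. Concretely, let $X$ be any $p$-simplex of $\simp_+K(\cC)$ (with $p\ge -1$, the value $p=-1$ being the empty simplex, i.e. the cone point). By definition $\Sigma_S X=\sigma_S X\cup S$, a $(p+\ell)$-simplex of $\simp_+K(\cB)$ where $\ell=\operatorname{rk}[S]$, and then $\Sigma_T(\Sigma_S X)=\sigma_T(\sigma_S X\cup S)\cup T$. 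Since $\sigma_T$ is a map defined on individual objects of $\cC(\cB)$ and carried to simplices vertex-by-vertex, $\sigma_T(\sigma_S X\cup S)=\sigma_T(\sigma_S X)\cup\sigma_T(S)=(\sigma_T\sigma_S)(X)\cup\sigma_T S$. Hence $\Sigma_T\Sigma_S X=(\sigma_T\sigma_S)(X)\cup\sigma_T S\cup T$. On the other side, $[S]\circ[T]=[T\cup\sigma_TS]$ has rank $k+\ell$, and $\Sigma_{T\cup\sigma_TS}X=\sigma_{T\cup\sigma_TS}(X)\cup(T\cup\sigma_TS)$. By Equation \eqref{eq: sigma TS=sigma T sigma S} (the associativity identity $\sigma_T\sigma_S R=\sigma_{T,\sigma_TS}R$ established in the construction of composition), applied to each vertex $R$ of $X$, we get $\sigma_{T\cup\sigma_TS}(X)=(\sigma_T\sigma_S)(X)$. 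Therefore both expressions equal $(\sigma_T\sigma_S)(X)\cup\sigma_T S\cup T$, and the two functors agree on objects.

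Next I would check that the two functors agree on morphisms, but this is automatic: a morphism $X\to X'$ in $\simp_+K(\cC)$ is just an inclusion of simplices $X\subseteq X'$, and both $\Sigma_{T\cup\sigma_TS}$ and $\Sigma_T\Sigma_S$ are defined so that inclusions go to inclusions (since $\sigma_T$ and $\sigma_S$ are injective on vertex sets and the adjoined vertex sets $T$, $\sigma_TS$ are fixed). Thus once the object map is shown to be the same, the morphism map is forced, and the equality of functors follows. One should also note in passing that $\sigma_TS\cup T$ genuinely is a partial cluster tilting set (so that $[T\cup\sigma_TS]$ is a legitimate morphism and $\Sigma_{T\cup\sigma_TS}$ is defined) — but this is exactly the content of the Corollary following Proposition \ref{prop 1.8: Properties of sigma_T}, so it may be cited rather than reproved.

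The only genuine subtlety, and the step I expect to need the most care, is the identification $\sigma_{T\cup\sigma_TS}=\sigma_T\circ\sigma_S$ as maps $\cC(\cC)\to\cC(\cA)$. This is not literally stated in the excerpt as an equation of maps on $\cC(\cC)$; what is stated is the associativity identity \eqref{eq: sigma TS=sigma T sigma S}, namely $\sigma_T\sigma_S R=\sigma_{T,\sigma_TS}R$ for a single object $R$, derived there by verifying that $\sigma_T\sigma_S R$ satisfies the three uniqueness-characterizing properties (a),(b),(c) of Proposition \ref{prop 1.8: Properties of sigma_T} for $\sigma_{T,\sigma_TS}R$. So the honest thing to do is to invoke that identity verbatim, applied vertex-by-vertex to the simplex $X$. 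With that in hand the lemma is a routine unwinding; I do not anticipate any obstacle beyond making sure the empty-simplex case $p=-1$ is included (there $\Sigma_S$ sends the cone point to the $(\ell-1)$-simplex $S$, $\Sigma_T$ then sends that to $\sigma_T S\cup T$, and $\Sigma_{T\cup\sigma_TS}$ sends the cone point directly to $T\cup\sigma_TS$, which agree).
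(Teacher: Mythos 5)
Your proposal is correct and follows essentially the same route as the paper: unwind $\Sigma_T\Sigma_SX=T\cup\sigma_TS\cup\sigma_T\sigma_SX$ and invoke the associativity identity \eqref{eq: sigma TS=sigma T sigma S} to identify this with $\Sigma_{T\cup\sigma_TS}X$. The extra checks you include (morphisms, the empty simplex, legitimacy of $T\cup\sigma_TS$) are sound but are left implicit in the paper's one-line proof.
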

\begin{proof} For any $X$ in $\simp_+\cC$ we have
\[
\Sigma_T\Sigma_SX=\Sigma_T(S\cup \sigma_SX)=T\cup \sigma_TS\cup \sigma_T\sigma_SX=\sigma_{T\cup \sigma_TS}X
\]
since $\sigma_T\sigma_S=\sigma_{T\cup \sigma_TS}$ \eqref{eq: sigma TS=sigma T sigma S}.
\end{proof}

On classifying spaces, this gives an embedding of cells:
\[
	B\Sigma_T:E(\cB)=B\simp_+K(\cB)\to B\simp_+K(\cA)=E(\cA)
\]
which sends the center of $E(\cB)$ to the barycenter of the $k-1$ simplex spanned by $T$.

Let $\overline e(\cA)$, $e(\cA)$ be the images of $E(\cA)$ and its interior in $X(\cS)$. Then the statement that the quotient space
\[
	\bigcup \overline e(\cA)=\coprod_{\cA\in\cG(\cS)}E(\cA)/\sim\,,
\]
with equivalence relation given by identifying every point in $E(\cB)$ to its image in $E(\cA)$ under all mappings $B\Sigma_T:E(\cB)\to E(\cA)$ constructed as above, is a CW-complex is equivalent to the following proposition.

\begin{prop}
For a fixed $\cA\in\cG(\cS)$ of rank $n$, the embeddings $B\Sigma_T:E(\cB)\into E(\cA)$ for all cluster morphisms $[T]:\cA\to \cB$ of rank $\ge1$ define a continuous map
\[
	\eta_\cA:\partial E(\cA)=B\simp K(\cA)\to \bigcup_{rk\,\cB<n} \overline e(\cB)
\]
giving the attaching map for the cell $e(\cA)$ in a CW-complex $X(\cS)=\bigcup \overline e(\cA)$.
\end{prop}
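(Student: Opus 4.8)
The plan is to induct on $n=rk\,\cA$, taking the $(n-1)$--skeleton $X(\cS)^{(n-1)}:=\bigcup_{rk\,\cB<n}\overline e(\cB)$ as already built. Because $\cS$ is finite, $\cG(\cS)$ has finitely many objects, hence $X(\cS)$ has finitely many cells; so the point--set issues (weak topology, closedness of skeleta, Hausdorffness) are routine, and the content lies in producing $\eta_\cA$ and in checking that the canonical quotient map $E(\cA)\to X(\cS)$ is injective on the open disk.

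First I would unwind the combinatorics of $\partial E(\cA)=B\simp K(\cA)$. The nonempty simplices $T$ of $K(\cA)$ are exactly the nonempty partial cluster tilting sets in $\cC(\cA)$, i.e. the cluster morphisms $[T]\colon\cA\to\cB_T$ of rank $|T|\ge1$ with $\cB_T=\cA\cap|T|^\perp$. For such $T$, Theorem \ref{thm:sigma-T is a bijection} makes $\sigma_T\colon\cC(\cB_T)\into\cC(\cA)$ a bijection onto $\cC_T(\cA)$ which preserves ext--orthogonality, so $\rho\mapsto\sigma_T\rho\cup T$ is an order--preserving, order--reflecting injection from the poset of simplices of $K(\cB_T)$ (including $\emptyset$) onto $\{\tau\in\simp_+K(\cA)\colon\tau\supseteq T\}$. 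Hence $B\Sigma_T$ embeds $E(\cB_T)\cong D^{\,n-|T|}$ onto the subcomplex $D^\ast(T)$ of $B\simp K(\cA)$ spanned by the barycenters $b_\tau$ with $\tau\supseteq T$; it never reaches the cone point $b_\emptyset$ of $E(\cA)$ (since $\sigma_T\rho\cup T\supseteq T\neq\emptyset$), so $D^\ast(T)\subseteq\partial E(\cA)$, and $B\Sigma_T$ carries $\operatorname{int}E(\cB_T)$ onto $D^\ast(T)\setminus\bigcup_{T'\supsetneq T}D^\ast(T')$. Since every flag $\tau_0\subsetneq\cdots\subsetneq\tau_m$ of nonempty simplices of $K(\cA)$ lies in $D^\ast(\tau_0)$, the sets $D^\ast(T)$ for $T$ nonempty cover $\partial E(\cA)$, with $D^\ast(T)\cap D^\ast(T')=D^\ast(T\cup T')$ when $T\cup T'$ is a simplex and empty otherwise.

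Next I would define $\eta_\cA$ on each $D^\ast(T)$ as $D^\ast(T)\xrightarrow{(B\Sigma_T)^{-1}}E(\cB_T)\to X(\cS)$, the second arrow being the canonical map, whose image lies in $\overline e(\cB_T)\subseteq X(\cS)^{(n-1)}$ (inductively, as $rk\,\cB_T=n-|T|<n$). To see that these prescriptions agree on overlaps, take $D^\ast(T)\cap D^\ast(T')=D^\ast(T'')$ with $T''=T\cup T'$; then $[T'']=[S]\circ[T]$ where $S=\sigma_T^{-1}(T''\setminus T)$ is a partial cluster tilting set in $\cC(\cB_T)$ by Theorem \ref{thm:sigma-T is a bijection}, and symmetrically $[T'']$ factors through $[T']$. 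Lemma \ref{lem: Sigma is a functor} (equivalently \eqref{eq: sigma TS=sigma T sigma S}) gives $B\Sigma_{T''}=B\Sigma_T\circ B\Sigma_S$, and the identifications defining $X(\cS)$ make the canonical map $E(\cB_{T''})\to X(\cS)$ equal to the composite $E(\cB_{T''})\xrightarrow{B\Sigma_S}E(\cB_T)\to X(\cS)$; together these show that the $T$--, the $T'$-- and the $T''$--prescriptions all restrict on $D^\ast(T'')$ to the canonical map $E(\cB_{T''})\to X(\cS)$ precomposed with $(B\Sigma_{T''})^{-1}$. Thus $\eta_\cA\colon\partial E(\cA)\to X(\cS)^{(n-1)}$ is well defined and continuous by the gluing lemma, and attaching a copy of $D^n\cong E(\cA)$ along $\eta_\cA$ for every $\cA$ with $rk\,\cA=n$ produces $X(\cS)^{(n)}$; running the induction and invoking finiteness exhibits $X(\cS)=\bigcup\overline e(\cA)$ as a CW complex with cells the $e(\cA)$.

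Finally I would check that $e(\cA)$ is a genuine open $n$--cell: the identifications in $X(\cS)$ all have the form $q\sim B\Sigma_T(q)$ with $[T]\colon\cA'\to\cB'$ of rank $\ge1$ and $B\Sigma_T(q)\in\partial E(\cA')$, so tracing out equivalence classes by induction on rank shows the class of $p\in\operatorname{int}E(\cA)$ is exactly $\{B\Sigma_T(p):[T]\colon\cB\to\cA\text{ a cluster morphism}\}$, which meets $\operatorname{int}E(\cA')$ only for $\cA'=\cA$ (because $B\Sigma_T(p)$ is interior only when $T=\emptyset$) and then only in $p$; hence the open cells $e(\cA)$ are pairwise disjoint and $E(\cA)\to\overline e(\cA)$ is injective on the interior. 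The one genuinely substantive step is the overlap compatibility for $\eta_\cA$, which is exactly where the associativity $\sigma_T\sigma_S=\sigma_{T,\sigma_TS}$ of Lemma \ref{lem: Sigma is a functor} enters; the rest is standard finite CW bookkeeping.
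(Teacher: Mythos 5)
Your proposal is correct and follows essentially the same route as the paper: both arguments cover $\partial E(\cA)$ by the images of the embeddings $B\Sigma_T$ (your $D^\ast(T)$ are exactly the images appearing in the paper's surjectivity step) and both reduce the compatibility of overlapping prescriptions to the functoriality $\Sigma_{T\cup\sigma_TS}=\Sigma_T\Sigma_S$ of Lemma \ref{lem: Sigma is a functor}. The only difference is organizational — you define $\eta_\cA$ piecewise on a closed cover and invoke the gluing lemma, while the paper exhibits $\coprod E(\cB)\onto\partial E(\cA)$ and checks that coincident points are already identified downstairs — plus you make explicit the disjointness of the open cells, which the paper leaves implicit.
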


\begin{proof} If $n=0$ then $\partial E(\cA)$ is empty and there is nothing to prove. So, suppose that $n>0$ and the proposition holds for numbers $<n$. In particular $\bigcup_{rk\,\cB<n}\overline e(\cB)$ is a CW-complex

The statement is that the maps $B\Sigma_T:E(\cB)\to E(\cA)$ together form a surjective continuous mapping
\[
\bigcup_{[T]:\cA\to \cB}\overline e(\cB)=\coprod E(\cB)/\! \sim\ \onto \partial E(\cA)
\]
and that, furthermore, any two elements which map to the same point in $\partial E(\cA)$ are already identified in the subcomplex $\bigcup_{[T]:\cA\to \cB}\overline e(\cB)\subseteq\bigcup_{rk\,\cB<n}\overline e(\cB)$.

To prove the surjectivity statement, take any point $z\in \partial E(\cA)$. Then $z$ will be in the span of a simplex
\[
Z_\ast:	Z_0\subset Z_1\subset \cdots\subset Z_p
\]
where each $Z_i$ is nonempty. Let $\cB=|Z_0|^\perp$. Then $[Z_0]:\cA\to\cB$ is a cluster morphism of positive rank and the $p$-simplex $Z_\ast$ in $\partial E(\cA)$ is the image of the $p$-simplex
\[
	X_\ast: X_0\subset X_1\subset\cdots\subset X_p
\]
in $E(\cB)$ where $X_0=\emptyset$ and each $X_i$ is the unique partial cluster tilting set in $\cB$ so that 
\[
Z_i=Z_0\cup \sigma_{Z_0}X_i=\Sigma_{Z_0}X_i
\]
and $z=B\Sigma_{Z_0}x$ where $x$ is a point in the simplex spanned by $X_\ast$. Therefore, $\bigcup B\Sigma_T:\bigcup \overline e(\cB)\onto \partial E(\cA)$ is surjective.

Now suppose that $y\in E(\cB')$ maps to the same point $z\in \partial E(\cA)$ under the map induced by $[T]:\cA\to\cB'$. Since each $\Sigma_T$ is an embedding, this implies that $y$ lies in the interior of a simplex of the same dimension as $Z_\ast$, say, $
	Y_\ast:Y_0\subset Y_1\subset\cdots\subset Y_p
$. This implies that
\[
	Z_i=T\cup \sigma_TY_i
\]
In particular, $T\subseteq Z_0$ and $Z_0=T\cup \sigma_TY_0$. In other words, the morphism $[Z_0]:\cA\to\cB$ is the composition of $[T]$ and $[Y_0]:\cB'\to\cB$. By Lemma \ref{lem: Sigma is a functor}, this implies $\Sigma_{Z_0}=\Sigma_T\circ \Sigma_{Y_0}$. Since $\Sigma_T$ is an embedding, the equation
\[
	\Sigma_TY_i=Z_i=\Sigma_{Z_0}X_i=\Sigma_T\Sigma_{Y_0}X_i
\]
implies $Y_i=\Sigma_{Y_0}X_i$ for all $i$. So, $y=B\Sigma_{Y_0}(x)$ and the points $x,y$ are identified in the subcomplex $\bigcup \overline e(\cB)$.
\end{proof}

\subsection{Proof that $X(\cS)=B\cG(\cS)$}\label{ss 4.2: proof that X(S)=BG(S)}
We will show:

\begin{thm}\label{thm: BG(S)=X(S)}
For any finite convex set $\cS$ of real Schur roots, we have a homeomorphism
\[
	X(\cS)\cong B\cG(\cS).
\]
The image of $\overline e(\cA)\subseteq X(\cS)$ in $B\cG(\cS)$, denoted $\overline\vare(\cA)$, is the union of all simplices corresponding to sequences of composable morphisms
\[
	\cA_0\to \cA_1\to\cdots\to \cA_p
\]
where $\cA_0=\cA$. The center of the cell $e(\cA)$ maps to $\cA$ considered as a vertex of $B\cG(\cS)$.
\end{thm}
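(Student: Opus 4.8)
The plan is: for each object $\cA$ of $\cG(\cS)$ I will identify the poset category $\simp_+K(\cA)$ with the under‑category $\cA\backslash\cG(\cS)$ (the fiber category of $\mathrm{id}_{\cG(\cS)}$ over $\cA$), and then assemble the desired homeomorphism $\Theta\colon X(\cS)\to B\cG(\cS)$ out of the classifying maps $BG_\cA$ of the forgetful functors $G_\cA\colon\cA\backslash\cG(\cS)\to\cG(\cS)$.

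First I would establish the identification. An object of $\cA\backslash\cG(\cS)$ is a cluster morphism $[T]\colon\cA\to\cB$, which is recorded by the partial cluster tilting set $T\subseteq\cC(\cA)$, with $\cB=\cA\cap|T|^\perp$ then determined; so the objects of $\cA\backslash\cG(\cS)$ are precisely those of $\simp_+K(\cA)$ (simplices of $K(\cA)$ together with the empty simplex). By the composition law \eqref{eq:composition of cluster morphisms}, $[R]\circ[T]=[T\cup\sigma_TR]$, so there is a morphism $[T]\to[T']$ in $\cA\backslash\cG(\cS)$ exactly when $T\subseteq T'$, and it is then unique, namely $[\sigma_T^{-1}(T'\setminus T)]$ — here Theorem~\ref{thm:sigma-T is a bijection} guarantees $\sigma_T^{-1}(T'\setminus T)$ really is a partial cluster tilting set in $\cC(\cB)$. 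Thus $\simp_+K(\cA)\cong\cA\backslash\cG(\cS)$, and under this the forgetful functor $G_\cA$ sends $\tau$ to $\cA\cap|\tau|^\perp$ and an inclusion $\tau\subseteq\tau'$ to $[\sigma_\tau^{-1}(\tau'\setminus\tau)]$; the cone point of $E(\cA)=B\simp_+K(\cA)$ is $\emptyset=\mathrm{id}_\cA$, so $BG_\cA$ sends it to the vertex $\cA$.

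Next, compatibility with the gluing maps. For a cluster morphism $[T]\colon\cA\to\cB$, the embedding $\Sigma_T\colon\simp_+K(\cB)\hookrightarrow\simp_+K(\cA)$, $\sigma\mapsto T\cup\sigma_T\sigma$, corresponds under the previous identification exactly to post‑composition $[S]\mapsto[S]\circ[T]$ from $\cB\backslash\cG(\cS)$ to $\cA\backslash\cG(\cS)$ (this uses \eqref{eq: sigma TS=sigma T sigma S}, equivalently Lemma~\ref{lem: Sigma is a functor}, together with property (b) of $\sigma_T$ for the target). Since the forgetful functors satisfy $G_\cA\circ\Sigma_T=G_\cB$ — this is just associativity of composition in $\cG(\cS)$ plus the uniqueness in Theorem~\ref{thm:sigma-T is a bijection} — we get $BG_\cA\circ B\Sigma_T=BG_\cB$. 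Hence the maps $BG_\cA\colon E(\cA)\to B\cG(\cS)$ respect the identifications that define $X(\cS)$ and assemble to a well‑defined continuous map $\Theta\colon X(\cS)\to B\cG(\cS)$.

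Finally, $\Theta$ is a homeomorphism. A nondegenerate simplex $\cA_0\xrightarrow{[S_1]}\cdots\xrightarrow{[S_p]}\cA_p$ of $B\cG(\cS)$ has a well‑defined initial vertex $\cA_0$, and the composites $[U_i]=[S_i]\circ\cdots\circ[S_1]$ yield a strictly increasing flag $\emptyset=U_0\subsetneq\cdots\subsetneq U_p$ of partial cluster tilting sets in $\cC(\cA_0)$, i.e.\ a nondegenerate simplex of the open cell $e(\cA_0)=\mathrm{int}\,E(\cA_0)$, and this is a bijection onto such flags. So $\Theta$ maps $e(\cA)$ bijectively onto $\vare(\cA):=\bigsqcup\{\text{open nondegenerate simplices of }B\cG(\cS)\text{ with initial vertex }\cA\}$; since $B\cG(\cS)=\bigsqcup_\cA\vare(\cA)$ and $X(\cS)=\bigsqcup_\cA e(\cA)$ by the preceding proposition, $\Theta$ is a bijection, $\overline\vare(\cA)=\Theta(\overline e(\cA))$ is the union of the closed simplices of chains starting at $\cA$, and the center of $e(\cA)$ goes to $\cA$. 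Because $\cS$ is finite, $\cG(\cS)$ has finitely many objects and morphisms, so $B\cG(\cS)$ is a finite CW‑complex (hence compact Hausdorff) and $X(\cS)$ is likewise a finite CW‑complex by the preceding proposition; a continuous bijection from a compact space to a Hausdorff space is a homeomorphism, so $\Theta$ is one. I expect the only real work to lie in the first two steps — verifying $\simp_+K(\cA)\cong\cA\backslash\cG(\cS)$ and that the flag/chain correspondence is compatible with faces and degeneracies — which is routine bookkeeping once Theorem~\ref{thm:sigma-T is a bijection} and the composition law are available; there is no genuine topological difficulty beyond the compact–Hausdorff argument.
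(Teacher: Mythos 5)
Your proposal is correct and follows essentially the same route as the paper: the identification $\cA\backslash\cG(\cS)\cong\simp_+K(\cA)$ is the paper's Lemma \ref{lem: A under G(S) is simp+K(A)}, and the compatibility $BG_\cA\circ B\Sigma_T=BG_\cB$ is the paper's commuting square of functors. The only difference is that where the paper cites the general decomposition of $B\cD$ into under-category pieces (Proposition \ref{prop: BC is the union of B(X under C)}, whose proof is left to the reader), you unpack that step explicitly via the partition of $B\cG(\cS)$ into open nondegenerate simplices indexed by initial vertex together with the compact--Hausdorff argument, which is a legitimate way to supply the omitted details.
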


\begin{rem}\label{rem: orientation of cells is given by signed exceptional sequences}
Since the top dimensional simplices are given by maximal sequences of composable morphisms which in turn are given by signed exceptional sequences for $\cA$, each such sequence will give an orientation for the cell $\overline\vare(\cA)$. 
\end{rem}

The proof of Theorem \ref{thm: BG(S)=X(S)} is based on the following general observation.

\begin{prop}\label{prop: BC is the union of B(X under C)}
The classifying space of any small category $\cD$ is equal to the union of classifying spaces $B(X\backslash \cD)$ of under-categories $X\backslash \cD$ for all $X\in\cD$ modulo the identifications given by all mappings
\[
	Bf^\ast: B(Y\backslash \cD)\to B(X\backslash \cD)
\]
induced by all morphisms $f:X\to Y$ in $\cD$. Furthermore, the image of $B(X\backslash \cD)$ in $B\cD$ is the union of all simplices corresponding to sequences of composable morphisms
\[
	X\to X_1\to X_2\to\cdots\to X_p
\]
and the identity morphism $(X,id_X)\in X\backslash \cD$ maps to the vertex $X$ in $B\cD$.
\qed
\end{prop}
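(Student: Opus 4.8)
The plan is to work at the level of simplicial sets and exploit that geometric realization preserves colimits. First recall that $X\mapsto X\backslash\cD$ is a functor from $\cD\op$ to the (meta)category of small categories: a morphism $f:X\to Y$ of $\cD$ induces $f^\ast:Y\backslash\cD\to X\backslash\cD$ by $(Z,g:Y\to Z)\mapsto(Z,gf:X\to Z)$, and $(gf)^\ast=f^\ast g^\ast$. Composing with the forgetful functors $u_X:X\backslash\cD\to\cD$, $(Z,g)\mapsto Z$, gives a compatible family of maps $Bu_X:B(X\backslash\cD)\to B\cD$, hence a canonical continuous map
\[
	\Theta:\varinjlim_{X\in\cD\op}B(X\backslash\cD)\longrightarrow B\cD .
\]
Since colimits of spaces are quotients of coproducts, the source of $\Theta$ is precisely $\coprod_X B(X\backslash\cD)$ modulo the identifications induced by all the $Bf^\ast$ as in the statement, so it suffices to prove that $\Theta$ is a homeomorphism.

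Because $B=|\cN_\bullet(-)|$ and $|\cdot|$ is a left adjoint, it is enough to show that the induced map of simplicial sets $\varinjlim_{\cD\op}\cN_\bullet(X\backslash\cD)\to\cN_\bullet\cD$ is an isomorphism; as colimits of simplicial sets are computed degreewise, this is a claim in each degree $n$. An $n$-simplex of $X\backslash\cD$ is the same datum as a chain $X\xrarrow{f_0}Y_0\to Y_1\to\cdots\to Y_n$ in $\cD$, and $u_X$ sends it to $Y_0\to\cdots\to Y_n$. Thus $\coprod_X\cN_n(X\backslash\cD)$ is the set of $(n+1)$-chains in $\cD$ and the map to $\cN_n\cD$ is ``delete the initial object,'' which is surjective (take $X=Y_0$, $f_0=\mathrm{id}$). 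For injectivity, given such an $n$-simplex over $X$ with first arrow $f_0:X\to Y_0$, applying $f_0^\ast$ to the chain $Y_0\xrarrow{\mathrm{id}}Y_0\to Y_1\to\cdots\to Y_n$ over $Y_0$ recovers the original simplex; so in the colimit every element is identified with this canonical representative, which depends only on the image chain $Y_0\to\cdots\to Y_n$. Hence two $n$-simplices become equal in the colimit if and only if they have the same image in $\cN_n\cD$, the degreewise map is a bijection, and being induced by the simplicial maps $\cN_\bullet(u_X)$ it is a simplicial isomorphism. Therefore $\Theta$ is a homeomorphism.

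For the remaining assertions, observe that the simplices of $X\backslash\cD$ starting at the initial object $(X,\mathrm{id}_X)$ are exactly the chains $(X,\mathrm{id}_X)\to(X_1,g_1)\to\cdots\to(X_p,g_p)$, which $u_X$ carries to the composable sequences $X\to X_1\to\cdots\to X_p$ in $\cD$; and every simplex of $X\backslash\cD$ is a face of one of these, obtained by prepending the structure morphism of its first vertex. Consequently the image of $B(X\backslash\cD)$ in $B\cD$ is the union of the closed simplices of all composable sequences $X\to X_1\to\cdots\to X_p$, and the $0$-simplex $(X,\mathrm{id}_X)$ maps to the vertex $X$.

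The only step I expect to require any care is the degreewise identification in the second paragraph, namely that the equivalence relation generated by the maps $\cN_n(f^\ast)$ is precisely ``having the same image under the forgetful maps''; everything else is formal (colimits commute with $|\cdot|$ and are computed degreewise, and the identification respects faces and degeneracies since it is induced by the natural family $u_{(-)}$). I would also flag that $Bu_X:B(X\backslash\cD)\to B\cD$ need not be injective, since distinct simplices of $X\backslash\cD$ lying over a common chain of $\cD$ with different structure maps have the same image; thus ``image'' in the statement denotes the underlying subspace, consistent with the fact that $B(X\backslash\cD)$ is contractible while its image in $B\cD$ generally is not.
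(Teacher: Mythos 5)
Your proof is correct and complete: the reduction to a degreewise statement about nerves via the fact that geometric realization and degreewise colimits commute, together with the observation that every $n$-simplex over $X$ is identified via $f_0^\ast$ with a canonical representative having identity structure map, does establish that the colimit maps isomorphically onto $\cN_\bullet\cD$. The paper offers no proof of this proposition (it is explicitly left to the reader as following from the definitions), and your argument is exactly the intended unwinding of those definitions, carefully and correctly carried out, including the apt caveat that $Bu_X$ itself need not be injective.
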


Since this statement follows from the definitions and holds in any category, we leave the proof to the reader.

\begin{lem}\label{lem: A under G(S) is simp+K(A)}
For any object $\cA$ in the category $\cG(\cS)$ we have an isomorphism of categories:
\[
	\cA\backslash \cG(\cS)\cong \simp_+ K(\cA)
\]
given by sending each objects $[T]:\cA\to\cB$ in $\cA\backslash \cG(\cS)$ to the simplex $T$ considered as an object of $\simp_+ K(\cA)$. This inducing a homeomorphism $B(\cA\backslash \cG(\cS))\cong E(\cA)\cong D^n$.
\end{lem}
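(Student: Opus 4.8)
The plan is to write down the candidate functor $[T]\mapsto T$ explicitly and check directly that it is a bijection on objects and on hom-sets, letting Theorem \ref{thm:sigma-T is a bijection} and the composition formula \eqref{eq:composition of cluster morphisms} do all the work. First I would pin down the objects. An object of $\cA\backslash\cG(\cS)$ is a morphism $[T]\colon\cA\to\cB$ in $\cG(\cS)$, and here $\cB=|T|^\perp\cap\cA$ is determined by $T$; moreover convexity of $\cS$ (Definition \ref{def: convex set of roots}(1)) guarantees that the dimension vectors of the simple objects of $\cB$ lie in $ab(\alpha_\ast)\subseteq\cS$, so $\cB$ is again an object of $\cG(\cS)$ and no constraint on $T$ is lost. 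Hence the objects of $\cA\backslash\cG(\cS)$ are in bijection with the partial cluster tilting sets $T\subseteq\cC(\cA)$ (including $T=\emptyset$). On the other side, the objects of $\simp_+K(\cA)$ are exactly the simplices of $K(\cA)$ together with the empty simplex, and by the definition of $K(\cA)$ a nonempty collection of its vertices spans a simplex precisely when the corresponding objects of $\cC(\cA)$ are pairwise ext-orthogonal, i.e.\ form a partial cluster tilting set. So $[T]\mapsto T$ is a bijection on objects.

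Next I would treat morphisms. A morphism $[T]\to[T']$ in $\cA\backslash\cG(\cS)$ is a morphism $[S]\colon\cB\to\cB'$ with $[S]\circ[T]=[T']$; by \eqref{eq:composition of cluster morphisms} this says $T\cup\sigma_T S=T'$, which in particular forces $T\subseteq T'$. Conversely, if $T\subseteq T'$ are partial cluster tilting sets, then $T'\setminus T\subseteq\cC_T(\cA)$, and by Theorem \ref{thm:sigma-T is a bijection} there is a unique $S:=\sigma_T^{-1}(T'\setminus T)\in\cC(\cB)$ (with $\sigma_T$ extended to sets elementwise), which is itself a partial cluster tilting set because $\sigma_T S\cup T=T'$ is one; this yields a morphism $[S]\colon\cB\to\cB'$ of $\cG(\cS)$, and it is the unique morphism over $[T]$ hitting $[T']$ since $\sigma_T$ is injective and the target of a cluster morphism is determined by its underlying set. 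Thus $[T]\mapsto T$ gives a bijection $\Hom_{\cA\backslash\cG(\cS)}([T],[T'])\to\Hom_{\simp_+K(\cA)}(T,T')$ — both hom-sets having exactly one element when $T\subseteq T'$ and none otherwise — and it is compatible with composition and identities by associativity of composition in $\cG(\cS)$ together with the fact that the empty partial cluster tilting set is an identity morphism (so $\sigma_T\emptyset=\emptyset$). This establishes the isomorphism of categories $\cA\backslash\cG(\cS)\cong\simp_+K(\cA)$, and passing to classifying spaces gives $B(\cA\backslash\cG(\cS))\cong B\simp_+K(\cA)=E(\cA)\cong D^n$ by the definition of $E(\cA)$ in Section \ref{ss 4.1: the CW-complex X(S)}.

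I do not expect a genuine obstacle: the whole content is the bijectivity of $\sigma_T$ (Theorem \ref{thm:sigma-T is a bijection}) together with the composition law $[S]\circ[T]=[T\cup\sigma_TS]$. The only point that calls for slight care is verifying that the morphisms of $\cA\backslash\cG(\cS)$ are parametrized exactly by the inclusions $T\subseteq T'$ and nothing more — i.e.\ that the bookkeeping of the target $\cB'$ introduces no extra morphisms — which is handled by the observation that $\cB'=|T'|^\perp\cap\cA$ is forced.
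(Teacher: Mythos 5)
Your proposal is correct and follows essentially the same route as the paper: both identify the objects of $\cA\backslash\cG(\cS)$ with partial cluster tilting sets in $\cC(\cA)$ and observe that there is a unique morphism $[T]\to[T']$ exactly when $T\subseteq T'$, making the under-category a poset category isomorphic to $\simp_+K(\cA)$. The only difference is that you spell out, via the composition formula and the bijectivity of $\sigma_T$ from Theorem \ref{thm:sigma-T is a bijection}, the existence and uniqueness of the morphism over an inclusion $T\subseteq T'$, which the paper asserts without comment.
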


\begin{proof}
Recall that the objects of $\cA\backslash \cG(\cS)$ are cluster morphisms $[T]:\cA\to\cB$ given by partial (unordered) cluster tilting sets $T=\{T_1,\cdots,T_k\}\subset \cC(\cA)$. There is a unique morphism $[T]\to [S]$ if and only if $T\subseteq S$. Thus $\cA\backslash \cG(\cS)$ is a poset category and the mapping $\cA\backslash \cG(\cS)\to \simp_+K(\cA)$ sending $[T]$ to $T$ gives an isomorphism of partial ordered sets and therefore an isomorphism of poset categories.
\end{proof}

\begin{proof}[Proof of Theorem \ref{thm: BG(S)=X(S)}] Proposition \ref{prop: BC is the union of B(X under C)} and Lemma \ref{lem: A under G(S) is simp+K(A)} imply that
\[
	B\cG(\cS)=\coprod_{\cA\in\cG(\cS)} B(\cA\backslash \cG(\cS))/\!\sim\ \cong \coprod_{\cA\in\cG(\cS)} E(\cA)/\!\sim\ =\bigcup_{\cA\in\cG(\cS)}\overline e(\cA)=X(\cS).
\]
It remains to show that the identifications on the cells $E(\cA)\cong B(\cA\backslash \cG(\cS))$ are the same in $B\cG(\cS)$ and in $X(\cS)$. This is equivalent to showing that the following diagram of functors commutes for any morphism $[T]:\cA\to\cB$.
\[
\xymatrix{
\cB\backslash \cG(\cS)\ar[d]_{[T]^\ast}\ar[r] &
	\simp_+K(\cB)\ar[d]^{\Sigma_T}\\
\cA\backslash \cG(\cS) \ar[r]& 
	\simp_+K(\cA)
	}
\]
But this follows from the fact that the vertical maps are defined by the same formula. Namely, they both take the partial cluster tilting set $X$ in $\cB$ to $T\cup \sigma_TX$ in $\cA$. 

This commuting diagram of categories induces a commuting diagram of classifying spaces showing that $B\cG(\cS)$ and $X(\cS)$ are made from the same pieces pasted together in the same way. So, they are homeomorphic.
\end{proof}

\subsection{Example}\label{ss 4.3: example}
 Suppose that $\cA=\cA(\alpha,\beta)$ where $M_\alpha,M_\beta$ are relative simple projectives. Then the only objects in $\cC(\alpha,\beta)$ are $M_\alpha,M_\alpha[1],M_\beta,M_\beta[1]$ and 
 \[
 \xymatrixrowsep{10pt}\xymatrixcolsep{10pt}
\xymatrix{
 && \beta \ar@{-}[dl]\ar@{-}[dr] \\
	K(\alpha,\beta)=& -\alpha\ar@{-}[dr] && \alpha\ar@{-}[dl]\\
	&& -\beta	}
 \]
 This simplicial complex has 4 edges, 4 vertices and one empty simplex:
\[
\xymatrix{
 & \{-\alpha,\beta\} & \beta\ar[l]\ar[r] & \{\alpha,\beta\} \\
	\simp_+K(\alpha,\beta)=& -\alpha\ar[u]\ar[d] &\emptyset\ar[u]\ar[d]\ar[l]\ar[r]\ar[lu]\ar[ru]\ar[ld]\ar[rd]& \alpha\ar[d]\ar[u]\\
	& \{-\alpha,-\beta\} & -\beta\ar[l]\ar[r] & \{\alpha,-\beta\}	}
 \]
with classifying space $E(\cA(\alpha,\beta))\cong D^2$. This category is isomorphic to the under-category:
\[
\xymatrixcolsep{30pt}
\xymatrix{
	& \cA(\emptyset) & \cA(\alpha)\ar[l]_{[-\alpha]}\ar[r]^{[\alpha]} &\cA(\emptyset) \\ 
	\cA(\alpha,\beta)\backslash \cG(\cS)=
	& \cA(\beta)\ar[u]^{[\beta]}\ar[d]_{[-\beta]} 
	&\cA(\alpha,\beta)\ar[u]_{[\beta]}\ar[d]^{[-\beta]}\ar[l]_{[-\alpha]}\ar[r]^{[\alpha]}\ar[lu]_{[-\alpha,\beta]}\ar[ru]^{[\alpha,\beta]}\ar[ld]_{[-\alpha,-\beta]}\ar[rd]^{[\alpha,-\beta]} 
	& \cA(\beta)\ar[d]^{[-\beta]}\ar[u]_{[\beta]}\\
	& \cA(\emptyset) & \cA(\alpha)\ar[l]^{[-\alpha]}\ar[r]_{[\alpha]} & \cA(\emptyset) 
}
 \]
 The space $B\cG(\alpha,\beta)\cong X(\alpha,\beta)$ has four cells: $\vare(\cA(\emptyset))$ which is at each of the four vertices in the diagram, $\vare(\cA(\alpha))$ which is the interior of the top and bottom rows, $\vare(\cA(\beta))$ which is the interior of the left and right columns and $\vare(\cA(\alpha,\beta))$ which is the interior of the square. So,
 \[
 	X(\alpha,\beta)\cong B\cG(\alpha,\beta)=S^1\times S^1
 \] 
 is a torus.

\subsection{Semi-invariant labels}\label{ss 4.4: semi-invariants}

One of the key properties of the picture space $X(\cS)$ is that it has a ``normally oriented'' codimension one subcomplex
\[
	D(\cS)=\bigcup_{\beta\in\cS} D(\beta) 
\]
where each $D(\beta)$ is locally the support of the virtual semi-invariant with det-weight $\beta$ (Definition \ref{def: det semi-inv and supports}). Using the categorical version of $X(\cS)$, these subspaces are easy to describe.

\begin{defn}
For any $\beta\in\cS$, let $D(\beta)\subseteq B\cG(\cS)$ be the union of all simplices given by composable sequences of morphisms
\[
	\cA_0\to\cA_1\to\cdots\to \cA_p
\]
where $M_\beta\in \cA_p$. Then $D(\cS)=\bigcup D(\beta)$ is the union of all simplices given by sequences of morphisms as above where $\cA_p$ is nonzero.
\end{defn}

It follows directly from this definition that the complement of $D(\cS)$ in $B\cG(\cS)\cong X(\cS)$ is the open star of $\cA(\emptyset)$ which is, by definition, the set of all points so that the barycentric coordinate of $\cA(\emptyset)$ is positive. This is a contractible space with deformation retraction to the vertex $\cA(\emptyset)$ given by linear deformation of barycentric coordinates.

In the universal covering $\tilde X(\cS)$ of $X(\cS)$, the complement of the inverse image $\tilde D(\cS)$ of $D(\cS)$ in $\tilde X(\cS)$ is a disjoint union of contractible spaces, one for each element of the fundamental group $G(\cS)$ of $X(\cS)$. This gives is a locally constant function
\[
	g: \tilde X(\cS)\backslash \tilde D(\cS)\to G(\cS)
\]
which has the following property.

For any root $\beta\in\cS$, let $x_t$ be the path in $X(\cS)$ given by the cell $E(\cA(\beta))\cong B\cA(\beta)\backslash\cG(\cS)$ going from left to right along the path 
\[
	\cA(\emptyset)\xleftarrow{[-\beta]} \cA(\beta)\xrarrow{[\beta]}\cA(\emptyset).
\]
This path intersects $D(\beta)$ only in its midpoint $\cA(\beta)$. Since this represents the generator $x(\beta)$ of $\cG(\cS)$, any lifting $\tilde{x_t}$ of this path to $\tilde X(\cS)\cong B\cG(\cS)$ will have the property that
\[
	g(\tilde x_1)=g(\tilde x_0)x(\beta).
\]

We will now determine the relationship between the subcomplex $D(\beta)\subseteq X(\cS)$ and the subspace $D_{\alpha_\ast}(\beta)\subseteq \RR\alpha_\ast$ defined in Theorem \ref{Stability theorem for virtual semi-invariants}. Suppose that $\cA=\cA(\alpha_1,\cdots,\alpha_n)$ where $\alpha_i\in\cS$. Then, for each positive root $\beta\in\Phi_+(\cA)$, we recall that 
\[
	D_{\alpha_\ast}(\beta) = \{v\in\RR\alpha_\ast\cong \RR^n\,|\, \brk{v,\beta}=0 \text{ and } \brk{v,\beta'}\le 0\ \forall \beta'\subseteq\beta,\beta'\in \Phi_+(\cA)\}
\]
This is a closed convex subset of the hyperplane $\{v\in\RR\alpha_\ast\,|\,\brk{v,\beta}=0\}$. This hyperplane has a normal orientation. The \emph{positive side} is the set
\[
	\{v\in\RR\alpha_\ast\,|\,\brk{v,\beta}>0\}.
\]
For example, $\beta$ is on the positive side of $D_{\alpha_\ast}(\beta)$. One point in $\RR\alpha_\ast$ which is on the positive side of all of these hyperplanes is the dimension vector of the sum of all projective objects.

We recall that $D_{\alpha_\ast}(\beta)$ contains the dimension vector of any object $M\in\cC(\cA)$ with the property that 
\[
\Hom_\Lambda(|M|,M_\beta)=0=\Ext^1_\Lambda(|M|,M_\beta),
\]
i.e., $|M|\in \,^\perp M_\beta$. This implies that, given any cluster tilting set $T=(T_1,\cdots,T_n)$ in $\cC(\cA)$, with corresponding $c$-vectors $(-\gamma_1,\cdots,-\gamma_n)$, we have
\[
	\undim T_i\in D_{\alpha_\ast}(|\gamma_j|)
\]
for all $i\neq j$. Furthermore, $\undim T_j$ is on the positive or negative side of $D_{\alpha_\ast}(|\gamma_j|)$ depending on whether $\gamma_j$ is positive or negative, respectively.

For any $n$-dimensional simplicial complex we define a \emph{normal orientation} on an $n-1$ simplex $\tau$ to be the assignment of a sign ($+$ or $-$) to each $n$-simplex containing $\tau$ as a face. A \emph{normal orientation} of an $n-1$ dimensional subcomplex of an $n$-dimensional simplicial complex is defined to be a normal orientation of each of its $n-1$ simplices. We do not assume any consistency between orientations of adjacent $n-1$ simplices.

\begin{defn}
Let $\cA\in\cG(\cS)$ of rank $n$ and let $\beta\in \Phi_+(\cA)$. We define $L_\cA(\beta)\subset K(\cA)$ to be the normally oriented codimension one subcomplex  consisting of simplices all of whose vertices lie in $\cA\cap \,^\perp M_\beta$. The normal orientation is given in the discussion above. Namely, an $n-1$-simplex $T=\{T_1,\cdots,T_n\}$ with one face $\partial_i T$ in $\,^\perp M_\beta$ has positive sign if the corresponding vector $\gamma_i$ is positive (the $c$-vector $-\gamma_i$ is negative). Take the full subcategory $\simp_+L_\cA(\beta) \subset \simp_+K(\cA)$ which is normally oriented when considered as a simplicial complex. Denote its classifying space by
\[
	D_\cA(\beta)=B\simp_+L_\cA(\beta)\subset B \simp_+K(\cA)=E(\cA).
\]
\end{defn}

For a fixed $\cA$ with rank $n$, the space
\[
	L(\cA)=\bigcup_{\beta\in \Phi_+(\cA)} B\simp L_\cA(\beta)\subset B\simp K(\cA)\cong S^{n-1}
\]
is the picture for $\cA$ as defined in \cite{IOTW4} and $B\simp L_\cA(\beta)$ is the normally oriented subset labeled $\beta$. The following proposition shows that the spaces $D(\beta)\subseteq B\cG(\cS)$ play the analogous role. So, the union $D(\cS)=\bigcup D(\beta)$ is a generalization of a picture.

\begin{prop}\label{prop: DA(b)=e(A) cap D(b)}
$D_\cA(\beta)\subseteq E(\cA)$ is the inverse image of $\overline \vare(\cA)\cap D(\beta)$ under the epimorphism $E(\cA)\onto \overline \vare(\cA)$. Furthermore, the normal orientation of $D_\cA(\beta)$ is such that each embedding of the 1-cell $E(\cA(\beta))$ in $E(\cA)$, oriented by the path $[-\beta]^{-1}[\beta]$ passes from the negative to the positive side of $D_\cA(\beta)$.
\end{prop}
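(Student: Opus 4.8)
The plan is to unwind the definitions on both sides and check they describe the same set of simplices, then track the normal orientation through the $c$-vector sign convention. First I would recall that the epimorphism $E(\cA) \onto \overline\vare(\cA)$ is the restriction of the map $E(\cA) = B(\cA\backslash\cG(\cS)) \to B\cG(\cS)$ from Proposition \ref{prop: BC is the union of B(X under C)}, which on simplices sends a chain $[T_0]\to[T_1]\to\cdots\to[T_p]$ in $\cA\backslash\cG(\cS)$ (i.e.\ a flag $T_0\subseteq T_1\subseteq\cdots\subseteq T_p$ of partial cluster tilting sets in $\cC(\cA)$) to the chain of composable morphisms $\cA\to\cA/T_0\to\cdots\to\cA/T_p$ in $\cG(\cS)$, where I write $\cA/T$ for $|T|^\perp\cap\cA$. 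By definition, $D(\beta)\subseteq B\cG(\cS)$ is the union of simplices whose terminal object $\cA_p$ contains $M_\beta$; intersecting with $\overline\vare(\cA)$ and pulling back, the inverse image consists of exactly those flags $T_0\subseteq\cdots\subseteq T_p$ for which $M_\beta\in |T_p|^\perp\cap\cA$, i.e.\ $|T_p|\in {}^\perp M_\beta$. Since $M_\beta$ is exceptional and $\beta\in\Phi_+(\cA)$, this is equivalent (by the observation just before Theorem \ref{Stability theorem for virtual semi-invariants}, that $X\in{}^\perp Y$ iff $\undim X\in D_{\alpha_\ast}(\undim Y)$ when $Y$ is exceptional) to every vertex of $T_p$ having dimension vector in $D_{\alpha_\ast}(\beta)$, i.e.\ every vertex of $T_p$ lying in $\cA\cap{}^\perp M_\beta$. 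That is precisely the condition defining $L_\cA(\beta)$, and taking $\simp_+$ and classifying spaces gives $D_\cA(\beta)=B\simp_+L_\cA(\beta)$. So the two subcomplexes of $E(\cA)$ coincide.

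Next I would verify the normal orientation statement. The codimension-one simplices of $L_\cA(\beta)$ correspond to $(n-1)$-simplices $T=\{T_1,\dots,T_n\}$ of $K(\cA)$ — that is, cluster tilting sets in $\cC(\cA)$ — having exactly one face $\partial_i T = T\setminus\{T_i\}$ with all vertices in ${}^\perp M_\beta$. The two $n$-simplices of $K(\cA)$ containing such a face $\partial_iT$ are $T$ and its mutation $\mu_iT$ in the $i$-direction, and these are distinguished by the sign of the $c$-vector in the $i$-th slot: by the discussion preceding the definition of $L_\cA(\beta)$, writing $(-\gamma_1,\dots,-\gamma_n)$ for the exchange-corresponding $c$-vectors of the ordered cluster tilting set whose underlying unordered set is $T$, we have $\undim T_j\in D_{\alpha_\ast}(|\gamma_j|)$ for $j\neq i$ automatically, while $\undim T_i$ lies on the positive side of $D_{\alpha_\ast}(|\gamma_i|)$ iff $\gamma_i$ is a positive root. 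The normal orientation of $L_\cA(\beta)$ is defined to assign the sign of $T_i$ (positive when $\gamma_i>0$) to the $n$-simplex $T$, and the opposite sign to its mutation; so I just need to match this with the geometric side of $D_\cA(\beta)$ that the oriented $1$-cell $E(\cA(\beta))$ crosses.

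For the last claim I would specialize to the $1$-cell. The embedding $B\Sigma_T\colon E(\cA(\beta))\hookrightarrow E(\cA)$ attached to a morphism $[T]\colon\cA\to\cA(\beta)$ of rank $n-1$ sends the two endpoints of the path $[-\beta]^{-1}[\beta]$ (which are $\cA(\emptyset)$, reached via $[\beta]$ and via $[-\beta]$) to the $n$-simplices $\sigma_T(M_\beta)\cup T$ and $\sigma_T(M_\beta[1])\cup T$ of $K(\cA)$, and the midpoint $\cA(\beta)$ to the common $(n-1)$-face $T$, which lies in $L_\cA(\beta)$ since every vertex of $T$ is in ${}^\perp M_\beta$ (because $|T|^\perp\cap\cA=\cA(\beta)\ni M_\beta$). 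Now $\sigma_T(M_\beta)$ has dimension vector congruent to $\beta$ modulo $\RR T$ (Property (c) of $\sigma_T$, Proposition \ref{prop 1.8: Properties of sigma_T}), and by Corollary \ref{cor: comparing D-beta to D-alpha} the vector $\undim T + \vare\beta$ lies in $D_{\alpha_\ast}(\beta)$ only in the limit — more to the point, the sign of $\brk{\undim\sigma_T(M_\beta),\beta}$ relative to the hyperplane $\brk{-,\beta}=0$ records which side of $D_\cA(\beta)$ this endpoint sits on; since $\brk{\beta,\beta}=\dim_K\End(M_\beta)>0$, the endpoint reached via $[\beta]$ is on the positive side, and the shifted endpoint $\sigma_T(M_\beta[1])$, with dimension vector congruent to $-\beta$, is on the negative side. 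Hence the path oriented by $[-\beta]^{-1}[\beta]$ runs from the negative to the positive side of $D_\cA(\beta)$, which is exactly the assigned normal orientation.

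The main obstacle I anticipate is the orientation bookkeeping: one must be careful that the sign convention "$\gamma_j$ positive $\Leftrightarrow$ positive side" in the definition of $L_\cA(\beta)$, the convention $\undim(M[1])=-\undim M$, and the direction of the generator $x(\beta)=[-\beta]^{-1}[\beta]$ all line up consistently, and that the mutation $\mu_iT$ really is the "other" $n$-simplex across $\partial_iT$ in $K(\cA)$ (this uses that $K(\cA)$ is a sphere, so each codimension-one simplex bounds exactly two top simplices — stated via \cite{IOTW3} earlier). Everything else is a direct translation through Proposition \ref{prop: BC is the union of B(X under C)}, Lemma \ref{lem: A under G(S) is simp+K(A)}, and the perpendicularity criterion for $D_{\alpha_\ast}(\beta)$.
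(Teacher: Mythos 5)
Your proposal is correct and follows essentially the same route as the paper's proof: both identify $D_\cA(\beta)$ with the preimage of $\overline\vare(\cA)\cap D(\beta)$ by matching chains of morphisms ending in a category containing $M_\beta$ with flags of partial cluster tilting sets whose vertices lie in $\,^\perp M_\beta$, and both settle the orientation by the computation $\brk{\undim\sigma_TM_\beta,\beta}=\brk{\beta,\beta}>0$ (equivalently, the last $c$-vector of $(T_1,\cdots,T_{n-1},\sigma_TM_\beta)$ is $-\beta$), so the endpoint reached via $[\beta]$ sits on the positive side. The brief aside invoking Corollary \ref{cor: comparing D-beta to D-alpha} is unnecessary and slightly off target, but it does not affect the argument.
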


\begin{proof}
By Theorem \ref{thm: BG(S)=X(S)} and the definition of $D(\beta)$, the intersection $\overline \vare(\cA(\alpha_\ast))\cap D(\beta)$ is the union of all simplices in $B\cG(\cS)$ corresponding to sequences of morphisms $\cA_0\to\cdots\to \cA_p$ starting at $\cA_0=\cA(\alpha_\ast)$ and ending at $\cA_p=\cA(\beta)$. If the rank of $\cA(\alpha_\ast)$ is $n$, the composition of these morphisms is a morphism
\[
	[T]:\cA(\alpha_\ast)\to \cA(\beta)
\]
where $T=(T_1,\cdots,T_{n-1})$ is a cluster tilting set in $\cA(\alpha_\ast)\cap M_\beta^\perp$. Such a cluster tilting set corresponds to a maximal simplex in $L_{\cA(\alpha_\ast)}(\beta)$ and the sequence of objects $\cA_i$ correspond to faces of this simplex starting with the empty face. So, the inverse image of $\overline \vare(\cA)\cap D(\beta)$ in $E(\cA)$ is $D_\cA(\beta)=B\simp_+L_{\cA}(\beta)\subset B\simp_+K(\cA)$.

Conversely, any simplex in $D_\cA(\beta)$ is a chain of faces of a maximal simplex in $L_\cA(\beta)$. Such a chain corresponds to an ordered cluster tilting set in $\cA\cap M_\beta^\perp$ which corresponds to a maximal chain of morphisms $\cA\to\cdots\to\cA(\beta)$ which is a maximal simplex in $\overline\vare(\cA)\cap D(\beta)$.

The completion of the cluster tilting set $T$ is given by the composition of $[T]:\cA\to\cA(\beta)$ with $[M_\beta]:\cA(\beta)\to\cA(\emptyset)$ which is
$
	[M_\beta]\circ[T]=[T,\sigma_TM_\beta]:\cA\to \cA(\emptyset)
$. The cluster tilting set $(T_1,\cdots,T_{n-1},\sigma_TM_\beta)$ has last $c$-vector $-\gamma_n=-\beta$ since $\brk{\undim T_i,\beta}=0$ and
\[
	\brk{\undim\sigma_TM_\beta,\beta}=\brk{\beta,\beta}>0
\]
Therefore, the normal orientation of $D_\cA(\beta)$ assigns a positive sign to the maximal simplex $\{T_1,\cdots,T_n,M_\beta\}$. But this is equivalent to saying that $[-\beta]^{-1}[\beta]$ goes through $D_\cA(\beta)$ in the positive direction as claimed.
\end{proof}

\subsection{Cellular chain complex for $X(\cS)$}\label{ss 4.5: cellular chain complex}

We recall that the cellular chain complex of any $CW$-complex $X$ is:
\[
	\cdots \to C_n(X)\xrarrow{d_n} C_{n-1}(X) \to \cdots \to C_1(X)\xrarrow{d_1} C_0(X)\to 0
\]
where $C_n(X)$ is the free abelian group generated by the $n$-cells of $X$ with some chosen orientation for each cell. The boundary map $d_n:C_n(X)\to C_{n-1}(X)$ is given by an integer matrix whose $ij$ coordinate is the incidence number of the composition
\[
	S^{n-1}\xrarrow{\eta_j} X^{n-1}\xrarrow{\pi_i} S^{n-1}
\]
where $X^{n-1}$ is the $n-1$ skeleton of $X$, $\eta_j$ is the attaching map of the $j$th $n$-cell of $X$ and $\pi_i$ is the map which collapses all cells in $X^{n-1}$ to a point except for the $i$th $n-1$-cell.

In the case $X=X(\cS)$, where $\cS$ is a finite convex set of real Schur roots, the generators of $C_n(X)$ are oriented wide categories $\cA=\cA(\alpha_1,\cdots,\alpha_n)$ where $\alpha_i\in\cS$. We denote this element $[\cA]\in C_n(X)$. The orientation is given by the ordering of the hom-orthogonal roots $\alpha_i$ which span $\cA(\alpha_\ast)$. Any odd permutation of the $\alpha_i$ will change the sign of the generator. For example $[\cA(\alpha_2,\alpha_1)]=-[\cA(\alpha_1,\alpha_2)]$.

\begin{thm}\label{thm: chain complex for X(S)}
The boundary map $d_n:C_n(\cS)\to C_{n-1}(\cS)$ is given on each oriented generator $\cA=\cA(\alpha_1,\cdots,\alpha_n)$ by
\[
	d_n[\cA]=\sum_{\beta\in \Phi_+(\cA)\text{ not projective}} \det(c_{ij}) [\cA\cap M_\beta^\perp]
\]
where the sum is over all nonprojective exceptional roots $\beta\in\Phi_+(\cA)\subseteq\cS$. The sign $\det(c_{ij})=\pm1$ is the determinant of the unique integer matrix $(c_{ij})$ satisfying
\[
	\beta_i=\sum_{j=1}^n c_{ij}\alpha_j
\]
for all $1\le i\le n$ where $\cA\cap \beta^\perp=\cA(\beta_1,\cdots,\beta_{n-1})$ is any chosen orientation of $\cB=\cA\cap M_\beta^\perp$ and $\beta_n=\beta$.\end{thm}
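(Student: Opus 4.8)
The plan is to compute $d_n$ directly from the cellular chain complex, using that the incidence number of an $n$-cell and an $(n-1)$-cell is a local degree. By the construction of $X(\cS)$ and Theorem \ref{thm: BG(S)=X(S)}, the attaching map of the $n$-cell $\overline e(\cA)$, for $\cA=\cA(\alpha_\ast)$ of rank $n$, is $\eta_\cA\colon\partial E(\cA)=B\simp K(\cA)\cong S^{n-1}\to X^{n-1}$, and the $(n-1)$-disks $B\Sigma_M(E(\cB))$ attached along the rank one cluster morphisms $[M]\colon\cA\to\cB$ cover $\partial E(\cA)$ with disjoint interiors mapping homeomorphically onto the open cells $e(\cB)$. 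Hence, for fixed $\cB$ of rank $n-1$, the incidence number is the signed count
\[
	[d_n[\cA]:[\cB]]=\sum_{[M]\colon\cA\to\cB,\ \text{rank }1}\vare(M),
\]
where $\vare(M)=\pm1$ compares the orientation that $B\Sigma_M(E(\cB))$ inherits from $\partial E(\cA)$ (with its boundary orientation from the chosen orientation of $E(\cA)$) with the chosen orientation of $e(\cB)$.

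Next I would enumerate the rank one morphisms landing in a given $\cB$. Since ${}^\perp\cB\cap\cA$ has rank one it equals $\cA(\beta)$ for a unique $\beta\in\Phi_+(\cA)$, and then $\cB=\cA\cap M_\beta^\perp$; conversely every corank one subcategory of $\cA$ arises this way, for a unique $\beta$. There is exactly one such morphism, $[M_\beta]$, when $\beta$ is not relatively projective in $\cA$, and exactly two, $[M_\beta]$ and $[M_\beta[1]]$, when it is. Thus the $(n-1)$-cells occurring in $d_n[\cA]$ are indexed by $\Phi_+(\cA)\subseteq\cS$, with each projective $\beta$ contributing $\vare(M_\beta)+\vare(M_\beta[1])$.

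The heart of the matter is the sign $\vare(M)$. By Remark \ref{rem: orientation of cells is given by signed exceptional sequences} and Theorem \ref{thm 2.3: bijection one}, a maximal flag of partial cluster tilting sets in $\cC(\cA)$ — equivalently a complete signed exceptional sequence $(X_\ast)$ with ordered cluster tilting set $(T_\ast)=\theta_n(X_\ast)$ — carries, relative to the standard orientation from the ordered basis $(\alpha_1,\dots,\alpha_n)$, the sign $\sgn\det$ of the matrix of the $\undim T_i$ (in the order determined by the flag) in the $\alpha$-basis; by Theorem \ref{thm: formula for theta inverse} this determinant is unchanged if the $\undim T_i$ are replaced by the $\undim X_i$. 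A top simplex of $B\Sigma_M(E(\cB))$, prepended with the empty partial cluster tilting set (the cone point of $E(\cA)$), is such a flag, and its cluster tilting set is $\{M\}\cup\sigma_M(T')$ for an ordered cluster tilting set $T'$ of $\cB=\cA(\beta_1,\dots,\beta_{n-1})$; by Property (c) of Proposition \ref{prop 1.8: Properties of sigma_T} the rows $\undim(\sigma_MT'_i)$ differ from $\undim T'_i$ by multiples of $\undim M$, so row reduction identifies the determinant with that of the matrix whose rows are $\beta_1,\dots,\beta_{n-1}$ and $\undim M$. Because $M_\beta\in{}^\perp\cB$, the module $M_\beta$ followed by an exceptional sequence of $\cB$ is an exceptional sequence in $\cA$ (Theorem \ref{thm: Schofield's observation}), so $(\beta_1,\dots,\beta_{n-1},\beta)$ is a $\ZZ$-basis of $\ZZ\alpha_\ast$; hence this determinant is $\pm1$, equal to $\det(c_{ij})$ when $M=M_\beta$ and to $-\det(c_{ij})$ when $M=M_\beta[1]$, since $\undim M_\beta[1]=-\beta$ negates the last row. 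Combining with the boundary orientation gives $\vare(M_\beta)=\pm\det(c_{ij})$ and $\vare(M_\beta[1])=-\vare(M_\beta)$, the global sign being a universal constant (depending at most on $n$) which one fixes to be $+1$. Therefore the two contributions cancel exactly when $\beta$ is relatively projective, the surviving terms are $\det(c_{ij})[\cB]$ for nonprojective $\beta\in\Phi_+(\cA)$, and the answer is independent of the chosen orientation of $\cB$ because permuting $\beta_1,\dots,\beta_{n-1}$ scales $\det(c_{ij})$ and $[\cB]$ by the same sign. This is the asserted formula.

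The step I expect to be the main obstacle is the orientation bookkeeping: one must fix and track the boundary orientation on $S^{n-1}=\partial E(\cA)$, the effect of the cone point relating top simplices of $\partial E(\cA)$ to those of $E(\cA)$, and the convention turning a maximal flag of partial cluster tilting sets into an \emph{ordered} cluster tilting set, then verify that the resulting global sign is $+1$ uniformly in $n$ — most cleanly by checking one low rank case, e.g.\ rank $2$, and propagating via the functoriality $\Sigma_{T\cup\sigma_TS}=\Sigma_T\Sigma_S$ of Lemma \ref{lem: Sigma is a functor}. The remaining ingredients — the covering of $\partial E(\cA)$ by the faces, the determinant identity via Property (c), and the $M\mapsto M[1]$ cancellation — are routine once the conventions are pinned down.
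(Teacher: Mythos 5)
Your proposal is correct and follows essentially the same route as the paper's own proof: enumerate the rank-one cluster morphisms $[M]:\cA\to\cB$ (one for nonprojective $\beta$, two for projective $\beta$), identify the incidence sign with $\det(c_{ij})$ by comparing signed exceptional sequences in the $\alpha$-basis, and cancel the $[M_\beta]$, $[M_\beta[1]]$ contributions when $\beta$ is relatively projective. Your treatment of the orientation bookkeeping via local degrees and the row-reduction through Property (c) is somewhat more explicit than the paper's, but it is the same argument.
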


\begin{proof}
By Theorem \ref{thm: BG(S)=X(S)}, the $n$-cell $\varepsilon(\cA)$ in $X(\cS)$ is the union of $n$-simplices $\cA_0\to \cA_1\to\cdots\to \cA_n$ where $\cA_0=\cA$. The first morphism $\cA\to \cB=\cA_1$ is given by a single exceptional object $[M_\beta]$ in the cluster category of $\cA$ and the $n-1$ simplex $\cB=\cA_1\to\cdots\to \cA_n$ is part of the $n-1$ cell $\varepsilon(\cB)$. Every simplex is oriented by the ordering of its vertices. Since each maximal chain of composable morphisms $\cA=\cA_0\to\cdots\to \cA_n$ is given by a signed exceptional sequence, each such sequence gives an orientation of $\varepsilon(\cA)$. \vs2

Claim: The corresponding sequence of dimension vectors $(\alpha_1,\cdots,\alpha_n)$ is unique up to invertible integer matrix tranformation. I.e., $\beta_i=\sum c_{ij}\alpha_j$ where $(c_{ij})\in GL(n,\ZZ)$ for any other such sequence $(\beta_i)$.

Proof of Claim: Any two exceptional sequences can be transformed into each other by braid moves. Each braid move changes the sequence of dimension vectors by transposing two and adding a multiple of one to the other. The signs in a signed exceptional sequence can be changed by multiplication by a diagonal matrix with entries $\pm1$. In all cases, the dimension vectors change by an integer matrix of determinant $\pm1$.
\vs2

Suppose we have a fixed orientation of the $n$-cell $\varepsilon(\cA)$. Then which morphisms $[M_\beta]:\cA\to \cB$ do we have? By definition of cluster morphism, there is one such morphism for every (isomorphism class of) indecomposable object $M_\beta$ of the cluster category of $\cA$. These objects have target $\cB=\cA\cap M_\beta^\perp$. Each wide subcategory $\cB\subseteq\cA$ of rank $n-1$ occurs in this way and $M_\beta$ is uniquely determined by $\cB$ except in the case when $M_\beta$ is projective in which case $[M_\beta[1]]=[M_{-\beta}]$ is also a morphism $\cA\to\cB$.

When $M_\beta$ is not projective, the incidence number of $\varepsilon(\cA)$ with $\varepsilon(\cB)$ is $\pm1$ and the sign is determined by the choice of orientation of both $\cA$ and $\cB$. The orientation of $\cB$ is specified by an $n-1$ simplex: $\cB=\cB_1\to \cB_2\to\cdots\to \cB_n=0$ which is given by a signed exceptional sequence $(\beta_1,\cdots,\beta_{n-1})$. Appending the morphism $[M_\beta]:\cA\to\cB$ gives the signed exceptional sequence $(\beta_1,\cdots,\beta_{n-1},\beta)$. If $(c_{ij})$ is the comparison matrix of this sequence with $(\alpha_1,\cdots,\alpha_n)$ then $\det(c_{ij})$ is the incidence number of $[\cA]$ with $[\cB]$. 

For each projective object $P=M_\beta\in\cA$ there are two objects in the cluster category: $P$ and $P[1]=M_{-\beta}$. This gives two morphisms $[M_{\pm \beta}]:\cA\to \cB$. For any fixed orientations $(\alpha_\ast),(\beta_\ast)$ of $\cA,\cB=\cA\cap P^\perp$ these two morphisms have opposite sign since the sign of the last vector $\beta_n=\pm \beta$ changes. Therefore, the incidence number of $[\cA]$ and $[\cA\cap P^\perp]$ is zero. This proves the formula for $d_n:C_n(X(\cS))\to C_{n-1}(X(\cS))$ for any finite convex set $\cS$.
\end{proof}

\section{Acknowledgements} The authors wish to thank Kent Orr and Jerzy Weyman for the many years that we spent discussing semi-invariant pictures and their possible meaning. The first author acknowledges support of National Security Agency Grant \#H98230-13-1-0247 and the second author acknowledges support by National Science Foundation Grants \#DMS-1103813 and \#DMS-0901185 during the work reported in this paper.

\end{document}